\documentclass[a4paper,reqno,10pt]{amsart}

\usepackage[T1]{fontenc}
\usepackage[utf8]{inputenc}
\usepackage{lmodern}
\usepackage{amsfonts,amssymb,amsthm,amsmath,mathtools,mathrsfs}
\usepackage{extpfeil}
\usepackage{xcolor,enumitem,thmtools}
\usepackage[all,2cell,cmtip]{xy}
\xyoption{curve}
\usepackage[hypertexnames=false]{hyperref}
\usepackage[nameinlink]{cleveref}

\hypersetup{
  colorlinks=true,
  linkcolor=blue!55!black,
  citecolor=green!45!black,
  urlcolor=blue!60!black,
  pdftitle={A strongly compact cardinal yields a left and right coherent ring with PGF properly contained in GP},
  pdfauthor={Chencheng Zhang}
}

\declaretheoremstyle[headfont=\bfseries,bodyfont=\itshape]{plainstyle}
\declaretheoremstyle[headfont=\bfseries,bodyfont=\normalfont]{defstyle}
\declaretheoremstyle[headfont=\bfseries,bodyfont=\normalfont]{remarkstyle}
\declaretheorem[style=plainstyle,numberwithin=section,name=Theorem]{theorem}

\declaretheorem[style=plainstyle,sibling=theorem,name=Lemma]{lemma}
\declaretheorem[style=plainstyle,sibling=theorem,name=Proposition]{proposition}
\declaretheorem[style=plainstyle,sibling=theorem,name=Corollary]{corollary}
\declaretheorem[style=plainstyle,sibling=theorem,name=Fact]{fact}
\crefname{fact}{Fact}{Facts}
\Crefname{fact}{Fact}{Facts}
\declaretheorem[style=plainstyle,sibling=theorem,name=Question]{question}
\declaretheorem[style=defstyle,sibling=theorem,name=Definition]{definition}

\declaretheorem[style=remarkstyle,sibling=theorem,name=Remark]{remark}
\numberwithin{equation}{section}

\newcommand{\GP}{\mathcal{GP}}
\newcommand{\GF}{\mathcal{GF}}
\newcommand{\PGF}{\mathcal{PGF}}

\newcommand{\longtwoheadrightarrow}{\xtwoheadrightarrow{\phantom{\,}}}
\newcommand{\doilink}[1]{\href{https://doi.org/#1}{\nolinkurl{doi:#1}}}

\title[Strong compactness and $\PGF\subsetneq\GP$]{A strongly compact cardinal yields a left and right coherent ring with $\PGF(R)\subsetneq\GP(R)$}

\author{Chencheng Zhang}
\address{School of Mathematical Sciences, Shanghai Jiao Tong University, Shanghai 200240, P. R. China}
\email{zhangchencheng@sjtu.edu.cn}

\subjclass[2020]{Primary 16E05, 16E30; Secondary 03E55}
\keywords{Gorenstein projective module, projectively coresolved Gorenstein flat module, strongly compact cardinal, ultrafilter, Roos complex, Boolean ring, coherent ring}
\date{}

\begin{document}

\begin{abstract}
      For a ring $R$, let $\GP(R)$, $\GF(R)$, and $\PGF(R)$ denote the classes of Gorenstein projective, Gorenstein flat, and projectively coresolved Gorenstein flat left $R$-modules, respectively.
      We isolate the local ultrafilter hypothesis $\textsf{LUH}$: the existence of a strongly compact cardinal implies $\textsf{LUH}$, while $\textsf{LUH}$ implies the existence of a measurable cardinal.
      Assuming $\textsf{LUH}$, we construct a left and right coherent ring $R$ and a strongly Gorenstein projective left $R$-module $G$ which is not Gorenstein flat; hence $\PGF(R)\subsetneq\GP(R)$.
\end{abstract}

\maketitle


\section{Introduction}
\label{sec:introduction}

Throughout the article, every ring is associative and has an identity, every ring homomorphism preserves identity elements, and every left or right module is unital. Thus $1_R\cdot m=m$ for every element $m$ of a left $R$-module, and $m\cdot1_R=m$ for every element $m$ of a right $R$-module.

Let $R$ be a ring. A \emph{Gorenstein projective} left $R$-module is a cocycle of a totally acyclic complex of projective left $R$-modules. The class of these modules is denoted by $\GP(R)$. Auslander--Bridger's modules of $G$-dimension zero are the finitely generated precursors over two-sided Noetherian rings \cite{AuslanderBridger1969}; for a modern definition, see Enochs--Jenda \cite{EnochsJenda1995}.
A \emph{Gorenstein flat} left $R$-module is a cocycle of an exact complex of flat left $R$-modules which remains exact after applying $E\otimes_R-$ for every injective right $R$-module $E$. The class is denoted by $\GF(R)$ and was introduced by Enochs--Jenda--Torrecillas \cite{EJT1993}.
A \emph{projectively coresolved Gorenstein flat} left $R$-module is a cocycle of an exact complex of projective left $R$-modules which remains exact after applying $E\otimes_R-$ for every injective right $R$-module $E$. The class of projectively coresolved Gorenstein flat modules and the notation $\PGF$ were introduced by Šaroch--Šťovíček \cite{SarochStovicek2020}, after switching sides to our convention.

Šaroch--Šťovíček proved that
\begin{equation}\label{eq:introduction-basic-inclusion}
      \PGF(R)\subseteq\GP(R)\cap\GF(R)
\end{equation}
for every ring $R$; see \cite[Theorem~4.4]{SarochStovicek2020}. Iacob proved that
\begin{equation}\label{eq:introduction-iacob-equivalence}
      \GP(R)\subseteq\GF(R) \quad\iff\quad \GP(R)=\PGF(R);
\end{equation}
see \cite[Theorem~3]{Iacob2020}. Hence the following questions are equivalent:
\begin{enumerate}[label=\textsf{(Q\arabic*)}]
      \item Does $\GP(R)\subseteq\GF(R)$ hold for every ring $R$?
      \item Does $\GP(R)=\PGF(R)$ hold for every ring $R$?
\end{enumerate}
Question~\textsf{(Q1)} is older than the class $\PGF$, and hence older than the equivalent formulation in Question~\textsf{(Q2)}. Ding, Li, and Mao trace Question~\textsf{(Q1)} to Holm's 2000 master's thesis, which asks whether there exist a right coherent ring $R$ and a left $R$-module $G$ such that
\[
      G\in\GP(R)\setminus\GF(R);
\]
see \cite[Remark~4.5\textup{(3)}--\textup{(5)}]{DingLiMao2009}. Holm proved that $\GP(R)\subseteq\GF(R)$ when $R$ is right coherent and has finite left finitistic projective dimension \cite[Proposition~3.4]{Holm2004}. Thus his theorem does not settle Question~\textsf{(Q1)} for an arbitrary right coherent ring. Šaroch--Šťovíček later stated the unrestricted problem in the form of Question~\textsf{(Q2)} \cite[Remark following Example~3.10]{SarochStovicek2020}, and Mao still lists Question~\textsf{(Q1)} as open (in \textsf{ZFC}) in 2026 \cite[p.~3255]{Mao2026}. 

The construction of a strongly Gorenstein projective module which is not Gorenstein flat uses additional set-theoretic assumptions. They enter precisely in the proof of the Ext-vanishing statement in \Cref{prop:all-ext}:
\begin{enumerate}[label=\textsf{(E\arabic*)}]
      \item The cases of $\operatorname{Hom}$ and $\operatorname{Ext}^1$ use a nonprincipal $\aleph_1$-complete ultrafilter.
      \item The groups $\operatorname{Ext}^q$ for $q\geq2$ are computed using Roos theory and require a fine $\aleph_1$-complete ultrafilter on a family of small subsets.
\end{enumerate}
This leads to the local ultrafilter hypothesis $\textsf{LUH}$ from \Cref{def:luh}. For a cardinal $\kappa$, the assertion $\textsf{LUH}(\kappa)$ requires
\begin{enumerate}[label=\textsf{(LUH\arabic*)}]
      \item a nonprincipal $\kappa$-complete ultrafilter on $\kappa$; and
      \item an $\aleph_1$-complete fine ultrafilter on $\mathcal P_\kappa(2^\kappa)$, where $\mathcal P_\kappa(X)\coloneqq \{a\subseteq X\mid |a|<\kappa\}$.
\end{enumerate}
The sentence $\textsf{LUH}$ asserts that $\textsf{LUH}(\kappa)$ holds for some cardinal $\kappa$; the abbreviation stands for \emph{local ultrafilter hypothesis} and is specific to this article. By \Cref{prop:luh-implication-diagram},
\begin{equation}\label{eq:introduction-luh-chain}
      \kappa\text{ is strongly compact} \implies \textsf{LUH}(\kappa) \implies \kappa\text{ is measurable}.
\end{equation}

\begin{proposition}\label{prop:introduction-luh}
      Assume $\textsf{ZFC}+\textsf{LUH}$. Then there exist a left and right coherent ring $R$ and a strongly Gorenstein projective left $R$-module $G$ which is not Gorenstein flat. In particular,
      \[
            G\in\GP(R)\setminus\GF(R).
      \]
      Consequently,
      \[
            \GP(R)\not\subseteq\GF(R) \qquad\text{and}\qquad \PGF(R)\subsetneq\GP(R).
      \]
\end{proposition}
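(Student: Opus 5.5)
The plan is to build $R$ as a Boolean-type ring and to take $G$ to be the cocycle of a periodic complex
\[
      \cdots\xrightarrow{\ a\ }P\xrightarrow{\ a\ }P\xrightarrow{\ a\ }\cdots
\]
of projectives. Since the paper's keywords point to Boolean rings and ultrafilters, I would take $R=B[x]/(x^2)$, or a twisted variant of it. Here $B$ is a suitable Boolean ring attached to $\kappa$, for instance a ring of locally constant functions or $\mathbb F_2$-valued functions on $\kappa$ or on $\mathcal P_\kappa(2^\kappa)$. I would choose an element $a$ with $\ker(a\cdot-)=\operatorname{im}(a\cdot-)$ on $R$. Then $G=Ra$ fits into an exact sequence $0\to G\to R\to G\to 0$. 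This makes $G$ strongly Gorenstein projective as soon as $\operatorname{Ext}^{\geq1}_R(G,P)=0$ for every projective $P$. By periodicity, this reduces to showing $\operatorname{Ext}^q_R(G,R^{(I)})=0$ for all $q\geq1$ and all index sets $I$, and that is exactly the content of \Cref{prop:all-ext}, which I would invoke.

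\Cref{prop:all-ext} is where \textsf{LUH} enters. The nonprincipal $\kappa$-complete ultrafilter handles $\operatorname{Hom}$ and $\operatorname{Ext}^1$ into infinite direct sums, by showing that maps from $G$ into $R^{(I)}$ factor through finitely many coordinates, in the way measurability gives slenderness-type statements. The fine $\aleph_1$-complete ultrafilter on $\mathcal P_\kappa(2^\kappa)$, combined with the Roos complex computing $\lim^{(q)}$, handles $\operatorname{Ext}^q$ for $q\geq2$. Coherence of $R$ on both sides should be a separate, earlier lemma. Boolean rings are von Neumann regular and hence coherent, and a square-zero extension $B\ltimes B$ by a finitely presented bimodule stays coherent. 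I would cite that lemma.

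For non-Gorenstein-flatness, I would use that over a right coherent ring, $\GF(R)$ is closed under the relevant operations and that a module in $\GF(R)$ satisfies $\operatorname{Tor}^R_{\geq1}(E,G)=0$ for every injective right module $E$. By periodicity, $\operatorname{Tor}_1^R(E,G)$ is the homology of $E\xrightarrow{a}E\xrightarrow{a}E$. So it suffices to exhibit one injective right $R$-module $E$, for example the character module $\operatorname{Hom}_{\mathbb Z}(R,\mathbb Q/\mathbb Z)$ or an injective envelope built from a nonprincipal ultrafilter quotient of $B$, on which $\ker a\neq aE$. The non-flat-like behaviour should come from the fact that $G$ is built from $R$, whose character dual sees the "ultrafilter limit" coordinates that projective resolutions miss.

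Finally, $\GP(R)\not\subseteq\GF(R)$ is immediate from $G\in\GP(R)\setminus\GF(R)$. Then $\PGF(R)\subsetneq\GP(R)$ follows from \eqref{eq:introduction-basic-inclusion}, which gives $\PGF(R)\subseteq\GF(R)$, so $G\notin\PGF(R)$; alternatively it follows from Iacob's equivalence \eqref{eq:introduction-iacob-equivalence}.

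The main obstacle is the Ext-vanishing for $q\geq2$ (\textsf{(E2)}): controlling higher derived limits via Roos theory requires the fine ultrafilter to make the Roos complex acyclic. I would treat that as given by \Cref{prop:all-ext}, and spend the effort on checking that the $\operatorname{Tor}$ computation really produces a nonzero class.
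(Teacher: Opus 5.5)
\textbf{There is a genuine gap: the module $G=Ra$ you propose is always Gorenstein flat, so no choice of injective $E$ can detect it.} Your citation of \Cref{prop:all-ext} also does not apply to it. That proposition concerns $S=R/\mathfrak m$ over the Boolean ring $R=(\mathbb F_2)^\kappa$: it says $\operatorname{Ext}^q_R(S,R^{(J)})=0$, and says nothing about a cyclic module $Ra$ over $B[x]/(x^2)$.

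The more serious problem is the following. If $0\to G\to R\xrightarrow{\cdot a}G\to0$ has the ring itself as middle term, then the complete resolution $P^\bullet\colon\cdots\to R\xrightarrow{\cdot a}R\xrightarrow{\cdot a}R\to\cdots$ consists of finitely generated free modules. For every injective right module $E$ one therefore has $E\otimes_RP^\bullet\cong\operatorname{Hom}_{R^{\mathrm{op}}}(\operatorname{Hom}_R(P^\bullet,R),E)$. This complex is exact, because $\operatorname{Hom}_R(P^\bullet,R)$ is exact by total acyclicity and $E$ is injective. Hence $G\in\PGF(R)$, and $\ker a= aE$ holds for every injective $E$.

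Set theory is not even needed here, since $\operatorname{Hom}_R(P^\bullet,R^{(I)})\cong\operatorname{Hom}_R(P^\bullet,R)^{(I)}$. A "twisted variant" does not help either. The paper notes, via Dai--Zhang, that no $\aleph_1$-generated strongly Gorenstein projective module can be a counterexample. So the middle projective must be huge, and the Ext-vanishing must be transferred from $S$ to the periodic complex rather than proved for $G$ directly.

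What your proposal is missing is the mechanism that turns two facts about $S$ into a module in $\GP\setminus\GF$. The facts are $\operatorname{Ext}^{q}_R(S,R^{(J)})=0$ for all $q$, and $[\chi_{\mathcal U}]\neq0$ in $R^+\otimes_RS$. The paper proceeds in four steps.
\begin{enumerate}
\item It takes a deleted free resolution $F^\bullet$ of $S$. Then $\operatorname{Hom}_R(F^\bullet,Q)$ is exact for every projective $Q$, while $\mathrm H^0(R^+\otimes_RF^\bullet)\cong R^+\otimes_R S\neq0$.
\item It folds $F_\oplus=\bigoplus_{n\le0}F^n$ into the one-periodic complex $B\otimes_RF_\oplus$ over $B=R[\varepsilon]/(\varepsilon^2)$, with differential $\Delta(v,w)=(\partial v,\,v+\partial w)$. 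Here $\operatorname{Hom}_B(-,U^{(J)})$ is exact, where $U$ is $R$ with $\varepsilon$ acting by $0$, and $\mathrm H^0(R^+\otimes_B-)\neq0$.
\item Over $B$ alone this is not enough: $U$ is not projective and $R^+$ is not injective as $B$-modules. So the paper passes to $T=\begin{psmallmatrix}R&0\\R&B\end{psmallmatrix}$. There $e_BT\cong U\oplus B$, so exactness of Hom into $U^{(J)}$ becomes total acyclicity of $Te_B\otimes_B(F_B)^\bullet$.
\item The module $(Te_R)^+$ is an injective right $T$-module whose $B$-corner is $R^+$. This gives $\operatorname{Tor}_1^T((Te_R)^+,G)\neq0$ for $G=Z^2$ of that complex.
\end{enumerate}
Two-sided coherence of $T$ comes from $T$ being a free rank-$4$ central $R$-algebra, not from a square-zero extension lemma. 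Your final deduction of $\PGF\subsetneq\GP$ from $G\in\GP\setminus\GF$ is fine.
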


\begin{proof}
      Choose a cardinal $\kappa$ satisfying $\textsf{LUH}(\kappa)$ and apply \Cref{thm:bilateral-coherent-counterexample}.
\end{proof}

\begin{theorem}[Main theorem]\label{thm:introduction-main}
      Assume $\textsf{ZFC}$ and that there is a strongly compact cardinal $\kappa$. Then there exist a left and right coherent ring $R$ and a strongly Gorenstein projective left $R$-module $G$ which is not Gorenstein flat. In particular,
      \[
            G\in\GP(R)\setminus\GF(R)
            \qquad\text{and}\qquad
            \PGF(R)\subsetneq\GP(R).
      \]
\end{theorem}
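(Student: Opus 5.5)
The main theorem should follow from \Cref{prop:introduction-luh} together with the first implication in \eqref{eq:introduction-luh-chain}. Let $\kappa$ be strongly compact. By \Cref{prop:luh-implication-diagram}, $\textsf{LUH}(\kappa)$ holds, so $\textsf{LUH}$ holds in the ambient universe. \Cref{prop:introduction-luh} (equivalently, \Cref{thm:bilateral-coherent-counterexample} applied to this $\kappa$) then provides a left and right coherent ring $R$ and a strongly Gorenstein projective left $R$-module $G$ with $G\notin\GF(R)$. Strongly Gorenstein projective modules are Gorenstein projective, so $G\in\GP(R)\setminus\GF(R)$. The basic inclusion \eqref{eq:introduction-basic-inclusion} gives $\PGF(R)\subseteq\GP(R)\cap\GF(R)$, so $G\notin\PGF(R)$. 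Combined with $\PGF(R)\subseteq\GP(R)$, this yields $\PGF(R)\subsetneq\GP(R)$. Alternatively, \eqref{eq:introduction-iacob-equivalence} shows that $\GP(R)\not\subseteq\GF(R)$ forces $\GP(R)\neq\PGF(R)$.

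The only substantive step to check is the implication ``strongly compact $\Rightarrow\textsf{LUH}(\kappa)$'', which I take from \Cref{prop:luh-implication-diagram}. If I had to verify it directly, I would use the standard characterization that $\kappa$ is strongly compact if and only if every $\kappa$-complete filter extends to a $\kappa$-complete ultrafilter, or equivalently, that for every set $X$ with $|X|\ge\kappa$ there is a fine $\kappa$-complete ultrafilter on $\mathcal P_\kappa(X)$.

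\textsf{(LUH1)}: extend the filter of co-small subsets of $\kappa$, namely those whose complement has size $<\kappa$, to a $\kappa$-complete ultrafilter. This filter is $\kappa$-complete because $\kappa$ is regular, so the extension exists. The resulting ultrafilter contains no set of size $<\kappa$, so in particular it is nonprincipal.

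\textsf{(LUH2)}: take a fine $\kappa$-complete ultrafilter on $\mathcal P_\kappa(2^\kappa)$. It is $\aleph_1$-complete because $\kappa$ is uncountable.

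I expect no real obstacle at the level of this theorem, since all the ring-theoretic difficulty (the Ext-vanishing of \Cref{prop:all-ext} via Roos theory, and coherence) is packaged in \Cref{thm:bilateral-coherent-counterexample}. The one point to be careful about is that $\textsf{LUH}(\kappa)$ is used for the same $\kappa$ that is strongly compact, with $2^\kappa\ge\kappa$ so that fineness makes sense. Both points are immediate.
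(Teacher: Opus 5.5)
Your proposal is correct and matches the paper's proof, which likewise combines the implication strongly compact $\Rightarrow\textsf{LUH}(\kappa)$ from \Cref{prop:luh-implication-diagram} with \Cref{prop:introduction-luh}. Your optional direct check of \textsf{(LUH1)}, which extends the co-small filter using the regularity of $\kappa$, differs only cosmetically from the paper's argument, which pushes a fine $\kappa$-complete ultrafilter forward along $s\mapsto\min(\kappa\setminus s)$ in \Cref{lem:fine-pushforward}; both arguments are valid.
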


\begin{proof}
      By \eqref{eq:introduction-luh-chain}, every strongly compact cardinal satisfies $\textsf{LUH}(\kappa)$.  Apply \Cref{prop:introduction-luh}.
\end{proof}

A recent preprint of Dai and Zhang proves that every $\aleph_1$-generated strongly Gorenstein projective module is Gorenstein flat \cite[Proposition~2.1]{DaiZhang2026}. Thus no $\aleph_1$-generated strongly Gorenstein projective module can be a counterexample to Question~\textsf{(Q1)}. In contrast, assuming $\textsf{LUH}$, the main theorem below constructs a strongly Gorenstein projective module $G$ which is not Gorenstein flat. Consequently, $G$ is not $\aleph_1$-generated. In this sense, their result rules out small counterexamples, whereas ours constructs a necessarily large one. The two proofs are independent.

The article is organized as follows. \Cref{sec:preliminaries} collects the required set-theoretic and homological background. \Cref{sec:roos-theory} proves the Ext-vanishing statement used in \Cref{prop:introduction-luh}.  \Cref{sec:main-proof} constructs the relevant modules and rings, including a left and right coherent ring, and proves \Cref{prop:introduction-luh}. The main argument ends there. For completeness, \Cref{sec:positive-interface,sec:consistency-questions} form two discussion appendices: they record a speculative conditional route toward the opposite equality and the related consistency questions, without asserting that the auxiliary hypothesis used there is consistent.

\section{Preliminaries}
\label{sec:preliminaries}

The ambient foundational theory is $\textsf{ZFC}$. We use the standard set-theoretic notation of \cite[Chapters~1--3]{Jech2003}.

\subsection{Set-theoretic background}
\label{subsec:set-theoretic-background}

\subsubsection{Set-theoretic notation}
\label{subsec:ordinals-cardinals}

We use von Neumann ordinals and identify each cardinal with its initial ordinal. The cardinality of a set $X$ is denoted by $|X|$. We use $\alpha$, $\beta$, and $\gamma$ for ordinals and $\delta$, $\kappa$, and $\lambda$ for cardinals. Write $\operatorname{cf}\alpha$ for the cofinality of an ordinal $\alpha$; an infinite cardinal $\kappa$ is \emph{regular} if $\operatorname{cf}\kappa=\kappa$, and \emph{singular} otherwise. Write $\omega$ for the first infinite ordinal, denoted by $\aleph_0$ when viewed as a cardinal, and identify $\mathbb N=\omega=\{0,1,2,\ldots\}$. For a function $f$, we write $f(x)$ for the image of an element, $f[A]$ for the direct image of a subset, and $f^{-1}[B]$ for the inverse image of a subset. The notation $f^{-1}$ without square brackets is reserved for an inverse function. For a morphism between indexed objects, the lower index records the source and the upper index records the target:
\[
      f_i^j\colon X_i\longrightarrow X_j, \qquad x\longmapsto f_i^j(x).
\]

\begin{definition}
      For an infinite cardinal $\kappa$ and a set $X$, put $\mathcal P_\kappa(X)\coloneqq\{S\subseteq X\mid |S|<\kappa\}$, namely the set of all subsets of $X$ of cardinality less than $\kappa$. The \emph{full power set} is denoted by
      \[
            \mathcal P(X)\coloneqq\{A\mid A\subseteq X\}.
      \]
      For any cardinal $\kappa$, put $2^\kappa\coloneqq|\mathcal P(\kappa)|$.
\end{definition}

\begin{fact}
      For every cardinal $\kappa$, Cantor's theorem gives $\kappa<2^\kappa$ \cite[Chapter~3]{Jech2003}.
\end{fact}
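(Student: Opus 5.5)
The plan is to apply Cantor's diagonal argument directly. Nothing from the rest of the paper is needed beyond the definition $2^\kappa\coloneqq|\mathcal P(\kappa)|$. The claim $\kappa<2^\kappa$ splits into two parts, $\kappa\le 2^\kappa$ and $\kappa\neq 2^\kappa$. I would prove each separately, working with the set $\kappa$ itself, viewed as its initial ordinal, and its power set $\mathcal P(\kappa)$.

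For $\kappa\le 2^\kappa$, I would use the singleton map $\kappa\to\mathcal P(\kappa)$, $\alpha\mapsto\{\alpha\}$. It is visibly injective, so $|\kappa|\le|\mathcal P(\kappa)|$. Since $\kappa$ is identified with its initial ordinal, this gives $\kappa\le 2^\kappa$.

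For strictness, I would show that no map $f\colon\kappa\to\mathcal P(\kappa)$ is surjective. Given such an $f$, consider the diagonal set
\[
      D\coloneqq\{\alpha\in\kappa\mid \alpha\notin f(\alpha)\}\subseteq\kappa .
\]
If $D=f(\beta)$ for some $\beta\in\kappa$, then $\beta\in D$ holds if and only if $\beta\notin f(\beta)=D$, which is a contradiction. So $D\notin f[\kappa]$, and in particular there is no bijection between $\kappa$ and $\mathcal P(\kappa)$. Hence $\kappa\ne 2^\kappa$. Combined with the first part, and with the fact that cardinals are totally ordered under the well-ordering available in $\textsf{ZFC}$, this gives $\kappa<2^\kappa$.

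There is no real obstacle here. The only point needing care is bookkeeping: $2^\kappa$ is defined as the cardinal $|\mathcal P(\kappa)|$, so the comparison must be made through injections and the absence of bijections rather than as a naive comparison of sets. The argument works for every cardinal $\kappa$, including finite ones and $\kappa=0$, where $D=\varnothing$ and the empty map is not surjective onto $\{\varnothing\}$.
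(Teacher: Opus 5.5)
Your proof is correct: it is the standard diagonal argument for Cantor's theorem, which the paper does not reprove but cites from \cite[Chapter~3]{Jech2003}. One minor point is that the appeal to totality of the cardinal order is unnecessary, since $\kappa\le 2^\kappa$ together with $\kappa\ne 2^\kappa$ is already the definition of $\kappa<2^\kappa$.
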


\subsubsection{Filters and ultrafilters}
\label{subsec:filters}

\begin{definition}\label{def:ultrafilter}
      Let $Y$ be a set. A \emph{filter} on $Y$ is a nonempty family $\mathcal F\subseteq\mathcal P(Y)$ satisfying
      \begin{enumerate}[label=\textsf{(F\arabic*)}]
            \item $\emptyset\notin\mathcal F$;
            \item if $A\in\mathcal F$ and $A\subseteq B\subseteq Y$, then $B\in\mathcal F$;
            \item if $A,B\in\mathcal F$, then $A\cap B\in\mathcal F$.
      \end{enumerate}
      An \emph{ultrafilter} on $Y$ is a filter that is maximal under inclusion. Each $y\in Y$ gives a \emph{principal} ultrafilter $\mathcal U_y\coloneqq\{A\subseteq Y\mid y\in A\}$. An ultrafilter on $Y$ is \emph{nonprincipal} if it is not equal to $\mathcal U_y$ for any $y\in Y$.
\end{definition}

\begin{fact}\label{fact:ultrafilter-characterization}
      A filter $\mathcal F$ on $Y$ is an ultrafilter if and only if, for every subset $A\subseteq Y$, exactly one of $A\in\mathcal F$ and $(Y\setminus A)\in\mathcal F$ holds \cite[Definition~7.3 and Lemma~7.4]{Jech2003}.
\end{fact}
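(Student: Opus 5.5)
This fact is a standard characterization, so the proof is the usual two-direction argument using only the filter axioms (F1)--(F3) and maximality. First, note that at most one of $A$ and $Y\setminus A$ can lie in any filter $\mathcal F$: if both did, then (F3) would give $\emptyset=A\cap(Y\setminus A)\in\mathcal F$, contradicting (F1). So the content is that an ultrafilter contains at least one of them, and conversely.

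For the forward direction, let $\mathcal F$ be an ultrafilter and $A\subseteq Y$ with $A\notin\mathcal F$. I will show $Y\setminus A\in\mathcal F$. Suppose first that $A\cap F\neq\emptyset$ for every $F\in\mathcal F$. Then I form
\[
\mathcal F'\coloneqq\{B\subseteq Y\mid B\supseteq A\cap F\text{ for some }F\in\mathcal F\}.
\]
Checking the axioms is routine: (F1) holds by the assumption, (F2) holds by construction, and (F3) follows from $(A\cap F_1)\cap(A\cap F_2)=A\cap(F_1\cap F_2)$. Moreover $\mathcal F\subseteq\mathcal F'$ and $A\in\mathcal F'$ (since $Y\in\mathcal F$, as $\mathcal F$ is nonempty and upward closed). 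This strictly enlarges $\mathcal F$, contradicting maximality. Hence some $F\in\mathcal F$ satisfies $A\cap F=\emptyset$. Then $F\subseteq Y\setminus A$, and (F2) gives $Y\setminus A\in\mathcal F$.

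For the converse, let $\mathcal F$ be a filter with the dichotomy property and let $\mathcal G\supseteq\mathcal F$ be a filter. For any $A\in\mathcal G$, if $A\notin\mathcal F$ then $Y\setminus A\in\mathcal F\subseteq\mathcal G$. Then both $A$ and $Y\setminus A$ lie in $\mathcal G$, which is impossible by the first observation. So $\mathcal G=\mathcal F$, and $\mathcal F$ is maximal.

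No step is a genuine obstacle. The only point needing care is verifying that the enlarged family $\mathcal F'$ is a filter, in particular (F1); that is exactly where the case split on whether $A$ meets every member of $\mathcal F$ is used. Alternatively, one may simply cite \cite[Lemma~7.4]{Jech2003} as the paper does.
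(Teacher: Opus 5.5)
Your proof is correct. It is the standard argument that the paper's citation points to: the paper gives no proof of its own and defers to Jech's Lemma~7.4, which proves the same equivalence (Jech takes the dichotomy as the definition and maximality as the characterization) by enlarging a maximal filter with the sets $A\cap F$, exactly as you do, and by noting that $A$ and $Y\setminus A$ cannot both lie in a filter.
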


\begin{definition}
      (\cite[Section~7, pp.~77--78]{Jech2003}) For an infinite cardinal $\kappa$, a filter $\mathcal F$ on $Y$ is \emph{$\kappa$-complete} if, for every index set $J$ with $|J|<\kappa$ and every family $\{A_i\mid i\in J\}\subseteq\mathcal F$, one has $\bigcap_{i\in J}A_i\in\mathcal F$.

      In particular,
      \begin{itemize}
            \item Every filter is  $\aleph_0$-complete by the nonemptiness condition and \Cref{def:ultrafilter}~\textsf{(F2)}, \textsf{(F3)}.
            \item A filter is \emph{$\aleph_1$-complete} if it is closed under countable intersections, where $\aleph_1$ is the first uncountable cardinal.
      \end{itemize}
\end{definition}

\begin{lemma}\label{lem:small-sets-ultrafilter-null}
      Let $\kappa$ be an infinite cardinal and let $\mathcal U$ be a nonprincipal $\kappa$-complete ultrafilter on a set $X$. Then $A\notin\mathcal U$ for every $A\in\mathcal P_\kappa(X)$. Equivalently, if $\mathcal I\coloneqq\mathcal P(X)\setminus\mathcal U$, then $\mathcal P_\kappa(X)\subseteq\mathcal I$.
\end{lemma}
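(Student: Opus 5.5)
The plan is to argue by contradiction. Suppose $A\in\mathcal U$ with $|A|<\kappa$. First observe that $A\neq\emptyset$ by \textsf{(F1)}. Then write $A$ as the union of its singletons, $A=\bigcup_{a\in A}\{a\}$, so that
\[
      X\setminus A=\bigcap_{a\in A}\bigl(X\setminus\{a\}\bigr),
\]
an intersection indexed by a set of cardinality $|A|<\kappa$.

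The key step is to show that each cofinite-by-one set $X\setminus\{a\}$ lies in $\mathcal U$. By \Cref{fact:ultrafilter-characterization}, exactly one of $\{a\}$ and $X\setminus\{a\}$ belongs to $\mathcal U$. If $\{a\}\in\mathcal U$, then by \textsf{(F2)} every superset of $\{a\}$ is in $\mathcal U$, so $\mathcal U_a\subseteq\mathcal U$. Since $\mathcal U_a$ is itself an ultrafilter, maximality forces $\mathcal U=\mathcal U_a$, contradicting nonprincipality. Hence $X\setminus\{a\}\in\mathcal U$ for every $a\in X$.

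Applying $\kappa$-completeness to the family $\{X\setminus\{a\}\mid a\in A\}$, indexed by $J=A$ with $|J|<\kappa$, gives $X\setminus A\in\mathcal U$. Combined with $A\in\mathcal U$, \textsf{(F3)} yields $\emptyset=A\cap(X\setminus A)\in\mathcal U$, contradicting \textsf{(F1)}. Alternatively, one can cite the exclusivity clause of \Cref{fact:ultrafilter-characterization} directly. The equivalent reformulation $\mathcal P_\kappa(X)\subseteq\mathcal I$ is then just a restatement, since $\mathcal I$ is the complement of $\mathcal U$ in $\mathcal P(X)$.

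The only mildly delicate point is the verification that $\{a\}\in\mathcal U$ forces $\mathcal U=\mathcal U_a$. This uses both upward closure and maximality, and it is precisely where the nonprincipality hypothesis enters. Everything else is routine.
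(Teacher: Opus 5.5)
Your proof is correct and follows essentially the same route as the paper's: show $X\setminus\{a\}\in\mathcal U$ for every $a$ by nonprincipality, apply $\kappa$-completeness to get $X\setminus A\in\mathcal U$, and conclude $A\notin\mathcal U$. You argue by contradiction where the paper argues directly. You also spell out two points the paper leaves implicit: why $\{a\}\in\mathcal U$ forces $\mathcal U=\mathcal U_a$, and why $A\neq\emptyset$ (so the intersection is over a nonempty index set).
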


\begin{proof}
      Since $\mathcal U$ is nonprincipal, no singleton belongs to $\mathcal U$. Thus, for every $x\in X$, \Cref{fact:ultrafilter-characterization} shows that $X\setminus\{x\}\in\mathcal U$. If $A\in\mathcal P_\kappa(X)$, then $|A|<\kappa$. Thus $\kappa$-completeness yields
      \[
            X\setminus A =\bigcap_{x\in A}\bigl(X\setminus\{x\}\bigr) \in\mathcal U.
      \]
      Hence $A\notin\mathcal U$ by \Cref{fact:ultrafilter-characterization}, and consequently $A\in\mathcal I$.
\end{proof}

\begin{definition}
      (\cite[p.~1864]{BagariaMagidorGroup2014}) For an infinite cardinal $\kappa$ and a set $Y$, an ultrafilter $\mathcal U$ on $\mathcal P_\kappa(Y)$ is \emph{fine} if, for every $y\in Y$, the cone $\mathcal C_y\coloneqq\{Y_0\in\mathcal P_\kappa(Y)\mid y\in Y_0\}$ belongs to $\mathcal U$.
\end{definition}

\subsubsection{Measurable and compact cardinals}

We recall measurable and strongly compact cardinals in order to compare the local condition $\textsf{LUH}$ from \Cref{def:luh} with familiar large-cardinal hypotheses. Then \Cref{prop:luh-implication-diagram} gives the implication chain
\[
      \kappa\text{ strongly compact} \implies \textsf{LUH}(\kappa) \implies \kappa\text{ measurable}.
\]
\begin{definition}\label{def:measurable}
      (\cite[Definition~10.3]{Jech2003}) An uncountable cardinal $\kappa$ is \emph{measurable} if there is a nonprincipal $\kappa$-complete ultrafilter on the set $\kappa$.
\end{definition}

\begin{definition}\label{def:strongly-compact}
      (\cite[Proposition~4.1 and Corollary~22.18]{Kanamori2003}) An uncountable cardinal $\kappa$ is \emph{strongly compact} if it satisfies the following equivalent conditions:
      \begin{enumerate}[label=\textsf{(SC\arabic*)}]
            \item every $\kappa$-complete filter on every set extends to a $\kappa$-complete ultrafilter;
            \item for every cardinal $\lambda\geq\kappa$, there is a fine $\kappa$-complete ultrafilter on $\mathcal P_\kappa(\lambda)$.
      \end{enumerate}
\end{definition}

\begin{remark}\label{rem:strongly-compact-regular}
      The regularity hypothesis sometimes included in \Cref{def:strongly-compact} is redundant: every strongly compact cardinal is measurable \cite[Proposition~4.1]{Kanamori2003}, and every measurable cardinal is regular \cite[Lemma~10.4]{Jech2003}.
\end{remark}

Bagaria--Magidor \cite{BagariaMagidor2014} weakens the strong compactness condition as follows.

\begin{definition}\label{def:aleph-one-strong-compactness}
      An uncountable cardinal $\kappa$ is \emph{$\aleph_1$-strongly compact} if every $\kappa$-complete filter on every set extends to an $\aleph_1$-complete ultrafilter.

      Equivalently, $\kappa$ is \emph{$\aleph_1$-strongly compact} if for every cardinal $\lambda\geq\kappa$, there is a fine $\aleph_1$-complete ultrafilter on $\mathcal P_\kappa(\lambda)$; see \cite[Theorem~4.7]{BagariaMagidorGroup2014} and \cite[Definition~1.6 and Theorem~1.7]{Usuba2021}.
\end{definition}

\subsection{Homological algebra}
\label{subsec:homological-algebra}

Let $R^{\mathrm{op}}$ denote the opposite ring. We write ${}_R\mathrm{Mod}$ for the category of left $R$-modules and $\mathrm{Mod}_R={}_{R^{\mathrm{op}}}\mathrm{Mod}$ for the category of right $R$-modules.

For an abelian group $M$, write $M^+\coloneqq\operatorname{Hom}_{\mathbb Z}(M,\mathbb Q/\mathbb Z)$ for its character dual. If ${}_RM$ is a left $R$-module, then $M^+$ is a right $R$-module with $(f\cdot r)(m)\coloneqq f(r\cdot m)$. If $N_R$ is a right $R$-module, then $N^+$ is a left $R$-module with $(r\cdot f)(n)\coloneqq f(n\cdot r)$.

A \emph{complex} means a cochain complex.  For a complex $X^\bullet$ in ${}_R\mathrm{Mod}$, we use the notation
\[
      X^\bullet\colon \xymatrix@C=34pt{ \cdots\ar[r]&X^{n-1}\ar[r]^-{d_X^{n-1}}&X^n \ar[r]^-{d_X^n}&X^{n+1}\ar[r]&\cdots }
\]
where $d_X^n\colon X^n\longrightarrow X^{n+1}$, $x\longmapsto d_X^n(x)$, and $Z^n(X^\bullet)\coloneqq\operatorname{Ker}d_X^n$ is the $n$-th cocycle. If $M\in{}_R\mathrm{Mod}$ and $N\in\mathrm{Mod}_R$, the complexes $\operatorname{Hom}_R(X^\bullet,M)$ and $N\otimes_R X^\bullet$ use the standard cochain conventions.

For $M,M^{\prime}\in{}_R\mathrm{Mod}$ and $N,N^{\prime}\in\mathrm{Mod}_R$, we write $\operatorname{Ext}^n_R({}_RM,{}_R(M^{\prime}))$, $\operatorname{Ext}^n_{R^{\mathrm{op}}}(N_R,(N^{\prime})_R)$, and $\operatorname{Tor}^R_n(N_R,{}_RM)$ with the standard conventions.

\subsubsection{Coherent rings and finite change of rings}
\label{subsec:coherent-change-of-rings}

\begin{definition}\label{def:coherent-ring}
      A ring $R$ is \emph{left coherent} if every finitely generated left ideal of $R$ is finitely presented as a left $R$-module. Right coherence is defined over $R^{\mathrm{op}}$.
\end{definition}

\begin{proposition}
      \label{prop:finite-central-coherence}
      Let $\rho\colon S\longrightarrow A$, $s\longmapsto\rho(s)$, be a ring homomorphism.
      \begin{enumerate}[label=\textup{(\arabic*)}]
            \item If $S$ is left coherent and ${}_S A$ is finitely presented, then $A$ is left coherent.
            \item If $S$ is right coherent and $A_S$ is finitely presented, then $A$ is right coherent.
            \item In particular, let $S$ be a commutative coherent ring and let $A$ be a central $S$-algebra that is finitely presented as a left, equivalently right, $S$-module.  Then $A$ is left and right coherent.
      \end{enumerate}
\end{proposition}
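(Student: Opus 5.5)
To prove (1), the plan is to use the standard characterization of coherence: a ring $A$ is left coherent if and only if every finitely generated submodule of a finitely presented left $A$-module is finitely presented. It suffices to check this for finitely generated left ideals of $A$. So let $I\subseteq A$ be a finitely generated left ideal, generated by $a_1,\dots,a_n$. We must show that $I$ is finitely presented over $A$.

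First we restrict scalars along $\rho$. Since ${}_SA$ is finitely generated, say by $b_1,\dots,b_m$, the ideal $I$ is finitely generated as a left $S$-module by the elements $b_ja_i$. It is an $S$-submodule of the finitely presented $S$-module ${}_SA$. Because $S$ is left coherent, finitely presented left $S$-modules form an abelian subcategory closed under finitely generated submodules. Hence ${}_SI$ is finitely presented, and so is ${}_S(A/I)$.

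Next we pass back up to $A$. Consider the surjection $\pi\colon A^n\to I$, $(x_i)\mapsto\sum x_ia_i$, with kernel $K$. We need $K$ finitely generated over $A$; it is enough to show it is finitely generated over $S$. Now ${}_SA^n$ is finitely presented and ${}_SI$ is finitely presented, so the kernel of an $S$-linear surjection between them is finitely generated over $S$ (Schanuel-type lemma, \cite{Jech2003}-independent standard fact). Therefore $K$ is finitely generated over $S$, hence over $A$, and $I$ is finitely presented over $A$.

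The key point to watch is that the $S$-structure on $A^n$ and on $I$ is induced by left multiplication through $\rho$, so that $\pi$ is $S$-linear. This holds because $\pi$ is $A$-linear and $S$ acts through $A$. No centrality of $\rho$ is needed here.

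Part (2) is (1) applied to $\rho^{\mathrm{op}}\colon S^{\mathrm{op}}\to A^{\mathrm{op}}$, since right coherence of $A$ is left coherence of $A^{\mathrm{op}}$ and $A_S$ is ${}_{S^{\mathrm{op}}}A^{\mathrm{op}}$.

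For (3), $S$ is commutative and coherent, so it is both left and right coherent. Centrality of $\rho(S)$ means the left and right $S$-module structures on $A$ coincide ($\rho(s)a=a\rho(s)$). So finite presentation of ${}_SA$ is the same as finite presentation of $A_S$, and (1) and (2) apply. The only mildly delicate step throughout is the closure of finitely presented $S$-modules under finitely generated submodules and kernels. This is standard for coherent rings, and I would cite it rather than reprove it.
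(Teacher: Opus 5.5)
Your proposal is correct, and parts (2) and (3) match the paper exactly: (2) passes to the opposite rings, and in (3) centrality makes ${}_SA$ and $A_S$ the same module. For (1) the paper simply cites Harris's Corollary~1.2, while you unpack it with the standard argument. You show that ${}_SI$ is a finitely generated submodule of the finitely presented ${}_SA$, hence finitely presented by left coherence of $S$, so the kernel of $A^n\to I$ is finitely generated over $S$ and a fortiori over $A$; this is sound (the Jech citation attached to the kernel lemma is out of place, but the lemma itself is standard).
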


\begin{proof}
      Part~\textup{(1)} is \cite[Corollary~1.2]{Harris1966}, and part~\textup{(2)} follows by applying part~\textup{(1)} to the opposite rings. For part~\textup{(3)}, the ring $S$ is coherent on both sides. Since the image of $S$ is central in $A$, the underlying left and right $S$-module structures on $A$ agree. Hence ${}_SA$ is finitely presented if and only if $A_S$ is finitely presented. Parts~\textup{(1)} and~\textup{(2)} now show that $A$ is coherent on both sides.
\end{proof}

\subsubsection{Gorenstein homological algebra}
\label{subsec:gorenstein-classes}

\begin{definition}\label{def:acyclicity-conditions}
      Let $X^\bullet$ be a complex in ${}_R\mathrm{Mod}$.
      \begin{enumerate}[label=\textup{(\arabic*)}]
            \item The complex $X^\bullet$ is \emph{acyclic} if $\operatorname{Im}d_X^{n-1}=\operatorname{Ker}d_X^n$ for every $n\in\mathbb Z$.
            \item A complex $P^\bullet$ of projective objects of ${}_R\mathrm{Mod}$ is \emph{totally acyclic} if it is acyclic and $\operatorname{Hom}_R(P^\bullet,Q)$ is acyclic for every projective $Q\in{}_R\mathrm{Mod}$.
            \item A complex $X^\bullet$ of flat objects of ${}_R\mathrm{Mod}$ is \emph{$F$-totally acyclic} if it is acyclic and $E\otimes_R X^\bullet$ is acyclic for every injective $E\in\mathrm{Mod}_R$.
      \end{enumerate}
\end{definition}

In the term \emph{$F$-totally acyclic}, $F$ refers to flat; see \cite[Section~2]{EstradaFuIacob2017}.

\begin{definition}\label{def:gp-gf-pgf}
      Let $R$ be a ring.
      \begin{enumerate}[label=\textup{(\arabic*)}]
            \item A module $M\in{}_R\mathrm{Mod}$ is \emph{Gorenstein projective} if it is a cocycle of a totally acyclic complex of projective objects of ${}_R\mathrm{Mod}$.  The class of such modules is denoted by $\GP(R)$.
            \item A module $M\in{}_R\mathrm{Mod}$ is \emph{Gorenstein flat} if it is a cocycle of an $F$-totally acyclic complex of flat objects of ${}_R\mathrm{Mod}$.  The class of such modules is denoted by $\GF(R)$.
            \item A module $M\in{}_R\mathrm{Mod}$ is \emph{projectively coresolved Gorenstein flat} if it is a cocycle of an $F$-totally acyclic complex of projective objects of ${}_R\mathrm{Mod}$.  The class of such modules is denoted by $\PGF(R)$.
      \end{enumerate}
\end{definition}

\begin{proposition}\label{prop:gorenstein-comparison}
      Let $R$ be a ring.
      \begin{enumerate}[label=\textup{(\arabic*)}]
            \item By definition, $\PGF(R)\subseteq\GF(R)$. Šaroch--Šťovíček  \cite[Theorem~4.4]{SarochStovicek2020} prove $\PGF(R)\subseteq\GP(R)$. Hence $\PGF(R)\subseteq\GP(R)\cap\GF(R)$.
            \item Iacob \cite[Theorem~3]{Iacob2020} gives the equivalence $\GP(R)=\PGF(R) \iff \GP(R)\subseteq\GF(R)$.
      \end{enumerate}
\end{proposition}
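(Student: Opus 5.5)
The proof is essentially bookkeeping: part~(1) reduces to unwinding \Cref{def:gp-gf-pgf}, and the remaining assertions are imported from the cited literature. I would not reprove the theorems of Šaroch--Šťovíček or Iacob.

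For the inclusion $\PGF(R)\subseteq\GF(R)$ in part~(1), I would take $M\in\PGF(R)$ and fix an $F$-totally acyclic complex $P^\bullet$ of projective left $R$-modules with $M=Z^n(P^\bullet)$ for some $n$. Every projective module is a direct summand of a free module, and free modules are flat because tensor products commute with direct sums. Since direct summands of flat modules are flat, each $P^i$ is flat. The two conditions in \Cref{def:acyclicity-conditions}\textup{(3)} are acyclicity of $P^\bullet$ and acyclicity of $E\otimes_RP^\bullet$ for every injective $E\in\mathrm{Mod}_R$. Neither condition refers to whether the terms are projective or merely flat, so $P^\bullet$ is also an $F$-totally acyclic complex of flat modules. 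Hence $M\in\GF(R)$ by \Cref{def:gp-gf-pgf}\textup{(2)}.

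Next I would invoke \cite[Theorem~4.4]{SarochStovicek2020} for $\PGF(R)\subseteq\GP(R)$. The only point to check is that their side convention matches ours after passing to the opposite ring; the introduction already notes this switch. Intersecting the two inclusions gives $\PGF(R)\subseteq\GP(R)\cap\GF(R)$.

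Part~(2) is exactly \cite[Theorem~3]{Iacob2020}, again read with the same side convention. As a sanity check on the direction that uses part~(1): if $\GP(R)=\PGF(R)$, then $\GP(R)\subseteq\GF(R)$ follows directly from part~(1). The converse is the substantive content of Iacob's theorem, which I would quote rather than reprove.

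There is no genuine obstacle. The only care needed is with left/right conventions when transcribing the cited results.
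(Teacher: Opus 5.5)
Your proposal is correct and takes the same approach as the paper: it likewise treats $\PGF(R)\subseteq\GF(R)$ as immediate from the definitions, since projective modules are flat, and imports $\PGF(R)\subseteq\GP(R)$ and the equivalence in part~(2) from Šaroch--Šťovíček and Iacob. Your extra observation that the direction $\GP(R)=\PGF(R)\Rightarrow\GP(R)\subseteq\GF(R)$ already follows from part~(1) is correct.
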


The notion of a strongly Gorenstein projective module was introduced by Bennis--Mahdou \cite{BennisMahdou2007} where the rings are assumed to be commutative; we state \Cref{def:strongly-gorenstein-projective,fact:strongly-gorenstein-projective-periodic,fact:gp-summand-strongly-gorenstein-projective} for left modules over an arbitrary ring.

\begin{definition}
      \label{def:strongly-gorenstein-projective}
      A module $G\in{}_R\mathrm{Mod}$ is \emph{strongly Gorenstein projective} if there is an exact sequence
      \begin{equation}\label{eq:strongly-gp-defining}
            0\longrightarrow G\xlongrightarrow{\iota}P \xlongrightarrow{\pi}G\longrightarrow0
      \end{equation}
      with $P$ projective and which remains exact after applying $\operatorname{Hom}_R(-,Q^{\prime})$ for every projective $Q^{\prime}\in{}_R\mathrm{Mod}$.
\end{definition}

\begin{fact}
      \label{fact:strongly-gorenstein-projective-periodic}
      (\cite[Definition~1.1 and p.~2]{BennisMahdou2009}) A module $G\in{}_R\mathrm{Mod}$ is strongly Gorenstein projective if and only if it is a cocycle of a one-periodic totally acyclic complex of projective objects of ${}_R\mathrm{Mod}$.
\end{fact}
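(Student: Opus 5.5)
The statement to prove is \Cref{fact:strongly-gorenstein-projective-periodic}: $G$ is strongly Gorenstein projective if and only if it is a cocycle of a one-periodic totally acyclic complex of projective left $R$-modules. I will prove the two implications separately by splicing or unsplicing the short exact sequence \eqref{eq:strongly-gp-defining}.

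For the forward direction, start from \eqref{eq:strongly-gp-defining}. Splice infinitely many copies of it to form the complex
\[
P^\bullet\colon\ \cdots\longrightarrow P\xrightarrow{\iota\pi}P\xrightarrow{\iota\pi}P\longrightarrow\cdots,
\]
which is one-periodic with differential $d=\iota\pi$ in every degree.
\begin{itemize}
\item \emph{Exactness.} Since $\iota$ is injective, $\operatorname{Ker}(\iota\pi)=\operatorname{Ker}\pi$. Since $\pi$ is surjective, $\operatorname{Im}(\iota\pi)=\operatorname{Im}\iota$. Exactness of \eqref{eq:strongly-gp-defining} gives $\operatorname{Im}\iota=\operatorname{Ker}\pi$, so $P^\bullet$ is acyclic.
\item \emph{The cocycle is $G$.} Every cocycle equals $\operatorname{Ker}\pi=\iota(G)\cong G$.
\item \emph{Total acyclicity.} Fix a projective $Q'$. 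Applying $\operatorname{Hom}_R(-,Q')$ termwise gives the complex $\operatorname{Hom}_R(P,Q')$ with differential $(\iota\pi)^*=\pi^*\iota^*$. By hypothesis the sequence
\[
0\to\operatorname{Hom}_R(G,Q')\xrightarrow{\pi^*}\operatorname{Hom}_R(P,Q')\xrightarrow{\iota^*}\operatorname{Hom}_R(G,Q')\to0
\]
is exact. Hence $\pi^*$ is injective, $\iota^*$ is surjective, and $\operatorname{Im}\pi^*=\operatorname{Ker}\iota^*$. The same kernel/image argument as above then shows $\operatorname{Hom}_R(P^\bullet,Q')$ is acyclic.
\end{itemize}

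For the converse, let $P^\bullet$ be one-periodic and totally acyclic with $G\cong Z^n(P^\bullet)$. First fix what one-periodic means: all terms equal one projective $P$ and all differentials equal one map $d$. If the paper intends periodicity only up to isomorphism, first transport the structure along the given isomorphisms; this is routine. Then every cocycle is $Z=\operatorname{Ker}d$. Factor $d$ as the surjection $\pi\colon P\twoheadrightarrow\operatorname{Im}d$ followed by the inclusion $\iota\colon\operatorname{Im}d\hookrightarrow P$. Exactness gives $\operatorname{Im}d=\operatorname{Ker}d=Z$, so $0\to Z\xrightarrow{\iota}P\xrightarrow{\pi}Z\to0$ is exact, and $Z\cong G$. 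It remains to check that this sequence stays exact under $\operatorname{Hom}_R(-,Q')$ for projective $Q'$. Applying $\operatorname{Hom}_R(-,Q')$ to it is automatically left exact, so $\pi^*$ is injective and $\operatorname{Im}\pi^*=\operatorname{Ker}\iota^*$. The only remaining point, and the one place where total acyclicity is used, is surjectivity of $\iota^*$.

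To get surjectivity of $\iota^*$, take $f\colon Z\to Q'$. Then $f\pi\colon P\to Q'$ satisfies
\[
(f\pi)\circ d=f\pi\iota\pi=0,
\]
because $\pi\iota=d|_Z=0$. So $f\pi$ is a cycle in $\operatorname{Hom}_R(P^\bullet,Q')$. By acyclicity of that complex, $f\pi=g\circ d=g\iota\pi$ for some $g\colon P\to Q'$. Since $\pi$ is surjective, $f=g\iota=\iota^*(g)$. Hence $\iota^*$ is surjective and $G$ is strongly Gorenstein projective.
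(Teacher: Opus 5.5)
Your proof is correct: the forward splicing and the chase showing that $\iota^\ast$ is surjective both work as written. The paper does not prove this fact itself, only cites Bennis--Mahdou, but your forward direction (splicing with $d=\iota\pi$, then getting $\operatorname{Ker}d=\operatorname{Im}d$ and $\operatorname{Ker}d^\ast=\operatorname{Im}d^\ast$ from injectivity and surjectivity of the factors) is the argument the paper writes out in Claim~1 of the proof of \Cref{thm:pds-no-monotone-g-sequential}, and your converse is the standard unsplicing, with surjectivity of $\iota^\ast$ checked by a direct chase in $\operatorname{Hom}_R(P^\bullet,Q')$.
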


\begin{fact}
      \label{fact:gp-summand-strongly-gorenstein-projective}
      (\cite[Construction~1.5]{MoradifarSaroch2022}) Every Gorenstein projective module in ${}_R\mathrm{Mod}$ is a direct summand of a strongly Gorenstein projective module in ${}_R\mathrm{Mod}$.
\end{fact}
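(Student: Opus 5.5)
The plan is to use the standard ``direct sum of all cocycles'' construction, going back to Bennis--Mahdou. Let $M\in\GP(R)$ and choose, by \Cref{def:gp-gf-pgf}, a totally acyclic complex $P^\bullet$ of projective left $R$-modules with $M\cong Z^0(P^\bullet)$. For each $n\in\mathbb Z$, write $Z^n\coloneqq Z^n(P^\bullet)$. Acyclicity gives short exact sequences
\[
0\longrightarrow Z^n\xlongrightarrow{\iota_n}P^n\xlongrightarrow{\pi_n}Z^{n+1}\longrightarrow0,
\]
where $\iota_n$ is the inclusion and $\pi_n$ is $d_P^n$ with its codomain restricted to its image $Z^{n+1}$. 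Then put
\[
G\coloneqq\bigoplus_{n\in\mathbb Z}Z^n,\qquad P\coloneqq\bigoplus_{n\in\mathbb Z}P^n .
\]
Here $P$ is projective, being a coproduct of projectives, and $M=Z^0$ is a direct summand of $G$.

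Next I take $\iota\coloneqq\bigoplus_n\iota_n\colon G\to P$. For $\pi\colon P\to G$ I take the map sending the $n$-th summand $P^n$ onto the $(n+1)$-st summand $Z^{n+1}$ of $G$ via $\pi_n$. After reindexing the target by the bijection $n\mapsto n+1$ of $\mathbb Z$, the sequence $0\to G\xrightarrow{\iota}P\xrightarrow{\pi}G\to0$ is the direct sum of the exact sequences above. Since direct sums are exact in ${}_R\mathrm{Mod}$, this is a short exact sequence of the form \eqref{eq:strongly-gp-defining}.

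It remains to check that $\operatorname{Hom}_R(-,Q')$ preserves exactness for every projective $Q'$. Since $\operatorname{Hom}_R\bigl(\bigoplus_n X_n,Q'\bigr)\cong\prod_n\operatorname{Hom}_R(X_n,Q')$ naturally, the Hom-sequence of $0\to G\to P\to G\to0$ is, up to the same reindexing, the product of the Hom-sequences of the componentwise short exact sequences. Products of exact sequences of abelian groups are exact, so it suffices to treat each component. Left exactness of $\operatorname{Hom}$ is automatic, so the only point is surjectivity of $\operatorname{Hom}_R(P^n,Q')\to\operatorname{Hom}_R(Z^n,Q')$. This follows from total acyclicity: exactness of $\operatorname{Hom}_R(P^\bullet,Q')$ at degree $n$ shows that every map $Z^n\to Q'$, viewed as a map $P^{n-1}\to Q'$ via $P^{n-1}\twoheadrightarrow Z^n$, lies in the image of $\operatorname{Hom}_R(P^n,Q')$ precisely because it vanishes on the kernel $Z^{n-1}$ of $P^{n-1}\twoheadrightarrow Z^n$. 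Equivalently, $\operatorname{Ext}^1_R(Z^{n+1},Q')=0$.

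The only step requiring any care is this last one: translating total acyclicity into the componentwise surjectivity, and making sure the passage through infinite products is legitimate. Both are routine. Hence $G$ is strongly Gorenstein projective and contains $M$ as a direct summand.
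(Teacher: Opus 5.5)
Your proof is correct and is the standard direct-sum-of-all-cocycles construction, originally due to Bennis--Mahdou. The paper gives no proof of its own; it cites Construction~1.5 of Moradifar--Šaroch, which is this same argument. The one step you leave implicit is passing from $h\circ d_P^{n-1}=g\circ\pi_{n-1}$ to $h\circ\iota_n=g$, which follows at once by cancelling the epimorphism $\pi_{n-1}$, so nothing is missing.
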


\begin{lemma}
      \label{lem:free-middle-stabilization}
      Let $G\in{}_R\mathrm{Mod}$ be strongly Gorenstein projective. Then there exist a projective $X\in{}_R\mathrm{Mod}$ and a free $F\in{}_R\mathrm{Mod}$ fitting into a short exact sequence
      \[
            0\longrightarrow G\oplus X\xlongrightarrow{j}F \xlongrightarrow{q}G\oplus X\longrightarrow0
      \]
      in ${}_R\mathrm{Mod}$ that remains exact after applying $\operatorname{Hom}_R(-,Q^{\prime})$ for every projective $Q^{\prime}\in{}_R\mathrm{Mod}$.
\end{lemma}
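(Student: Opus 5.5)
We will use the Eilenberg swindle. Start from the defining sequence \eqref{eq:strongly-gp-defining}, $0\to G\xrightarrow{\iota}P\xrightarrow{\pi}G\to0$, with $P$ projective and $\operatorname{Hom}_R(-,Q')$-exact for every projective $Q'$. Since $P$ is projective, choose a projective $P'$ with $P\oplus P'$ free. In practice we take $P'$ to be a free module $L$ of sufficiently large rank; then $P\oplus L\cong L$ by the swindle $L\cong\bigoplus_{\mathbb N}(P\oplus P')$. We will set $X\coloneqq L$ (projective, indeed free) and $F\coloneqq P\oplus L\oplus L$. It remains to write a short exact sequence $0\to G\oplus L\to F\to G\oplus L\to0$ whose middle term is isomorphic to a free module.

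The construction is the direct sum of the given sequence with a split one. Form the direct sum of \eqref{eq:strongly-gp-defining} with the split exact sequence
\[
0\longrightarrow L\xrightarrow{\ \binom{1}{0}\ } L\oplus L\xrightarrow{\ (0\ 1)\ } L\longrightarrow 0 .
\]
This gives
\[
0\longrightarrow G\oplus L\xrightarrow{\ j\ } P\oplus L\oplus L\xrightarrow{\ q\ } G\oplus L\longrightarrow 0,
\]
where $j=\iota\oplus\binom{1}{0}$ and $q=\pi\oplus(0\ 1)$. The middle term is $F=(P\oplus L)\oplus L\cong L\oplus L$, which is free. Composing $j$ and $q$ with a chosen isomorphism to the free module changes neither exactness nor the Hom condition, so we may regard $F$ as genuinely free.

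Next we check the Hom-exactness. For a projective $Q'$, the functor $\operatorname{Hom}_R(-,Q')$ is additive, so it carries a direct sum of sequences to the direct sum of the resulting sequences. The image of \eqref{eq:strongly-gp-defining} is exact by hypothesis. The image of the split sequence is split exact, since additive functors preserve split exact sequences. A direct sum of exact sequences is exact, so the required condition holds.

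The only point needing care is the swindle $P\oplus L\cong L$. Let $P\oplus P'\cong R^{(I)}$ and take $L\coloneqq R^{(I\times\mathbb N)}\cong\bigoplus_{\mathbb N}(P\oplus P')$. Then
\[
P\oplus L\cong P\oplus\bigoplus_{\mathbb N}(P'\oplus P)\cong\bigoplus_{\mathbb N}(P\oplus P')\cong L,
\]
where the middle isomorphism comes from regrouping the summands. This is routine, and no real obstacle is expected.
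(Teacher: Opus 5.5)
Your proof is correct and follows essentially the same route as the paper: you take the direct sum of the defining sequence with the split sequence $0\to X\to X\oplus X\to X\to 0$, check $\operatorname{Hom}_R(-,Q')$-exactness summandwise, and use an Eilenberg swindle to make the middle term free. The only cosmetic difference is the choice of $X$: you take the free module $X=L\cong\bigoplus_{\mathbb N}(P\oplus P')$ and use $P\oplus L\cong L$, whereas the paper takes $X=Q_0\oplus F_0^{(\mathbb N)}$ with $F_0=P\oplus Q_0$ and shows $P\oplus X\oplus X\cong F_0^{(\mathbb N)}$.
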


\begin{proof}
      Start with a defining exact sequence \eqref{eq:strongly-gp-defining}. Since $P$ is projective, choose a projective $Q_0\in{}_R\mathrm{Mod}$ such that $F_0\coloneqq P\oplus Q_0$ is free. Put $X\coloneqq Q_0\oplus F_0^{(\mathbb N)}$.

      \noindent\emph{Claim.}  The module $X$ is projective, and $P\oplus X\oplus X$ is free.

      \begin{proof}[Proof of the claim]
            Indeed, $F_0^{(\mathbb N)}$ is free, so $X$ is projective. Since $F_0=P\oplus Q_0$, by Eilenberg--Mazur swindle one has
            \[
                  Q_0\oplus F_0^{(\mathbb N)} \cong P^{(\mathbb N)} \oplus Q_0^{(\mathbb N\amalg\{*\})} \cong P^{(\mathbb N)}\oplus Q_0^{(\mathbb N)} \cong F_0^{(\mathbb N)},
            \]
            as well as $F_0\oplus F_0^{(\mathbb N)}\cong F_0^{(\mathbb N)}$ and $F_0^{(\mathbb N)}\oplus F_0^{(\mathbb N)} \cong F_0^{(\mathbb N)}$. Consequently,
            \[
                  P\oplus X\oplus X \cong F_0\oplus Q_0\oplus F_0^{(\mathbb N)} \oplus F_0^{(\mathbb N)} \cong F_0^{(\mathbb N)}
            \]
            which proves the claim.
      \end{proof}

      Put $F\coloneqq P\oplus X\oplus X$.

      Take the direct sum of \eqref{eq:strongly-gp-defining} with the split exact sequence $0\longrightarrow X\xlongrightarrow{\binom 10}X\oplus X \xlongrightarrow{(0,1)}X\longrightarrow0$ in ${}_R\mathrm{Mod}$. Put $j\coloneqq\iota\oplus\binom 10$ and $q\coloneqq\pi\oplus(0,1)$. This gives the short exact sequence
      \[
            0\longrightarrow G\oplus X\xlongrightarrow{j}F \xlongrightarrow{q}G\oplus X\longrightarrow0.
      \]
      Its middle term is free by the above claim, and applying $\operatorname{Hom}_R(-,Q^{\prime})$ gives an exact sequence for every projective $Q^{\prime}\in{}_R\mathrm{Mod}$, since both summand sequences have this property.
\end{proof}

\subsubsection{Character modules}

Lam states Proposition~4.8 and Theorem~4.9 for right modules.  Their left-module versions follow by applying the right-module statements to $R^{\mathrm{op}}$.

\begin{fact}
      (\cite[Proposition~4.8]{Lam1999}) The contravariant character functor preserves and reflects exactness. More precisely, let either $L,M,N\in{}_R\mathrm{Mod}$ or $L,M,N\in\mathrm{Mod}_R$, and let $f\colon L\longrightarrow M$, $x\longmapsto f(x)$, and $g\colon M\longrightarrow N$, $y\longmapsto g(y)$, be $R$-homomorphisms.  The two conditions are equivalent:
      \begin{enumerate}[label=\textup{(\arabic*)}]
            \item the sequence $0\longrightarrow L\xlongrightarrow{f}M \xlongrightarrow{g}N\longrightarrow0$ is exact;
            \item the sequence $0\longrightarrow N^+\xlongrightarrow{g^+}M^+ \xlongrightarrow{f^+}L^+\longrightarrow0$ is exact.
      \end{enumerate}
\end{fact}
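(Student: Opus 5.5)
We prove the equivalence through the standard properties of the character functor $(-)^+=\operatorname{Hom}_{\mathbb Z}(-,\mathbb Q/\mathbb Z)$. It is contravariant and additive. It is exact because $\mathbb Q/\mathbb Z$ is an injective abelian group, being divisible. It is faithful on morphisms because $\mathbb Q/\mathbb Z$ is an injective cogenerator of abelian groups. The $R$-module structures play no role: a sequence of $R$-homomorphisms is exact exactly when the underlying sequence of abelian groups is exact, and $g^+,f^+$ are the $\mathbb Z$-duals of the underlying maps. So it suffices to prove the statement for abelian groups, and this covers left and right modules at once.

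\textbf{(1)$\Rightarrow$(2).} Apply $\operatorname{Hom}_{\mathbb Z}(-,\mathbb Q/\mathbb Z)$ to the short exact sequence. Left exactness of the contravariant Hom functor gives exactness of $0\to N^+\xrightarrow{g^+}M^+\xrightarrow{f^+}L^+$. Surjectivity of $f^+$ follows because $\operatorname{Ext}^1_{\mathbb Z}(N,\mathbb Q/\mathbb Z)=0$, by injectivity of $\mathbb Q/\mathbb Z$. Concretely, any homomorphism $L\to\mathbb Q/\mathbb Z$ extends along the monomorphism $f$, by Baer's criterion for divisible groups.

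\textbf{(2)$\Rightarrow$(1).} The key lemma is: for an abelian group $A$, $A=0$ if and only if $A^+=0$. If $0\ne a\in A$, the cyclic group $\langle a\rangle$ admits a nonzero map to $\mathbb Q/\mathbb Z$. Send $a$ to $1/n$ if $a$ has order $n$, and to $1/2$ if $a$ has infinite order. This map extends to $A$ by injectivity of $\mathbb Q/\mathbb Z$.

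Now consider the homology at each spot of the original sequence, for instance $H=\operatorname{Ker}g/\operatorname{Im}f$. Exactness of $(-)^+$ commutes with taking kernels, images and homology, so $H^+$ is the homology of the dualized sequence at $M^+$. That homology vanishes by hypothesis, so $H^+=0$, and the lemma gives $H=0$.

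The same argument handles the end terms:
\begin{itemize}
\item $(\operatorname{Ker}f)^+=\operatorname{Coker}(f^+)=0$, so $\operatorname{Ker}f=0$;
\item $(\operatorname{Coker}g)^+=\operatorname{Ker}(g^+)=0$, so $\operatorname{Coker}g=0$.
\end{itemize}
Hence the original sequence is exact.

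The only real step is the cogenerator lemma together with the fact that exact functors preserve homology. Both are routine; this is exactly the content of Lam's Proposition~4.8, which the paper cites.
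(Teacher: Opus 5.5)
Your proof is correct and is essentially the standard argument behind the cited \cite[Proposition~4.8]{Lam1999}; the paper gives no proof of its own. Preservation comes from injectivity of $\mathbb Q/\mathbb Z$, and reflection from its cogenerator property, which you package as ``$A^+=0\Rightarrow A=0$'' together with exact functors commuting with homology. One step should be made explicit: $g\circ f=0$ must hold before $\operatorname{Ker}g/\operatorname{Im}f$ makes sense, and it follows from $(g\circ f)^+=f^+\circ g^+=0$ and the faithfulness you noted at the outset (or from your lemma applied to $\operatorname{Im}(g\circ f)$).
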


Lambek's original theorem \cite{Lambek1964}, in the left-module form of \Cref{lem:lambek-duality}, characterizes flat modules by means of their character modules; see also Lam's lectures \cite[Theorem~4.9]{Lam1999}.

\begin{lemma}
      \label{lem:lambek-duality}
      A module $M\in{}_R\mathrm{Mod}$ is flat if and only if $M^+\in\mathrm{Mod}_R$ is injective.
\end{lemma}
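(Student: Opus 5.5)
The plan is to prove both implications of Lambek's duality (\Cref{lem:lambek-duality}) by reducing flatness and injectivity to exactness statements, and then to use that the character functor preserves and reflects exactness (Lam, Proposition~4.8). The bridge is the tensor--Hom adjunction. For $N\in\mathrm{Mod}_R$ and ${}_RM$ there is an isomorphism, natural in $N$,
\[
      (N\otimes_R M)^+=\operatorname{Hom}_{\mathbb Z}(N\otimes_R M,\mathbb Q/\mathbb Z)\cong\operatorname{Hom}_{R^{\mathrm{op}}}(N,M^+),
\]
sending $\varphi$ to $n\mapsto\varphi(n\otimes-)$. First I will check that this map lands in right $R$-module homomorphisms for the stated action $(f\cdot r)(m)=f(rm)$, and that it is bijective and natural. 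This is routine.

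For the direction ``flat $\Rightarrow$ $M^+$ injective'', let $0\to N'\to N$ be a monomorphism of right modules. Flatness of $M$ gives that $0\to N'\otimes_RM\to N\otimes_RM$ is exact. Applying $(-)^+$, exactness is preserved, because $\mathbb Q/\mathbb Z$ is an injective abelian group; this is the easy half of Lam's Proposition~4.8, applied to short exact sequences after extending by the cokernel. Hence $(N\otimes_RM)^+\to(N'\otimes_RM)^+$ is surjective. By naturality of the adjunction, $\operatorname{Hom}_{R^{\mathrm{op}}}(N,M^+)\to\operatorname{Hom}_{R^{\mathrm{op}}}(N',M^+)$ is surjective, so $M^+$ is injective.

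For the converse, assume $M^+$ is injective and let $N'\to N$ be injective with kernel $K$ of $N'\otimes_RM\to N\otimes_RM$. The restriction map $\operatorname{Hom}(N,M^+)\to\operatorname{Hom}(N',M^+)$ is surjective, so by the adjunction $(N\otimes M)^+\to(N'\otimes M)^+$ is surjective. Consider the exact sequence $0\to K\to N'\otimes M\to N\otimes M$. Dualizing it and using the reflection part of the cited fact (or directly: $K^+$ is a quotient of $(N'\otimes M)^+$ by the image of $(N\otimes M)^+$, which is everything) gives $K^+=0$. Since $\mathbb Q/\mathbb Z$ is a cogenerator of abelian groups, every nonzero group $K$ has a nonzero homomorphism to $\mathbb Q/\mathbb Z$, obtained by sending a nonzero cyclic subgroup nontrivially and extending by divisibility. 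Therefore $K=0$, and $M$ is flat.

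The only delicate point is this last step, the faithfulness of $(-)^+$, which is where the cogenerator property of $\mathbb Q/\mathbb Z$ is genuinely used. The cited Proposition~4.8 of Lam encapsulates it, so I would invoke it directly: exactness of $0\to(N\otimes M)^+\cdots$ reflects the exactness of $0\to N'\otimes M\to N\otimes M$. Applying this to $R^{\mathrm{op}}$ as the paper indicates gives the left-module form.
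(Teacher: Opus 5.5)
Your argument is correct. It is the standard proof behind the paper's citation: the paper gives no proof of its own and defers to Lambek and Lam's Theorem~4.9. That proof uses the natural isomorphism $(N\otimes_R M)^+\cong\operatorname{Hom}_{R^{\mathrm{op}}}(N,M^+)$ together with exactness and faithfulness of the character functor, which come from injectivity and the cogenerator property of $\mathbb Q/\mathbb Z$. Your closing remark about passing to $R^{\mathrm{op}}$ is unnecessary, since you already worked directly with a left module $M$ tested against right modules $N$.
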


\subsubsection{Adjoint triples for idempotent corners}

Let $T$ be a ring and let $e\in T$ be an idempotent. The bimodules ${}_T(Te)_{eTe}$ and ${}_{eTe}(eT)_T$ produce the adjunctions recorded in \Cref{lem:idempotent-corners}. They are the standard construction from Morita theory; see \cite[Section~18]{Lam1999}.  Psaroudakis--Vitória list the recollement induced by $e$ and all six functors for right modules in \cite[Example~2.9]{PsaroudakisVitoria2014}.

\begin{lemma}
      \label{lem:idempotent-corners}
      Let $T$ be a ring and let $e\in T$ be an idempotent. The functors with types
      \[
            \begin{aligned} Te\otimes_{eTe}-\colon{}_{eTe}\mathrm{Mod}&\longrightarrow{}_T\mathrm{Mod}, &{}_{eTe}X&\longmapsto{}_T(Te\otimes_{eTe}X),\\ \operatorname{Hom}_{eTe}(eT,-)\colon{}_{eTe}\mathrm{Mod}&\longrightarrow{}_T\mathrm{Mod}, &{}_{eTe}X&\longmapsto{}_T\operatorname{Hom}_{eTe}(eT,X),\\ e(-)\colon{}_T\mathrm{Mod}&\longrightarrow{}_{eTe}\mathrm{Mod}, &{}_TY&\longmapsto{}_{eTe}(eY), \end{aligned}
      \]
      form the natural adjoint triple
      \[
            Te\otimes_{eTe}- \ \dashv\ e(-)\ \dashv\ \operatorname{Hom}_{eTe}(eT,-).
      \]
      \begin{enumerate}[label=\textup{(\arabic*)}]
            \item For every $X\in{}_{eTe}\mathrm{Mod}$ and $Y\in{}_T\mathrm{Mod}$, there is a natural isomorphism
                  \[
                        \operatorname{Hom}_T(Te\otimes_{eTe}X,Y) \cong\operatorname{Hom}_{eTe}(X,eY);
                  \]
            \item For every $E\in\mathrm{Mod}_T$ and $X\in{}_{eTe}\mathrm{Mod}$, there is a natural isomorphism
                  \[
                        E\otimes_T(Te\otimes_{eTe}X) \cong(Ee)\otimes_{eTe}X;
                  \]
            \item The functor $\operatorname{Hom}_{eTe}(Te,-) \colon \mathrm{Mod}_{eTe} \longrightarrow \mathrm{Mod}_T$, $X_{eTe}\longmapsto\operatorname{Hom}_{eTe}(Te,X)_T$, preserves injective modules.
      \end{enumerate}
\end{lemma}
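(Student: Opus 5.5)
The plan is to verify each part directly from the definitions of the bimodule actions and the tensor--hom adjunction. The main tool is the tensor--hom adjunction for the bimodule ${}_T(Te)_{eTe}$, together with the identification $eY\cong\operatorname{Hom}_T(Te,Y)$ of left $eTe$-modules.

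\emph{Step~1: the two adjunctions.} For $Y\in{}_T\mathrm{Mod}$, I will first check that
\[
\operatorname{Hom}_T(Te,Y)\to eY,\qquad f\mapsto f(e),
\]
is an isomorphism of left $eTe$-modules. It is well defined because $f(e)=f(e\cdot e)=ef(e)\in eY$. Its inverse sends $y\in eY$ to the map $te\mapsto tey=ty$. The left $eTe$-action on $\operatorname{Hom}_T(Te,Y)$ is $(a\cdot f)(x)=f(xa)$, and under $f\mapsto f(e)$ it becomes $f(ea)=f(ae)=af(e)$ for $a\in eTe$, so the isomorphism is $eTe$-linear. The standard tensor--hom adjunction then gives
\[
\operatorname{Hom}_T(Te\otimes_{eTe}X,Y)\cong\operatorname{Hom}_{eTe}(X,\operatorname{Hom}_T(Te,Y))\cong\operatorname{Hom}_{eTe}(X,eY),
\]
naturally in $X$ and $Y$. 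This proves part~(1) and the left adjunction.

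For the right adjunction, I will identify $eY\cong eT\otimes_T Y$ via $ey\leftrightarrow e\otimes y$; the inverse is well defined because $et\otimes y=e\otimes ety$ maps to $ety$. The tensor--hom adjunction for ${}_{eTe}(eT)_T$ then yields
\[
\operatorname{Hom}_{eTe}(eT\otimes_TY,X)\cong\operatorname{Hom}_T\bigl(Y,\operatorname{Hom}_{eTe}(eT,X)\bigr),
\]
again naturally.

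\emph{Step~2: part~(2).} By associativity of the tensor product,
\[
E\otimes_T(Te\otimes_{eTe}X)\cong(E\otimes_TTe)\otimes_{eTe}X.
\]
It remains to identify $E\otimes_TTe\cong Ee$ as right $eTe$-modules, via $x\otimes te\mapsto xte$. The inverse is $xe\mapsto xe\otimes e=x\otimes e$, and checking that the two maps are mutually inverse is routine. Naturality in $E$ and $X$ is evident from the formulas.

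\emph{Step~3: part~(3).} The functor $\operatorname{Hom}_{eTe}(Te,-)$ is right adjoint to the restriction functor $\mathrm{Mod}_T\to\mathrm{Mod}_{eTe}$, $Y\mapsto Y\otimes_TTe\cong Ye$. This is the right-module analogue of Step~1, with the roles of $Te$ and $eT$ swapped. The left adjoint $Y\mapsto Ye$ is exact, since $Te$ is a projective left $T$-module, being a direct summand of $T$. A right adjoint of an exact functor preserves injectives, because
\[
\operatorname{Hom}_T\bigl(-,\operatorname{Hom}_{eTe}(Te,I)\bigr)\cong\operatorname{Hom}_{eTe}\bigl((-)e,I\bigr)
\]
is exact whenever $I$ is injective.

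I expect no real obstacle in this argument. The only care needed is bookkeeping: making sure each of the isomorphisms $f\mapsto f(e)$, $eT\otimes_TY\cong eY$ and $E\otimes_TTe\cong Ee$ respects the correct $eTe$-module structures.
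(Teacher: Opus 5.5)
Your proposal is correct and follows the paper's proof essentially line for line. You use the identifications $\operatorname{Hom}_T(Te,Y)\cong eY\cong eT\otimes_TY$ together with tensor--Hom adjunction for the adjoint triple and part~(1), associativity plus $E\otimes_TTe\cong Ee$ for part~(2), and the fact that a right adjoint of an exact functor preserves injectives for part~(3). You also correctly name the exact left adjoint in part~(3) as the right-module functor $(-)e\cong-\otimes_TTe$, which is what the argument actually needs; the paper's text writes $e(-)$ at that point.
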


\begin{proof}
      The adjoint triple on the categories of left modules comes from $\operatorname{Hom}_T(Te,Y)\cong eY\cong eT\otimes_T Y$ and the tensor--Hom adjunction, proving~\textup{(1)}. For~\textup{(2)}, note that
      \[
            E\otimes_T(Te\otimes_{eTe}X) \cong (E\otimes_T Te)\otimes_{eTe}X \cong (Ee)\otimes_{eTe}X.
      \]
      Hence the right adjoint of $(-)e \cong -\otimes_T Te$ is $\operatorname{Hom}_{eTe}(Te,-)$. For~\textup{(3)}, the right adjoint $\operatorname{Hom}_{eTe}(Te,-)$ to an exact functor $e(-)$ preserves injective objects \cite[Proposition~2.3.10]{Weibel1994}.
\end{proof}

\subsubsection{Stationary modules}
\label{subsec:stationary-modules}

The terms $C$-stationary and strict $C$-stationary have two historical layers.
\begin{itemize}
      \item Raynaud and Gruson introduced Mittag--Leffler modules and the stronger strict Mittag--Leffler modules in their study of flatness and projectivity \cite[Part~II, pp.~74--76]{RaynaudGruson1971}.
      \item Angeleri Hügel and Herbera subsequently developed the relative version: $C$-stationary modules are defined in \cite[Definition~3.7]{AngeleriHerbera2008} (in the corresponding left-module form), and strict $C$-stationary modules are introduced in \cite[Proposition~8.1 and Definition~8.2]{AngeleriHerbera2008} (in the corresponding left-module form). Moreover, \cite[Remark~8.3(1)]{AngeleriHerbera2008} (in the corresponding left-module form) shows that being strict $C$-stationary for every test module $C$ recovers the strict Mittag--Leffler modules of Raynaud--Gruson.
\end{itemize}

Fix a ring $R$ and a test module $C\in{}_R\mathrm{Mod}$.  Let $N_0,N_1\in{}_R\mathrm{Mod}$ and let $u\colon N_0\longrightarrow N_1$, $x\longmapsto u(x)$, be a homomorphism of left $R$-modules. Set
\[
      \mathfrak{Im}_u(C)\coloneqq \operatorname{Im}\bigl(\operatorname{Hom}_R(u,C)\colon \operatorname{Hom}_R(N_1,C)\longrightarrow\operatorname{Hom}_R(N_0,C),\ f\longmapsto f\circ u\bigr).
\]

\begin{definition}
      \label{def:stationary-and-strict}
      (\cite[Definition~3.7, Proposition~8.1, and Definition~8.2]{AngeleriHerbera2008}; in the corresponding left-module form) Let $\Lambda$ be a directed poset, and let $(M_\sigma,u_\sigma^\tau)_{\sigma\leq\tau\text{ in }\Lambda}$ be a directed system of finitely presented left $R$-modules with colimit $M\in{}_R\mathrm{Mod}$. Thus each $u_\sigma^\tau\colon M_\sigma\longrightarrow M_\tau$, $x\longmapsto u_\sigma^\tau(x)$, is a homomorphism of left $R$-modules. Let $u_\sigma\colon M_\sigma\longrightarrow M$, $x\longmapsto u_\sigma(x)$, be the structure homomorphism of left $R$-modules.
      \[
            \xymatrix@C=44pt{ M_\sigma\ar[r]_-{u_\sigma^\tau}\ar@/^1.2pc/[rr]^-{u_\sigma}& M_\tau\ar[r]_-{u_\tau}&M. }
      \]
      \begin{enumerate}[label=\textup{(\arabic*)}]
            \item The module $M$ is \emph{$C$-stationary} if, for every $\sigma\in\Lambda$, there is $\tau\geq\sigma$ such that, for every $\upsilon\geq\tau$,
                  \[
                        \mathfrak{Im}_{u_\sigma^\upsilon}(C) =\mathfrak{Im}_{u_\sigma^\tau}(C).
                  \]
            \item The module $M$ is \emph{strict $C$-stationary} if, for every $\sigma\in\Lambda$, there is $\tau\geq\sigma$ such that
                  \[
                        \mathfrak{Im}_{u_\sigma}(C) =\mathfrak{Im}_{u_\sigma^\tau}(C).
                  \]
      \end{enumerate}
      When $C={}_R R$, we say that $M$ is \emph{$R$-stationary} or \emph{strict $R$-stationary}, respectively.
\end{definition}

\begin{definition}\label{def:pointed-module}
      (\cite[Definition~8.8]{AngeleriHerbera2008}; in the corresponding left-module form) For a finite set $L$, an \emph{$L$-pointed left $R$-module} is a pair
      \[
            ({}_RM,\mathbf m), \qquad \mathbf m=(m_l)_{l\in L}\in({}_RM)^L.
      \]
      A homomorphism of $L$-pointed left $R$-modules
      \[
            h\colon({}_RM,\mathbf m)\longrightarrow({}_RN,\mathbf n), \qquad x\longmapsto h(x),
      \]
      is a homomorphism of left $R$-modules $h\colon{}_RM\longrightarrow{}_RN$, $x\longmapsto h(x)$, such that $h(m_l)=n_l$ for every $l\in L$.

      A direct system of $L$-pointed left $R$-modules is a direct system of left $R$-modules together with an $L$-indexed tuple at every stage such that every transition map is a homomorphism of $L$-pointed modules.  Its filtered colimit is the filtered colimit of the underlying left $R$-modules, equipped with the common image of these tuples.  An $L$-pointed module is \emph{finitely presented} if its underlying left $R$-module is finitely presented.
\end{definition}

\begin{lemma}\label{lem:finite-tuple-strict-stationarity}
      (\cite[Theorem~8.10(3)]{AngeleriHerbera2008}; in the corresponding left-module form) Let ${}_RM$ and ${}_RC$ be left $R$-modules.  The left $R$-module ${}_RM$ is strict ${}_RC$-stationary if and only if, for every finite tuple $\mathbf m=(m_1,\ldots,m_n)\in({}_RM)^n$, there exist a finitely presented left $R$-module ${}_RE$, a tuple $\mathbf e=(e_1,\ldots,e_n)\in({}_RE)^n$, and a homomorphism of left $R$-modules
      \[
            h\colon{}_RE\longrightarrow{}_RM, \qquad e_j \longmapsto m_j\quad(1\leq j\leq n).
      \]
      The possible images of the two tuples in $C$ must agree:
      \[
            \bigl\{\bigl(f(e_1),\ldots,f(e_n)\bigr)\bigm| f\in\operatorname{Hom}_R({}_RE,{}_RC)\bigr\}={} \bigl\{\bigl(g(m_1),\ldots,g(m_n)\bigr)\bigm| g\in\operatorname{Hom}_R({}_RM,{}_RC)\bigr\}.
      \]
\end{lemma}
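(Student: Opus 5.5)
We derive the criterion from the definition of strict $C$-stationarity (\Cref{def:stationary-and-strict}(2)) applied to a presentation of $M$ as a directed colimit of finitely presented modules $(M_\sigma,u_\sigma^\tau)$. The first observation is a containment that holds for every factorization. If $h\colon E\longrightarrow M$ with $h(e_j)=m_j$, then composing with $h$ sends each $g\in\operatorname{Hom}_R(M,C)$ to $g\circ h\in\operatorname{Hom}_R(E,C)$, with $(g\circ h)(e_j)=g(m_j)$. Hence the set of tuples realized by $M$ is always contained in the set realized by $E$, and the content of the lemma is the reverse containment.

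\emph{Forward direction.} Assume $M$ is strict $C$-stationary and let $\mathbf m$ be a finite tuple. Since $\Lambda$ is directed and each $m_j$ lies in the image of some $u_\sigma$, we can choose a single $\sigma$ and elements $x_1,\ldots,x_n\in M_\sigma$ with $u_\sigma(x_j)=m_j$. Strict stationarity provides $\tau\geq\sigma$ with $\mathfrak{Im}_{u_\sigma}(C)=\mathfrak{Im}_{u_\sigma^\tau}(C)$. We then take $E\coloneqq M_\tau$, $e_j\coloneqq u_\sigma^\tau(x_j)$ and $h\coloneqq u_\tau$. For $f\in\operatorname{Hom}_R(M_\tau,C)$, the map $f\circ u_\sigma^\tau$ lies in $\mathfrak{Im}_{u_\sigma}(C)$, so $f\circ u_\sigma^\tau=g\circ u_\sigma$ for some $g\in\operatorname{Hom}_R(M,C)$. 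Evaluating at the $x_j$ gives $f(e_j)=g(m_j)$, which is the missing containment.

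\emph{Converse.} Fix $\sigma\in\Lambda$. Since $M_\sigma$ is finitely presented, choose generators $x_1,\ldots,x_n$ of $M_\sigma$ and apply the hypothesis to the tuple $m_j\coloneqq u_\sigma(x_j)$. This yields $h\colon E\longrightarrow M$ with $E$ finitely presented. Because $E$ is finitely presented, $h$ factors through some stage, $h=u_\rho\circ h'$ with $h'\colon E\longrightarrow M_\rho$. Enlarging $\rho$ within the directed set, we may assume $\rho\geq\sigma$ and, using that $M_\sigma$ is finitely presented, that $h'(e_j)=u_\sigma^\rho(x_j)$ for all $j$.

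We now show $\mathfrak{Im}_{u_\sigma^\rho}(C)\subseteq\mathfrak{Im}_{u_\sigma}(C)$; the reverse inclusion is automatic because $u_\sigma=u_\rho\circ u_\sigma^\rho$. Take $f\in\operatorname{Hom}_R(M_\rho,C)$. Then $f\circ h'\in\operatorname{Hom}_R(E,C)$, so by the hypothesis there is $g\in\operatorname{Hom}_R(M,C)$ with $g(m_j)=(f\circ h')(e_j)=f(u_\sigma^\rho(x_j))$ for all $j$. The two maps $g\circ u_\sigma$ and $f\circ u_\sigma^\rho$ therefore agree on the generators $x_j$ of $M_\sigma$, and hence on all of $M_\sigma$. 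Thus $\tau=\rho$ witnesses strict stationarity at $\sigma$.

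The main obstacle is the bookkeeping in this converse: making $h$ factor through a stage $\rho\geq\sigma$ that is compatible with the chosen generators. This uses finite presentation of both $E$ and $M_\sigma$, together with the standard fact that maps from a finitely presented module into a directed colimit factor through some stage, with equalities of maps becoming true at a later stage. Since the statement is exactly \cite[Theorem~8.10(3)]{AngeleriHerbera2008} transposed to left modules, it is also legitimate simply to cite that result.
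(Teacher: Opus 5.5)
Your proof is correct. It differs from the paper only in that the paper gives no argument of its own: it imports the lemma from Angeleri H\"ugel--Herbera, Theorem~8.10(3), transposed to left modules.

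You instead derive the criterion directly from the paper's definition of strict $C$-stationarity for a fixed directed system $(M_\sigma,u_\sigma^\tau)$ of finitely presented modules. In the forward direction, the stage $E=M_\tau$ supplied by stationarity, with $h=u_\tau$ and $e_j=u_\sigma^\tau(x_j)$, realizes exactly the tuples realized by $M$. In the converse, you factor $h=u_\rho\circ h'$ and push $\rho$ above $\sigma$ so that $h'(e_j)=u_\sigma^\rho(x_j)$. Then $g\circ u_\sigma$ and $f\circ u_\sigma^\rho$ agree on a generating tuple of $M_\sigma$, which gives $\mathfrak{Im}_{u_\sigma^\rho}(C)=\mathfrak{Im}_{u_\sigma}(C)$.

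What your route buys is self-containedness plus a small bonus. The tuple condition mentions no presentation of $M$. So your two directions, applied to two different directed systems, show that strict $C$-stationarity with respect to one system of finitely presented modules is equivalent to it with respect to any other. That is the presentation-independence the paper separately outsources to Angeleri H\"ugel--Herbera, Proposition~8.1. The citation, by contrast, places the lemma inside their framework of pointed direct systems, which the paper reuses elsewhere through Theorem~8.10(2) and~(4).

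One minor correction. Arranging $h'(e_j)=u_\sigma^\rho(x_j)$ needs only that finitely many equalities holding in a directed colimit already hold at some stage; it does not need finite presentation of $M_\sigma$. Finite generation of $M_\sigma$ is used only to choose the generators $x_1,\ldots,x_n$. Finite presentation of $E$ is what gives the factorization of $h$ through a stage.
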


\begin{remark}
      The two definitions in \Cref{def:stationary-and-strict} are independent of the chosen finitely presented direct system. For $C$-stationarity, presentation independence follows from \cite[Theorem~4.8]{AngeleriHerbera2008} (in the corresponding left-module form); for strict $C$-stationarity it follows from \cite[Proposition~8.1 and Definition~8.2]{AngeleriHerbera2008} (in the corresponding left-module form).
\end{remark}

The following lemma of Wang and Liang is the decisive link between strict stationarity and Gorenstein flatness in the proof of the conditional result in \Cref{thm:pds-no-monotone-g-sequential}.
We include a short proof.

\begin{lemma}
      \label{lem:stationarity-bridge}
      (\cite[Lemma~3.1]{WangLiang2016}) Let $G\in{}_R\mathrm{Mod}$ be strongly Gorenstein projective. If $G$ is strict $R$-stationary, then $G$ is Gorenstein flat.
\end{lemma}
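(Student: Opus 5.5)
We prove that the one-periodic complex obtained from a strongly Gorenstein projective $G$ stays exact after tensoring with any injective right module $E$. By \Cref{lem:free-middle-stabilization} we may replace $G$ by $G\oplus X$, since $G$ is a direct summand of $G\oplus X$ and $\GF(R)$ is closed under direct summands. Strict $R$-stationarity also passes to $G\oplus X$, because projective modules are strict Mittag--Leffler and the condition in \Cref{lem:finite-tuple-strict-stationarity} is compatible with finite direct sums. So we may assume there is a sequence $0\to G\xrightarrow{j}F\xrightarrow{q}G\to0$ with $F$ free that is $\operatorname{Hom}_R(-,Q')$-exact for every projective $Q'$. Splicing copies of it gives a one-periodic totally acyclic complex $P^\bullet$ of free modules, all of whose cocycles equal $G$. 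It then suffices to show that $E\otimes_R P^\bullet$ is acyclic for every injective $E\in\mathrm{Mod}_R$. Equivalently, we must show that $0\to E\otimes_RG\to E\otimes_RF\to E\otimes_RG\to0$ is exact, that is, that $E\otimes_R j$ is injective.

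Dualizing with \Cref{lem:lambek-duality} and the exactness-reflecting property of $(-)^+$, it is enough to show that
\[
\operatorname{Hom}_R(F,E^+)\longrightarrow\operatorname{Hom}_R(G,E^+)
\]
is surjective, using $(E\otimes_R M)^+\cong\operatorname{Hom}_R(M,E^+)$. Here $E^+$ is a flat left module because $E^{++}$ is injective, or alternatively by Lambek's lemma applied to a pure embedding. By Lazard's theorem $E^+$ is a direct limit of finitely generated free modules. So the problem reduces to the following: every homomorphism $G\to E^+$ extends along $j$ to $F$, given that homomorphisms into projective modules do extend. This is the familiar $\operatorname{Ext}^1_R(G,\text{flat})=0$ criterion, and the sequence $0\to G\to F\to G\to 0$ identifies the required vanishing with $\operatorname{Ext}^1_R(G,E^+)=0$.

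The key step, and the main obstacle, is passing from vanishing against projectives to vanishing against the flat module $E^+$. Here strict $R$-stationarity enters. Take a homomorphism $g\colon G\to E^+$. Tensor--Hom duality converts it into the exactness question for $E\otimes_R-$, which commutes with direct limits, so we may test on finitely many elements. Concretely, an element of $\operatorname{Ker}(E\otimes j)$ is a finite sum $\sum e_i\otimes m_i$ with $m_i\in G$. Apply \Cref{lem:finite-tuple-strict-stationarity} to the tuple $(m_i)$ to get a finitely presented $E'$ with elements $e'_i$ and a map $h\colon E'\to G$ with $e'_i\mapsto m_i$, such that every homomorphism $E'\to R$ has, on the $e'_i$, the same values as some homomorphism $G\to R$. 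For a finitely presented module, $E\otimes_R E'\cong\operatorname{Hom}_R(\operatorname{Hom}_R(E',R),E)$ holds when $E$ is injective, and this is where injectivity of $E$ is used. Using this, the values on the tuple are controlled by $\operatorname{Hom}_R(G,R)$, and maps $G\to R$ extend to $F\to R$ by total acyclicity. It follows that the image of $\sum e_i\otimes m_i$ in $E\otimes_RF$ vanishing forces the element to vanish already in $E\otimes_R G$.

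The delicate bookkeeping lies in matching the Hom-evaluation description of $E\otimes_R E'$ with the stationarity equality of tuple images. We first carry it out for $E=\prod R^+$-type injectives, namely direct summands of products of $(R_R)^+$. For these, $E\otimes_R -$ on finitely presented modules is computed via $\operatorname{Hom}_R(-,R)^{+}$-type formulas. Every injective right module is a direct summand of such a product, which finishes the argument.
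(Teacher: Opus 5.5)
Your proof is correct in substance, and its decisive step is the paper's argument with one black box opened. The paper quotes the evaluation criterion \cite[Theorem~8.10(4)]{AngeleriHerbera2008} to get injectivity of $\alpha_{E,G}\colon E\otimes_RG\to\operatorname{Hom}_{R^{\mathrm{op}}}(G^\ast,E)$, where $G^\ast=\operatorname{Hom}_R(G,R)$. It then concludes from the naturality square for $i\colon G\to P$ together with surjectivity of $i^\ast\colon P^\ast\to G^\ast$.

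Your element chase proves exactly this. Suppose $\xi=\sum_k e_k\otimes m_k$ is killed by $1_E\otimes j$. Then $\sum_k e_k\,\phi(j(m_k))=0$ for every $\phi\in F^\ast$, and surjectivity of $\operatorname{Hom}_R(j,R)$ turns this into $\sum_k e_k\,g(m_k)=0$ for all $g\in G^\ast$. The tuple equality of \Cref{lem:finite-tuple-strict-stationarity} transfers this to all $f\in(E')^\ast$. Finally, the isomorphism $E\otimes_RE'\cong\operatorname{Hom}_{R^{\mathrm{op}}}((E')^\ast,E)$ for finitely presented $E'$ and injective $E$ forces $\sum_k e_k\otimes e'_k=0$, hence $\xi=0$. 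That isomorphism follows from right exactness of $E\otimes_R-$, exactness of $\operatorname{Hom}_{R^{\mathrm{op}}}(-,E)$ on a finite free presentation, and the five lemma. So you derive the needed direction of the evaluation criterion from the finite-tuple criterion. This makes your argument more self-contained, while the paper's route is shorter.

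Some surrounding remarks should be deleted, one of them because it is false.
\begin{itemize}
\item Over an arbitrary ring, $E^+$ need not be flat for injective $E_R$. Flatness of all such character modules is equivalent to right coherence of $R$ (Cheatham--Stone). So neither ``$E^{++}$ is injective'' nor the Lazard step is available in the generality of the lemma. This does no damage only because your chase never uses it: the reformulation via $\operatorname{Hom}_R(j,E^+)$ just leads back to $E\otimes_R-$.
\item Replacing $G$ by $G\oplus X$ through \Cref{lem:free-middle-stabilization} is unnecessary, and so is the closure of $\GF(R)$ under direct summands, which is a deep result (\cite[Corollary~4.12]{SarochStovicek2020}). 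The chase only uses surjectivity of $\operatorname{Hom}_R(-,R)$ applied to the embedding of $G$, and this already holds for the original projective middle term.
\item Your last paragraph is redundant, because the evaluation isomorphism already holds for every injective $E$. It also has the side wrong: $(R_R)^+$ is a left module, whereas the injective cogenerator of $\mathrm{Mod}_R$ is $({}_RR)^+$.
\end{itemize}
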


\begin{proof}
      Choose a short exact sequence of left $R$-modules
      \[
            0\longrightarrow{}_RG\xlongrightarrow{i}{}_RP
            \xlongrightarrow{q}{}_RG\longrightarrow0
      \]
      as in \Cref{def:strongly-gorenstein-projective}. Let $E_R$ be an injective right $R$-module, and put
      \[
            G^\ast\coloneqq\operatorname{Hom}_R({}_RG,{}_RR),
            \qquad
            P^\ast\coloneqq\operatorname{Hom}_R({}_RP,{}_RR).
      \]
      The exactness preserved by $\operatorname{Hom}_R(-,{}_RR)$ implies that
      $i^\ast\coloneqq\operatorname{Hom}_R(i,{}_RR)\colon P^\ast\longrightarrow G^\ast$ is surjective. The strict $R$-stationarity of $G$ and the evaluation criterion \cite[Theorem~8.10(4)]{AngeleriHerbera2008} (in the corresponding left-module form) imply that
      \[
            \alpha_{E,G}\colon E\otimes_R G
            \longrightarrow\operatorname{Hom}_{R^{\mathrm{op}}}(G^\ast,E),
            \qquad e\otimes g\longmapsto\bigl(f\longmapsto e\cdot f(g)\bigr),
      \]
      is injective. Naturality gives the commutative diagram
      \[
            \xymatrix@R=15pt@C=58pt{
                  E\otimes_R G\ar[r]^-{1_E\otimes i}\ar[d]_-{\alpha_{E,G}}
                  &E\otimes_R P\ar[d]^-{\alpha_{E,P}}\\
                  \operatorname{Hom}_{R^{\mathrm{op}}}(G^\ast,E)
                  \ar[r]_-{\operatorname{Hom}_{R^{\mathrm{op}}}(i^\ast,E)}
                  &\operatorname{Hom}_{R^{\mathrm{op}}}(P^\ast,E).
            }
      \]
      The lower horizontal homomorphism is injective because $i^\ast$ is surjective. Hence $1_E\otimes i$ is injective. Since tensor products are right exact, the sequence
      \[
            0\longrightarrow E\otimes_R G
            \xlongrightarrow{1_E\otimes i}E\otimes_R P
            \xlongrightarrow{1_E\otimes q}E\otimes_R G
            \longrightarrow0
      \]
      is exact. Repeating the differential $i\circ q$ therefore gives an $F$-totally acyclic one-periodic complex of projective left $R$-modules with cocycle $G$. Thus $G$ is Gorenstein flat.
\end{proof}

\section{Ultrafilter hypotheses and Ext-vanishing via the Roos complex}
\label{sec:roos-theory}

This section proves \Cref{prop:all-ext}: \textsf{(LUH1)} gives the degree-zero and degree-one vanishing in \Cref{lem:low-degree}, while \textsf{(LUH2)} gives the higher-degree vanishing through the Roos complex.

\subsection{The two ultrafilter hypotheses}
\label{subsec:luh}

The abbreviation $\textsf{LUH}$ stands for \emph{local ultrafilter hypothesis}. It is a convenient label specific to the present article, not the name of a standard large-cardinal axiom.

\begin{definition}
      \label{def:luh}
      For a cardinal $\kappa$, $\textsf{LUH}(\kappa)$ holds if the following two conditions hold:
      \begin{enumerate}[label=\textsf{(LUH\arabic*)}]
            \item $\kappa$ is measurable, namely, there is a nonprincipal $\kappa$-complete ultrafilter $\mathcal U$ on $\kappa$;
            \item there is an $\aleph_1$-complete fine ultrafilter $\mathcal W$ on $\mathcal P_\kappa(2^\kappa)$.
      \end{enumerate}
      The sentence $\textsf{LUH}$ is the statement $\exists\kappa\,\textsf{LUH}(\kappa)$.
\end{definition}

\begin{lemma}\label{lem:fine-pushforward}
      Let $\kappa\leq\lambda$ be infinite cardinals, let $\delta$ be an infinite cardinal, and let $\mathcal V$ be a fine $\delta$-complete ultrafilter on $\mathcal P_\kappa(\lambda)$.  Regard $\kappa$ as the initial segment of the ordinal $\lambda$, and define
      \[
            \mu\colon\mathcal P_\kappa(\lambda)\longrightarrow\kappa, \qquad s\longmapsto\min(\kappa\setminus s).
      \]
      Then $\mu_\ast\mathcal V\coloneqq \{A\subseteq\kappa\mid\mu^{-1}[A]\in\mathcal V\}$ is a nonprincipal $\delta$-complete ultrafilter on $\kappa$.
\end{lemma}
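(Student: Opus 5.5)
The plan is to verify directly that $\mu$ is well defined and that the pushforward $\mu_\ast\mathcal V$ inherits each required property from $\mathcal V$. Inverse images commute with all Boolean operations, so most properties transfer formally. Fineness is used only for nonprincipality.

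First, well-definedness. For $s\in\mathcal P_\kappa(\lambda)$ we have $|s\cap\kappa|\leq|s|<\kappa$, so $s\cap\kappa\neq\kappa$. Hence $\kappa\setminus s$ is a nonempty set of ordinals, and its minimum exists and lies in $\kappa$. Thus $\mu$ is a function $\mathcal P_\kappa(\lambda)\longrightarrow\kappa$.

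Second, the filter and ultrafilter axioms and $\delta$-completeness. We have $\mu^{-1}[\emptyset]=\emptyset\notin\mathcal V$ and $\mu^{-1}[\kappa]=\mathcal P_\kappa(\lambda)\in\mathcal V$. If $A\subseteq B\subseteq\kappa$, then $\mu^{-1}[A]\subseteq\mu^{-1}[B]$, which gives upward closure. For a family $\{A_i\mid i\in J\}$ with $|J|<\delta$, we have
\[
\mu^{-1}\Bigl[\bigcap_{i\in J}A_i\Bigr]=\bigcap_{i\in J}\mu^{-1}[A_i],
\]
so $\delta$-completeness of $\mathcal V$ gives $\delta$-completeness of $\mu_\ast\mathcal V$, and in particular \textsf{(F3)}. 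For maximality, we have $\mu^{-1}[\kappa\setminus A]=\mathcal P_\kappa(\lambda)\setminus\mu^{-1}[A]$. \Cref{fact:ultrafilter-characterization} applied to $\mathcal V$ then shows that exactly one of $A$ and $\kappa\setminus A$ lies in $\mu_\ast\mathcal V$. The same fact therefore makes $\mu_\ast\mathcal V$ an ultrafilter.

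Third, nonprincipality, which is the only step with content. Fix $\alpha<\kappa$; it suffices to show $\{\alpha\}\notin\mu_\ast\mathcal V$. If $\mu(s)=\alpha$, then $\alpha\notin s$. Hence
\[
\mu^{-1}[\{\alpha\}]\cap\mathcal C_\alpha=\emptyset,
\]
where $\mathcal C_\alpha$ is the cone at $\alpha\in\kappa\subseteq\lambda$. By fineness $\mathcal C_\alpha\in\mathcal V$, so $\mu^{-1}[\{\alpha\}]\notin\mathcal V$, and therefore $\{\alpha\}\notin\mu_\ast\mathcal V$. Since an ultrafilter of the form $\mathcal U_\alpha$ would contain $\{\alpha\}$, $\mu_\ast\mathcal V$ is nonprincipal.

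I expect no real obstacle. The only points needing care are the cardinality observation $|s\cap\kappa|<\kappa$ in the well-definedness step and the use of fineness at points $\alpha<\kappa$, which is where the hypothesis $\kappa\leq\lambda$ enters.
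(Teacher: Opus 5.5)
Your proposal is correct and follows essentially the same route as the paper's proof. Both show that $\mu$ is well defined because $|s|<\kappa$. Both obtain the ultrafilter property and $\delta$-completeness from the fact that inverse images commute with complements and intersections. Both get nonprincipality from the disjointness of $\mathcal C_\alpha$ and $\mu^{-1}[\{\alpha\}]$ together with fineness.
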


\begin{proof}
      For every $s\in\mathcal P_\kappa(\lambda)$, the inequality  $|s|<\kappa$ implies $\kappa\setminus s\neq\emptyset$. The nonempty set of ordinals $\kappa\setminus s$ has a least element, so $\mu$ is well defined. Inverse images preserve complements and finite intersections; hence \Cref{fact:ultrafilter-characterization} shows that $\mu_\ast\mathcal V$ is an ultrafilter on $\kappa$.

      Let $D$ be a set with $|D|<\delta$, and suppose $A_i\in\mu_\ast\mathcal V$ for every $i\in D$.  Then
      \[
            \mu^{-1}\Bigl[\bigcap_{i\in D}A_i\Bigr] =\bigcap_{i\in D}\mu^{-1}[A_i]\in\mathcal V,
      \]
      so $\mu_\ast\mathcal V$ is $\delta$-complete.

      For $\alpha<\kappa$, fineness gives $\mathcal C_\alpha\coloneqq \{s\in\mathcal P_\kappa(\lambda)\mid\alpha\in s\}\in\mathcal V$.
      The sets $\mathcal C_\alpha$ and $\mu^{-1}[\{\alpha\}]$ are disjoint. If $\mu^{-1}[\{\alpha\}]$ belonged to $\mathcal V$, then $\mathcal C_\alpha\cap\mu^{-1}[\{\alpha\}]=\emptyset$ in $\mathcal V$, contrary to properness.  Therefore $\mu^{-1}[\{\alpha\}]\notin\mathcal V$, and hence $\{\alpha\}\notin\mu_\ast\mathcal V$.  Thus $\mu_\ast\mathcal V$ contains no singleton and is nonprincipal.
\end{proof}

\begin{proposition}\label{prop:luh-implication-diagram}
      For every uncountable cardinal $\kappa$, the implication diagram \eqref{eq:luh-implication-diagram} is valid.
      \begin{equation}\label{eq:luh-implication-diagram}
            \xymatrix@C=40pt@R=10pt{ & \kappa\textup{ strongly compact} \ar@{=>}[dl]_-{\textup{(1)}} \ar@{=>}[dr]^-{\textup{(2)}} & \\ \substack{\kappa\textup{ measurable and}\\ \aleph_1\textup{-strongly compact}} \ar@{=>}[dr]_-{\textup{(3)}} & & \substack{\textup{there is a fine }\kappa\textup{-complete}\\ \textup{ultrafilter on } \mathcal P_\kappa(2^\kappa)} \ar@{=>}[dl]^-{\textup{(4)}}\\ & \textsf{LUH}(\kappa)\ar@{=>}[dd]^-{\textup{(5)}} & \\ \\ & \kappa\textup{ measurable} & }
      \end{equation}
\end{proposition}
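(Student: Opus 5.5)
We verify the five arrows of \eqref{eq:luh-implication-diagram} one at a time, using only \Cref{def:strongly-compact}, \Cref{def:aleph-one-strong-compactness}, \Cref{rem:strongly-compact-regular}, and \Cref{lem:fine-pushforward}. Throughout we use Cantor's theorem, $\kappa<2^\kappa$, so that $\lambda\coloneqq 2^\kappa$ is a legitimate cardinal $\geq\kappa$ in \textsf{(SC2)} and in the $\mathcal P_\kappa(\lambda)$ formulation of $\aleph_1$-strong compactness.

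For arrow (1), \Cref{rem:strongly-compact-regular} gives that a strongly compact $\kappa$ is measurable. For $\aleph_1$-strong compactness we use \textsf{(SC1)}: every $\kappa$-complete filter extends to a $\kappa$-complete ultrafilter. Since $\kappa$ is uncountable we have $\aleph_1\leq\kappa$, so every $\kappa$-complete ultrafilter is $\aleph_1$-complete. This is exactly the first formulation in \Cref{def:aleph-one-strong-compactness}.

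Arrow (2) is \textsf{(SC2)} applied with $\lambda=2^\kappa\geq\kappa$.

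For arrow (3), measurability is \textsf{(LUH1)}. For \textsf{(LUH2)}, apply the second formulation of \Cref{def:aleph-one-strong-compactness} with $\lambda=2^\kappa$; this yields a fine $\aleph_1$-complete ultrafilter on $\mathcal P_\kappa(2^\kappa)$.

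For arrow (4), let $\mathcal V$ be a fine $\kappa$-complete ultrafilter on $\mathcal P_\kappa(2^\kappa)$.
\begin{itemize}
\item Since $\kappa$ is uncountable, $\mathcal V$ is in particular $\aleph_1$-complete, which gives \textsf{(LUH2)}.
\item Apply \Cref{lem:fine-pushforward} with $\lambda=2^\kappa$ and $\delta=\kappa$. The pushforward $\mu_\ast\mathcal V$ is then a nonprincipal $\kappa$-complete ultrafilter on $\kappa$, which gives \textsf{(LUH1)}.
\end{itemize}

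Arrow (5) is immediate, because \textsf{(LUH1)} is literally \Cref{def:measurable}. For this we need $\kappa$ uncountable, which is assumed in the statement.

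There is no serious obstacle. The only points needing care are:
\begin{itemize}
\item checking that $\kappa$-completeness implies $\aleph_1$-completeness, which requires $\kappa\geq\aleph_1$, i.e.\ the uncountability hypothesis;
\item invoking the equivalence of the two formulations in \Cref{def:aleph-one-strong-compactness}, which the paper cites from the literature rather than proving.
\end{itemize}
Once those are in place, every arrow reduces to specializing a definition to $\lambda=2^\kappa$.
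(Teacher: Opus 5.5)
Your proof is correct and follows the paper's argument arrow by arrow. The only deviation is in arrow (1): you obtain measurability by citing \Cref{rem:strongly-compact-regular} (ultimately Kanamori), whereas the paper derives it internally by applying \textsf{(SC2)} with $\lambda=2^\kappa$ and then \Cref{lem:fine-pushforward} with $\delta=\kappa$, exactly as in arrow (4); either route is valid.
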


\begin{proof}
      \emph{Arrow \textup{(1)}.} By \Cref{def:strongly-compact}\textsf{(SC2)} with $\lambda=2^\kappa$, there is a fine $\kappa$-complete ultrafilter on $\mathcal P_\kappa(2^\kappa)$.  Applying \Cref{lem:fine-pushforward} with $\delta=\kappa$ shows that $\kappa$ is measurable.  Moreover, every strongly compact cardinal is $\aleph_1$-strongly compact: since $\kappa$ is uncountable, every $\kappa$-complete ultrafilter is $\aleph_1$-complete, so \textsf{(SC1)} in \Cref{def:strongly-compact} implies the extension property in \Cref{def:aleph-one-strong-compactness}.

      \emph{Arrow \textup{(2)}.} Condition \textsf{(SC2)} in \Cref{def:strongly-compact}, with $\lambda=2^\kappa$, gives the existence assertion in the upper-right vertex of \eqref{eq:luh-implication-diagram}.

      \emph{Arrow \textup{(3)}.} Measurability is exactly \textsf{(LUH1)}.  Taking $\lambda=2^\kappa$ in \Cref{def:aleph-one-strong-compactness} gives \textsf{(LUH2)}.

      \emph{Arrow \textup{(4)}.} Let $\mathcal V$ be a fine $\kappa$-complete ultrafilter on $\mathcal P_\kappa(2^\kappa)$.  Since $\kappa$ is uncountable, $\mathcal V$ is $\aleph_1$-complete and witnesses \textsf{(LUH2)}. Applying \Cref{lem:fine-pushforward} with $\delta=\kappa$ gives a nonprincipal $\kappa$-complete ultrafilter on $\kappa$, which witnesses \textsf{(LUH1)}.

      \emph{Arrow \textup{(5)}.} Condition \textsf{(LUH1)} is the definition of measurability.
\end{proof}

In particular, if there is a strongly compact cardinal, then $\textsf{LUH}$ holds.

\subsection{The quotient \texorpdfstring{$R/\mathfrak m$}{R/m} of the Boolean ring}
\label{subsec:boolean}

Fix a cardinal $\kappa$ such that $\textsf{LUH}(\kappa)$ holds. Let $\mathcal U$ be a nonprincipal $\kappa$-complete ultrafilter on $\kappa$ witnessing \textsf{(LUH1)}. Only \textsf{(LUH1)} is used from the beginning of \Cref{subsec:boolean} through \Cref{lem:low-degree}; \textsf{(LUH2)} first enters in \Cref{subsec:roos-compactness-package}.

Let $X\coloneqq\kappa$ be the underlying set, and let $\mathcal I\coloneqq\mathcal P(X)\setminus\mathcal U$. Since $\mathcal U$ is an ultrafilter, $A\in\mathcal I$ if and only if $(X\setminus A)\in\mathcal U$ by \Cref{fact:ultrafilter-characterization}. Consider the \emph{Boolean ring} (and $\mathbb F_2$-algebra) $R\coloneqq(\mathbb F_2)^X\cong\operatorname{Hom}_{\mathrm{Sets}}(X,\mathbb F_2)$, where the operations are defined pointwise. For each $r\in R$, the \emph{support} of $r$ is $\operatorname{supp}_X r\coloneqq\{x\in X \mid r(x)\neq0\}$. The family $\mathcal I$ is an ideal of the Boolean algebra $\mathcal P(X)$: it is downward closed, and if $A,B\in\mathcal I$, then \Cref{fact:ultrafilter-characterization} gives $X\setminus A,X\setminus B\in\mathcal U$.  Hence $(X\setminus A)\cap(X\setminus B)\in\mathcal U$, so $A\cup B\in\mathcal I$.  It is maximal because, if a proper ideal $\mathcal J$ strictly contains $\mathcal I$, choose $A\in\mathcal J\setminus\mathcal I$.  Then $A\in\mathcal U$ and, by \Cref{fact:ultrafilter-characterization}, $X\setminus A\in\mathcal I\subseteq\mathcal J$, so $X=A\cup(X\setminus A)\in\mathcal J$, a contradiction.  For the standard filter--ideal correspondence, see \cite[Definition~7.3 and Lemma~7.4]{Jech2003}.  Correspondingly, put
\begin{equation}\label{eq:boolean-quotient}
      \mathfrak m\coloneqq \{r\in R \mid \operatorname{supp}_X r\in\mathcal I\}, \qquad S\coloneqq R/\mathfrak m.
\end{equation}
For $A\subseteq X$, let $e_A\in R$ be its characteristic function, which is idempotent and is given by
\[
      e_A(x)\coloneqq \begin{cases} 1,&x\in A,\\ 0,&x\notin A. \end{cases}
\]

\begin{fact}
      Equip $\mathcal P(X)$ with symmetric difference as addition and intersection as multiplication. Then
      \begin{equation}\label{eq:boolean-support-isomorphism}
            \operatorname{supp}_X\colon R\xlongrightarrow{\sim}\mathcal P(X), \qquad r\longmapsto\operatorname{supp}_X r,
      \end{equation}
      is an isomorphism of Boolean rings.  In particular,
      \begin{enumerate}[label=\textup{(\arabic*)}]
            \item $\operatorname{supp}_X(r+s) =(\operatorname{supp}_X r \setminus \operatorname{supp}_X s)\mathbin \cup (\operatorname{supp}_X s \setminus \operatorname{supp}_X r)$;
            \item $\operatorname{supp}_X(r\cdot s) =(\operatorname{supp}_X r)\cap (\operatorname{supp}_X s)$;
            \item $\operatorname{supp}_X 0=\emptyset$ and $\operatorname{supp}_X 1=X$;
            \item $r=e_{\operatorname{supp}_X r}$ for every $r\in R$, and $A=\operatorname{supp}_X e_A$ for every $A\subseteq X$.
      \end{enumerate}
\end{fact}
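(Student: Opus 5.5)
The plan is to argue pointwise, using that every element of $R=(\mathbb F_2)^X$ is a function $X\longrightarrow\mathbb F_2$, and that $\mathbb F_2$ has only the two values $0$ and $1$. First I would prove item~(4), which gives bijectivity of $\operatorname{supp}_X$ with explicit inverse $A\longmapsto e_A$. For $r\in R$ and $x\in X$, we have $e_{\operatorname{supp}_X r}(x)=1$ if and only if $r(x)\neq0$, which holds if and only if $r(x)=1$. Hence $e_{\operatorname{supp}_X r}=r$. Conversely, $\operatorname{supp}_X e_A=\{x\mid e_A(x)\neq0\}=A$ by the definition of $e_A$. Thus the two maps are mutually inverse bijections between $R$ and $\mathcal P(X)$.

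Next I would verify the compatibility with the operations, items~(1)--(3), by evaluating at a fixed $x\in X$. Since addition in $R$ is pointwise, $(r+s)(x)=r(x)+s(x)$ in $\mathbb F_2$. This sum equals $1$ exactly when precisely one of $r(x),s(x)$ equals $1$, that is, when $x$ lies in exactly one of $\operatorname{supp}_X r$ and $\operatorname{supp}_X s$. This is the symmetric difference in~(1). Likewise $(rs)(x)=r(x)s(x)$ equals $1$ exactly when both factors equal $1$, which gives the intersection in~(2). The constant functions $0$ and $1$ have supports $\emptyset$ and $X$, which gives~(3).

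Finally, $\operatorname{supp}_X$ is a bijection carrying $+$, $\cdot$, $0$, $1$ of $R$ to symmetric difference, intersection, $\emptyset$, $X$. Transporting the ring axioms of $R$ along it shows at once that $\mathcal P(X)$ with these operations is a (Boolean) ring. It also shows that $\operatorname{supp}_X$ is a ring isomorphism, so no separate check of associativity or distributivity on $\mathcal P(X)$ is needed. There is no genuine obstacle in this argument. The only point requiring care is to state the transport argument explicitly, rather than assuming beforehand that $(\mathcal P(X),\triangle,\cap)$ is a ring.
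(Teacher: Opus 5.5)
Your proposal is correct: evaluating pointwise at each $x\in X$ establishes items~(1)--(4). Transporting the ring structure of $R$ along the bijection $\operatorname{supp}_X$, whose inverse is $A\longmapsto e_A$, then gives the isomorphism without separately checking the ring axioms for symmetric difference and intersection on $\mathcal P(X)$; the paper records this as a standard fact without proof, and your argument is the routine verification it implicitly relies on.
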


\begin{fact}\label{fact:boolean-quotient-map}
      With $\mathfrak m$ and $S$ defined in \eqref{eq:boolean-quotient}, the map
      \begin{equation}\label{eq:q-ultrafilter}
            q_{\mathcal U}\colon R\longrightarrow\mathbb F_2, \qquad r\longmapsto \begin{cases} 1,&\operatorname{supp}_X r\in\mathcal U,\\ 0,&\operatorname{supp}_X r\in\mathcal I, \end{cases}
      \end{equation}
      is a surjective ring homomorphism with kernel $\mathfrak m$.

      To verify the assertion, let
      \[
            p_{\mathcal I}\colon\mathcal P(X)\longrightarrow \bigl(\mathcal P(X)\bigr)/\mathcal I, \qquad A\longmapsto A+\mathcal I
      \]
      be the quotient map.  For $A\subseteq X$, one has $p_{\mathcal I}(A)=0$ if and only if $A\in\mathcal I$. If $A\in\mathcal U$, then $X\setminus A\in\mathcal I$ by \Cref{fact:ultrafilter-characterization}, and thus $p_{\mathcal I}(A)=p_{\mathcal I}(X)=1$. Consequently there is a unique ring isomorphism $\varepsilon_{\mathcal U}\colon \bigl(\mathcal P(X)\bigr)/\mathcal I \xlongrightarrow{\sim}\mathbb F_2$, $p_{\mathcal I}(A)\longmapsto q_{\mathcal U}(e_A)$, with $\varepsilon_{\mathcal U}(p_{\mathcal I}(X))=1$, and $q_{\mathcal U} =\varepsilon_{\mathcal U}\circ p_{\mathcal I}\circ \operatorname{supp}_X$ by \eqref{eq:boolean-support-isomorphism}. The map $q_{\mathcal U}$ is therefore surjective, and its kernel is the inverse image of $\mathcal I$ under $\operatorname{supp}_X$, namely $\mathfrak m$.
\end{fact}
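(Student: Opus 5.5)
The statement just given is \Cref{fact:boolean-quotient-map}: $q_{\mathcal U}$ is a surjective ring homomorphism $R\to\mathbb F_2$ with kernel $\mathfrak m$. I will route everything through the support isomorphism \eqref{eq:boolean-support-isomorphism}. This reduces the claim to a statement about the Boolean ring $\mathcal P(X)$, with symmetric difference as addition and intersection as multiplication, and its ideal $\mathcal I$.

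\emph{Step 1: $\mathcal I$ is a ring ideal of $\mathcal P(X)$.} It was already shown that $\mathcal I$ is downward closed and closed under finite unions. Symmetric difference satisfies $A\triangle B\subseteq A\cup B$, so $\mathcal I$ is closed under addition. Intersection with an arbitrary set stays inside $A$, so $\mathcal I$ absorbs multiplication. Hence $\mathcal I$ is a ring ideal, the quotient ring $\mathcal P(X)/\mathcal I$ exists, and $p_{\mathcal I}$ is a surjective ring homomorphism with kernel $\mathcal I$.

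\emph{Step 2: identify the quotient with $\mathbb F_2$.} I will show that $\mathcal P(X)/\mathcal I$ has exactly two elements. For any $A$, \Cref{fact:ultrafilter-characterization} gives exactly one of $A\in\mathcal I$ or $X\setminus A\in\mathcal I$.
\begin{itemize}
\item In the first case, $p_{\mathcal I}(A)=0$.
\item In the second case, $A\triangle X=X\setminus A\in\mathcal I$, so $p_{\mathcal I}(A)=p_{\mathcal I}(X)=1$.
\end{itemize}
Since $X\in\mathcal U$, we have $X\notin\mathcal I$, so $0\neq1$ in the quotient. A ring with exactly two elements $0\neq1$ is uniquely isomorphic to $\mathbb F_2$. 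Call this isomorphism $\varepsilon_{\mathcal U}$.

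\emph{Step 3: identify $q_{\mathcal U}$ and conclude.} Compute the composite $\varepsilon_{\mathcal U}\circ p_{\mathcal I}\circ\operatorname{supp}_X$ on an element $r$. By Step 2 it sends $r$ to $1$ exactly when $\operatorname{supp}_X r\notin\mathcal I$, that is, when $\operatorname{supp}_X r\in\mathcal U$. So the composite agrees with $q_{\mathcal U}$ as defined in \eqref{eq:q-ultrafilter}, which also confirms that $q_{\mathcal U}$ is well defined.
\begin{itemize}
\item As a composite of ring homomorphisms, $q_{\mathcal U}$ is a ring homomorphism that preserves $1$.
\item It is surjective because it sends $1\mapsto1$ and $0\mapsto0$.
\item Its kernel is $\operatorname{supp}_X^{-1}[\mathcal I]$, which is $\mathfrak m$ by \eqref{eq:boolean-quotient}.
\end{itemize}

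There is no real obstacle here. The only points needing care are that $\mathcal I$ is closed under symmetric difference, not just union, and that the quotient is nonzero because $X\in\mathcal U$. Neither needs $\kappa$-completeness; only the ultrafilter property is used.
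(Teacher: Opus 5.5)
Your proposal is correct and follows the paper's argument. Like the paper, you factor $q_{\mathcal U}$ as $\varepsilon_{\mathcal U}\circ p_{\mathcal I}\circ\operatorname{supp}_X$, use the ultrafilter dichotomy to show that $p_{\mathcal I}$ takes only the values $0$ and $p_{\mathcal I}(X)=1$, and read off surjectivity and the kernel; your explicit checks that $\mathcal I$ is closed under symmetric difference and that $0\neq1$ in the quotient (because $X\in\mathcal U$) fill in details the paper leaves implicit.
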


\begin{lemma}\label{lem:low-degree}
      For every set $J$, one has
      \[
            \operatorname{Hom}_R({}_R S,({}_RR)^{(J)})=0
            \qquad\text{and}\qquad
            \operatorname{Ext}_R^1({}_R S,({}_RR)^{(J)})=0.
      \]
\end{lemma}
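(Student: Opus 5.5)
We compute both groups from the short exact sequence $0\longrightarrow\mathfrak m\longrightarrow R\longrightarrow S\longrightarrow0$ of \eqref{eq:boolean-quotient}. Put $N\coloneqq({}_RR)^{(J)}$. Since $R$ is commutative, we view elements of $N$ as finitely supported families $w=(w_j)_{j\in J}$ of elements of $R=(\mathbb F_2)^X$. The plan rests on two facts from earlier in the paper: every singleton lies in $\mathcal I$, and every countable subset of $X$ lies in $\mathcal I$. Both follow from \Cref{lem:small-sets-ultrafilter-null}, because $\mathcal U$ is nonprincipal and $\kappa$-complete and $\kappa$ is uncountable, being measurable.

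\emph{Hom.} A homomorphism $S\longrightarrow N$ is determined by the image $w$ of $1+\mathfrak m$, and this $w$ must satisfy $\mathfrak m\cdot w=0$. For each $x\in X$ we have $\{x\}\in\mathcal I$, so $e_{\{x\}}\in\mathfrak m$. Then $e_{\{x\}}w_j=0$ gives $w_j(x)=0$ for every $j$ and every $x$. Hence $w=0$, and $\operatorname{Hom}_R(S,N)=0$.

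\emph{$\operatorname{Ext}^1$.} Since $R$ is projective, the long exact sequence identifies $\operatorname{Ext}^1_R(S,N)$ with the cokernel of the restriction map $\operatorname{Hom}_R(R,N)\longrightarrow\operatorname{Hom}_R(\mathfrak m,N)$. It therefore suffices to show that every $f\colon\mathfrak m\longrightarrow N$ has the form $r\longmapsto rv$ for some $v\in N$.

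\emph{Construction of $v$.} Note that $r=e_{\operatorname{supp}_X r}$, and $r\in\mathfrak m$ exactly when $\operatorname{supp}_X r\in\mathcal I$. So $f$ is determined by the values $f(e_A)$ for $A\in\mathcal I$. These values satisfy $f(e_A)=e_A f(e_A)$, and $e_{\{x\}}f(e_A)=f(e_{A\cap\{x\}})$. Define a family $v=(v_j)_{j\in J}\in R^J$ in the full product by
\[
v_j(x)\coloneqq\bigl(f(e_{\{x\}})\bigr)_j(x).
\]
Evaluating pointwise at each $x\in X$ and using the two identities above gives $f(e_A)=e_Av$ in $R^J$ for every $A\in\mathcal I$.

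\emph{Finiteness of $v$ (main step).} It remains to see that $v$ has only finitely many nonzero coordinates, so that $v\in N$. Suppose instead that there are distinct $j_n\in J$ ($n\in\mathbb N$) with $v_{j_n}\neq0$. Choose $x_n\in X$ with $v_{j_n}(x_n)=1$ and put $A\coloneqq\{x_n\mid n\in\mathbb N\}$. Then $A$ is countable, so $A\in\mathcal I$. But $f(e_A)=e_Av$ has a nonzero $j_n$-coordinate for every $n$, contradicting $f(e_A)\in N$.

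Thus $v\in N$ and $f(r)=rv$ for every $r\in\mathfrak m$. So $f$ is the restriction of $R\longrightarrow N$, $1\longmapsto v$, and $\operatorname{Ext}^1_R(S,N)=0$.

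Only $\aleph_1$-completeness and nonprincipality of $\mathcal U$ are used, through the membership of countable sets in $\mathcal I$. The main point is the finiteness argument in the last step.
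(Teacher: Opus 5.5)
Your proof is correct and follows the paper's argument: your $v$ is exactly the paper's $y=(e_{E_j})_{j\in J}$, your identity $f(e_A)=e_Av$ is the paper's Claim~2, and your countable-set contradiction is its Claim~3. Your version is slightly leaner, because you correctly observe that only the indices $j_n$ need to be distinct, not the points $x_n$, so the paper's point-finiteness bookkeeping and inductive choice of fresh points are unnecessary.
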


\begin{proof}
      Put $N\coloneqq({}_RR)^{(J)}$. Evaluation at $1+\mathfrak m$ gives an isomorphism
      \[
            \operatorname{ev}_{1+\mathfrak m}\colon \operatorname{Hom}_R({}_RS,N) \xlongrightarrow{\sim} \operatorname{ann}_N\mathfrak m, \qquad h\longmapsto h(1+\mathfrak m),
      \]
      where $\operatorname{ann}_N\mathfrak m\coloneqq \{y\in N\mid a\cdot y=0\text{ for every }a\in\mathfrak m\}$.
      The inverse sends $y\in\operatorname{ann}_N\mathfrak m$ to the map $r+\mathfrak m\longmapsto r\cdot y$, which is well defined since $\mathfrak m\cdot y=0$.

      \emph{Claim 1.} One has $\operatorname{ann}_N\mathfrak m=0$.

      \begin{proof}[Proof of Claim~1]
            Suppose $0\neq y\in\operatorname{ann}_N\mathfrak m$.  Choose a nonzero coordinate $y_j\in R$ and $x\in\operatorname{supp}_X y_j$. By construction $y_j(x)=1$, and hence $e_{\{x\}}\cdot y\neq0$. By \Cref{lem:small-sets-ultrafilter-null}, $\{x\}\in\mathcal I$, and hence $e_{\{x\}}\in\mathfrak m$. The inequality $e_{\{x\}}\cdot y\neq0$ contradicts $y\in\operatorname{ann}_N\mathfrak m$.
      \end{proof}

      The evaluation isomorphism and Claim~1 now give $\operatorname{Hom}_R({}_RS,N)=0$.

      We next prove that $\operatorname{Ext}_R^1({}_RS,N)=0$. Applying $\operatorname{Hom}_R(-,N)$ to the exact sequence
      \[
            0\longrightarrow{}_R\mathfrak m \longrightarrow{}_RR \longrightarrow{}_RS \longrightarrow0
      \]
      gives the long exact sequence
      \[
                  0=\operatorname{Hom}_R({}_RS,N)
                  \longrightarrow\operatorname{Hom}_R({}_RR,N)
                  \longrightarrow\operatorname{Hom}_R({}_R\mathfrak m,N)\longrightarrow\operatorname{Ext}_R^1({}_RS,N)
                  \longrightarrow\operatorname{Ext}_R^1({}_RR,N)=0.
      \]
      Hence it is enough to show that every left $R$-module homomorphism $f\colon{}_R\mathfrak m\longrightarrow N$, $x\longmapsto f(x)$, extends to ${}_RR$. Fix such a map $f$.

      \emph{Claim 2.} There is a family $(E_j)_{j\in J}$ of subsets of $X$ such that every $x\in X$ belongs to only finitely many $E_j$ and, for every $A\in\mathcal I$ and $j\in J$, $(f(e_A))_j=e_{A\cap E_j}$.

      \begin{proof}[Proof of Claim~2]
            For each $x\in X$, \Cref{lem:small-sets-ultrafilter-null} gives $\{x\}\in\mathcal I$, and hence $e_{\{x\}}\in\mathfrak m$. Since $e_{\{x\}}R=\mathbb F_2e_{\{x\}}\cong\mathbb F_2$, one has
            \[
                  \iota_x\colon(\mathbb F_2)^{(J)} \xlongrightarrow{\sim}e_{\{x\}}N=(e_{\{x\}}R)^{(J)}, \qquad (c_j)_{j\in J}\longmapsto (c_j\cdot e_{\{x\}})_{j\in J}.
            \]
            The equality $e_{\{x\}}\cdot f(e_{\{x\}})=f(e_{\{x\}})$, together with $\iota_x$, gives a unique $v_x\in(\mathbb F_2)^{(J)}$ such that
            \[
                  f(e_{\{x\}})=e_{\{x\}}\cdot f(e_{\{x\}}) =(v_x(j)\cdot e_{\{x\}})_{j\in J}.
            \]
            For each $j\in J$, define $E_j\coloneqq\{x\in X \mid v_x(j)=1\}$. For every $x\in X$, the set $\{j\in J \mid x\in E_j\}$ is finite, since $v_x\in(\mathbb F_2)^{(J)}$. This proves the required finiteness condition.

            Fix $A\in\mathcal I$, $j\in J$, and $x\in X$.  Since $f$ is a homomorphism of left $R$-modules,
            \[
                  e_{\{x\}}\cdot f(e_A) =f(e_{\{x\}}\cdot e_A) =\begin{cases} f(e_{\{x\}}),&x\in A,\\ 0,&x\notin A. \end{cases}
            \]
            Taking the $j$th coordinate and evaluating at $x$ yields
            \[
                  (f(e_A))_j(x) =\begin{cases} v_x(j),&x\in A,\\ 0,&x\notin A \end{cases}\quad =e_{A\cap E_j}(x).
            \]
            Since $x$ was arbitrary, $(f(e_A))_j=e_{A\cap E_j}$ follows.
      \end{proof}

      For every $A\in\mathcal I$, the membership $f(e_A)\in({}_RR)^{(J)}$ and $(f(e_A))_j=e_{A\cap E_j}$ show that only finitely many $j\in J$ satisfy $A\cap E_j\neq\emptyset$.

      \emph{Claim 3.} Only finitely many $E_j$ are nonempty.

      \begin{proof}[Proof of Claim~3]
            Suppose instead that infinitely many $E_j$ are nonempty.  Assume that distinct $j_0,\ldots,j_{k-1}$ and distinct $x_0,\ldots,x_{k-1}$ with $x_i\in E_{j_i}$ have been chosen.  By the finiteness condition in Claim~2, the set
            \[
                  \bigcup_{i<k}\{j\in J\mid x_i\in E_j\}
            \]
            is finite.  Hence some nonempty $E_{j_k}$ avoids all previously chosen points; choose $x_k\in E_{j_k}$.  Repeating the construction produces distinct $j_k\in J$ and distinct $x_k\in E_{j_k}$ for $k\in\mathbb N$. Since $A\coloneqq\{x_k \mid k\in\mathbb N\}$ satisfies $|A|\leq\aleph_0<\kappa$, one has $A\in\mathcal I$ by \Cref{lem:small-sets-ultrafilter-null}. For every $k\in\mathbb N$, the set $A\cap E_{j_k}$ is nonempty, so $(f(e_A))_j=e_{A\cap E_j}$ makes the $j_k$th coordinate of $f(e_A)$ nonzero.  Infinitely many nonzero coordinates contradict $f(e_A)\in({}_RR)^{(J)}$.
      \end{proof}

      By Claim~3, $y\coloneqq(e_{E_j})_{j\in J}\in({}_RR)^{(J)}$. Every $a\in\mathfrak m$ equals $e_A$ for $A\coloneqq\operatorname{supp}_X a\in\mathcal I$, and $(f(e_A))_j=e_{A\cap E_j}$ gives $f(a)=a\cdot y$. Thus the left $R$-module homomorphism
      \[
            g\colon{}_RR\longrightarrow N, \qquad r\longmapsto r\cdot y,
      \]
      extends $f$. Consequently, $\operatorname{Ext}_R^1({}_RS,N)=0$.
\end{proof}

The proof of \Cref{lem:low-degree} uses only that $\mathcal U$ is nonprincipal and $\aleph_1$-complete: nonprincipality excludes singletons, and $\aleph_1$-completeness excludes the countable set constructed in Claim~3.

\subsection{A fine ultrafilter on finite strict chains}
\label{subsec:roos-compactness-package}

Continue with $\kappa$, $\mathcal U$, $X$, and $\mathcal I$ fixed in \Cref{subsec:boolean}.  To solve component equations in the Roos complex, we now use the finite strict chains in $\mathcal I$ and the fine ultrafilter supplied by \textsf{(LUH2)}.

\begin{definition}
      For each $p\in\mathbb N$, put
      \[
            \mathrm{Flag}_p(\mathcal I)\coloneqq \bigl\{\mathbf A=(A_0,\ldots,A_p)\in\mathcal I^{p+1} \mathrel{\big|} A_0\subsetneq A_1\subsetneq\cdots\subsetneq A_p\bigr\}.
      \]
      The elements of $\mathrm{Flag}_p(\mathcal I)$ are called \emph{strict $p$-chains} in $\mathcal I$. Put
      \[
            \mathrm{Flag}_{<\omega}(\mathcal I)\coloneqq \bigsqcup_{p\in\mathbb N}\mathrm{Flag}_p(\mathcal I),
      \]
      the set of all finite strict chains in $\mathcal I$.
\end{definition}

\begin{lemma}\label{lem:flag-cardinality}
      The set of finite strict chains in $\mathcal I$ has cardinality $2^\kappa$; that is, $\bigl|\mathrm{Flag}_{<\omega}(\mathcal I)\bigr|=2^\kappa$.
\end{lemma}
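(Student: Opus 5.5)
The argument is a two-sided cardinality estimate, with both bounds coming from the fact that $|\mathcal P(X)|=2^\kappa$ for $X=\kappa$.

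\emph{Upper bound.} Each strict $p$-chain is an element of $\mathcal I^{p+1}\subseteq\mathcal P(X)^{p+1}$. For every $p\in\mathbb N$ this gives
\[
      \bigl|\mathrm{Flag}_p(\mathcal I)\bigr|\leq (2^\kappa)^{p+1}=2^\kappa,
\]
using cardinal arithmetic for the infinite cardinal $2^\kappa$. Taking the countable disjoint union over $p$ then yields
\[
      \bigl|\mathrm{Flag}_{<\omega}(\mathcal I)\bigr|\leq\aleph_0\cdot2^\kappa=2^\kappa.
\]

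\emph{Lower bound.} It suffices to show $|\mathcal I|\geq 2^\kappa$, because the length-zero chains already give an injection $\mathcal I\to\mathrm{Flag}_0(\mathcal I)$, $A\mapsto(A)$. Since $\mathcal U$ is an ultrafilter, complementation $A\mapsto X\setminus A$ is a bijection $\mathcal U\to\mathcal I$ by \Cref{fact:ultrafilter-characterization}. As $\mathcal P(X)=\mathcal U\sqcup\mathcal I$, we get $|\mathcal I|=|\mathcal U|$. Then $2^\kappa=|\mathcal P(X)|=|\mathcal I|+|\mathcal I|$, so $|\mathcal I|=2^\kappa$, since for infinite cardinals $\mu+\mu=\mu$ and $|\mathcal I|$ is infinite (as $\mathcal P(X)$ is). Cantor–Schröder–Bernstein then finishes the proof.

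The only step needing care is the lower bound. The complementation trick makes it short and uses only that $\mathcal U$ is an ultrafilter. An alternative is to fix a partition of $\kappa$ into a small piece and note that all subsets of a set in $\mathcal I$ lie in $\mathcal I$, but that yields only $2^{|B|}$ for $B\in\mathcal I$, and $|B|$ may be smaller than $\kappa$ (indeed every $B$ with $|B|<\kappa$ is in $\mathcal I$ by \Cref{lem:small-sets-ultrafilter-null}). The complementation argument avoids this issue.
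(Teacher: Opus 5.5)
Your proof is correct and follows essentially the same route as the paper: complementation gives $|\mathcal I|=|\mathcal U|$, hence $|\mathcal I|=2^\kappa$; the length-zero chains give the lower bound; and the estimate $|\mathrm{Flag}_p(\mathcal I)|\leq 2^\kappa$ for each $p$, summed over $p\in\mathbb N$, gives the upper bound. The only differences are cosmetic: you bound by $(2^\kappa)^{p+1}$ directly rather than by $|\mathcal I|^{p+1}$, and you deduce that $\mathcal I$ is infinite from $|\mathcal P(X)|=2\cdot|\mathcal I|$ rather than from the fact that every singleton lies in $\mathcal I$.
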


\begin{proof}
      By \Cref{fact:ultrafilter-characterization}, an ultrafilter chooses exactly one member of each complementary pair. Complementation is the bijection $\mathcal I\xlongrightarrow{\sim}\mathcal U$ given by $A\longmapsto X\setminus A$. Moreover, $\mathcal P(X)=\mathcal I\sqcup\mathcal U$, and $\mathcal I$ is infinite because it contains every singleton by \Cref{lem:small-sets-ultrafilter-null}. Hence
      \[
            2^\kappa = |\mathcal P(X)|=|\mathcal I|+|\mathcal U| =2\cdot|\mathcal I|=|\mathcal I|.
      \]
      The map $A\longmapsto(A)$ is a bijection from $\mathcal I$ to $\mathrm{Flag}_0(\mathcal I)$, so $\bigl|\mathrm{Flag}_{<\omega}(\mathcal I)\bigr| \geq |\mathcal I| = 2^\kappa$. On the other hand, $\bigl|\mathrm{Flag}_p(\mathcal I)\bigr|\leq|\mathcal I|^{p+1}=|\mathcal I|$ for each $p\in\mathbb N$, and hence $\bigl|\mathrm{Flag}_{<\omega}(\mathcal I)\bigr| \leq\aleph_0\cdot|\mathcal I|=|\mathcal I|$. The two inequalities give the stated equality.
\end{proof}

By \Cref{lem:flag-cardinality}, put $\mathscr S\coloneqq\mathcal P_\kappa\bigl(\mathrm{Flag}_{<\omega}(\mathcal I)\bigr)$ 
and fix a bijection $\theta\colon\mathrm{Flag}_{<\omega}(\mathcal I) \xlongrightarrow{\sim}2^\kappa$, $\mathbf A\longmapsto\theta(\mathbf A)$. It induces the bijection
\begin{equation}\label{eq:theta-on-small-subsets}
      \mathscr S \xlongrightarrow{\sim}\mathcal P_\kappa(2^\kappa), \qquad s\longmapsto\theta[s].
\end{equation}
The inverse of the bijection in \eqref{eq:theta-on-small-subsets} sends $t$ to $\theta^{-1}[t]$. By \textsf{(LUH2)}, there is an $\aleph_1$-complete fine ultrafilter $\mathcal W_0$ on $\mathcal P_\kappa(2^\kappa)$. Transporting $\mathcal W_0$ along the bijection in \eqref{eq:theta-on-small-subsets} gives
\begin{equation}\label{eq:W}
      \mathcal W\coloneqq \Bigl\{\mathscr E\subseteq\mathscr S \mathrel{\Big|} \{\theta[s] \mid s\in\mathscr E\}\in\mathcal W_0\Bigr\}.
\end{equation}
Then there is an isomorphism of Boolean algebras
\[
      \mathcal P(\mathscr S) \longrightarrow \mathcal P\!\left(\mathcal P_\kappa(2^\kappa)\right), \qquad \mathscr E\longmapsto\{\theta[s]\mid s\in\mathscr E\}.
\]
Thus \eqref{eq:W} defines an ultrafilter. If $\mathscr E_n\in\mathcal W$ for every $n\in\mathbb N$, then bijectivity gives
\[
      \left\{\theta[s]\mid s\in\bigcap_{n\in\mathbb N}\mathscr E_n\right\} =\bigcap_{n\in\mathbb N} \{\theta[s]\mid s\in\mathscr E_n\}\in\mathcal W_0,
\]
so $\mathcal W$ is $\aleph_1$-complete.

For a fixed $\mathbf A\in\mathrm{Flag}_{<\omega}(\mathcal I)$, put $\mathcal C_{\mathbf A}\coloneqq \{s\in\mathscr S\mid\mathbf A\in s\}$.
The direct image of $\mathcal C_{\mathbf A}$ is
\begin{equation}\label{eq:transported-cone}
      \{\theta[s]\mid s\in\mathcal C_{\mathbf A}\} =\{t\in\mathcal P_\kappa(2^\kappa) \mid\theta(\mathbf A)\in t\}.
\end{equation}
The right-hand side of \eqref{eq:transported-cone} belongs to $\mathcal W_0$ by fineness.  Hence $\mathcal C_{\mathbf A}\in\mathcal W$ by \eqref{eq:W}, and $\mathcal W$ is fine.

\begin{proposition}\label{prop:three-mechanisms}
      Retain $\kappa$ and $\mathcal U$ from \Cref{subsec:boolean}, and let $\mathcal W$ be the ultrafilter defined in \eqref{eq:W}.  Then statements \textup{(1)}--\textup{(3)} hold.
      \begin{enumerate}[label=\textup{(\arabic*)}]
            \item If $\Gamma\subseteq\mathrm{Flag}_{<\omega}(\mathcal I)$  and $|\Gamma|<\kappa$, then all sets occurring in the chains in $\Gamma$ have a strict common upper bound in $\mathcal I$; that is, there is $A_\Gamma\in\mathcal I$ such that $A_i\subsetneq A_\Gamma$ for every $\mathbf A=(A_0,\ldots,A_p)\in\Gamma$ and every $i\leq p$.
            \item For each fixed $\mathbf A\in\mathrm{Flag}_{<\omega}(\mathcal I)$, one has $\{s\in\mathscr S\mid\mathbf A\in s\}\in\mathcal W$.
            \item Let $Y$ and $J$ be sets, put $R_Y\coloneqq(\mathbb F_2)^Y$, and let $(v^s)_{s\in\mathscr S}$ be a family with
                  \[
                        v^s\in({}_{R_Y}R_Y)^{(J)} \qquad(s\in\mathscr S).
                  \]
                  Define $v\in({}_{R_Y}R_Y)^J$ by taking the ultralimit along $\mathcal W$ in each coordinate:
                  \begin{equation}\label{eq:coordinatewise-ultralimit}
                        v(j)(x)=a
                        \iff \{s\in\mathscr S\mid v^s(j)(x)=a\}\in\mathcal W
                        \qquad(j\in J,\ x\in Y,\ a\in\mathbb F_2).
                  \end{equation}
                  By \Cref{fact:ultrafilter-characterization}, an ultrafilter chooses exactly one set from each complementary pair, so $v$ is well-defined. Then $v\in({}_{R_Y}R_Y)^{(J)}$; equivalently, the set $\{j\in J \mid v(j)\neq0\}$ is finite.
      \end{enumerate}
\end{proposition}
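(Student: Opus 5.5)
Statements (1) and (2) are direct, and (3) uses $\aleph_1$-completeness together with the finitely-supported nature of each $v^s$.

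For (1), let $B\coloneqq\bigcup_{\mathbf A\in\Gamma}\bigcup_{i\leq p_{\mathbf A}}A_i$. This is a union of fewer than $\kappa$ members of $\mathcal I$, since $|\Gamma|<\kappa$, each chain is finite, and $\kappa$ is infinite. By $\kappa$-completeness of $\mathcal U$, the complement $X\setminus B=\bigcap(X\setminus A_i)$ lies in $\mathcal U$, so $B\in\mathcal I$. In particular $X\setminus B$ is nonempty, indeed of size $\kappa$ by \Cref{lem:small-sets-ultrafilter-null}. Pick $x\in X\setminus B$ and put $A_\Gamma\coloneqq B\cup\{x\}$. This lies in $\mathcal I$, being a union of two members of the ideal, and it strictly contains every $A_i$ because $x\notin A_i$.

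Statement (2) is exactly the fineness of $\mathcal W$ verified just before the proposition via \eqref{eq:transported-cone}, so I would simply cite that computation.

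For (3), write $\operatorname{supp}v\coloneqq\{j\in J\mid v(j)\neq0\}$ and suppose it is infinite. I would choose distinct $j_0,j_1,\ldots$ in it and points $x_n\in Y$ with $v(j_n)(x_n)=1$. By \eqref{eq:coordinatewise-ultralimit}, each set $\mathscr E_n\coloneqq\{s\in\mathscr S\mid v^s(j_n)(x_n)=1\}$ belongs to $\mathcal W$. By $\aleph_1$-completeness, $\bigcap_{n\in\mathbb N}\mathscr E_n\in\mathcal W$, and in particular this intersection is nonempty. Any $s$ in it satisfies $v^s(j_n)\neq0$ for every $n$. Thus $v^s$ has infinitely many nonzero coordinates, contradicting $v^s\in(R_Y)^{(J)}$. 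Hence $\operatorname{supp}v$ is finite.

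The only mild subtlety is in (3): an element of $\operatorname{supp}v$ must first be witnessed by a specific point $x_n$, so that we obtain countably many sets in $\mathcal W$ to intersect. Note that only $\aleph_1$-completeness is needed here, not fineness.
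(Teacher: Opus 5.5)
Your proposal is correct and follows the paper's proof essentially step by step. In (1) you take the union of fewer than $\kappa$ members of $\mathcal I$, use $\kappa$-completeness of $\mathcal U$ to keep it in $\mathcal I$, and adjoin a point of its complement. Part (2) is the transported-cone fineness computation, and in (3) you intersect the countably many sets indexed by witnesses $(j_n,x_n)$ and use $\aleph_1$-completeness of $\mathcal W$ to reach the same contradiction.
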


\begin{proof}
      For \textup{(1)}, let $\mathcal V_\Gamma$ be the family of all sets that occur in one of the chains belonging to $\Gamma$.  Since every chain is finite and $|\Gamma|<\kappa$, one has $|\mathcal V_\Gamma|\leq|\Gamma|\cdot\aleph_0<\kappa$. By construction of $\mathrm{Flag}_{<\omega}(\mathcal I)$, every member of $\mathcal V_\Gamma$ belongs to $\mathcal I$. The $\kappa$-completeness of $\mathcal U$ makes $\mathcal I$ closed under unions of fewer than $\kappa$ sets: if $(B_\xi)_{\xi\in D}$ is a family in $\mathcal I$ with $|D|<\kappa$, then \Cref{fact:ultrafilter-characterization} gives $X\setminus B_\xi\in\mathcal U$ for every $\xi\in D$, and $\kappa$-completeness gives
      \[
            X\setminus\bigcup_{\xi\in D}B_\xi =\bigcap_{\xi\in D}(X\setminus B_\xi)\in\mathcal U,
      \]
      so $\bigcup_{\xi\in D}B_\xi\in\mathcal I$. The closure of $\mathcal I$ just proved gives $A_\Gamma^0\coloneqq\bigcup_{A\in\mathcal V_\Gamma}A\in\mathcal I$. Hence $X\setminus A_\Gamma^0$ is nonempty. Take $x_\Gamma\in X\setminus A_\Gamma^0$ and put $A_\Gamma\coloneqq A_\Gamma^0\cup\{x_\Gamma\}$. Since $|\{x_\Gamma\}|<\kappa$, \Cref{lem:small-sets-ultrafilter-null} gives $\{x_\Gamma\}\in\mathcal I$, and closure under finite unions gives $A_\Gamma\in\mathcal I$. Finally, for every $A\in\mathcal V_\Gamma$ one has $A\subseteq A_\Gamma^0\subsetneq A_\Gamma$, and therefore $A\subsetneq A_\Gamma$, as required.

      For \textup{(2)}, the set in the statement is $\mathcal C_{\mathbf A}$, and \eqref{eq:transported-cone} together with \eqref{eq:W} proves that $\mathcal C_{\mathbf A}\in\mathcal W$.

      For \textup{(3)}, suppose that $v$ has infinite $J$-support. Choose distinct $j_k\in J$ and points $x_k\in Y$ such that $v(j_k)(x_k)=1$ for every $k\in\mathbb N$.  By \eqref{eq:coordinatewise-ultralimit}, one has
      \[
            D_k\coloneqq \{s\in\mathscr S\mid v^s(j_k)(x_k)=1\} \in\mathcal W \qquad(k\in\mathbb N).
      \]
      Since $\mathcal W$ is $\aleph_1$-complete, one has $D_\infty\coloneqq\bigcap_{k\in\mathbb N}D_k\in\mathcal W$. In particular, $D_\infty$ is nonempty. If $s\in D_\infty$, then $v^s(j_k)\neq0$ for every $k$, contradicting $v^s\in({}_{R_Y}R_Y)^{(J)}$.
\end{proof}

Thus part \textup{(1)} uses the $\kappa$-completeness of $\mathcal U$, part \textup{(2)} uses the fineness of $\mathcal W$, and part \textup{(3)} uses the $\aleph_1$-completeness of $\mathcal W$.  No normality or $\kappa$-completeness of $\mathcal W$ is required.

\subsection{Directed systems and derived limits}
\label{subsec:roos-bridge}

Recall that a partially ordered set $(\Lambda,\leq)$ is \emph{directed} if, for every $\sigma,\tau\in\Lambda$, there is $\upsilon\in\Lambda$ with $\sigma\leq\upsilon$ and $\tau\leq\upsilon$.  A direct system of left $R$-modules over $\Lambda$ is a family $(M_\sigma,u_\sigma^\tau)_{\sigma\leq\tau}$ of left $R$-modules $M_\sigma$ and left $R$-module homomorphisms
\[
      u_\sigma^\tau\colon{}_R(M_\sigma)\longrightarrow{}_R(M_\tau), \qquad m\longmapsto u_\sigma^\tau(m) \qquad(\sigma\leq\tau),
\]
such that $u_\sigma^\sigma=\operatorname{id}_{M_\sigma}$ and $u_\tau^\upsilon\circ u_\sigma^\tau=u_\sigma^\upsilon$ whenever $\sigma\leq\tau\leq\upsilon$.  Let
\[
      \jmath_\sigma\colon{}_R(M_\sigma)\longrightarrow \bigoplus_{\lambda\in\Lambda}{}_R(M_\lambda), \qquad m\longmapsto\jmath_\sigma(m)
\]
be the coproduct injection.  The \emph{filtered colimit} of $(M_\sigma,u_\sigma^\tau)_{\sigma\leq\tau}$ is
\begin{equation}\label{eq:directed-colimit-presentation}
      \varinjlim_{\sigma\in\Lambda}M_\sigma \coloneqq\left(\bigoplus_{\sigma\in\Lambda}M_\sigma\right) \bigg/ \sum_{\substack{\sigma\leq\tau\\m\in M_\sigma}} R\cdot\Bigl( \jmath_\sigma(m)-\jmath_\tau\bigl(u_\sigma^\tau(m)\bigr) \Bigr).
\end{equation}
For each $\sigma\in\Lambda$, let
\[
      \iota_\sigma\colon{}_R(M_\sigma)\longrightarrow{}_R\!\left(\varinjlim_{\lambda\in\Lambda}M_\lambda\right), \qquad m\longmapsto[\jmath_\sigma(m)],
\]
be the canonical structure homomorphism. These maps satisfy $\iota_\tau\circ u_\sigma^\tau=\iota_\sigma$ whenever $\sigma\leq\tau$.
For a directed poset, the terms ``filtered colimit'' and ``directed colimit'' are synonymous.

\begin{lemma}\label{lem:boolean-ideal-colimit}
      Equip $e_AR$ and $\mathfrak m$ with their inherited $R$--$R$-bimodule structures.  For $A\subseteq B$ in $\mathcal I$, let $\iota_A^B\colon{}_R(e_AR)_R \lhook\joinrel\longrightarrow{}_R(e_BR)_R$, $x\longmapsto x$, be the inclusion. For each $A\in\mathcal I$, let $j_A\colon{}_R(e_AR)_R\lhook\joinrel\longrightarrow{}_R\mathfrak m_R$ be the inclusion. These inclusions are compatible with the transition maps: if $A\subseteq B$, then $j_B\circ\iota_A^B=j_A$. Hence they induce a canonical $R$--$R$-bimodule homomorphism
      \[
            \eta\colon \varinjlim_{A\in\mathcal I}{}_R(e_AR)_R \longrightarrow{}_R\mathfrak m_R, \qquad \iota_A(x)\longmapsto x \quad(A\in\mathcal I,\ x\in e_AR).
      \]
      Here the colimit is taken in the category of $R$--$R$-bimodules; its underlying left $R$-module is the colimit in $R\text{-}\mathrm{Mod}$. Then $\eta$ is an $R$--$R$-bimodule isomorphism, and every ${}_R(e_AR)$ is a finitely generated projective left $R$-module.
\end{lemma}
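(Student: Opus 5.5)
The argument has three parts: the index set $\mathcal I$ is directed, the colimit map $\eta$ is both injective and surjective, and each $e_AR$ is projective because $e_A$ is idempotent.

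\emph{Directedness and compatibility.} First, $\mathcal I$ ordered by inclusion is directed, because it is an ideal of the Boolean algebra $\mathcal P(X)$ and so is closed under finite unions, as shown in \Cref{subsec:boolean}. For $A\subseteq B$ we have $e_Ae_B=e_A$, hence $e_AR\subseteq e_BR$; this gives the transition inclusions $\iota_A^B$. Every element of $e_AR$ has support inside $A\in\mathcal I$, so $e_AR\subseteq\mathfrak m$, which gives $j_A$. All these maps are inclusions of subsets of $R$, so $j_B\circ\iota_A^B=j_A$ holds trivially. They are also bimodule maps, because $R$ is commutative and $e_AR$, $\mathfrak m$ are two-sided ideals. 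The universal property of the colimit in the category of $R$--$R$-bimodules then produces $\eta$. Colimits of bimodules are computed as in \eqref{eq:directed-colimit-presentation}, since bimodules are left modules over $R\otimes_{\mathbb Z}R^{\mathrm{op}}$, so the underlying left module is the colimit in ${}_R\mathrm{Mod}$.

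\emph{Bijectivity of $\eta$.} For surjectivity, take $a\in\mathfrak m$ and put $A\coloneqq\operatorname{supp}_Xa\in\mathcal I$. Then $a=e_A=e_A\cdot1\in e_AR$, so $a=\eta(\iota_A(a))$. For injectivity, the index poset is directed, so every element of the colimit has the form $\iota_A(x)$ for a single $A$. This is the standard description: sums of finitely many representatives can be pushed to a common upper bound. If $\eta(\iota_A(x))=x=0$, then $\iota_A(x)=0$. Equivalently, one can note that the colimit of a directed family of submodules along inclusions is their union, and the union $\bigcup_{A\in\mathcal I}e_AR$ equals $\mathfrak m$ by the surjectivity argument.

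\emph{Projectivity.} Since $e_A$ is idempotent, ${}_RR=Re_A\oplus R(1-e_A)$ and $e_AR=Re_A$ by commutativity. So ${}_R(e_AR)$ is a direct summand of ${}_RR$, cyclic and generated by $e_A$, hence finitely generated projective.

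No step is a real obstacle. The only point needing care is the injectivity of $\eta$, which rests on the standard fact that in a directed colimit every element comes from a single stage; this holds because $\mathcal I$ is directed.
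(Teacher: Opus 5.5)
Your proposal is correct and follows the paper's proof step for step. It obtains directedness of $(\mathcal I,\subseteq)$ from closure under finite unions, and surjectivity from $a=e_{\operatorname{supp}_X a}\in e_{\operatorname{supp}_X a}R$. Injectivity comes from representing each element of the directed colimit at a single stage; the paper does this explicitly by pushing a finite sum $\sum_{i<n}\jmath_{A_i}(r_i)$ to the stage $B=\bigcup_{i<n}A_i$. Projectivity comes from the splitting ${}_RR={}_R(e_AR)\oplus{}_R((1-e_A)R)$.
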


\begin{proof}
      The inclusions and $\eta$ preserve both scalar actions because all actions are induced by multiplication in the commutative ring $R$. The poset $(\mathcal I,\subseteq)$ is directed because $A\cup B\in\mathcal I$ for all $A,B\in\mathcal I$.  Every $r\in\mathfrak m$ belongs to $e_{\operatorname{supp}_X r}R$. Conversely, if $A\in\mathcal I$ and $r\in R$, then
      \[
            \operatorname{supp}_X(e_A\cdot r) =A\cap\operatorname{supp}_X r\in\mathcal I,
      \]
      so $e_A\cdot r\in\mathfrak m$.  Therefore $\mathfrak m=\bigcup_{A\in\mathcal I}e_AR$, and $\eta$ is surjective.

      To prove that $\eta$ is injective, let $\xi\in\operatorname{Ker}\eta$. Choose a representative
      \[
            \sum_{i<n}\jmath_{A_i}(r_i) \in\bigoplus_{A\in\mathcal I}{}_R(e_AR), \qquad A_i\in\mathcal I,\quad r_i\in e_{A_i}R,
      \]
      of $\xi$, where $n\in\mathbb N$, and put $B\coloneqq\bigcup_{i<n}A_i\in\mathcal I$.  Since $A_i\subseteq B$, the relations in \eqref{eq:directed-colimit-presentation} identify $\xi$ with the class of $\jmath_B\bigl(\sum_{i<n}r_i\bigr)$.  On the other hand, $\eta(\xi)=\sum_{i<n}r_i=0$ in $\mathfrak m$.  Thus $\xi=0$, so $\eta$ is injective and hence an isomorphism.

      Finally, note that ${}_RR={}_R(e_AR)\oplus{}_R((1-e_A)R)$. Thus ${}_R(e_AR)$ is the cyclic left $R$-module generated by $e_A$ and a direct summand of ${}_RR$, so it is finitely generated and projective.
\end{proof}

We also recall that an \emph{inverse system of left $R$-modules} over $(\mathcal I,\subseteq)$ is a family $\mathbf Y=(Y_A,\rho_B^A)_{A\subseteq B}$ of left $R$-modules $Y_A$ and left $R$-module homomorphisms
\[
      \rho_B^A\colon{}_R(Y_B)\longrightarrow{}_R(Y_A), \qquad y\longmapsto\rho_B^A(y) \qquad(A\subseteq B),
\]
satisfying $\rho_A^A=\operatorname{id}_{Y_A}$ and $\rho_B^A\circ\rho_C^B=\rho_C^A$ whenever $A\subseteq B\subseteq C$:
\[
      \xymatrix@C=44pt{ Y_C\ar[r]_-{\rho_C^B}\ar@/^1.2pc/[rr]^-{\rho_C^A}& Y_B\ar[r]_-{\rho_B^A}&Y_A. }
\]
The \emph{inverse limit} of $\mathbf Y$ is the left $R$-module of compatible families
\begin{equation}\label{eq:inverse-limit}
      \varprojlim_{A\in\mathcal I}Y_A \coloneqq \left\{(y_A)_{A\in\mathcal I}\in\prod_{A\in\mathcal I}Y_A \mathrel{\Big|} \rho_B^A(y_B)=y_A\text{ whenever }A\subseteq B\right\} \subseteq\prod_{A\in\mathcal I}Y_A.
\end{equation}
The inverse limit functor is left exact.  Its $p$-th right derived functor is denoted by
\[
      \mathop{\varprojlim\nolimits^{p}}\displaylimits_{A\in\mathcal I}Y_A \coloneqq \mathbf R^p\!\varprojlim_{A\in\mathcal I}Y_A \qquad(p\in\mathbb N).
\]

\subsection{The Roos complex and derived inverse limits}

Roos introduced an explicit functorial cochain model for the right derived functors of inverse limit \cite{Roos1961}; see also \cite[Section~1.3]{AlonsoAlviteJeremias2026}.

\begin{definition}\label{def:normalized-roos-complex}
      Let $\mathbf Y=(Y_A,\rho_B^A)_{A\subseteq B}$ be an inverse system of left $R$-modules over $(\mathcal I,\subseteq)$, with transition maps
      \[
            \rho_B^A\colon {}_R(Y_B)\longrightarrow {}_R(Y_A), \qquad y\longmapsto\rho_B^A(y) \qquad(A\subseteq B).
      \]
      For $p\in\mathbb N$, define
      \[
            \mathcal R^p(\mathbf Y) \coloneqq \prod_{\mathbf A=(A_0\subsetneq\cdots\subsetneq A_p) \in\mathrm{Flag}_p(\mathcal I)}Y_{A_0}.
      \]
      A cochain $z\in\mathcal R^p(\mathbf Y)$ therefore has a component $z_{\mathbf A}=z_{A_0,\ldots,A_p}\in Y_{A_0}$ for every $\mathbf A\in\mathrm{Flag}_p(\mathcal I)$.  Define
      \begin{equation}\label{eq:general-roos-differential}
            \begin{aligned}
                  d_{\mathbf Y}^p&\colon
                  \mathcal R^p(\mathbf Y)\longrightarrow \mathcal R^{p+1}(\mathbf Y),\\
                  (d_{\mathbf Y}^p z)_{A_0,\ldots,A_{p+1}}
                  &\coloneqq \rho_{A_1}^{A_0}z_{A_1,\ldots,A_{p+1}}+\sum_{i=1}^{p+1}(-1)^i\cdot
                  z_{A_0,\ldots,\widehat{A_i},\ldots,A_{p+1}}.
            \end{aligned}
      \end{equation}
      The component formula applies whenever $A_0\subsetneq\cdots\subsetneq A_{p+1}$, and $\widehat{A_i}$ denotes omission of $A_i$. The resulting nonnegative cochain complex
      \[
            \mathcal R^\bullet(\mathbf Y)\colon \mathcal R^0(\mathbf Y) \xlongrightarrow{d_{\mathbf Y}^0} \mathcal R^1(\mathbf Y) \xlongrightarrow{d_{\mathbf Y}^1} \mathcal R^2(\mathbf Y) \xlongrightarrow{d_{\mathbf Y}^2}\cdots
      \]
      is the \emph{normalized Roos cochain complex} of $\mathbf Y$. The complex $\mathcal R^\bullet(\mathbf Y)$ has the augmentation
      \begin{equation}\label{eq:roos-augmentation}
            0\longrightarrow \varprojlim_{A\in\mathcal I}Y_A \xlongrightarrow{\varepsilon_{\mathbf Y}} \mathcal R^0(\mathbf Y) \xlongrightarrow{d_{\mathbf Y}^0} \mathcal R^1(\mathbf Y),
      \end{equation}
      where $\varepsilon_{\mathbf Y}$ is the inclusion of the compatible families in \eqref{eq:inverse-limit}. The augmentation map $\varepsilon_{\mathbf Y}$ is not a differential of the nonnegative complex $\mathcal R^\bullet(\mathbf Y)$; in particular, it does not contribute boundaries to degree-zero cohomology.
\end{definition}

Roos's Proposition~3 treats the complex denoted by $\Pi^\ast\mathbf Y$, whose degree-$p$ term for a poset is indexed by the strict chains $A_0\subsetneq\cdots\subsetneq A_p$ \cite[p.~3702]{Roos1961}. In modern terminology, this is the reduced, or nondegenerate-chain, Roos complex; see \cite[Section~1.3]{AlonsoAlviteJeremias2026}. Since $\mathcal I$ is a set, the diagram category of inverse systems of left $R$-modules has enough injective objects. The category $R\text{-}\mathrm{Mod}$ has exact products, so Roos's result applies.

\begin{proposition}
      \label{prop:roos-computes-derived-limits}
      (\cite[pp.~3702--3703]{Roos1961}) For every inverse system $\mathbf Y$ in \Cref{def:normalized-roos-complex}, one has $d_{\mathbf Y}^{p+1}\circ d_{\mathbf Y}^p=0$ for each $p\in\mathbb N$, and there are natural isomorphisms of left $R$-modules
      \begin{equation}\label{eq:roos-derived-limit}
            \mathrm H^p\bigl(\mathcal R^\bullet(\mathbf Y)\bigr) \cong \mathop{\varprojlim\nolimits^{p}}\displaylimits_{A\in\mathcal I}Y_A \qquad(p\in\mathbb N).
      \end{equation}
      In degree $0$, this gives $\mathrm H^0\bigl(\mathcal R^\bullet(\mathbf Y)\bigr) =\operatorname{Ker}d_{\mathbf Y}^0 =\operatorname{Im}\varepsilon_{\mathbf Y} \cong\varprojlim_{A\in\mathcal I}Y_A$.
\end{proposition}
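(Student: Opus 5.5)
The plan is to prove the two assertions separately: first that $d^{p+1}_{\mathbf Y}\circ d^p_{\mathbf Y}=0$ by a direct component computation, and then the isomorphism \eqref{eq:roos-derived-limit} by the universal $\delta$-functor argument. The standard template applies because ${}_R\mathrm{Mod}$ has exact products and the diagram category of $\mathcal I$-indexed inverse systems has enough injectives.

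\emph{The identity $d^{p+1}\circ d^p=0$.} Fix $z\in\mathcal R^p(\mathbf Y)$ and a chain $A_0\subsetneq\cdots\subsetneq A_{p+2}$. Expand $(d^{p+1}d^pz)_{A_0,\ldots,A_{p+2}}$ using \eqref{eq:general-roos-differential} twice. The terms involving the omission of two indices $i<k$ (both $\ge1$) occur twice with opposite signs $(-1)^{i+k}$ and $(-1)^{i+k-1}$, so they cancel; this is the usual simplicial computation. The terms applying $\rho$ require separate bookkeeping, using $\rho_{A_1}^{A_0}\rho_{A_2}^{A_1}=\rho_{A_2}^{A_0}$:
\begin{itemize}
\item the term $\rho^{A_0}_{A_1}\rho^{A_1}_{A_2}z_{A_2,\ldots}$ cancels against the $i=1$ omission term of $\rho^{A_0}_{A_2}z_{A_2,\ldots}$;
\item the terms $\rho^{A_0}_{A_1}z_{A_1,\ldots,\widehat{A_k},\ldots}$ cancel pairwise with the $\rho$-terms of the omissions.
\end{itemize}
Every chain appearing is again strict, since omitting entries from a strict chain keeps it strict, so the normalized complex is closed under these faces.

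\emph{Exactness properties.} The assignment $\mathbf Y\mapsto\mathcal R^\bullet(\mathbf Y)$ is an additive functor to complexes. Since products in ${}_R\mathrm{Mod}$ are exact, each $\mathcal R^p(-)$ is exact, so a short exact sequence of inverse systems yields a short exact sequence of Roos complexes. The resulting long exact cohomology sequence makes $\mathrm H^\ast(\mathcal R^\bullet(-))$ a cohomological $\delta$-functor. In degree $0$, $\operatorname{Ker}d^0$ consists of the families $(z_A)$ with $\rho_B^A z_B - z_A=0$ for all $A\subsetneq B$, that is, exactly the compatible families. This gives $\mathrm H^0=\operatorname{Im}\varepsilon_{\mathbf Y}\cong\varprojlim$.

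\emph{Effaceability (main obstacle).} It remains to show $\mathrm H^p(\mathcal R^\bullet(\mathbf I))=0$ for $p\ge1$ whenever $\mathbf I$ is injective; universality then gives the natural isomorphisms. First I will reduce to injectives of the form $\prod_{C}\mathbf{Hom}(\mathbb Z[\mathcal I_{\le C}]\text{-type},E_C)$. Concretely, the right adjoint to evaluation at $C$ sends a module $E$ to the system $A\mapsto E$ for $A\le C$ and $A\mapsto 0$ otherwise. Every injective system is a summand of a product of these with $E_C$ injective. Since Roos complexes commute with products and summands, it suffices to treat a single such system. For it, $\mathcal R^\bullet$ is the normalized cochain complex of the nerve of the down-set $\{A\in\mathcal I: A\subseteq C\}$ with coefficients in $E$. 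This down-set has top element $C$, so the nerve is a cone and the complex is contractible. I will write the explicit contraction by inserting $C$ at the end of a chain, with a case distinction for chains that already end at $C$ in order to preserve strictness; this case distinction is the delicate point of the argument. This yields $\mathrm H^p=0$ for $p\ge1$ and $\mathrm H^0=E=\varprojlim$. The sign bookkeeping in the contracting homotopy for the normalized, strict-chain version is the step I expect to require the most care. Alternatively, I would simply cite Roos's Proposition~3 as the paper does.
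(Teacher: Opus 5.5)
Your first three steps are correct. The face cancellation gives $d^{p+1}_{\mathbf Y}\circ d^p_{\mathbf Y}=0$. Exactness of products in ${}_R\mathrm{Mod}$ makes $\mathbf Y\mapsto\mathrm H^\ast(\mathcal R^\bullet(\mathbf Y))$ a $\delta$-functor. And $\operatorname{Ker}d^0_{\mathbf Y}$ is exactly the set of compatible families. Together with effaceability, this is the standard proof of Roos's result, which the paper only cites after noting that ${}_R\mathrm{Mod}$ has exact products and that inverse systems have enough injectives.

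\textbf{The effaceability step uses the wrong building blocks.} The transition maps run $\rho_B^A\colon Y_B\to Y_A$ for $A\subseteq B$. Hence the right adjoint of $\mathbf Y\mapsto Y_C$ is the system $\mathbf K_C(E)$ with $K_C(E)_A=E$ if $C\subseteq A$ and $K_C(E)_A=0$ otherwise, with identity transition maps on this up-set. Indeed, a morphism $\mathbf Y\to\mathbf K_C(E)$ amounts to a map $f_C\colon Y_C\to E$, via $f_B=f_C\circ\rho_B^C$ for $B\supseteq C$.

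The system you describe, supported on the down-set $\{A\in\mathcal I\mid A\subseteq C\}$, is not this adjoint, and your reduction fails for it:
\begin{itemize}
\item Every $A\subseteq C$ lies below some $A\cup\{x\}\in\mathcal I$ with $x\notin C$, where that system vanishes. So its inverse limit is $0$.
\item Since $\varprojlim$ commutes with products, no product of such systems can have $\mathbf K_C(E)$ as a direct summand, because $\varprojlim\mathbf K_C(E)=E$.
\item Your description of its Roos complex is also wrong. $\mathcal R^p$ only constrains $A_0\subseteq C$; the later $A_i$ may leave the down-set, and $\mathrm H^0=\varprojlim=0$, not $E$.
\end{itemize}

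\textbf{The repair is to use the up-set} $\{A\in\mathcal I\mid C\subseteq A\}$. For $\mathbf K_C(E)$ the condition $C\subseteq A_0$ forces the whole chain into this up-set, and every transition map is the identity. So $\mathcal R^\bullet(\mathbf K_C(E))$ really is the normalized cochain complex of the up-set's order complex with coefficients in $E$.

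The up-set has least element $C$, and the contraction inserts $C$ at the front: $(hz)_{A_0,\ldots,A_{p-1}}=z_{C,A_0,\ldots,A_{p-1}}$ if $C\subsetneq A_0$, and $(hz)_{A_0,\ldots,A_{p-1}}=0$ if $A_0=C$. Checking these two cases gives $d^{p-1}h+hd^p=\operatorname{id}$ on $\mathcal R^p(\mathbf K_C(E))$ for every $p\geq1$.

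Every injective system $\mathbf I$ splits off $\prod_{C\in\mathcal I}\mathbf K_C(E_C)$, where $I_C\hookrightarrow E_C$ with $E_C$ injective. Hence positive-degree cohomology vanishes on injectives. With this correction your argument is a complete, self-contained proof of what the paper takes from Roos.
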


\begin{lemma}
      \label{lem:normalized-bar-resolution}
      Let $\mathbf M=({}_R(M_A),\iota_A^B)_{A\subseteq B}$ be a direct system of projective left $R$-modules over $(\mathcal I,\subseteq)$, and put ${}_RM\coloneqq\varinjlim_{A\in\mathcal I}{}_R(M_A)$. For $p\in\mathbb N$, define
      \[
            \mathcal B_p(\mathbf M)\coloneqq
            \bigoplus_{A_0\subsetneq\cdots\subsetneq A_p}{}_R(M_{A_0}).
      \]
      Write $[A_0,\ldots,A_p;m]$ for the element $m\in M_{A_0}$ in the summand indexed by $A_0\subsetneq\cdots\subsetneq A_p$. For $p\in\mathbb N\setminus\{0\}$, define
      \begin{align*}
            \partial_p\colon\mathcal B_p(\mathbf M)&\longrightarrow\mathcal B_{p-1}(\mathbf M),\\
            [A_0,\ldots,A_p;m]&\longmapsto
            [A_1,\ldots,A_p;\iota_{A_0}^{A_1}(m)]
            +\sum_{i=1}^p(-1)^i[A_0,\ldots,\widehat{A_i},\ldots,A_p;m].
      \end{align*}
      For each $A\in\mathcal I$, let $\iota_A\colon{}_R(M_A)\longrightarrow{}_RM$ denote the canonical structure homomorphism. These maps satisfy $\iota_B\circ\iota_A^B=\iota_A$ whenever $A\subseteq B$. Define the augmentation by
      \[
            \epsilon\colon\mathcal B_0(\mathbf M)\longrightarrow{}_RM,
            \qquad [A;m]\longmapsto\iota_A(m).
      \]
      Then
      \begin{equation}\label{eq:normalized-bar-resolution}
            \cdots\longrightarrow\mathcal B_2(\mathbf M)
            \xlongrightarrow{\partial_2}\mathcal B_1(\mathbf M)
            \xlongrightarrow{\partial_1}\mathcal B_0(\mathbf M)
            \xlongrightarrow{\epsilon}{}_RM\longrightarrow0
      \end{equation}
      is a projective resolution of ${}_RM$.
\end{lemma}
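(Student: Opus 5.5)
Each $\mathcal B_p(\mathbf M)$ is a direct sum of projective modules $M_{A_0}$, so it is projective. The remaining work is to check that \eqref{eq:normalized-bar-resolution} is a complex and that it is exact.

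\emph{Complex identities.} For $\partial_{p-1}\circ\partial_p=0$ we would expand $\partial_{p-1}\partial_p[A_0,\ldots,A_p;m]$ and cancel terms in pairs, exactly as in the simplicial identities.
\begin{itemize}
      \item The ``face $0$'' term, which applies a transition map $\iota_{A_0}^{A_1}$, combines with the omission terms using $\iota_{A_1}^{A_2}\circ\iota_{A_0}^{A_1}=\iota_{A_0}^{A_2}$.
      \item Omitting $A_i$ and then $A_j$ cancels against omitting them in the other order, since the two signs are opposite.
      \item The key observation is that face $0$ (which drops $A_0$ and pushes $m$ forward) and face $i$ (which drops $A_i$) satisfy $d_0d_i=d_{i-1}d_0$ for $i\ge 2$. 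They also satisfy $d_0d_1=d_0d_0$, because both sides are $[A_2,\ldots;\iota_{A_0}^{A_2}m]$.
\end{itemize}
Similarly, $\epsilon\circ\partial_1=0$ follows from $\iota_{A_1}\circ\iota_{A_0}^{A_1}=\iota_{A_0}$.

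\emph{Exactness, by reduction to the case of a maximum.} Homology commutes with filtered colimits, so I would write the augmented complex as a filtered colimit over finite subposets. The cleaner route is the following. For each $C\in\mathcal I$, let $\mathcal B^{\le C}_\bullet$ be the subcomplex spanned by chains with $A_p\subseteq C$. Since $(\mathcal I,\subseteq)$ is directed, every chain lies below some $C$; indeed we may take $C=A_p$. Hence the augmented complex $\mathcal B_\bullet(\mathbf M)\to M$ is the filtered colimit over $C\in\mathcal I$ of the augmented complexes $\mathcal B^{\le C}_\bullet\to M_C$, because $M=\varinjlim M_C$. Filtered colimits of modules are exact, so it suffices to prove that each $\mathcal B^{\le C}_\bullet\to M_C\to0$ is exact. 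Here one must check that the transition maps between these augmented complexes are compatible. For $C\subseteq C'$ the chain inclusion is compatible with $\iota_C^{C'}$ on the augmentation, because $\epsilon$ on the subcomplex is $[A;m]\mapsto\iota_A^C(m)$.

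\emph{Contracting homotopy.} On the poset $\{A\in\mathcal I\mid A\subseteq C\}$, which has the maximum $C$, I would write down an explicit contracting homotopy $h$ (cone on the top element):
\begin{itemize}
      \item $h_{-1}\colon M_C\to\mathcal B_0$ by $m\mapsto[C;m]$;
      \item for $p\ge0$, $h_p[A_0,\ldots,A_p;m]=(-1)^{p+1}[A_0,\ldots,A_p,C;m]$ if $A_p\subsetneq C$, and $0$ if $A_p=C$.
\end{itemize}
Then one verifies $\partial h+h\partial=\mathrm{id}$ case by case, according to whether $A_p=C$. This sign and degenerate-case bookkeeping is the main obstacle, and is where normalization (strict chains) matters. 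In the case $A_p=C$, the term $h\partial$ must reproduce the chain itself via the omission of $A_p$. If the signs resist, I would instead use the standard fact that a normalized bar complex is a retract of the unnormalized one, for which the homotopy $[\ldots;m]\mapsto[\ldots,C;m]$ works without case distinctions. Exactness of each $\mathcal B^{\le C}_\bullet\to M_C$ then gives exactness of the colimit, and so \eqref{eq:normalized-bar-resolution} is a projective resolution of ${}_RM$.
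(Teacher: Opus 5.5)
Your proof is correct, and it takes a genuinely different route from the paper's.

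\textbf{How the paper argues.} The paper never passes to subcomplexes. Given a positive-degree cycle $c$, it uses \Cref{prop:three-mechanisms}\textup{(1)} to choose $B\in\mathcal I$ \emph{strictly} containing every set in the finitely many chains supporting $c$. It then applies the cone operator $[A_0,\ldots,A_q;m]\mapsto(-1)^{q+1}[A_0,\ldots,A_q,B;m]$. No case distinction is needed, precisely because no entry equals $B$. Exactness at $\mathcal B_0(\mathbf M)$ is handled separately: the explicit colimit relation gives $B_0$ with $\sum_i\iota_{A_i}^{B_0}(m_i)=0$, and then a strict upper bound $B$ is chosen. Surjectivity of $\epsilon$ is checked directly.

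\textbf{How you argue.} You write the augmented complex as the directed union of the augmented subcomplexes $\mathcal B^{\le C}_\bullet(\mathbf M)\to M_C$. Each of these is contractible via the cone on the maximum $C$, with $h_p=0$ on chains ending at $C$. Exactness of filtered colimits then finishes the proof.

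Your signs do work out. If $A_p=C$, every face of $[A_0,\ldots,A_p;m]$ other than the one omitting $A_p$ still ends at $C$ and is killed by $h_{p-1}$. The remaining face gives $h_{p-1}\bigl((-1)^p[A_0,\ldots,A_{p-1};m]\bigr)=[A_0,\ldots,A_{p-1},C;m]$. So the fallback to the unnormalized complex is unnecessary.

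\textbf{What each route buys.} Your route is more general and more uniform:
\begin{itemize}
\item it works over any directed poset, using only closure of $\mathcal I$ under finite unions rather than the strict-upper-bound property from \Cref{prop:three-mechanisms}\textup{(1)};
\item degree $0$ and surjectivity of $\epsilon$ are absorbed into the same contracting homotopy via $h_{-1}$.
\end{itemize}
The paper's route is more hands-on and avoids both the case split and the appeal to exactness of filtered colimits. In exchange, it depends on finite families in $\mathcal I$ having strict upper bounds in $\mathcal I$.
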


\begin{proof}
      Every $\mathcal B_p(\mathbf M)$ is projective because it is a direct sum of projective left $R$-modules. The standard cancellation of consecutive faces gives $\partial_p\circ\partial_{p+1}=0$ and $\epsilon\circ\partial_1=0$.

      We prove exactness in every positive degree. Let $p\in\mathbb N\setminus\{0\}$, and let $c\in\mathcal B_p(\mathbf M)$ satisfy $\partial_p(c)=0$. The element $c$ has finite support, so \Cref{prop:three-mechanisms}\textup{(1)} gives $B\in\mathcal I$ strictly containing every set that occurs in a chain supporting $c$. For every $q\in\mathbb N$, define the following homomorphism on the subgroup generated by the chains whose entries are strictly contained in $B$:
      \[
            h_{B,q}[A_0,\ldots,A_q;m]
            \coloneqq(-1)^{q+1}[A_0,\ldots,A_q,B;m].
      \]
      Direct substitution in the definition of $\partial_p$ gives $\partial_{p+1}\circ h_{B,p}+h_{B,p-1}\circ\partial_p =\operatorname{id}$.
      Hence $c=\partial_{p+1}(h_{B,p}(c))$.

      It remains to prove exactness at $\mathcal B_0(\mathbf M)$. Let
      $c=\sum_{i=1}^n[A_i;m_i]\in\operatorname{Ker}\epsilon$. By the defining equivalence relation of the filtered colimit, there is a common upper bound $B_0\in\mathcal I$ of $A_1,\ldots,A_n$ such that $\sum_{i=1}^n\iota_{A_i}^{B_0}(m_i)=0$.
      Applying \Cref{prop:three-mechanisms}\textup{(1)} to the finite family of singleton chains determined by $A_1,\ldots,A_n,B_0$, choose $B\in\mathcal I$ strictly containing all these sets. Then
      $\sum_{i=1}^n\iota_{A_i}^{B}(m_i)=0$, and hence
      \[
            c=\partial_1\left(-\sum_{i=1}^n[A_i,B;m_i]\right).
      \]
      Finally, $\epsilon$ is surjective because every element of a filtered colimit is represented at one of its stages. Thus \eqref{eq:normalized-bar-resolution} is exact.
\end{proof}

\begin{proposition}
      \label{prop:roos-jensen-projective-system}
      Let
      \[
            \mathbf M=\bigl({}_R(M_A)_R,\iota_A^B\bigr)_{A\subseteq B}
      \]
      be a direct system of $R$--$R$-bimodules such that every underlying left $R$-module ${}_R(M_A)$ is projective and every transition map
      \[
            \iota_A^B\colon {}_R(M_A)_R\longrightarrow {}_R(M_B)_R,
            \qquad m\longmapsto\iota_A^B(m),
      \]
      is an $R$--$R$-bimodule homomorphism. Put
      \[
            {}_R(M)_R\coloneqq \varinjlim_{A\in\mathcal I}{}_R(M_A)_R.
      \]
      For every left $R$-module ${}_R N$, there are natural isomorphisms of abelian groups
      \begin{equation}\label{eq:roos-jensen-edge}
            \operatorname{Ext}_R^p({}_R(M),{}_R N) \cong \mathop{\varprojlim\nolimits^{p}}\displaylimits_{A\in\mathcal I} \operatorname{Hom}_R({}_R(M_A),{}_R N) \qquad(p\in\mathbb N).
      \end{equation}
      The inverse system in \eqref{eq:roos-jensen-edge} has transition map
      \[
            (\iota_A^B)^\ast\colon \operatorname{Hom}_R({}_R(M_B),{}_R N) \longrightarrow \operatorname{Hom}_R({}_R(M_A),{}_R N), \qquad f\longmapsto f\circ\iota_A^B.
      \]
      The derived inverse limit in \eqref{eq:roos-jensen-edge} is taken in the category of abelian groups.
\end{proposition}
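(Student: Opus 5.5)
The plan is to compute $\operatorname{Ext}$ using the projective resolution \eqref{eq:normalized-bar-resolution} from \Cref{lem:normalized-bar-resolution}, and then identify the resulting cochain complex with the normalized Roos complex of the inverse system $\bigl(\operatorname{Hom}_R({}_R(M_A),{}_RN),(\iota_A^B)^\ast\bigr)$. The statement then follows from \Cref{prop:roos-computes-derived-limits}. The resolution only needs the underlying left modules ${}_R(M_A)$ to be projective and the colimit to be filtered over $(\mathcal I,\subseteq)$. Both hold here, and the strict upper bounds of \Cref{prop:three-mechanisms}\textup{(1)} are already built into that lemma.

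The first step is to apply $\operatorname{Hom}_R(-,{}_RN)$ to the deleted resolution $\mathcal B_\bullet(\mathbf M)$. Since $\operatorname{Hom}_R(\bigoplus_i P_i,N)\cong\prod_i\operatorname{Hom}_R(P_i,N)$, this gives
\[
      \operatorname{Hom}_R\bigl(\mathcal B_p(\mathbf M),N\bigr)\cong\prod_{A_0\subsetneq\cdots\subsetneq A_p}\operatorname{Hom}_R({}_R(M_{A_0}),N)=\mathcal R^p(\mathbf Y),
\]
where $\mathbf Y=(Y_A,\rho_B^A)$ with $Y_A\coloneqq\operatorname{Hom}_R({}_R(M_A),N)$ and $\rho_B^A\coloneqq(\iota_A^B)^\ast$. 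Next I check that the transpose $\partial_{p+1}^\ast$ agrees with $d_{\mathbf Y}^p$. Evaluate $\partial_{p+1}^\ast z$ on a generator $[A_0,\ldots,A_{p+1};m]$. The zeroth face contributes $z_{A_1,\ldots,A_{p+1}}(\iota_{A_0}^{A_1}(m))=(\rho_{A_1}^{A_0}z_{A_1,\ldots})(m)$. The remaining faces contribute $\sum_{i\ge1}(-1)^iz_{A_0,\ldots,\widehat{A_i},\ldots}(m)$. This is exactly formula \eqref{eq:general-roos-differential}, and the signs match with no twisting. So $\operatorname{Hom}_R(\mathcal B_\bullet(\mathbf M),N)\cong\mathcal R^\bullet(\mathbf Y)$ as cochain complexes of abelian groups.

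Taking cohomology, $\operatorname{Ext}^p_R(M,N)\cong\mathrm H^p(\mathcal R^\bullet(\mathbf Y))$, and \Cref{prop:roos-computes-derived-limits} identifies the right-hand side with $\varprojlim^p_{A\in\mathcal I}Y_A$. One point needs care: that proposition is stated for inverse systems of left $R$-modules, whereas here $Y_A$ is naturally only an abelian group. The $R$-action on the right of $M_A$ gives a left $R$-module structure $(rf)(m)=f(mr)$, and the bimodule hypothesis makes $(\iota_A^B)^\ast$ $R$-linear. Alternatively, Roos's theorem can be applied verbatim in $\mathrm{Ab}$, which also has exact products. Derived limits computed in $R$-$\mathrm{Mod}$ and in $\mathrm{Ab}$ agree, since both are computed by the same Roos complex. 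For $p=0$ the isomorphism is just $\operatorname{Hom}(\varinjlim M_A,N)\cong\varprojlim\operatorname{Hom}(M_A,N)$, which is consistent with the general case.

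Naturality in $N$ follows because every identification is functorial in $N$. The resolution $\mathcal B_\bullet(\mathbf M)$ is fixed and does not depend on $N$, and Roos's isomorphism is natural. I expect the main obstacle to be only bookkeeping: the sign and face conventions in the transpose computation, and making sure the cited Roos result applies in the category where the derived limit is taken. The homological content is already supplied by \Cref{lem:normalized-bar-resolution}.
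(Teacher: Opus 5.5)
Your proposal is correct and follows essentially the same route as the paper: it resolves ${}_RM$ by the normalized bar resolution of \Cref{lem:normalized-bar-resolution}, identifies $\operatorname{Hom}_R(\mathcal B_\bullet(\mathbf M),{}_RN)$ degreewise with the normalized Roos complex of $\bigl(\operatorname{Hom}_R({}_R(M_A),{}_RN),(\iota_A^B)^\ast\bigr)$, and invokes \Cref{prop:roos-computes-derived-limits}. The paper handles the base-ring issue by your second option, regarding this inverse system as one of abelian groups and applying Roos's result with base ring $\mathbb Z$.
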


\begin{proof}
      After forgetting the right $R$-actions, apply \Cref{lem:normalized-bar-resolution} to the direct system of projective left $R$-modules $({}_R(M_A),\iota_A^B)_{A\subseteq B}$. Put
      \[
            \mathbf H_N\coloneqq
            \bigl(\operatorname{Hom}_R({}_R(M_A),{}_RN),(\iota_A^B)^\ast\bigr)_{A\subseteq B}.
      \]
      Regard $\mathbf H_N$ as an inverse system of abelian groups. Thus \Cref{prop:roos-computes-derived-limits} is applied below with the base ring $\mathbb Z$.
      In cochain degree $p$, the natural isomorphism
      \begin{align*}
            \operatorname{Hom}_R\bigl(\mathcal B_p(\mathbf M),{}_RN\bigr)
            &\longrightarrow
            \prod_{A_0\subsetneq\cdots\subsetneq A_p}
            \operatorname{Hom}_R({}_R(M_{A_0}),{}_RN),\\
            f&\longmapsto
            \bigl(m\longmapsto f\bigl([A_0,\ldots,A_p;m]\bigr)\bigr)_{A_0\subsetneq\cdots\subsetneq A_p}
      \end{align*}
      identifies the coboundary induced by $\partial_{p+1}$ with the differential in \eqref{eq:general-roos-differential} for the inverse system
      $\bigl(\operatorname{Hom}_R({}_R(M_A),{}_RN),(\iota_A^B)^\ast\bigr)_{A\subseteq B}$.
      Consequently, the projective resolution \eqref{eq:normalized-bar-resolution} and \Cref{prop:roos-computes-derived-limits} give natural isomorphisms of abelian groups
      \[
            \operatorname{Ext}_R^p({}_R(M),{}_RN)
            \cong
            \mathrm H^p\!\left(\mathcal R^\bullet(\mathbf H_N)\right)
            \cong
            \mathop{\varprojlim\nolimits^{p}}\displaylimits_{A\in\mathcal I}
            \operatorname{Hom}_R({}_R(M_A),{}_RN)
      \]
      for every $p\in\mathbb N$. These are the isomorphisms in \eqref{eq:roos-jensen-edge}.
\end{proof}

\begin{proposition}\label{prop:boolean-ext-derived-limit}
      For each left $R$-module ${}_R N$, one has natural isomorphisms of abelian groups:
      \begin{enumerate}[label=\textup{(\arabic*)}]
            \item For every $p\in\mathbb N$, $\operatorname{Ext}_R^p({}_R\mathfrak m,{}_R N) \cong \mathop{\varprojlim\nolimits^{p}}\displaylimits_{A\in\mathcal I}{}_R(e_A N)$;
            \item For every $p\in\mathbb N\setminus\{0\}$, $\operatorname{Ext}_R^{p+1}({}_R S,{}_R N) \cong \mathop{\varprojlim\nolimits^{p}}\displaylimits_{A\in\mathcal I}{}_R(e_A N)$.
      \end{enumerate}
\end{proposition}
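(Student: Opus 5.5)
Part (1) follows from \Cref{prop:roos-jensen-projective-system} applied to the direct system of \Cref{lem:boolean-ideal-colimit}. Take $\mathbf M=({}_R(e_AR)_R,\iota_A^B)_{A\subseteq B}$ over $(\mathcal I,\subseteq)$. Each ${}_R(e_AR)$ is finitely generated projective, and each $\iota_A^B$ is a bimodule inclusion, so the hypotheses of \Cref{prop:roos-jensen-projective-system} hold. The bimodule isomorphism $\eta$ of \Cref{lem:boolean-ideal-colimit} identifies the colimit with ${}_R\mathfrak m_R$. Hence
\[
      \operatorname{Ext}_R^p({}_R\mathfrak m,{}_RN)\cong\mathop{\varprojlim\nolimits^{p}}\displaylimits_{A\in\mathcal I}\operatorname{Hom}_R({}_R(e_AR),{}_RN).
\]
It remains to identify the inverse system. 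Evaluation at $e_A$ gives an isomorphism $\operatorname{Hom}_R(e_AR,N)\xrightarrow{\sim}e_AN$, $f\mapsto f(e_A)$. Indeed, $e_AR=Re_A$ is cyclic and a summand of $R$ with complement $R(1-e_A)$, so the image is $\{y\in N\mid(1-e_A)y=0\}=e_AN$. Under these isomorphisms the restriction $(\iota_A^B)^\ast$ becomes $f(e_B)\mapsto f(e_A)=f(e_Ae_B)=e_A f(e_B)$, i.e.\ multiplication by $e_A$, namely $e_BN\to e_AN$. This is the natural inverse system ${}_R(e_AN)$ implicit in the statement. The isomorphism of inverse systems induces isomorphisms on all $\varprojlim^p$, e.g.\ via the Roos complexes of \Cref{prop:roos-computes-derived-limits}, which are functorial. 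Naturality in $N$ is clear, since evaluation is natural.

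For part (2), apply $\operatorname{Hom}_R(-,N)$ to $0\to\mathfrak m\to R\to S\to0$ and use $\operatorname{Ext}^i_R(R,N)=0$ for $i\geq1$. For $p\geq1$ this yields $\operatorname{Ext}_R^p(\mathfrak m,N)\cong\operatorname{Ext}_R^{p+1}(S,N)$, the connecting homomorphism being an isomorphism since its neighbours $\operatorname{Ext}^p_R(R,N)$ and $\operatorname{Ext}^{p+1}_R(R,N)$ vanish. Combining this with (1) gives (2). The exclusion of $p=0$ is exactly where $\operatorname{Hom}_R(R,N)\to\operatorname{Hom}_R(\mathfrak m,N)$ need not be surjective.

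The only point needing care is the identification of the transition maps with multiplication by $e_A$. In particular, one must check that this system of subgroups of $N$ is the one meant by ${}_R(e_AN)$; it is, with its left $R$-structure, since $R$ is commutative. Everything else is a formal consequence of the earlier results, so I expect no real obstacle.
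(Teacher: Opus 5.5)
Your proposal is correct and follows the paper's proof essentially step for step: you transport $\operatorname{Ext}_R^p(\mathfrak m,N)$ along the isomorphism $\eta$ of \Cref{lem:boolean-ideal-colimit}, apply the edge isomorphism of \Cref{prop:roos-jensen-projective-system}, and identify the inverse system $\operatorname{Hom}_R(e_AR,N)$ with $(e_AN,\ n\mapsto e_A n)$ via evaluation at $e_A$, exactly as the paper does. Part (2) is obtained in the paper by the same dimension shift along $0\to\mathfrak m\to R\to S\to 0$ for $p\geq 1$.
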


\begin{proof}
      Since $R$ is commutative, every $e_A$ is central.  For $A\in\mathcal I$, evaluation at $e_A$ gives an isomorphism of left $R$-modules
      \[
            \operatorname{ev}_A\colon \operatorname{Hom}_R({}_R(e_AR),{}_R N) \xlongrightarrow{\sim}{}_R(e_A N), \qquad f\longmapsto f(e_A).
      \]
      Indeed, $e_A\cdot f(e_A)=f(e_A\cdot e_A)=f(e_A)$, and the inverse of $\operatorname{ev}_A$ is
      \[
            \eta_A\colon {}_R(e_A N) \longrightarrow\operatorname{Hom}_R({}_R(e_AR),{}_R N), \qquad n\longmapsto \bigl(e_A\cdot r\longmapsto r\cdot n\bigr).
      \]
      The definition of $\eta_A$ is well-defined: if $e_A\cdot r=e_A\cdot r'$, then $n=e_A\cdot n$ gives
      \[
            (r-r')\cdot n=(r-r')\cdot e_A\cdot n =e_A\cdot(r-r')\cdot n=0.
      \]
      For $s\in R$, commutativity gives
      \[
            \eta_A(n)\bigl(s\cdot(e_A\cdot r)\bigr) =\eta_A(n)\bigl(e_A\cdot(s\cdot r)\bigr) =(s\cdot r)\cdot n =s\cdot\eta_A(n)(e_A\cdot r),
      \]
      so $\eta_A(n)$ is a left $R$-module homomorphism.  The assignment $n\longmapsto\eta_A(n)$ is also a homomorphism of left $R$-modules, because
      \[
            \eta_A(s\cdot n)(e_A\cdot r) =r\cdot s\cdot n =s\cdot r\cdot n =\bigl(s\cdot\eta_A(n)\bigr)(e_A\cdot r),
      \]
      for every $s\in R$.  Moreover, for $f\in\operatorname{Hom}_R({}_R(e_AR),{}_R N)$ and $n\in e_A N$,
      \[
            \bigl((\eta_A\circ\operatorname{ev}_A)(f)\bigr)(e_A\cdot r) =r\cdot f(e_A)=f(e_A\cdot r), \qquad (\operatorname{ev}_A\circ\eta_A)(n)=n.
      \]
      Therefore $\operatorname{ev}_A\circ\eta_A=\operatorname{id}_{e_A N}$ and $\eta_A\circ\operatorname{ev}_A =\operatorname{id}_{\operatorname{Hom}_R({}_R(e_AR),{}_R N)}$. Let $A\subseteq B$, retain the inclusion $\iota_A^B$ from \Cref{lem:boolean-ideal-colimit}, and put $f_n\coloneqq\eta_B(n)$ for $n\in e_BN$.  Then
      \[
            \operatorname{ev}_A\bigl(f_n\circ\iota_A^B\bigr) =f_n(e_A)=e_A\cdot n.
      \]
      Thus the maps $\operatorname{ev}_A$ identify the inverse system $\bigl(\operatorname{Hom}_R({}_R(e_AR),{}_R N), (\iota_A^B)^\ast\bigr)_{A\subseteq B}$ with the inverse system whose $A$-component is ${}_R(e_A N)$ and whose transition map for $A\subseteq B$ is
      \[
            {}_R(e_BN)\longrightarrow{}_R(e_AN), \qquad n\longmapsto e_A\cdot n.
      \]
      By \Cref{lem:boolean-ideal-colimit}, the $R$--$R$-bimodule isomorphism $\eta\colon \varinjlim_{A\in\mathcal I}{}_R(e_AR)_R \xlongrightarrow{\sim}{}_R\mathfrak m_R$ induces the following isomorphism of abelian groups:
      \[
            \operatorname{Ext}_R^p(\eta,{}_R N)\colon
            \operatorname{Ext}_R^p({}_R\mathfrak m,{}_R N)
            \xlongrightarrow{\sim}
            \operatorname{Ext}_R^p\!\left(
                  \varinjlim_{A\in\mathcal I}{}_R(e_AR),{}_R N
            \right).
      \]
      Composing this isomorphism with the edge isomorphism in \Cref{prop:roos-jensen-projective-system} and the evaluation isomorphism of inverse systems, one obtains (1).

      Finally, consider the short exact sequence
      \begin{equation}\label{eq:boolean-quotient-short-exact}
            0\longrightarrow {}_R\mathfrak m_R \longrightarrow {}_RR_R \longrightarrow {}_R S_R \longrightarrow 0.
      \end{equation}
      Since ${}_RR$ is projective, the exact sequence of $\mathrm{Ext}$-groups gives natural isomorphisms of abelian groups
      \[
            \operatorname{Ext}_R^{p+1}({}_R S,{}_R N) \cong \operatorname{Ext}_R^p({}_R\mathfrak m,{}_R N) \qquad(p\in\mathbb N\setminus\{0\}).
      \]
      Composing these dimension-shifting isomorphisms with (1) gives (2).
\end{proof}

\begin{corollary}\label{cor:boolean-free-inverse-system}
      Let $J$ be a set.  For $A\in\mathcal I$, put
      \[
            (F^{(J)})_A\coloneqq e_A(({}_RR)^{(J)}) =\bigl(e_A({}_RR)\bigr)^{(J)},
      \]
      and, for $A\subseteq B$, define
      \[
            \rho_B^A\colon{}_R(F^{(J)})_B \longrightarrow{}_R(F^{(J)})_A, \qquad y\longmapsto e_A\cdot y.
      \]
      Then $\mathbf F^{(J)}=((F^{(J)})_A,\rho_B^A)_{A\subseteq B}$ is an inverse system of left $R$-modules.  Give $(\mathbb F_2)^A$ the left $R$-module structure $(r\cdot u)(x)\coloneqq r(x)u(x)$.  For every $A\in\mathcal I$, restriction to $A$ gives a left $R$-module isomorphism
      \begin{equation}\label{eq:boolean-direct-sum-restriction-isomorphism}
            \lambda_A^{(J)}\colon{}_R(F^{(J)})_A \xlongrightarrow{\sim} {}_R\bigl(((\mathbb F_2)^A)^{(J)}\bigr), \qquad (u_j)_{j\in J}\longmapsto(u_j|_A)_{j\in J}.
      \end{equation}
      These isomorphisms satisfy
      \begin{equation}\label{eq:boolean-direct-sum-restriction-compatibility}
            \lambda_A^{(J)}\bigl(\rho_B^A(y)\bigr) =\Bigl(\bigl(\lambda_B^{(J)}(y)\bigr)(j)|_A\Bigr)_{j\in J}
      \end{equation}
      for $A\subseteq B$ and $y\in(F^{(J)})_B$.  Moreover, for every $p\in\mathbb N\setminus\{0\}$, there is a natural isomorphism of abelian groups
      \begin{equation}\label{eq:boolean-free-ext-derived-limit}
            \operatorname{Ext}_R^{p+1} \bigl({}_R S,({}_RR)^{(J)}\bigr) \cong \mathop{\varprojlim\nolimits^{p}}\displaylimits_{A\in\mathcal I} (F^{(J)})_A.
      \end{equation}
\end{corollary}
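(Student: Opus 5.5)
The corollary is essentially bookkeeping on top of \Cref{prop:boolean-ext-derived-limit}, so I will verify the three assertions in order and then specialize that proposition to $N=({}_RR)^{(J)}$.

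\emph{Inverse system.} Each $e_A$ is a central idempotent of the commutative ring $R$, and for $A\subseteq B$ in $\mathcal I$ we have $e_Ae_B=e_{A\cap B}=e_A$. Hence $y\longmapsto e_A\cdot y$ maps $e_B(R^{(J)})$ into $e_A(R^{(J)})$ and is $R$-linear. It is the identity on $e_A(R^{(J)})$ because $e_A$ is idempotent. For $A\subseteq B\subseteq C$ the cocycle identity $\rho_B^A\circ\rho_C^B=\rho_C^A$ holds because $e_Ae_B=e_A$. The equality $e_A(R^{(J)})=(e_AR)^{(J)}$ holds because multiplication by $e_A$ acts coordinatewise on a direct sum.

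\emph{Restriction isomorphism.} By \eqref{eq:boolean-support-isomorphism}, $e_AR$ consists exactly of the functions $u\in(\mathbb F_2)^X$ with support contained in $A$. Restriction $u\longmapsto u|_A$ is therefore a bijection $e_AR\to(\mathbb F_2)^A$; its inverse is extension by zero. It is $R$-linear for the stated pointwise action, since $(ru)|_A(x)=r(x)u(x)$. Applying this coordinatewise gives $\lambda_A^{(J)}$, and finite support in $J$ is preserved in both directions. For the compatibility \eqref{eq:boolean-direct-sum-restriction-compatibility}, note that for $y\in(e_BR)^{(J)}$ and $x\in A$ one has $(e_Ay_j)(x)=y_j(x)$. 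Thus $\lambda_A^{(J)}(e_Ay)$ is obtained from $\lambda_B^{(J)}(y)$ by restricting each coordinate to $A$.

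\emph{Ext identification.} Apply \Cref{prop:boolean-ext-derived-limit}(2) with $N=({}_RR)^{(J)}$. Its proof identifies the inverse system appearing there as $(e_AN)_A$ with transition maps $n\longmapsto e_A\cdot n$, which is exactly $\mathbf F^{(J)}$. This yields \eqref{eq:boolean-free-ext-derived-limit} for every $p\geq1$. Naturality in $J$ (that is, in $N$) is inherited from that proposition.

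Nothing here is a real obstacle. The only point needing care is checking that the inverse system produced in \Cref{prop:boolean-ext-derived-limit} coincides on the nose with $\mathbf F^{(J)}$, including its transition maps, so that the same $\varprojlim^p$ is being computed.
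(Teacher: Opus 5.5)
Your proposal is correct and follows essentially the same route as the paper. Both arguments get the inverse-system axioms from $e_Ae_B=e_A$, build $\lambda_A^{(J)}$ coordinatewise from restriction and extension by zero on $e_AR$, and obtain \eqref{eq:boolean-free-ext-derived-limit} by specializing \Cref{prop:boolean-ext-derived-limit}\textup{(2)} to $N=({}_RR)^{(J)}$, whose proof already identifies the relevant inverse system with $e_AN$ and transition maps $n\longmapsto e_A\cdot n$.
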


\begin{proof}
      Since $e_A\cdot e_B=e_A$ whenever $A\subseteq B$, one has $\rho_A^A=\operatorname{id}_{(F^{(J)})_A}$ and $\rho_B^A\circ\rho_C^B=\rho_C^A$ whenever $A\subseteq B\subseteq C$.  Thus $\mathbf F^{(J)}$ is an inverse system.

      Restriction ${}_R(e_AR)\longrightarrow{}_R((\mathbb F_2)^A)$ is a left $R$-module isomorphism whose inverse extends a function by zero outside $A$.  Taking $J$-indexed direct sums gives \eqref{eq:boolean-direct-sum-restriction-isomorphism}.  For $A\subseteq B$ and $u\in e_BR$, one has $(e_A\cdot u)|_A=u|_A$; applying this equality in every $J$-coordinate gives \eqref{eq:boolean-direct-sum-restriction-compatibility}.

      Taking ${}_R N=({}_RR)^{(J)}$ in \Cref{prop:boolean-ext-derived-limit}\textup{(2)} gives \eqref{eq:boolean-free-ext-derived-limit}.
\end{proof}

It remains to compute the right-hand side of \eqref{eq:boolean-free-ext-derived-limit} from the Roos complex of $\mathbf F^{(J)}$.

\begin{lemma}\label{lem:boolean-roos-formulas}
      Let $J$ be a set.  For every $p\in\mathbb N$,
      \[
            \mathcal R^p(\mathbf F^{(J)}) =\prod_{\mathbf A=(A_0\subsetneq\cdots\subsetneq A_p) \in\mathrm{Flag}_p(\mathcal I)}(F^{(J)})_{A_0},
      \]
      and, for $z\in\mathcal R^p(\mathbf F^{(J)})$,
      \begin{equation}\label{eq:boolean-roos-differential}
            (d_{\mathbf F^{(J)}}^p z)_{A_0,\ldots,A_{p+1}} =e_{A_0}\cdot z_{A_1,\ldots,A_{p+1}} +\sum_{i=1}^{p+1}(-1)^i\cdot z_{A_0,\ldots,\widehat{A_i},\ldots,A_{p+1}}.
      \end{equation}
      Since $R$ has characteristic $2$, the first two differentials are
      \[
            (d_{\mathbf F^{(J)}}^0y)_{A,B} =y_A+e_A\cdot y_B, \qquad (d_{\mathbf F^{(J)}}^1z)_{A,B,C} =z_{A,B}+z_{A,C}+e_A\cdot z_{B,C}.
      \]
      Finally, there are natural isomorphisms of left $R$-modules
      \[
            \mathrm H^0\bigl(\mathcal R^\bullet(\mathbf F^{(J)})\bigr) =\operatorname{Ker}d_{\mathbf F^{(J)}}^0 \cong\varprojlim_{A\in\mathcal I}(F^{(J)})_A
      \]
      and
      \[
            \mathrm H^p\bigl(\mathcal R^\bullet(\mathbf F^{(J)})\bigr) ={\operatorname{Ker}d_{\mathbf F^{(J)}}^p}\big/ {\operatorname{Im}d_{\mathbf F^{(J)}}^{p-1}} \cong \mathop{\varprojlim\nolimits^{p}}\displaylimits_{A\in\mathcal I} (F^{(J)})_A \qquad(p\in\mathbb N\setminus\{0\}).
      \]
\end{lemma}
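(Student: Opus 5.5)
This lemma is essentially a specialization of \Cref{def:normalized-roos-complex} and \Cref{prop:roos-computes-derived-limits} to the inverse system $\mathbf F^{(J)}$ of \Cref{cor:boolean-free-inverse-system}, so the proof is a direct substitution.

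First, the description of $\mathcal R^p(\mathbf F^{(J)})$ is the definition of the normalized Roos complex, with $Y_{A_0}=(F^{(J)})_{A_0}$. For the differential, I substitute the transition map $\rho_{A_1}^{A_0}(y)=e_{A_0}\cdot y$ from \Cref{cor:boolean-free-inverse-system} into \eqref{eq:general-roos-differential}. This gives \eqref{eq:boolean-roos-differential}, and the expression is well typed: $z_{A_1,\ldots,A_{p+1}}\in(F^{(J)})_{A_1}$ and $e_{A_0}e_{A_1}=e_{A_0}$, so $e_{A_0}\cdot z_{A_1,\ldots,A_{p+1}}\in(F^{(J)})_{A_0}$. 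The other terms $z_{A_0,\ldots,\widehat{A_i},\ldots,A_{p+1}}$ with $i\geq1$ keep $A_0$ as their first entry, so they also lie in $(F^{(J)})_{A_0}$.

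Next, the explicit low-degree formulas. Every $(F^{(J)})_A$ is an $R$-module and $R$ is an $\mathbb F_2$-algebra, so $-1=1$ and all signs can be dropped. For $p=0$ the formula reads $(d^0y)_{A,B}=e_A\cdot y_B-y_A=y_A+e_A\cdot y_B$. For $p=1$, the chain $(A,B,C)$ gives the terms $e_A\cdot z_{B,C}$, $-z_{A,C}$ (omitting $B$) and $+z_{A,B}$ (omitting $C$), which is the stated expression.

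Finally, the cohomology statements. By \Cref{prop:roos-computes-derived-limits} applied to $\mathbf Y=\mathbf F^{(J)}$, we have $d^{p+1}\circ d^p=0$ and natural isomorphisms $\mathrm H^p\cong\varprojlim^p$. In degree $0$, because the complex is nonnegative, $\mathrm H^0=\operatorname{Ker}d^0$, and \Cref{prop:roos-computes-derived-limits} identifies this kernel with $\operatorname{Im}\varepsilon\cong\varprojlim$. Concretely, a family $y$ lies in $\operatorname{Ker}d^0$ exactly when $y_A=e_A\cdot y_B$ for all $A\subsetneq B$, which is the compatibility condition defining \eqref{eq:inverse-limit}; the case $A=B$ holds trivially since $y_A\in(F^{(J)})_A$. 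The only point needing care is that the isomorphisms are of left $R$-modules, not merely abelian groups. This holds because \Cref{prop:roos-computes-derived-limits} is stated for inverse systems of left $R$-modules, and the differentials here are $R$-linear: $R$ is commutative, so multiplication by $e_{A_0}$ commutes with the $R$-action. I expect no genuine obstacle beyond this bookkeeping.
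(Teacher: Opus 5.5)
Your proposal is correct and matches the paper's proof. Like the paper, you substitute the terms and the transition maps $\rho_B^A(y)=e_A\cdot y$ of $\mathbf F^{(J)}$ into the normalized Roos complex, specialize to $p=0,1$ using $-1=1$, and obtain the cohomology identifications from \Cref{prop:roos-computes-derived-limits}; your well-typedness and kernel-equals-compatible-families checks are correct additions.
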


\begin{proof}
      Substitute the terms and transition maps of $\mathbf F^{(J)}$ from \Cref{cor:boolean-free-inverse-system} into \Cref{def:normalized-roos-complex}.  This gives the displayed terms and \eqref{eq:boolean-roos-differential}.  Setting $p=0$ and $p=1$ gives the two low-degree formulas because $-1=1$ in $R$.  The cohomology identifications follow from \Cref{prop:roos-computes-derived-limits}.
\end{proof}

\subsection{Vanishing of Ext with a free second argument}
\label{subsec:all-ext}

The proof of \Cref{prop:all-ext} extends \Cref{lem:low-degree} from $q\in\{0,1\}$ to every $q\in\mathbb N$.  The calculation of the positive derived inverse limits in \Cref{prop:all-ext} has two steps.
\begin{enumerate}[label=\textup{(\arabic*)}]
      \item \Cref{lem:selected-roos-equations} solves fewer than $\kappa$ selected component equations of $d_{\mathbf F^{(J)}}^{p-1}y=z$ by adjoining a strict common upper bound.
      \item \Cref{prop:positive-derived-limits} uses the ultrafilter $\mathcal W$ to combine the cochains supplied by \Cref{lem:selected-roos-equations}: fineness preserves each selected component equation, and $\aleph_1$-completeness preserves finite $J$-support.
\end{enumerate}

\begin{lemma}\label{lem:selected-roos-equations}
      Let $p\in\mathbb N\setminus\{0\}$, let $z\in\mathcal R^p(\mathbf F^{(J)})$ satisfy $d_{\mathbf F^{(J)}}^p z=0$, and let $\Gamma\subseteq\mathrm{Flag}_p(\mathcal I)$ satisfy $|\Gamma|<\kappa$.  Then there is $y^\Gamma\in\mathcal R^{p-1}(\mathbf F^{(J)})$ such that
      \[
            (d_{\mathbf F^{(J)}}^{p-1}y^\Gamma)_{\mathbf A} =z_{\mathbf A} \qquad(\mathbf A\in\Gamma),
      \]
      and every component of $y^\Gamma$ has finite $J$-support.
\end{lemma}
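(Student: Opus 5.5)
Apply \Cref{prop:three-mechanisms}\textup{(1)} to $\Gamma$ to obtain $B\coloneqq A_\Gamma\in\mathcal I$ with $A_i\subsetneq B$ for every entry $A_i$ of every chain $\mathbf A\in\Gamma$. Then use the standard cone contraction toward $B$, mirroring the homotopy $h_{B,q}$ in the proof of \Cref{lem:normalized-bar-resolution}. Define $y^\Gamma\in\mathcal R^{p-1}(\mathbf F^{(J)})$ componentwise: for a chain $\mathbf C=(C_0\subsetneq\cdots\subsetneq C_{p-1})$ all of whose entries are strictly contained in $B$, put
\[
      y^\Gamma_{\mathbf C}\coloneqq(-1)^{p}\, z_{C_0,\ldots,C_{p-1},B},
\]
which lies in $(F^{(J)})_{C_0}$ because $(C_0,\ldots,C_{p-1},B)\in\mathrm{Flag}_p(\mathcal I)$. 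Put $y^\Gamma_{\mathbf C}\coloneqq0$ for all other chains. Every component is then either $0$ or a component of $z$, hence lies in $(F^{(J)})_{C_0}=(e_{C_0}R)^{(J)}$ and has finite $J$-support automatically.

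To verify the equations, fix $\mathbf A=(A_0,\ldots,A_p)\in\Gamma$. Since $B$ strictly contains every $A_i$, the extended chain $(A_0,\ldots,A_p,B)$ lies in $\mathrm{Flag}_{p+1}(\mathcal I)$. The cocycle condition $(d^p z)_{A_0,\ldots,A_p,B}=0$, written out with \eqref{eq:boolean-roos-differential}, reads
\[
      e_{A_0}z_{A_1,\ldots,A_p,B}+\sum_{i=1}^{p}(-1)^i z_{A_0,\ldots,\widehat{A_i},\ldots,A_p,B}+(-1)^{p+1}z_{A_0,\ldots,A_p}=0.
\]
On the other hand, every face of $\mathbf A$ has all entries strictly below $B$. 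So $(d^{p-1}y^\Gamma)_{\mathbf A}=e_{A_0}y^\Gamma_{A_1,\ldots,A_p}+\sum_{i=1}^p(-1)^i y^\Gamma_{\ldots\widehat{A_i}\ldots}$ equals $(-1)^p$ times the first $p+1$ terms above. By the cocycle identity, this is $(-1)^p\cdot(-1)^{p}z_{\mathbf A}=z_{\mathbf A}$; in characteristic $2$ the signs are irrelevant anyway.

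The only real point is that the faces of $\mathbf A$ and the face $(A_1,\ldots,A_p)$ all fall in the region where $y^\Gamma$ is defined by the cone formula. This is exactly why a \emph{strict} common upper bound is needed and why $|\Gamma|<\kappa$ is needed: the bound must work for fewer than $\kappa$ chains simultaneously, which \Cref{prop:three-mechanisms}\textup{(1)} supplies via $\kappa$-completeness of $\mathcal U$. I expect the sign bookkeeping in the identity above to be the only place requiring care, and it disappears since $-1=1$ in $R$.
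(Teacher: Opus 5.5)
Your proposal is correct and is essentially the paper's proof: the same strict common upper bound $A_\Gamma$ from Proposition~\ref{prop:three-mechanisms}(1), the same cone formula $y^\Gamma_{\mathbf C}=z_{C_0,\ldots,C_{p-1},A_\Gamma}$ on chains lying strictly below $A_\Gamma$ (zero elsewhere), and the same cocycle identity on the extended chain $(A_0,\ldots,A_p,A_\Gamma)$. The only difference is cosmetic. The paper omits your sign $(-1)^p$, obtains $(d^{p-1}y^\Gamma)_{\mathbf A}=(-1)^p z_{\mathbf A}$, and then uses characteristic~$2$; your normalization gives $z_{\mathbf A}$ directly, without needing characteristic~$2$.
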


\begin{proof}
      By \Cref{prop:three-mechanisms}\textup{(1)}, there is $A_\Gamma\in\mathcal I$ which strictly contains every member of every flag in $\Gamma$.  For $\mathbf B=(B_0\subsetneq\cdots\subsetneq B_{p-1}) \in\mathrm{Flag}_{p-1}(\mathcal I)$, define
      \[
            (y^\Gamma)_{\mathbf B} \coloneqq \begin{cases} z_{B_0,\ldots,B_{p-1},A_\Gamma}, & B_{p-1}\subsetneq A_\Gamma,\\ 0,& \text{otherwise}. \end{cases}
      \]
      The nonzero branch in the definition of $y^\Gamma$ is indexed by a member of $\mathrm{Flag}_p(\mathcal I)$, so $y^\Gamma$ is well-defined.  Each component of $y^\Gamma$ is either zero or a component of $z$ and therefore has finite $J$-support.

      Fix $\mathbf A=(A_0\subsetneq\cdots\subsetneq A_p)\in\Gamma$. The choice of $A_\Gamma$ gives $A_p\subsetneq A_\Gamma$, so $(A_0\subsetneq\cdots\subsetneq A_p\subsetneq A_\Gamma)$ belongs to $\mathrm{Flag}_{p+1}(\mathcal I)$.  The cocycle equation $d_{\mathbf F^{(J)}}^p z=0$ on $(A_0\subsetneq\cdots\subsetneq A_p\subsetneq A_\Gamma)$ is
      \begin{equation}\label{eq:selected-roos-cocycle-identity}
            0 =e_{A_0}\cdot z_{A_1,\ldots,A_p,A_\Gamma} +\sum_{i=1}^{p}(-1)^i \cdot z_{A_0,\ldots,\widehat{A_i},\ldots,A_p,A_\Gamma} +(-1)^{p+1}z_{A_0,\ldots,A_p}.
      \end{equation}
      The first term and the displayed sum on the right-hand side of \eqref{eq:selected-roos-cocycle-identity} together form $(d_{\mathbf F^{(J)}}^{p-1}y^\Gamma)_{\mathbf A}$.  Consequently, $(d_{\mathbf F^{(J)}}^{p-1}y^\Gamma)_{\mathbf A} =(-1)^p\cdot z_{\mathbf A}$. Since $R$ has characteristic $2$, $(-1)^p\cdot z_{\mathbf A} =z_{\mathbf A}$.  The required component equation follows for every $\mathbf A\in\Gamma$ and every $p\in\mathbb N\setminus\{0\}$.
\end{proof}

\begin{proposition}\label{prop:positive-derived-limits}
      For every set $J$ and every $p\in\mathbb N\setminus\{0\}$, one has
      \[
            \mathop{\varprojlim\nolimits^{p}}\displaylimits_{A\in\mathcal I}(F^{(J)})_A=0.
      \]
\end{proposition}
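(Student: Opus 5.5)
By \Cref{lem:boolean-roos-formulas}, the group $\varprojlim^p_{A\in\mathcal I}(F^{(J)})_A$ for $p\geq1$ is $\operatorname{Ker}d^p_{\mathbf F^{(J)}}/\operatorname{Im}d^{p-1}_{\mathbf F^{(J)}}$. So I will fix a cocycle $z\in\mathcal R^p(\mathbf F^{(J)})$ with $d^p_{\mathbf F^{(J)}}z=0$ and construct a global $y\in\mathcal R^{p-1}(\mathbf F^{(J)})$ with $d^{p-1}_{\mathbf F^{(J)}}y=z$. The plan is to glue the local solutions of \Cref{lem:selected-roos-equations} along the fine $\aleph_1$-complete ultrafilter $\mathcal W$ on $\mathscr S=\mathcal P_\kappa(\mathrm{Flag}_{<\omega}(\mathcal I))$.

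First I produce the local solutions. For each $s\in\mathscr S$, put $\Gamma_s\coloneqq s\cap\mathrm{Flag}_p(\mathcal I)$; then $|\Gamma_s|<\kappa$. \Cref{lem:selected-roos-equations} gives $y^s\coloneqq y^{\Gamma_s}\in\mathcal R^{p-1}(\mathbf F^{(J)})$. It satisfies $(d^{p-1}y^s)_{\mathbf A}=z_{\mathbf A}$ for every $\mathbf A\in\Gamma_s$, and every component of $y^s$ has finite $J$-support. This uses the axiom of choice once, to pick one $y^s$ for each $s$.

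Next I take the ultralimit. Fix $\mathbf B=(B_0\subsetneq\cdots\subsetneq B_{p-1})\in\mathrm{Flag}_{p-1}(\mathcal I)$. Via $\lambda^{(J)}_{B_0}$ from \Cref{cor:boolean-direct-sum-restriction-isomorphism}, I view the components $y^s_{\mathbf B}$ as elements of $((\mathbb F_2)^{B_0})^{(J)}$. I then apply \Cref{prop:three-mechanisms}(3) with $Y=B_0$ to the family $(y^s_{\mathbf B})_{s\in\mathscr S}$. This defines $y_{\mathbf B}$ as the coordinatewise $\mathcal W$-ultralimit, and that result says $y_{\mathbf B}$ again has finite $J$-support. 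Transporting back along $\lambda^{(J)}_{B_0}$ gives $y_{\mathbf B}\in(F^{(J)})_{B_0}$. This is the step where $\aleph_1$-completeness of $\mathcal W$ is used.

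Then I verify $d^{p-1}y=z$ one component at a time. Fix $\mathbf A=(A_0\subsetneq\cdots\subsetneq A_p)\in\mathrm{Flag}_p(\mathcal I)$, $j\in J$ and $x\in A_0$. The value $(d^{p-1}y)_{\mathbf A}(j)(x)$ is a finite $\mathbb F_2$-sum of values $y_{\mathbf B}(j)(x)$ over the $p+1$ faces $\mathbf B$ of $\mathbf A$; the first term uses $e_{A_0}$, which is harmless because $x\in A_0$. For each face, the set of $s$ where $y^s_{\mathbf B}(j)(x)=y_{\mathbf B}(j)(x)$ lies in $\mathcal W$. By \Cref{prop:three-mechanisms}(2), the cone $\{s\mid\mathbf A\in s\}$ also lies in $\mathcal W$. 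The intersection of these finitely many sets is in $\mathcal W$, hence nonempty; pick $s$ in it. For this $s$, evaluating at $(j,x)$ gives
\[
(d^{p-1}y)_{\mathbf A}(j)(x)=(d^{p-1}y^s)_{\mathbf A}(j)(x)=z_{\mathbf A}(j)(x).
\]
The evaluation map is additive, and $\mathbf A\in\Gamma_s$ gives the last equality. Coordinates with $x\notin A_0$ vanish on both sides, so $d^{p-1}y=z$ and $z$ is a coboundary.

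I expect the main obstacle to be this gluing step. The componentwise ultralimit must commute with the differential, which works because each component of $d^{p-1}$ involves only finitely many components of $y$ and is linear pointwise. The ultralimit must also stay inside the direct sum $(F^{(J)})_{B_0}$ rather than only the product, which is exactly what \Cref{prop:three-mechanisms}(3) supplies. I would also state explicitly that the ultralimit is taken pointwise in $\mathbb F_2$ and is compatible with the restriction isomorphisms.
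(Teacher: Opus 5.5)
Your proposal is correct and follows the paper's proof essentially step for step. It builds local solutions from \Cref{lem:selected-roos-equations} applied to $s\cap\mathrm{Flag}_p(\mathcal I)$, takes a coordinatewise $\mathcal W$-ultralimit kept inside the direct sum by \Cref{prop:three-mechanisms}\textup{(3)}, and checks each component using the cone $\{s\mid\mathbf A\in s\}\in\mathcal W$ and closure of $\mathcal W$ under finite intersections. The only difference is cosmetic: you pick a single witness $s$ in the finite intersection, where the paper phrases the same step as ``ultralimits commute with finite sums in $\mathbb F_2$''.
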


\begin{proof}
      If $J=\emptyset$, then $(F^{(J)})_A=0$ for every $A\in\mathcal I$, and the conclusion is immediate.  Assume $J\neq\emptyset$. Fix $z\in\mathcal R^p(\mathbf F^{(J)})$ satisfying $d_{\mathbf F^{(J)}}^p z=0$.  For each $\Gamma\in\mathscr S$, one has $|\Gamma\cap\mathrm{Flag}_p(\mathcal I)|<\kappa$.  Choose $y^\Gamma\in\mathcal R^{p-1}(\mathbf F^{(J)})$ as in \Cref{lem:selected-roos-equations}, applied to $\Gamma\cap\mathrm{Flag}_p(\mathcal I)$.

      Fix $\mathbf B=(B_0\subsetneq\cdots\subsetneq B_{p-1}) \in\mathrm{Flag}_{p-1}(\mathcal I)$, $j\in J$, and $x\in B_0$. For $\Gamma\in\mathscr S$, put
      \[
            u_\Gamma\coloneqq
            \Bigl(\bigl(\lambda_{B_0}^{(J)}((y^\Gamma)_{\mathbf B})\bigr)(j)\Bigr)(x)
            \in\mathbb F_2.
      \]
      Define $\widetilde y_{\mathbf B}(j)(x)\in\mathbb F_2$ by
      \[
            \widetilde y_{\mathbf B}(j)(x)=a
            \quad\iff\quad
            \{\Gamma\in\mathscr S\mid u_\Gamma=a\}\in\mathcal W
            \qquad(a\in\mathbb F_2).
      \]
      By \Cref{fact:ultrafilter-characterization}, the ultrafilter $\mathcal W$ selects a unique $a\in\mathbb F_2$, so the scalar $\widetilde y_{\mathbf B}(j)(x)$ is well-defined.

      \noindent\emph{Claim 1.} For every $\mathbf B\in\mathrm{Flag}_{p-1}(\mathcal I)$, one has $\widetilde y_{\mathbf B} \in\bigl((\mathbb F_2)^{B_0}\bigr)^{(J)}$.

      \begin{proof}[Proof of Claim~1]
            For fixed $\mathbf B$, every member of the family
            \[
                  \left(\lambda_{B_0}^{(J)}((y^\Gamma)_{\mathbf B})\right)_{\Gamma\in\mathscr S}
            \]
            lies in $\bigl((\mathbb F_2)^{B_0}\bigr)^{(J)}$. Applying \Cref{prop:three-mechanisms}\textup{(3)} with $Y=B_0$ shows that its coordinatewise ultralimit along $\mathcal W$ has finite $J$-support. By definition, this ultralimit is $\widetilde y_{\mathbf B}$.
      \end{proof}

      By Claim~1, the formula
      \[
            y_{\mathbf B}\coloneqq \bigl(\lambda_{B_0}^{(J)}\bigr)^{-1} (\widetilde y_{\mathbf B}) \in(F^{(J)})_{B_0}
      \]
      is defined.  The components $y_{\mathbf B}$, indexed by $\mathbf B\in\mathrm{Flag}_{p-1}(\mathcal I)$, now define $y\in\mathcal R^{p-1}(\mathbf F^{(J)})$.

      \noindent\emph{Claim 2.} For every $\mathbf A\in\mathrm{Flag}_p(\mathcal I)$, one has $(d_{\mathbf F^{(J)}}^{p-1}y)_{\mathbf A}=z_{\mathbf A}$.

      \begin{proof}[Proof of Claim~2]
            Fix $\mathbf A=(A_0\subsetneq\cdots\subsetneq A_p) \in\mathrm{Flag}_p(\mathcal I)$.  Fineness in \Cref{prop:three-mechanisms}\textup{(2)} gives
            \[
                  Q_{\mathbf A} \coloneqq \{\Gamma\in\mathscr S\mid\mathbf A\in\Gamma\} \in\mathcal W.
            \]
            For every $\Gamma\in Q_{\mathbf A}$, one has $(d_{\mathbf F^{(J)}}^{p-1}y^\Gamma)_{\mathbf A} =z_{\mathbf A}$.

            Fix $j\in J$ and $x\in A_0$.  Taking the ultralimit along $\mathcal W$ in each coordinate commutes with every finite sum in $\mathbb F_2$: if scalar families $(b_i^\Gamma)_\Gamma$ have ultralimits $b_i$ along $\mathcal W$ for $1\leq i\leq m$ ($m\in\mathbb N\setminus\{0\}$), then
            \[
                  \bigcap_{i=1}^m \{\Gamma\in\mathscr S\mid b_i^\Gamma=b_i\} \in\mathcal W,
            \]
            and the ultralimit along $\mathcal W$ of $\sum_{i=1}^m b_i^\Gamma$ is $\sum_{i=1}^m b_i$. For the differential $d_{\mathbf F^{(J)}}^{p-1}y$, the first term in \eqref{eq:boolean-roos-differential} is $e_{A_0}\cdot y_{A_1,\ldots,A_p}$.  The identity in \eqref{eq:boolean-direct-sum-restriction-compatibility} gives
            \[
                  \Bigl(\bigl(\lambda_{A_0}^{(J)} (e_{A_0}\cdot y_{A_1,\ldots,A_p})\bigr)(j)\Bigr)(x) =\Bigl(\bigl(\lambda_{A_1}^{(J)} (y_{A_1,\ldots,A_p})\bigr)(j)\Bigr)(x).
            \]
            It follows from these two calculations and the definition of $y_{\mathbf B}$ that, after applying $\lambda_{A_0}^{(J)}$ and then evaluating at $(j,x)$, the ultralimit along $\mathcal W$ of the left-hand side of $(d_{\mathbf F^{(J)}}^{p-1}y^\Gamma)_{\mathbf A}  =z_{\mathbf A}$ is the corresponding coordinate of $(d_{\mathbf F^{(J)}}^{p-1}y)_{\mathbf A}$. Since $Q_{\mathbf A}\in\mathcal W$, the same coordinate on the right-hand side has ultralimit along $\mathcal W$ equal to the corresponding coordinate of $z_{\mathbf A}$. Therefore
            \[
                  \Bigl(\bigl(\lambda_{A_0}^{(J)} ((d_{\mathbf F^{(J)}}^{p-1}y)_{\mathbf A})\bigr)(j)\Bigr)(x) = \Bigl(\bigl(\lambda_{A_0}^{(J)}(z_{\mathbf A})\bigr)(j)\Bigr)(x).
            \]
            Since $j\in J$ and $x\in A_0$ were arbitrary and $\lambda_{A_0}^{(J)}$ is an isomorphism, the asserted equality follows.
      \end{proof}

      Claims~1 and~2 show that every $p$-cocycle in $\mathcal R^\bullet(\mathbf F^{(J)})$ is a boundary.  Thus $\mathrm H^p\bigl(\mathcal R^\bullet(\mathbf F^{(J)})\bigr)=0$ and \Cref{prop:roos-computes-derived-limits} gives $\mathop{\varprojlim\nolimits^{p}}\displaylimits_{A\in\mathcal I}(F^{(J)})_A=0$.
\end{proof}

\begin{proposition}\label{prop:all-ext}
      For every set $J$ and every $q\in\mathbb N$,
      \begin{equation}\label{eq:all-ext}
            \operatorname{Ext}_R^q({}_R S,({}_RR)^{(J)})=0.
      \end{equation}
\end{proposition}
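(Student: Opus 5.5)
The plan is to split according to the degree $q$, since all the work has already been done in the preceding results. For $q\in\{0,1\}$ the vanishing \eqref{eq:all-ext} is exactly \Cref{lem:low-degree}, applied to the given set $J$; this case uses only \textsf{(LUH1)}, through the nonprincipal $\kappa$-complete ultrafilter $\mathcal U$.

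For $q\geq2$, I will write $q=p+1$ with $p\in\mathbb N\setminus\{0\}$. Then \Cref{cor:boolean-free-inverse-system}, specifically \eqref{eq:boolean-free-ext-derived-limit}, gives a natural isomorphism
\[
      \operatorname{Ext}_R^{p+1}\bigl({}_RS,({}_RR)^{(J)}\bigr)\cong\mathop{\varprojlim\nolimits^{p}}\displaylimits_{A\in\mathcal I}(F^{(J)})_A .
\]
This isomorphism comes from dimension shifting along \eqref{eq:boolean-quotient-short-exact} together with the Roos–Jensen identification in \Cref{prop:roos-jensen-projective-system}. The right-hand side vanishes by \Cref{prop:positive-derived-limits}. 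That proposition is where \textsf{(LUH2)} enters: the fine $\aleph_1$-complete ultrafilter $\mathcal W$ glues the partial solutions supplied by \Cref{lem:selected-roos-equations} into a global cochain $y$ with $d^{p-1}y=z$. Hence $\operatorname{Ext}^q_R({}_RS,({}_RR)^{(J)})=0$ for all $q\geq2$.

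No step here is a genuine obstacle, because the substantive input, the ultralimit gluing of partial solutions, is already contained in \Cref{prop:positive-derived-limits}. The only point needing care is the indexing. The dimension shift in \Cref{prop:boolean-ext-derived-limit}(2) is valid only for $p\geq1$, since in degree $1$ the long exact sequence involves the nonzero group $\operatorname{Hom}_R(R,N)$. Therefore $q=1$ cannot be obtained from $\varprojlim^0$ and must be taken from \Cref{lem:low-degree}, which treats $q=0$ and $q=1$ directly. With this case split, the two ranges $q\in\{0,1\}$ and $q\geq2$ together cover every $q\in\mathbb N$.
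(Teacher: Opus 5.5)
Your proposal is correct and follows the paper's proof exactly: the cases $q\in\{0,1\}$ come from \Cref{lem:low-degree}, and for $q\geq2$ you set $p=q-1$ and combine \eqref{eq:boolean-free-ext-derived-limit} with \Cref{prop:positive-derived-limits}. Your remark that $q=1$ cannot be reached by dimension shifting down to $\varprojlim^0$, and so must come from \Cref{lem:low-degree}, is also accurate.
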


\begin{proof}
      For $q\in\{0,1\}$, equation \eqref{eq:all-ext} follows from \Cref{lem:low-degree}.  Suppose that $q\geq2$ and put $p\coloneqq q-1$, so $p\in\mathbb N\setminus\{0\}$.  Applying \eqref{eq:boolean-free-ext-derived-limit} and then \Cref{prop:positive-derived-limits} gives $\operatorname{Ext}_R^q({}_R S,({}_RR)^{(J)}) \cong \mathop{\varprojlim\nolimits^{p}}\displaylimits_{A\in\mathcal I} (F^{(J)})_A =0$.
\end{proof}

\section{Proof of the main theorem}
\label{sec:main-proof}

Use the notation and conclusions of \Cref{subsec:boolean,subsec:all-ext} under $\textsf{LUH}(\kappa)$: in particular,
\[
      X=\kappa, \qquad R=(\mathbb F_2)^X, \qquad \mathfrak m=\{r\in R\mid\operatorname{supp}_X r\in\mathcal I\}, \qquad S=R/\mathfrak m,
\]
and \eqref{eq:all-ext} holds.  The hypothesis $\textsf{LUH}$ asserts the existence of such a cardinal $\kappa$; by \Cref{prop:luh-implication-diagram}, the existence of a strongly compact cardinal is sufficient.

The construction has three steps.
\begin{enumerate}[label=\textup{(\arabic*)}]
      \item Choose a deleted free resolution $F^\bullet$ of $S$ and obtain a nonzero class $[\chi_{\mathcal U}]\in \mathrm H^0(R^+\otimes_R F^\bullet)$.
      \item Form a one-periodic totally acyclic complex over the ring of dual numbers and transport $[\chi_{\mathcal U}]$ to a nonzero tensor cohomology class.
      \item Use the one-periodic complex to construct a left and right coherent lower triangular matrix ring $T$ and a strongly Gorenstein projective left $T$-module ${}_TG$ which is not Gorenstein flat.
\end{enumerate}

\subsection{Step 1: a nonzero class from a deleted resolution}
\label{subsec:first-fold}

Choose a free cochain resolution of the left $R$-module $S$
\[
      \cdots \xlongrightarrow{d_F^{-3}}{}_RF^{-2} \xlongrightarrow{d_F^{-2}}{}_RF^{-1} \xlongrightarrow{d_F^{-1}}{}_RF^0 \longrightarrow {}_RS\longrightarrow 0,
\]
and let $F^\bullet$ denote the complex obtained by deleting $S$, with $F^q=0$ for $q>0$ and $d_F^0=0$. Thus $\mathrm H^0(F^\bullet)\cong{}_RS$ as left $R$-modules.

\begin{lemma}\label{lem:coacyclic}
      If ${}_RQ$ is a projective left $R$-module, then the cochain complex
      \[
            \operatorname{Hom}_R(F^\bullet,{}_RQ)
      \]
      is exact.
\end{lemma}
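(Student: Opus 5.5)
The cohomology of $\operatorname{Hom}_R(F^\bullet,{}_RQ)$ in degree $q$ is $\operatorname{Ext}^q_R({}_RS,{}_RQ)$, because $F^\bullet$ is a deleted projective resolution of $S$; we use the standard reindexing $F^{-q}\mapsto$ cochain degree $q$ of the Hom complex. So it suffices to show $\operatorname{Ext}^q_R({}_RS,{}_RQ)=0$ for every $q\in\mathbb N$ and every projective ${}_RQ$. Note that exactness is required in every degree, including degree $0$, where the cohomology is $\operatorname{Hom}_R(S,Q)$, since $d_F^0=0$ and $F^q=0$ for $q>0$.

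First reduce from projective to free modules. Choose a projective $Q'$ with $Q\oplus Q'\cong({}_RR)^{(J)}$ for some set $J$. Both $\operatorname{Hom}_R(F^\bullet,-)$ and $\operatorname{Ext}^q_R(S,-)$ are additive. Hence $\operatorname{Ext}^q_R(S,Q)$ is a direct summand of $\operatorname{Ext}^q_R(S,({}_RR)^{(J)})$, and equivalently the complex $\operatorname{Hom}_R(F^\bullet,Q)$ is a direct summand of $\operatorname{Hom}_R(F^\bullet,({}_RR)^{(J)})$. Exactness passes to direct summands of complexes.

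Second, apply \Cref{prop:all-ext}: for every set $J$ and every $q$, $\operatorname{Ext}^q_R({}_RS,({}_RR)^{(J)})=0$. Therefore $\mathrm H^q(\operatorname{Hom}_R(F^\bullet,({}_RR)^{(J)}))=0$ for all $q\geq0$. In negative cochain degrees the Hom complex is zero, so it is exact there trivially. Combining this with the summand argument gives exactness of $\operatorname{Hom}_R(F^\bullet,{}_RQ)$.

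There is no real obstacle, since all the set-theoretic work is contained in \Cref{prop:all-ext}. The only care needed is bookkeeping of degrees: the identification $\mathrm H^q(\operatorname{Hom}_R(F^\bullet,N))\cong\operatorname{Ext}^q_R(S,N)$ includes $q=0$ because $S$ was deleted, and this degree is covered by the $\operatorname{Hom}$-vanishing part of \Cref{lem:low-degree}.
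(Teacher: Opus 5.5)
Your proposal is correct and follows essentially the same route as the paper: you identify $\mathrm H^q\bigl(\operatorname{Hom}_R(F^\bullet,({}_RR)^{(J)})\bigr)$ with $\operatorname{Ext}^q_R({}_RS,({}_RR)^{(J)})$, kill it with \Cref{prop:all-ext}, note that the complex vanishes in negative degrees, and pass from free to projective modules via direct summands. Your remark that degree $0$ is handled by the $\operatorname{Hom}$-vanishing part of \Cref{lem:low-degree} is exactly the $q=0$ case the paper absorbs into \Cref{prop:all-ext}.
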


\begin{proof}
      For every set $J$ and every $q\in\mathbb N$, the standard computation from the free resolution gives a natural isomorphism of abelian groups
      \[
            \mathrm H^q\bigl( \operatorname{Hom}_R(F^\bullet,({}_RR)^{(J)}) \bigr) \cong \operatorname{Ext}_R^q({}_RS,({}_RR)^{(J)}).
      \]
      The right-hand side is zero by \Cref{prop:all-ext}.  The Hom complex is concentrated in nonnegative degrees, so it is exact. Since projective left $R$-modules are direct summands of free left $R$-modules, one sees that $\operatorname{Hom}_R(F^\bullet,{}_RQ)$ is exact for every projective left $R$-module ${}_RQ$.
\end{proof}

Let $q_{\mathcal U}$ be the Boolean quotient map in \eqref{eq:q-ultrafilter}: $q_{\mathcal U}\colon R\longrightarrow\mathbb F_2$. Let
\[
      \iota\colon (\mathbb F_2,+)\lhook\joinrel\longrightarrow \mathbb Q/\mathbb Z, \qquad a\longmapsto \begin{cases}0+\mathbb Z,&a=0,\\ \tfrac12+\mathbb Z,&a=1,\end{cases}
\]
be the indicated injective homomorphism of additive groups, and define the character $\chi_{\mathcal U}\coloneqq \iota\circ q_{\mathcal U}\in R^+$. The canonical quotient map sends $\chi_{\mathcal U}$ to the class
\[
      [\chi_{\mathcal U}]\coloneqq \chi_{\mathcal U}+R^+\mathfrak m \in \mathrm H^0(R^+\otimes_R F^\bullet) \cong R^+\otimes_R S \cong R^+/R^+\mathfrak m.
\]
The equality $\chi_{\mathcal U}\cdot\mathfrak m=0$ also says that the additive character $\chi_{\mathcal U}$ factors through $S=R/\mathfrak m$.

\begin{lemma}\label{lem:character-class}
      The class $[\chi_{\mathcal U}]\in R^+\otimes_R S$ is nonzero.
\end{lemma}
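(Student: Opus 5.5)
The plan is to show directly that $\chi_{\mathcal U}\notin R^+\mathfrak m$, using the identification $R^+\otimes_R S\cong R^+/R^+\mathfrak m$ already recorded before the statement. Here
\[
      R^+\mathfrak m=\Bigl\{\textstyle\sum_{i<n}\varphi_i\cdot a_i \Bigm| n\in\mathbb N,\ \varphi_i\in R^+,\ a_i\in\mathfrak m\Bigr\},
\]
where $(\varphi\cdot a)(r)=\varphi(ar)$.

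The first step is to collapse a finite sum to a single term. Given $a_0,\ldots,a_{n-1}\in\mathfrak m$, put $A\coloneqq\bigcup_{i<n}\operatorname{supp}_X a_i$. Then $A\in\mathcal I$, because $\mathcal I$ is an ideal of the Boolean algebra $\mathcal P(X)$ (shown in \Cref{subsec:boolean}). Since $a_i=a_ie_A$, we have
\[
      \varphi_i\cdot a_i=(\varphi_i\cdot a_i)\cdot e_A.
\]
Hence every element $\psi\in R^+\mathfrak m$ satisfies $\psi=\psi\cdot e_A$ for some $A\in\mathcal I$. Evaluating at $1-e_A=e_{X\setminus A}$ gives
\[
      \psi(e_{X\setminus A})=\psi(e_A e_{X\setminus A})=\psi(0)=0.
\]
Thus every element of $R^+\mathfrak m$ vanishes on $e_B$ for some $B=X\setminus A\in\mathcal U$.

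The second step is the contradiction. Suppose $\chi_{\mathcal U}\in R^+\mathfrak m$, and take $B\in\mathcal U$ as above with $\chi_{\mathcal U}(e_B)=0$. On the other hand, $\operatorname{supp}_X e_B=B\in\mathcal U$, so $q_{\mathcal U}(e_B)=1$ by \Cref{fact:boolean-quotient-map}. Therefore
\[
      \chi_{\mathcal U}(e_B)=\iota(1)=\tfrac12+\mathbb Z\neq0,
\]
which is a contradiction. Hence $[\chi_{\mathcal U}]\neq0$.

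The main obstacle is only bookkeeping: one must make sure the isomorphism $R^+\otimes_R S\cong R^+/R^+\mathfrak m$ is the standard one, $\varphi\otimes(r+\mathfrak m)\mapsto\varphi r+R^+\mathfrak m$, and that the right action on $R^+$ is the one used in the collapsing step. Both are routine because $R$ is commutative. No ultrafilter completeness is needed here; only the fact that $\mathcal I$ is an ideal disjoint from $\mathcal U$ is used.
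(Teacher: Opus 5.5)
Your proof is correct and follows essentially the same route as the paper. Both arguments collapse the finite sum via $A=\bigcup_i\operatorname{supp}_X a_i\in\mathcal I$ and use that $e_{X\setminus A}$ annihilates every $a_i$ while $q_{\mathcal U}(e_{X\setminus A})=1$; the only cosmetic difference is that the paper shows $\chi_{\mathcal U}\cdot e_{X\setminus A}$ equals both $0$ and $\chi_{\mathcal U}$ and then evaluates at $1$, whereas you evaluate directly at $e_{X\setminus A}$, which is slightly shorter.
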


\begin{proof}
      Suppose, to the contrary, that $[\chi_{\mathcal U}]=0$ in $R^+/R^+\mathfrak m$. Then $\chi_{\mathcal U}\in R^+\mathfrak m$, so there are $n\in\mathbb N\setminus\{0\}$, elements $f_i\in R^+$, and elements $a_i\in\mathfrak m$ such that $\chi_{\mathcal U}=\sum_{i=1}^n f_i\cdot a_i$. Put $A\coloneqq\bigcup_{i=1}^n\operatorname{supp}_X a_i$. Since $a_i\in\mathfrak m$, every $\operatorname{supp}_X a_i$ belongs to $\mathcal I$. The ideal $\mathcal I$ is closed under finite unions, and hence $A\in\mathcal I$. Consequently $X\setminus A\in\mathcal U$ by \Cref{fact:ultrafilter-characterization}.

      Let $e\coloneqq e_{X\setminus A}\in R$ be the characteristic function of $X\setminus A$. The support of each $a_i$ is contained in $A$, so $a_i\cdot e=0$. Therefore $f_i\cdot a_i\cdot e=0$ for each $1\leq i\leq n$. Thus $\chi_{\mathcal U}\cdot e=0$. On the other hand, $X\setminus A\in\mathcal U$ implies $q_{\mathcal U}(e)=1$. For every $b\in R$, one has
      \[
            (\chi_{\mathcal U}\cdot e)(b) =\chi_{\mathcal U}(e\cdot b) =\iota\bigl(q_{\mathcal U}(e\cdot b)\bigr) =\iota\bigl(q_{\mathcal U}(e)\cdot q_{\mathcal U}(b)\bigr) =\iota\bigl(1\cdot q_{\mathcal U}(b)\bigr) =\chi_{\mathcal U}(b).
      \]
      Hence $\chi_{\mathcal U}\cdot e=\chi_{\mathcal U}=0$.

      The equality $\chi_{\mathcal U}=0$ contradicts $\chi_{\mathcal U}(1)=\iota(1)=\tfrac12+\mathbb Z\neq 0+\mathbb Z$. Therefore $[\chi_{\mathcal U}]\neq 0$ in $R^+\otimes_R S$.
\end{proof}

\subsection{Step 2: a one-periodic complex in characteristic \texorpdfstring{$2$}{2}}
\label{subsec:second-fold}

Set
\[
      F_\oplus\coloneqq \bigoplus_{\substack{n\in\mathbb Z_{\leq 0}}}F^n = F^0 \oplus F^{-1}\oplus F^{-2}\oplus\cdots.
\]
For $x=(x_n)_{n\leq 0}\in F_\oplus$, define the endomorphism of left $R$-modules
\[
      \partial\colon F_\oplus\longrightarrow F_\oplus, \qquad (x_n)_{n\leq0}\longmapsto \bigl(d_F^{n-1}(x_{n-1})\bigr)_{n\leq0}.
\]
Thus $\partial\circ\partial=0$.  Define the ring of dual numbers $B\coloneqq R[\varepsilon]/(\varepsilon^2)$ and the endomorphism of left $B$-modules
\[
      \Delta\colon B\otimes_R F_\oplus\longrightarrow B\otimes_R F_\oplus, \qquad b\otimes_R x\longmapsto b\cdot\varepsilon\otimes_R x+b\otimes_R\partial(x).
\]
For every $i\in\mathbb Z$, set
\[
      (F_B)^i\coloneqq B\otimes_R F_\oplus,
      \qquad d_{F_B}^i\coloneqq\Delta,
\]
and write $(F_B)^\bullet\coloneqq\bigl((F_B)^i,d_{F_B}^i\bigr)_{i\in\mathbb Z}$.

\begin{lemma}\label{lem:fold-properties}
      The cochain complex $(F_B)^\bullet$ is a totally acyclic complex of free left $B$-modules
      \[
            \cdots\xlongrightarrow{\Delta}B\otimes_R F_\oplus \xlongrightarrow{\Delta}B\otimes_R F_\oplus \xlongrightarrow{\Delta}B\otimes_R F_\oplus \xlongrightarrow{\Delta}\cdots.
      \]
\end{lemma}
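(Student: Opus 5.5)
The argument has three parts: check that $\Delta$ squares to zero, reduce both exactness claims to the corresponding facts for the graded complex $F^\bullet$ by a filtration argument, and then invoke \Cref{lem:coacyclic}.

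\emph{Well-definedness and freeness.} Since $B=R\oplus R\varepsilon$ is free of rank $2$ over the commutative ring $R$, the module $B\otimes_R F_\oplus$ is a free left $B$-module, because $F_\oplus$ is free over $R$. The map $\Delta$ is $B$-linear: $\varepsilon$ is central and $\partial$ is $R$-linear. One computes $\Delta^2(b\otimes x)=b\varepsilon^2\otimes x+2b\varepsilon\otimes\partial x+b\otimes\partial^2x$. This vanishes because $\varepsilon^2=0$, $\partial^2=0$, and $2=0$ in $R$; the characteristic $2$ is exactly what kills the cross term.

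\emph{Acyclicity.} Write an element of $B\otimes_RF_\oplus$ as $u+\varepsilon v$ with $u,v\in F_\oplus$. Then $\Delta(u+\varepsilon v)=\partial u+\varepsilon(u+\partial v)$. Suppose $\Delta(u+\varepsilon v)=0$. Then $\partial u=0$ and $u=\partial v$ in characteristic $2$, so $u+\varepsilon v=\Delta(\varepsilon v)$. Hence every cycle is a boundary, and this needs no hypothesis on $F^\bullet$ at all. The complex is a cone-like construction: it is contractible over $R$ but not over $B$.

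\emph{Total acyclicity.} Let $Q$ be a projective left $B$-module; it suffices to treat $Q=B^{(J)}$ up to summands. By adjunction, $\operatorname{Hom}_B(B\otimes_RF_\oplus,Q)\cong\operatorname{Hom}_R(F_\oplus,Q)=\prod_{n\le0}\operatorname{Hom}_R(F^n,Q)$. As an $R$-module, $Q\cong(R^{(J)})\oplus\varepsilon(R^{(J)})$, so $Q$ is $R$-projective. Transported through this identification, the differential becomes $\varphi\mapsto\varphi\circ\partial+\varepsilon\varphi$ on families $(\varphi_n)$.

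Consider the short exact sequence of $B$-modules $0\to\varepsilon Q\to Q\to Q/\varepsilon Q\to0$, where both outer terms are $R$-projective modules killed by $\varepsilon$. The Hom complex is functorial in $Q$ and exact in $Q$ because $B\otimes_R F_\oplus$ is projective, so it suffices to show acyclicity when $\varepsilon Q=0$. In that case the differential is just $\varphi\mapsto\varphi\circ\partial$ on $\prod_n\operatorname{Hom}_R(F^n,Q)$. This is the totalization, by products in a one-periodic fashion, of $\operatorname{Hom}_R(F^\bullet,Q)$.

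This last step is the main obstacle. $\operatorname{Hom}_R(F^\bullet,Q)$ is exact by \Cref{lem:coacyclic}, including in degree $0$, since $\operatorname{Hom}_R(S,Q)=0$. The product totalization of an exact complex bounded on the correct side ($\operatorname{Hom}$ is concentrated in nonnegative degrees) is exact. The kernel condition $\varphi\circ\partial=0$ means that each $\varphi_{n}$ kills $\operatorname{Im}d^{n-1}$ and restricts suitably, so each component is a cocycle of $\operatorname{Hom}_R(F^\bullet,Q)$. One then solves componentwise for a preimage using exactness, and products cause no convergence issue. The care lies in checking this degree bookkeeping, and in checking that passage through the short exact sequence in $Q$ (a long exact sequence argument) is legitimate.
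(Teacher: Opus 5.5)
Your argument is correct apart from one slip, but it proves total acyclicity by a different route from the paper. The slip is in the acyclicity step: with $u=\partial v$, the preimage of $u+\varepsilon v$ is $1\otimes v$, since $\Delta(1\otimes v)=\partial v+\varepsilon v$, whereas $\Delta(\varepsilon\otimes v)=\varepsilon\otimes\partial v$.

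For total acyclicity, the paper never invokes \Cref{lem:coacyclic}. It writes a $B$-homomorphism $B\otimes_RF_\oplus\to({}_BB)^{(J)}$ as $a+\varepsilon b$ with $a,b\in\operatorname{Hom}_R(F_\oplus,({}_RR)^{(J)})$. The Hom differential then becomes $(a,b)\mapsto(a\circ\partial,\,a+b\circ\partial)$. The same two-coordinate trick you used for acyclicity shows that every cocycle has the form $(b\circ\partial,b)$, which is the image of $(b,0)$. Thus exactness against free $B$-modules is purely formal and holds for any $(F_\oplus,\partial)$ with $\partial^2=0$ in characteristic $2$. Conceptually, $({}_BB)^{(J)}\cong\operatorname{Hom}_R(B,({}_RR)^{(J)})$ and, as you noticed, the complex is contractible over $R$.

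Your d\'evissage along $0\to\varepsilon Q\to Q\to Q/\varepsilon Q\to0$ instead reduces to $\varepsilon$-annihilated targets $\cong({}_BU)^{(J)}$. There exactness genuinely requires $\operatorname{Ext}^q_R(S,({}_RR)^{(J)})=0$ for all $q\geq0$. This is legitimate here, since \Cref{lem:coacyclic} precedes the lemma. Your componentwise computation is exactly the paper's proof of \Cref{prop:folded-detection}\textup{(1)}, so you obtain that statement along the way. The price is that you make the lemma depend on $\textsf{LUH}$ when it does not need to. This blurs the paper's division of labour: total acyclicity over $B$ is automatic, and the set-theoretic input is needed only for the $U$-targets that appear after passing to the triangular ring $T$ in \Cref{prop:bilateral-total}.
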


\begin{proof}
      Since every $F^n$ is a free left $R$-module, so is $F_\oplus$.  Consequently every term $(F_B)^i=B\otimes_R F_\oplus$ is a free left $B$-module.

      \noindent\emph{Claim 1.} The maps $d_{F_B}^i=\Delta$ make $(F_B)^\bullet$ a cochain complex.

      \begin{proof}[Proof of Claim~1]
            For $b\in B$ and $x\in F_\oplus$, one has
            \[
                  (\Delta\circ\Delta)(b\otimes_R x) =b\cdot\varepsilon^2\otimes_R x +2\cdot b\cdot\varepsilon\otimes_R\partial(x) +b\otimes_R(\partial\circ\partial)(x) =0,
            \]
            because $\varepsilon^2=0$, $\partial\circ\partial=0$, and $R$ has characteristic $2$.  Thus $d_{F_B}^{i+1}\circ d_{F_B}^i=0$ for every $i\in\mathbb Z$.
      \end{proof}

      \noindent\emph{Claim 2.} The cochain complex $(F_B)^\bullet$ is exact.

      \begin{proof}[Proof of Claim~2]
            The decomposition $B=R\oplus(\varepsilon\cdot R)$ gives an isomorphism of left $R$-modules
            \begin{equation}\label{eq:dual-number-coordinate-identification}
                  B\otimes_R F_\oplus\xlongrightarrow{\sim} F_\oplus\oplus F_\oplus, \qquad 1\otimes_R v+\varepsilon\otimes_R w \longmapsto(v,w).
            \end{equation}
            By \eqref{eq:dual-number-coordinate-identification} one sees that
            \begin{equation}\label{eq:dual-number-coordinate-differential}
                  \Delta(v,w)=\bigl(\partial(v),v+\partial(w)\bigr).
            \end{equation}
            Suppose that $\Delta(v,w)=0$. Then $\partial(v)=0$ and $v+\partial(w)=0$. Since $R$ has characteristic $2$, the equality $v+\partial(w)=0$ gives $v=\partial(w)$. Hence $(v,w)=(\partial(w),w)=\Delta(w,0)$, proving $\operatorname{Ker}\Delta\subseteq\operatorname{Im}\Delta$. The equality $\Delta\circ\Delta=0$ gives $\operatorname{Im}\Delta\subseteq\operatorname{Ker}\Delta$, so $\operatorname{Ker}\Delta=\operatorname{Im}\Delta$ and $(F_B)^\bullet$ is exact.
      \end{proof}

      \noindent\emph{Claim 3.} For every set $J$, the cochain complex $\operatorname{Hom}_B((F_B)^\bullet,({}_BB)^{(J)})$ is exact.

      \begin{proof}[Proof of Claim~3]
            For a set $J$, put $K_J^\bullet\coloneqq \operatorname{Hom}_B\bigl((F_B)^\bullet,({}_BB)^{(J)}\bigr)$.
            The tensor--Hom adjunction and $B=R\oplus(\varepsilon\cdot R)$ give
            \begin{equation}\label{eq:dual-number-hom-identification}
                  K_J^j =\operatorname{Hom}_B((F_B)^{-j},({}_BB)^{(J)}) \cong \operatorname{Hom}_R(F_\oplus,({}_RR)^{(J)}) \oplus\operatorname{Hom}_R(F_\oplus,({}_RR)^{(J)}).
            \end{equation}
            It follows from \eqref{eq:dual-number-hom-identification} that a pair $(a,b)$ represents the $B$-homomorphism whose restriction to $1\otimes_R F_\oplus$ is $a+\varepsilon\cdot b$. Since $B$ has characteristic $2$, the usual sign in the Hom differential has no effect.  In every degree $j\in\mathbb Z$, the differential is
            \[
                  d_{K_J}^j(a,b) =\bigl(a\circ\partial,a+(b\circ\partial)\bigr).
            \]
            If $(a,b)\in\operatorname{Ker} d_{K_J}^j$, then $a\circ\partial=0$ and $a+(b\circ\partial)=0$. Since $R$ has characteristic $2$, the equality $a+(b\circ\partial)=0$ gives $a=b\circ\partial$. Hence $(a,b)=(b\circ\partial,b)=d_{K_J}^{j-1}(b,0)$, proving $\operatorname{Ker} d_{K_J}^j\subseteq \operatorname{Im} d_{K_J}^{j-1}$. The equality $d_{K_J}^j\circ d_{K_J}^{j-1}=0$ gives $\operatorname{Im} d_{K_J}^{j-1}\subseteq \operatorname{Ker} d_{K_J}^j$. Thus $K_J^\bullet$ is exact.
      \end{proof}

      Let $P\in{}_B\mathrm{Mod}$ be projective.  Choose $P^{\prime}\in{}_B\mathrm{Mod}$ and a set $J$ such that $P\oplus P^{\prime}\cong({}_BB)^{(J)}$.  Then
      \[
            \operatorname{Hom}_B((F_B)^\bullet,P) \oplus\operatorname{Hom}_B((F_B)^\bullet,P^{\prime}) \cong K_J^\bullet.
      \]
      Claim~3 shows that $K_J^\bullet$ is exact, so the direct summand $\operatorname{Hom}_B((F_B)^\bullet,P)$ is exact.  Claims~1--3 prove that $(F_B)^\bullet$ is totally acyclic.
\end{proof}

\begin{proposition}\label{prop:folded-detection}
      Let $U\coloneqq{}_B R_{\mathbb Z}$, where the left $B$-action is induced by the quotient $B\longtwoheadrightarrow R$.  Then:
      \begin{enumerate}[label=\textup{(\arabic*)}]
            \item $\operatorname{Hom}_B((F_B)^\bullet,({}_BU)^{(J)})$ is exact for every set $J$;
            \item $\mathrm H^0(R^+\otimes_B(F_B)^\bullet)\neq0$.
      \end{enumerate}
\end{proposition}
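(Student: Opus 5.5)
Both parts reduce to the one-periodic complex built from $(F_\oplus,\partial)$. The plan is to use base change along the quotient $B\longtwoheadrightarrow R$, which kills $\varepsilon$. This turns the complex $(F_B)^\bullet$, after $\operatorname{Hom}_B(-,U^{(J)})$ or $R^+\otimes_B-$, into a complex whose terms are built from $F_\oplus$ and whose differential is induced by $\partial$ alone. Its cohomology then splits into the degreewise cohomology of $\operatorname{Hom}_R(F^\bullet,-)$ or $R^+\otimes_RF^\bullet$. Throughout, $R^+$ is regarded as a right $B$-module through $B\longtwoheadrightarrow R$, just as $U$ is a left $B$-module.

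\emph{Part (1).} By the tensor--Hom adjunction for the ring map $R\longrightarrow B$, and since ${}_BU^{(J)}$ restricts to $({}_RR)^{(J)}$, we get
\[
\operatorname{Hom}_B(B\otimes_RF_\oplus,U^{(J)})\cong\operatorname{Hom}_R(F_\oplus,({}_RR)^{(J)}),\qquad f\longmapsto f(1\otimes_R-).
\]
Because $\Delta(1\otimes x)=\varepsilon\otimes x+1\otimes\partial(x)$ and $\varepsilon$ acts as zero on $U$, the differential becomes $a\longmapsto a\circ\partial$; no sign issues arise in characteristic $2$. Write $N\coloneqq({}_RR)^{(J)}$ and identify $\operatorname{Hom}_R(F_\oplus,N)=\prod_{n\leq0}\operatorname{Hom}_R(F^n,N)$. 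The map $a\longmapsto a\circ\partial$ then sends the component $a_n$ to the component in position $n-1$, namely $a_n\circ d_F^{n-1}$. Hence the complex, in each of its (identical) degrees, decomposes as a product over $n$ of the pieces $\operatorname{Hom}_R(F^{n+1},N)\to\operatorname{Hom}_R(F^n,N)\to\operatorname{Hom}_R(F^{n-1},N)$. Its cohomology is
\[
\prod_{q\geq0}\mathrm H^q\bigl(\operatorname{Hom}_R(F^\bullet,N)\bigr),
\]
where at $n=0$ there is no incoming map, since $F^1=0$. Each factor vanishes by \Cref{lem:coacyclic}, equivalently by \Cref{prop:all-ext}. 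The only step needing real care is this bookkeeping, in particular checking that the top piece contributes $\operatorname{Hom}_R(S,N)=0$ rather than something extra.

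\emph{Part (2).} Similarly,
\[
R^+\otimes_B(B\otimes_RF_\oplus)\cong R^+\otimes_RF_\oplus=\bigoplus_{n\leq0}R^+\otimes_RF^n,
\]
and the differential becomes $1\otimes\partial$, since $R^+\varepsilon=0$. Direct sums commute with cohomology, so $\mathrm H^0(R^+\otimes_B(F_B)^\bullet)\cong\bigoplus_{q\geq0}\mathrm H^{-q}(R^+\otimes_RF^\bullet)$. The summand $q=0$ is $\operatorname{Coker}(R^+\otimes d_F^{-1})\cong R^+\otimes_RS$, using $d_F^0=0$ and right exactness of the tensor product. This summand contains the class $[\chi_{\mathcal U}]$, which is nonzero by \Cref{lem:character-class}. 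Hence the cohomology is nonzero.

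I expect no serious obstacle. The main point is to verify carefully that the identifications in both parts are compatible with $\Delta$, in particular that the $\varepsilon$-term genuinely disappears after base change to $R$, and to track the index shift so that the $S$-terms land in the correct summand.
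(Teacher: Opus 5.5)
Your proposal is correct and follows essentially the same route as the paper. In both, the quotient $B\longtwoheadrightarrow R$ kills $\varepsilon$, and the periodized Hom (respectively tensor) complex is identified with a product (respectively direct sum) of shifted copies of $\operatorname{Hom}_R(F^\bullet,({}_RR)^{(J)})$ (respectively $R^+\otimes_RF^\bullet$), with the degree-zero summand handled exactly as you describe; the conclusion then comes from \Cref{lem:coacyclic} and \Cref{lem:character-class}.
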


\begin{proof}
      Fix a set $J$ and put
      \[
            L_J^\bullet\coloneqq \operatorname{Hom}_R(F^\bullet,({}_RR)^{(J)}).
      \]
      Thus $L_J^q=\operatorname{Hom}_R(F^{-q},({}_RR)^{(J)})$ for $q\in\mathbb N$, whereas $L_J^q=0$ for $q<0$.

      \noindent\emph{Claim 1.} For every $j\in\mathbb Z$, there is a natural isomorphism
      \begin{equation}\label{eq:hom-periodization-cohomology}
            \mathrm H^j\bigl( \operatorname{Hom}_B((F_B)^\bullet,({}_BU)^{(J)})\bigr) \cong \prod_{q\in\mathbb N}\mathrm H^q(L_J^\bullet).
      \end{equation}

      \begin{proof}[Proof of Claim~1]
            Since $\varepsilon\cdot U=0$, the degree-$j$ term of the Hom complex has a natural identification
            \begin{equation}\label{eq:hom-periodization-coordinates}
                  \operatorname{Hom}_B((F_B)^{-j},({}_BU)^{(J)}) \cong \operatorname{Hom}_R(F_\oplus,({}_RR)^{(J)}) \cong \prod_{q\in\mathbb N}L_J^q.
            \end{equation}
            The usual sign in the Hom differential is immaterial because $U$ has characteristic $2$.  Via \eqref{eq:hom-periodization-coordinates}, the differential sends $(a_q)_{q\in\mathbb N}$ to $(c_q)_{q\in\mathbb N}$, where
            \[
                  c_0\coloneqq 0, \qquad c_q\coloneqq d_{L_J}^{q-1}(a_{q-1}) \quad(q\in\mathbb N\setminus\{0\}).
            \]
            With the conventions $L_J^{-1}=0$ and $\operatorname{Im}d_{L_J}^{-1}=0$, the kernel and image of the coordinate differential are
            \[
                  \prod_{q\in\mathbb N}\operatorname{Ker}d_{L_J}^q \qquad\text{and}\qquad \prod_{q\in\mathbb N}\operatorname{Im}d_{L_J}^{q-1},
            \]
            respectively.  Products are exact in the category of abelian groups, so the quotient of the kernel by the image is the right-hand side of \eqref{eq:hom-periodization-cohomology}.
      \end{proof}

      The complex $L_J^\bullet$ is exact by \Cref{lem:coacyclic}; hence the group in \eqref{eq:hom-periodization-cohomology} is zero for every $j\in\mathbb Z$.  Since $J$ was arbitrary, \eqref{eq:hom-periodization-cohomology} proves part~\textup{(1)}.

      Put $L_+^\bullet\coloneqq R^+\otimes_R F^\bullet$ and $d_{L_+}^n\coloneqq 1_{R^+}\otimes_R d_F^n$.

      \noindent\emph{Claim 2.} For every $j\in\mathbb Z$, there is a natural isomorphism
      \begin{equation}\label{eq:tensor-periodization-cohomology}
            \mathrm H^j\bigl(R^+\otimes_B(F_B)^\bullet\bigr) \cong \bigoplus_{n\leq 0}\mathrm H^n(L_+^\bullet).
      \end{equation}

      \begin{proof}[Proof of Claim~2]
            The right $B$-action on $R^+$ factors through the quotient $B\longtwoheadrightarrow R$.  Hence $R^+\cdot\varepsilon=0$, and for every $j\in\mathbb Z$ there is a natural identification
            \begin{equation}\label{eq:tensor-periodization-coordinates}
                  R^+\otimes_B(F_B)^j \cong \bigoplus_{n\leq 0}L_+^n.
            \end{equation}
            Under the identification in \eqref{eq:tensor-periodization-coordinates}, the degree-$j$ differential is
            \[
                  1_{R^+}\otimes_B d_{F_B}^j\colon R^+\otimes_B(F_B)^j \longrightarrow R^+\otimes_B(F_B)^{j+1}, \qquad (x_n)_{n\leq0} \longmapsto \bigl(d_{L_+}^{n-1}(x_{n-1})\bigr)_{n\leq0}.
            \]
            Note that the kernel is the direct sum of the groups $\operatorname{Ker}d_{L_+}^n$, and the image is the direct sum of the groups $\operatorname{Im}d_{L_+}^{n-1}$.  Exactness of direct sums in the category of abelian groups proves \eqref{eq:tensor-periodization-cohomology}.
      \end{proof}

      Let $(\xi_n)_{n\leq0}$ be the element of the direct sum in \eqref{eq:tensor-periodization-cohomology} defined by
      \[
            \xi_0\coloneqq[\chi_{\mathcal U}] \quad\text{and}\quad \xi_n\coloneqq0\quad(n<0).
      \]
      By \Cref{lem:character-class}, this element is nonzero.  Under the isomorphism in \eqref{eq:tensor-periodization-cohomology} for $j=0$, let $\lambda_B$ be the class corresponding to $(\xi_n)_{n\leq0}$. Thus
      \begin{equation}\label{eq:lambda-B}
            0\neq\lambda_B \in\mathrm H^0\bigl(R^+\otimes_B(F_B)^\bullet\bigr),
      \end{equation}
      which proves part~\textup{(2)}.
\end{proof}

\begin{lemma}\label{lem:tensor-cohomology-tor}
      Let $A$ be a ring, let $K^\bullet$ be an exact cochain complex of flat objects of ${}_A\mathrm{Mod}$, let $N\in\mathrm{Mod}_A$, and let $n\in\mathbb Z$.  For every $k\in\mathbb Z$, let $\iota_K^k\colon Z^k(K^\bullet)\longrightarrow K^k$, $z\longmapsto z$, be the inclusion homomorphism of left $A$-modules.  There are natural isomorphisms
      \begin{align}
            \mathrm H^n(N\otimes_A K^\bullet) & \cong \operatorname{Ker}\bigl( N\otimes_A Z^{n+1}(K^\bullet) \xlongrightarrow{1_N\otimes_A\iota_K^{n+1}} N\otimes_A K^{n+1}\bigr) \label{eq:tensor-cohomology-kernel} \\ &\cong \operatorname{Tor}_1^A \bigl(N_A,{}_AZ^{n+2}(K^\bullet)\bigr). \label{eq:tensor-cohomology-tor}
      \end{align}
\end{lemma}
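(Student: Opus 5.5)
We will derive both isomorphisms from the short exact sequences cut out of $K^\bullet$ by its cocycles. Since $K^\bullet$ is exact, for every $k\in\mathbb Z$ the differential $d_K^k$ factors as a surjection $\pi^k\colon K^k\longtwoheadrightarrow Z^{k+1}(K^\bullet)$ followed by the inclusion $\iota_K^{k+1}$. This yields short exact sequences
\[
      0\longrightarrow Z^k(K^\bullet)\xlongrightarrow{\iota_K^k}K^k\xlongrightarrow{\pi^k}Z^{k+1}(K^\bullet)\longrightarrow0 .
\]
All constructions below are natural in $N$ and in $K^\bullet$. Throughout, we use only right exactness of $N\otimes_A-$ and the long exact Tor sequence.

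\emph{First isomorphism \eqref{eq:tensor-cohomology-kernel}.} Tensoring with $N$ factors $1_N\otimes d_K^n$ as
\[
      N\otimes K^n\xlongrightarrow{1\otimes\pi^n}N\otimes Z^{n+1}\xlongrightarrow{1\otimes\iota_K^{n+1}}N\otimes K^{n+1},
\]
where the first map is surjective by right exactness. Likewise $1_N\otimes d_K^{n-1}$ factors through $N\otimes Z^n$ via the surjection $1\otimes\pi^{n-1}$, so its image equals the image of $1\otimes\iota_K^n$. Right exactness applied to the sequence at level $n$ identifies this image with $\operatorname{Ker}(1\otimes\pi^n)$. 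Therefore
\[
      \mathrm H^n(N\otimes K^\bullet)=\operatorname{Ker}\bigl((1\otimes\iota_K^{n+1})(1\otimes\pi^n)\bigr)\big/\operatorname{Ker}(1\otimes\pi^n).
\]
Because $1\otimes\pi^n$ is surjective, it maps this quotient isomorphically onto $\operatorname{Ker}(1\otimes\iota_K^{n+1})$. This is elementary diagram chasing.

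\emph{Second isomorphism \eqref{eq:tensor-cohomology-tor}.} Apply $\operatorname{Tor}^A_\ast(N,-)$ to the sequence at level $n+1$,
\[
      0\longrightarrow Z^{n+1}\longrightarrow K^{n+1}\longrightarrow Z^{n+2}\longrightarrow0 .
\]
Since $K^{n+1}$ is flat, $\operatorname{Tor}_1^A(N,K^{n+1})=0$. The long exact sequence then gives an exact segment
\[
      0\longrightarrow\operatorname{Tor}_1^A(N,Z^{n+2})\xlongrightarrow{\ \delta\ }N\otimes Z^{n+1}\xlongrightarrow{1\otimes\iota_K^{n+1}}N\otimes K^{n+1}.
\]
Hence the connecting map $\delta$ identifies $\operatorname{Tor}_1^A(N,Z^{n+2})$ with $\operatorname{Ker}(1\otimes\iota_K^{n+1})$, naturally.

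No step seems a real obstacle. The only point needing care is the first step: we must check that the image of $1\otimes d_K^{n-1}$ is exactly $\operatorname{Ker}(1\otimes\pi^n)$. This holds because $1\otimes\pi^{n-1}$ is surjective onto $N\otimes Z^n$, and right exactness gives exactness of $N\otimes Z^n\to N\otimes K^n\to N\otimes Z^{n+1}\to0$. Flatness is used only for the vanishing of $\operatorname{Tor}_1^A(N,K^{n+1})$.
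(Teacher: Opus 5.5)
Your proposal is correct and follows the paper's proof: factor $1_N\otimes d_K^n$ through the surjection onto $N\otimes_A Z^{n+1}(K^\bullet)$, identify $\operatorname{Im}(1_N\otimes d_K^{n-1})$ with the kernel of that surjection, and then read off the Tor identification from $0\to Z^{n+1}\to K^{n+1}\to Z^{n+2}\to0$ using flatness of $K^{n+1}$. The only cosmetic difference is that the paper gets $\operatorname{Im}(1_N\otimes d_K^{n-1})=\operatorname{Ker}(1_N\otimes\overline d_K^{\,n})$ by tensoring the presentation $K^{n-1}\to K^n\to Z^{n+1}\to0$ directly, rather than passing through $N\otimes_A Z^n(K^\bullet)$ as you do.
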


\begin{proof}
      Put $Z^k\coloneqq Z^k(K^\bullet)$ for $k\in\mathbb Z$.  Exactness of $K^\bullet$ gives an exact sequence
      \[
            K^{n-1}\xlongrightarrow{d_K^{n-1}}K^n \xlongrightarrow{\overline d_K^n}Z^{n+1} \longrightarrow 0,
      \]
      where $\overline d_K^n$ is $d_K^n$ with its codomain restricted to $Z^{n+1}$.  After applying $N\otimes_A-$, right exactness identifies $\operatorname{Im}(1_N\otimes_A d_K^{n-1})$ with $\operatorname{Ker}(1_N\otimes_A\overline d_K^n)$ and shows that $1_N\otimes_A\overline d_K^n$ is surjective.  The differential $1_N\otimes_A d_K^n$ factors as
      \[
            N\otimes_A K^n \xlongrightarrow{1_N\otimes_A\overline d_K^n} N\otimes_A Z^{n+1} \xlongrightarrow{1_N\otimes_A\iota_K^{n+1}} N\otimes_A K^{n+1}.
      \]
      Since $1_N\otimes_A\overline d_K^n$ is surjective, the first isomorphism theorem gives an isomorphism
      \[
            { \operatorname{Ker}(1_N\otimes_A d_K^n)}\bigg/{ \operatorname{Ker}(1_N\otimes_A\overline d_K^n)} \xlongrightarrow{\sim} \operatorname{Ker}(1_N\otimes_A\iota_K^{n+1}), \qquad [x]\longmapsto (1_N\otimes_A\overline d_K^n)(x).
      \]
      Then $\operatorname{Ker}(1_N\otimes_A\overline d_K^n) =\operatorname{Im}(1_N\otimes_A d_K^{n-1})$ identifies the source with $\mathrm H^n(N\otimes_A K^\bullet)$, proving \eqref{eq:tensor-cohomology-kernel}.

      The short exact sequence
      \[
            0\longrightarrow Z^{n+1} \xlongrightarrow{\iota_K^{n+1}}K^{n+1} \longrightarrow Z^{n+2}\longrightarrow 0
      \]
      and flatness of $K^{n+1}$ give an exact sequence
      \[
            0\longrightarrow \operatorname{Tor}_1^A(N_A,{}_AZ^{n+2}) \longrightarrow N\otimes_A Z^{n+1} \longrightarrow N\otimes_A K^{n+1}.
      \]
      The injection from $\operatorname{Tor}_1^A(N_A,{}_AZ^{n+2})$ into $N\otimes_A Z^{n+1}$ identifies the Tor group with the kernel in \eqref{eq:tensor-cohomology-kernel}, proving \eqref{eq:tensor-cohomology-tor}.
\end{proof}

\begin{fact}\label{fact:gf-injective-tor-vanishing}
      (\cite[Lemma~2.4]{Bennis2009}) Let $A$ be a ring, let $N\in\GF(A)$, and let $E\in\mathrm{Mod}_A$ be injective. Then $\operatorname{Tor}_i^A(E_A,{}_AN)=0$ for all $i\in\mathbb N\setminus\{0\}$.
\end{fact}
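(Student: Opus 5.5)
The plan is to derive the Tor vanishing from the definition of $\GF(A)$ by dimension shifting along an $F$-totally acyclic complex. Let $N\in\GF(A)$. By \Cref{def:gp-gf-pgf}(2), choose an $F$-totally acyclic complex $K^\bullet$ of flat left $A$-modules with $N\cong Z^m(K^\bullet)$ for some $m\in\mathbb Z$; after reindexing we may take $m=0$. Every cocycle $Z^k(K^\bullet)$ is then again Gorenstein flat, witnessed by the same complex, so all cocycles can be treated uniformly.

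The key input is \Cref{lem:tensor-cohomology-tor}. Apply it to $K^\bullet$ and the injective right $A$-module $E$, for each $n\in\mathbb Z$:
\[
      \operatorname{Tor}_1^A\bigl(E_A,{}_AZ^{n+2}(K^\bullet)\bigr)\cong\mathrm H^n(E\otimes_A K^\bullet).
\]
The right-hand side vanishes because $K^\bullet$ is $F$-totally acyclic. Hence $\operatorname{Tor}_1^A(E,Z^k(K^\bullet))=0$ for every $k\in\mathbb Z$, and in particular $\operatorname{Tor}_1^A(E,N)=0$.

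For higher $i$, use the short exact sequences $0\to Z^{k-1}\to K^{k-1}\to Z^k\to 0$. Since $K^{k-1}$ is flat, $\operatorname{Tor}_j^A(E,K^{k-1})=0$ for $j\geq1$, and the long exact Tor sequence gives
\[
      \operatorname{Tor}_{i}^A(E,Z^k)\cong\operatorname{Tor}_{i-1}^A(E,Z^{k-1})\qquad(i\geq2).
\]
Iterating this $i-1$ times gives $\operatorname{Tor}_i^A(E,N)\cong\operatorname{Tor}_1^A\bigl(E,Z^{-(i-1)}(K^\bullet)\bigr)$, which vanishes by the previous paragraph.

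There is no real obstacle here. The one point to check is the direction of the dimension shift: the cocycles must be taken toward the left of the complex, and this is available because $K^\bullet$ is unbounded in both directions. Alternatively, one can cite \cite[Lemma~2.4]{Bennis2009} directly.
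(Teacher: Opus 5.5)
Your argument is correct. The paper gives no proof of this statement and only cites \cite[Lemma~2.4]{Bennis2009}, so your proposal is a genuinely self-contained alternative. You get $\operatorname{Tor}_1$-vanishing for every cocycle of an arbitrary $F$-totally acyclic witness complex from \Cref{lem:tensor-cohomology-tor}, which the paper proves anyway for Claim~2 of \Cref{thm:bilateral-coherent-counterexample}. You then reach higher degrees by dimension shifting along $0\to Z^{k-1}\to K^{k-1}\to Z^k\to 0$. Both steps are sound, and the shift runs in the right direction.

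The standard argument behind the citation is slightly more direct. The left half $\cdots\to K^{-2}\to K^{-1}\to N\to 0$ is a flat resolution of $N=Z^0(K^\bullet)$. Hence $\operatorname{Tor}_i^A(E,N)\cong\mathrm H^{-1-i}(E\otimes_A K^\bullet)=0$ for all $i\geq1$ at once, with no iteration.

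Your route keeps the paper self-contained. It also shows that the main theorem needs only the case $i=1$. That case is a single application of \Cref{lem:tensor-cohomology-tor} to whichever complex witnesses Gorenstein flatness, which need not be $(F_T)^\bullet$. The citation, by contrast, is simply shorter.
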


\subsection{Step 3: a counterexample over a left and right coherent ring}
\label{subsec:coherent-counterexample}

We now use the one-periodic complex $(F_B)^\bullet$ to construct a lower triangular matrix ring. For background on module categories over generalized triangular matrix rings, see \cite[Theorem~0.2]{Green1982}. Retain the ring of dual numbers $B=R[\varepsilon]/(\varepsilon^2)$ and the differential $\Delta$ from \Cref{lem:fold-properties}. Let the left $B$-action on $R$ be induced by the quotient $B\longtwoheadrightarrow R$, and let the right $R$-action be multiplication. Define
\begin{equation}\label{eq:bilateral-ring}
      \begin{gathered} T\coloneqq \begin{pmatrix} R&0\\ {}_BR_R&B \end{pmatrix}, \qquad e_R\coloneqq \begin{pmatrix}1&0\\0&0\end{pmatrix}, \qquad e_B\coloneqq \begin{pmatrix}0&0\\0&1\end{pmatrix}. \end{gathered}
\end{equation}
The identity of $T$ is $1_T=\operatorname{diag}(1_R,1_B)$.  The two displayed elements are idempotents, and
\[
      e_Re_B=e_Be_R=0, \qquad e_R+e_B=1_T.
\]
Via the canonical ring isomorphism
\[
      B\xlongrightarrow{\sim}e_BTe_B, \qquad b\longmapsto\begin{pmatrix}0&0\\0&b\end{pmatrix},
\]
we identify $B$ with the corner $e_BTe_B$. In particular, the $B$-module structures used below are the ones induced by this identification.

Recall that $U={}_BR_{\mathbb Z}$ as a left $B$-module.  The identity on the underlying copy of $R$ defines an isomorphism of left $B$-modules
\begin{equation}\label{eq:eta-R}
      \eta_R\colon {}_BU\xlongrightarrow{\sim}{}_BR, \qquad u\longmapsto u.
\end{equation}

\begin{fact}\label{fact:boolean-ring-coherent}
      (\cite[Example~4.46\textup{(b)}]{Lam1999}) The Boolean ring $R=(\mathbb F_2)^X$ is von Neumann regular and hence left and right coherent.
\end{fact}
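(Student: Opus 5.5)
The statement is that the Boolean ring $R=(\mathbb F_2)^X$ is von Neumann regular and hence left and right coherent. I would first prove regularity directly from the Boolean identity, and then deduce coherence from it.

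\emph{Regularity.} Every $r\in R$ satisfies $r^2=r$, because the operations are pointwise and $a^2=a$ for every $a\in\mathbb F_2$. Hence $r\cdot r\cdot r=r$, so $s=r$ witnesses the regularity condition $rsr=r$.

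\emph{Finitely generated ideals are principal and generated by idempotents.} Since $R$ is commutative, left and right ideals coincide, so it suffices to treat one side. For a principal ideal, $Rr=Re$ with $e=r$ idempotent. For two generators, put $e=e_1+e_2+e_1e_2$, which is the characteristic function of $\operatorname{supp}_X e_1\cup\operatorname{supp}_X e_2$. Then $e$ is idempotent, $e\in Re_1+Re_2$, and $e_i=e_ie$ for $i=1,2$. Therefore $Re_1+Re_2=Re$, and induction on the number of generators extends this to every finitely generated ideal.

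\emph{Coherence.} Let $I=Re$ with $e$ idempotent. Then ${}_RR={}_R(Re)\oplus{}_R(R(1-e))$, so the surjection $R\to Re$, $r\mapsto re$, has kernel $R(1-e)$, which is cyclic. Thus $I$ is finitely presented, so $R$ is left coherent. By commutativity, the same argument gives right coherence.

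No step is a genuine obstacle. The only point needing care is the finite-union idempotent formula, which is immediate from the support isomorphism \eqref{eq:boolean-support-isomorphism}. Alternatively, one can simply cite the standard fact that regular rings have flat modules only, so every module is flat and coherence follows.
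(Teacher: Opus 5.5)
Your proof is correct and follows the standard argument behind the paper's citation of Lam's Example~4.46(b): in a von Neumann regular ring every finitely generated one-sided ideal is generated by an idempotent, hence is a direct summand of $R$ and therefore finitely presented; you make the idempotent-generation step explicit for the Boolean ring via $e_1+e_2+e_1e_2$. Your closing alternative (every module is flat) also works, but it implicitly relies on Chase's characterization of coherence via flatness of direct products.
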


\begin{proposition}\label{prop:bilateral-coherence}
      The ring $T$ is left and right coherent.
\end{proposition}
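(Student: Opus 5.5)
The plan is to apply \Cref{prop:finite-central-coherence}\textup{(3)} with base ring $S_0\coloneqq R$, the Boolean ring, which is commutative and coherent by \Cref{fact:boolean-ring-coherent}. Consider the ring homomorphism
\[
      \rho\colon R\longrightarrow T,\qquad r\longmapsto\begin{pmatrix} r&0\\ r&r\end{pmatrix}\!,
\]
more precisely $r\longmapsto\operatorname{diag}(r,r)$, where the lower-right entry is $r\in R\subseteq B$. We first check that $\rho$ is a ring homomorphism with central image. The diagonal map $R\to R\times B$ is a unital ring map, and $R$ is commutative. Moreover, the bimodule ${}_BR_R$ satisfies $r\cdot m=m\cdot r$ for $r\in R\subseteq B$, because the left $B$-action factors through $B\twoheadrightarrow R$ and $R$ is commutative. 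Hence
\[
      \operatorname{diag}(r,r)\begin{pmatrix} a&0\\ m&b\end{pmatrix}=\begin{pmatrix} ra&0\\ rm&rb\end{pmatrix}=\begin{pmatrix} a&0\\ m&b\end{pmatrix}\operatorname{diag}(r,r).
\]
For the middle entry this uses $r\cdot m=m\cdot r$, and for the corner entries it uses the commutativity of $R$ and $B$. So $T$ is a central $R$-algebra.

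Next we show that $T$ is finitely presented as an $R$-module. As an $R$-module (left and right agree by centrality),
\[
      T\cong R\oplus R\oplus B\cong R\oplus R\oplus R\oplus \varepsilon R\cong R^{4}.
\]
The $R$-action on each summand is ordinary multiplication. For $B=R\oplus\varepsilon R$ this holds because $R\subseteq B$ acts by multiplication. For the entry ${}_BR_R$ it holds by the commutation just noted. Hence $T$ is free of rank $4$, in particular finitely presented, and \Cref{prop:finite-central-coherence}\textup{(3)} gives that $T$ is left and right coherent.

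The only step needing care is the centrality of $\rho(R)$, specifically that the two $R$-actions on the off-diagonal bimodule ${}_BR_R$ coincide. This is exactly where the definition of the left $B$-action through the quotient $B\twoheadrightarrow R$ is used. Once that is settled, the rest is bookkeeping.
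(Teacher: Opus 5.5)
Your proof is correct and follows essentially the same route as the paper: the diagonal embedding $r\longmapsto\operatorname{diag}(r,r)$ makes $T$ a central $R$-algebra that is free of rank four over $R$. Then \Cref{fact:boolean-ring-coherent} and \Cref{prop:finite-central-coherence}\textup{(3)} finish the argument; the paper additionally packages the rank-four computation as an algebra isomorphism $R\otimes_{\mathbb F_2}T_0\cong T$, which the argument does not need. Do delete your first displayed map with $r$ in the lower-left entry, since it does not even send $1$ to $1_T$.
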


\begin{proof}
      Define the finite $\mathbb F_2$-algebras
      \[
            B_0\coloneqq\mathbb F_2[\varepsilon]/(\varepsilon^2), \qquad T_0\coloneqq \begin{pmatrix} \mathbb F_2&0\\ {}_{B_0}\mathbb F_2{}_{\mathbb F_2}&B_0 \end{pmatrix},
      \]
      where the left $B_0$-action on the lower-left entry is induced by the quotient $B_0\longtwoheadrightarrow\mathbb F_2$.

      Give $T$ the $R$-algebra structure
      \[
            \iota_R\colon R\longrightarrow T, \qquad r\longmapsto \begin{pmatrix}r&0\\0&r\end{pmatrix}.
      \]
      Define the $R$-homomorphism
      \[
            \Phi\colon R\otimes_{\mathbb F_2}T_0 \xlongrightarrow{\sim}T, \qquad r\otimes \begin{pmatrix}a&0\\m&c+d\cdot\varepsilon\end{pmatrix} \longmapsto \begin{pmatrix} r\cdot a&0\\ r\cdot m&r\cdot c+(r\cdot d)\varepsilon \end{pmatrix}.
      \]

      \noindent\emph{Claim.} The map $\Phi$ is an isomorphism of central $R$-algebras, and ${}_RT$ and $T_R$ are free of rank $4$.

      \begin{proof}[Proof of the claim]
            Let $r,s\in R$, and let
            \[
                  x=\begin{pmatrix}a&0\\m&c+d\cdot\varepsilon\end{pmatrix}, \qquad y=\begin{pmatrix}a^{\prime}&0\\ m^{\prime}&c^{\prime}+d^{\prime}\cdot\varepsilon \end{pmatrix}
            \]
            be elements of $T_0$.  The entries $a,m,c,d,a^{\prime},m^{\prime},c^{\prime},d^{\prime}$ lie in $\mathbb F_2$.  The quotient action gives
            \[
                  \Phi(r\otimes x)\cdot\Phi(s\otimes y) ={} \begin{pmatrix} rs\cdot aa^{\prime}&0\\ rs\cdot(ma^{\prime}+cm^{\prime})& rs\cdot cc^{\prime} +rs\cdot(cd^{\prime}+dc^{\prime})\varepsilon \end{pmatrix} =\Phi(rs\otimes xy).
            \]
            Bilinearity now shows that $\Phi$ preserves multiplication, and $\Phi(1\otimes 1_{T_0})=1_T$.  The four coordinates $a,m,c,d$ identify both $R\otimes_{\mathbb F_2}T_0$ and $T$ with $R^4$ as $R$-modules.  The formula for $\Phi$ sends each coordinate tuple in $R^4$ to the identical coordinate tuple in $T$.  Hence $\Phi$ is bijective and $T$ is free of rank four on both sides over $R$.

            For $r\in R$ and $z=\begin{psmallmatrix}a&0\\m&b\end{psmallmatrix}\in T$, the ring $B$ is commutative and the two actions of $R$ on the lower-left entry agree, so
            \[
                  \iota_R(r)\cdot z =\begin{pmatrix}r\cdot a&0\\r\cdot m&r\cdot b\end{pmatrix} =z\cdot\iota_R(r).
            \]
            Thus $\iota_R(R)$ is central in $T$, and $\Phi(r\otimes1_{T_0})=\iota_R(r)$.
      \end{proof}

      By \Cref{fact:boolean-ring-coherent} and \Cref{prop:finite-central-coherence}\textup{(3)}, the ring $T$ is left and right coherent.
\end{proof}

Define the cochain complex $(F_T)^\bullet$ degreewise by
\begin{equation}\label{eq:bilateral-complex}
      \begin{gathered} (F_T)^\bullet \coloneqq Te_B\otimes_B(F_B)^\bullet, \qquad (F_T)^n \coloneqq Te_B\otimes_B(F_B)^n,\\ d_{F_T}^n \coloneqq 1_{Te_B}\otimes_B\Delta \quad(n\in\mathbb Z), \qquad {}_TG \coloneqq Z^2\bigl((F_T)^\bullet\bigr). \end{gathered}
\end{equation}

\begin{proposition}\label{prop:bilateral-total}
      The cochain complex $(F_T)^\bullet$ is a one-periodic totally acyclic complex of projective left $T$-modules. Consequently, the left $T$-module ${}_TG$ is strongly Gorenstein projective.
\end{proposition}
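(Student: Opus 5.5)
The plan is to transport everything along the functor $Te_B\otimes_B-\colon{}_B\mathrm{Mod}\to{}_T\mathrm{Mod}$ from \Cref{lem:idempotent-corners}. Since $Te_B\otimes_B B\cong Te_B$ is a direct summand of ${}_TT$, this functor sends free left $B$-modules to projective left $T$-modules. It also preserves direct sums. Hence each $(F_T)^n=Te_B\otimes_B(B\otimes_RF_\oplus)$ is projective, and $(F_T)^\bullet$ is one-periodic with differential $1\otimes\Delta$.

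\emph{Exactness.} I first compute $Te_B$ as a right $B$-module. It is $\begin{psmallmatrix}0\\ B\end{psmallmatrix}$, since the $(1,2)$ entry of $T$ is zero. So $Te_B\cong B_B$ is free of rank one. Consequently $Te_B\otimes_B-$ is exact, and $(F_T)^\bullet$ is exact by \Cref{lem:fold-properties}. Concretely, as a left $T$-module, $Te_B\otimes_BM$ is the pair $(0,M)$ in the triangular-matrix description of \cite{Green1982}. The functor is the left adjoint $Te_B\otimes_B-$ of $e_B(-)$.

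\emph{Total acyclicity.} Let $Q$ be a projective left $T$-module. By \Cref{lem:idempotent-corners}(1), there is a natural isomorphism
\[
\operatorname{Hom}_T((F_T)^\bullet,Q)\cong\operatorname{Hom}_B((F_B)^\bullet,e_BQ).
\]
It therefore suffices to show that this complex is exact whenever $e_BQ$ is a direct summand of $e_B(T^{(J)})=(e_BT)^{(J)}$. As a left $B$-module, $e_BT=\begin{pmatrix}{}_BR_R & B\end{pmatrix}\cong U\oplus B$. Hence $e_B(T^{(J)})\cong U^{(J)}\oplus B^{(J)}$. The complex $\operatorname{Hom}_B((F_B)^\bullet,B^{(J)})$ is exact by Claim~3 in the proof of \Cref{lem:fold-properties}. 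The complex $\operatorname{Hom}_B((F_B)^\bullet,U^{(J)})$ is exact by \Cref{prop:folded-detection}(1). Since $\operatorname{Hom}$ out of a fixed complex commutes with finite direct sums in the second variable, and direct summands of exact complexes are exact, the general projective case follows.

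\emph{Main obstacle.} The delicate step is the identification ${}_B(e_BT)\cong U\oplus B$. Here I must check that the left $B$-action on the lower-left entry is exactly the quotient action defining $U$. This is what makes the $U$-summand appear, and it is why \Cref{prop:folded-detection}(1) was needed.

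\emph{Conclusion.} Given total acyclicity, I apply \Cref{fact:strongly-gorenstein-projective-periodic}. The one-periodic totally acyclic complex $(F_T)^\bullet$ has cocycle $Z^2((F_T)^\bullet)={}_TG$. Hence ${}_TG$ is strongly Gorenstein projective.
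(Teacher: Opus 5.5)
Your proposal is correct and follows the paper's proof essentially step by step. Exactness comes from $(Te_B)_B\cong B_B$. Total acyclicity comes from the corner adjunction $\operatorname{Hom}_T((F_T)^\bullet,-)\cong\operatorname{Hom}_B((F_B)^\bullet,e_B(-))$, combined with ${}_B(e_BT)\cong U\oplus B$, \Cref{prop:folded-detection}(1), \Cref{lem:fold-properties}, and then \Cref{fact:strongly-gorenstein-projective-periodic}. The differences are cosmetic: you get projectivity of the terms from $Te_B$ being a summand of ${}_TT$ rather than from the adjoint-of-an-exact-functor argument, and you pass to direct summands at the level of $e_BQ$ rather than $Q$.
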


\begin{proof}
      Since $(Te_B)_B\cong B_B$ as right $B$-modules, the functor
      \[
            Te_B\otimes_B- \colon {}_B\mathrm{Mod}\longrightarrow{}_T\mathrm{Mod}, \qquad {}_BX\longmapsto{}_T(Te_B\otimes_BX)
      \]
      is exact. By \Cref{lem:idempotent-corners}\textup{(1)}, the functor $Te_B\otimes_B-$ is left adjoint to the exact functor
      \[
            e_B(-) \colon {}_T\mathrm{Mod}\longrightarrow{}_B\mathrm{Mod}, \qquad {}_TY\longmapsto{}_B(e_BY)
      \]
      and therefore preserves projective left $B$-modules. Hence $(F_T)^\bullet$ is an exact complex of projective left $T$-modules.

      The isomorphism $\eta_R$ in \eqref{eq:eta-R} gives an isomorphism of left $B$-modules
      \[
            \zeta\colon {}_B(e_BT)\xlongrightarrow{\sim}{}_BU\oplus{}_BB, \qquad \begin{pmatrix}0&0\\r&b\end{pmatrix}\longmapsto\bigl(\eta_R^{-1}(r),b\bigr).
      \]
      For every set $J$, the corner adjunction and the direct sum of $J$ copies of $\zeta$ identify the cochain complexes
      \[
            \begin{aligned}
                  \operatorname{Hom}_T\bigl((F_T)^\bullet,({}_TT)^{(J)}\bigr)
                  &\cong \operatorname{Hom}_B\bigl((F_B)^\bullet,({}_B(e_BT))^{(J)}\bigr)\\
                  &\cong \operatorname{Hom}_B\bigl((F_B)^\bullet,({}_BU)^{(J)}\bigr)
                  \oplus \operatorname{Hom}_B\bigl((F_B)^\bullet,({}_BB)^{(J)}\bigr).
            \end{aligned}
      \]
      The first summand is exact by \Cref{prop:folded-detection}\textup{(1)}, and the second is exact by \Cref{lem:fold-properties}. Thus $\operatorname{Hom}_T\bigl((F_T)^\bullet,({}_TT)^{(J)}\bigr)$ is exact for every set $J$. Every projective left $T$-module is a direct summand of a free left $T$-module, so $\operatorname{Hom}_T((F_T)^\bullet,P)$ is exact for every projective left $T$-module $P$. Therefore $(F_T)^\bullet$ is totally acyclic. It is one-periodic by \eqref{eq:bilateral-complex}, so \Cref{fact:strongly-gorenstein-projective-periodic} shows that $G=Z^2((F_T)^\bullet)$ is strongly Gorenstein projective.
\end{proof}

\begin{theorem}
      \label{thm:bilateral-coherent-counterexample}
      Assume $\textsf{LUH}(\kappa)$. Then the ring $T$ in \eqref{eq:bilateral-ring} is left and right coherent, and the left $T$-module ${}_TG=Z^2\bigl((F_T)^\bullet\bigr)$ is strongly Gorenstein projective but not Gorenstein flat. In particular,
      \[
            G\in\GP(T)\setminus\GF(T).
      \]
      Therefore $G\notin\PGF(T)$ and $\PGF(T)\subsetneq\GP(T)$.
\end{theorem}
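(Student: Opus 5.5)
Coherence of $T$ is \Cref{prop:bilateral-coherence}, and strong Gorenstein projectivity of ${}_TG$ is \Cref{prop:bilateral-total}. So the real content is that $G$ is not Gorenstein flat. The final assertions then follow formally: $\PGF(T)\subseteq\GF(T)$ gives $G\notin\PGF(T)$, and $\PGF(T)\subseteq\GP(T)$ from \Cref{prop:gorenstein-comparison}(1) gives strict inclusion.

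The plan is to find an injective right $T$-module $E$ with $\operatorname{Tor}_1^T(E,G)\neq0$ and then invoke \Cref{fact:gf-injective-tor-vanishing}. The natural candidate is
\[
E\coloneqq\operatorname{Hom}_B(Te_B,R^+),
\]
where $R^+$ is regarded as a right $B$-module through $B\twoheadrightarrow R$.

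First, $E$ is injective. The module $R$ is flat over itself, so by \Cref{lem:lambek-duality} the right module $R^+_R$ is injective. The quotient $B\to R$ is a ring map, and $\operatorname{Hom}_R(R_B,-)$ is right adjoint to the exact restriction functor; hence $R^+_B\cong\operatorname{Hom}_R(R_B,R^+_R)$ is injective over $B$. Then \Cref{lem:idempotent-corners}(3) shows that $E$ is injective over $T$.

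Second, I compute $E\otimes_T(F_T)^\bullet$. By \Cref{lem:idempotent-corners}(2),
\[
E\otimes_T(Te_B\otimes_B(F_B)^\bullet)\cong (Ee_B)\otimes_B(F_B)^\bullet .
\]
Moreover $Ee_B=\operatorname{Hom}_B(Te_B,R^+)e_B\cong\operatorname{Hom}_B(e_BTe_B,R^+)\cong R^+$ as right $B$-modules; I will check that this identification matches the right action. The result is an isomorphism of complexes $E\otimes_T(F_T)^\bullet\cong R^+\otimes_B(F_B)^\bullet$. By \Cref{prop:folded-detection}(2), and in every degree by periodicity, $\mathrm H^0$ of this complex is nonzero.

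Third, I apply \Cref{lem:tensor-cohomology-tor} to the exact complex $(F_T)^\bullet$ of projective, hence flat, left $T$-modules, with $N=E$ and $n=0$. This gives
\[
0\neq\mathrm H^0(E\otimes_T(F_T)^\bullet)\cong\operatorname{Tor}_1^T(E,Z^2((F_T)^\bullet))=\operatorname{Tor}_1^T(E,G).
\]
This is exactly why $G$ was chosen as $Z^2$. By \Cref{fact:gf-injective-tor-vanishing}, $G\notin\GF(T)$.

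The main obstacle is the identification $Ee_B\cong R^+$ as right $B$-modules, compatibly with the differential. The step needs care, since the Hom over $B$ uses the left $B$-structure of $Te_B$, which is the identity corner $e_BTe_B\cong B$ because $e_RTe_B=0$. Once that is verified, the rest is bookkeeping.
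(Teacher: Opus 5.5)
Your second and third steps are correct, including the identification $Ee_B\cong R^+$ that you flagged as the main obstacle. The real gap is in the first step: $E=\operatorname{Hom}_B(Te_B,R^+)$ is not an injective right $T$-module. So \Cref{fact:gf-injective-tor-vanishing} does not apply, and $\operatorname{Tor}_1^T(E,G)\neq0$ says nothing about Gorenstein flatness.

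The error is the claim that $R^+_B$ is injective over $B$. The functor $\operatorname{Hom}_R({}_BR_R,-)\colon\mathrm{Mod}_R\to\mathrm{Mod}_B$ is just restriction of scalars along $B\twoheadrightarrow R$. Its left adjoint $-\otimes_BR$ is not exact, because ${}_BR=B/(\varepsilon)$ is not flat over $B$. The right adjoint of restriction is $\operatorname{Hom}_B(R,-)\colon\mathrm{Mod}_B\to\mathrm{Mod}_R$, which carries injectives from $B$ to $R$, not the other way.

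Concretely, $\varepsilon B=\varepsilon R\cong R_B$. The homomorphism $\varepsilon B\to R^+$, $\varepsilon r\mapsto\chi_{\mathcal U}\cdot r$, does not extend to $B_B$, since every homomorphism $B_B\to R^+$ sends $\varepsilon$ to $y\cdot\varepsilon=0$. The same argument works over $T$. Let $\varepsilon$ also denote the corner element $\begin{psmallmatrix}0&0\\0&\varepsilon\end{psmallmatrix}$; then $\varepsilon T=\begin{psmallmatrix}0&0\\0&\varepsilon B\end{psmallmatrix}\subseteq T_T$, and $\varepsilon$ acts as zero on $E$. Hence the analogous map $\varepsilon T\to E$ does not extend, and $E$ is not injective.

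No other module of the form $\operatorname{Hom}_B(Te_B,I)$ can rescue the strategy either. For every injective right $B$-module $I$ one has $\operatorname{Hom}_B(Te_B,I)\otimes_T(F_T)^\bullet\cong I\otimes_B(F_B)^\bullet$, and this complex is exact. Indeed, $I$ is a direct summand of some $\operatorname{Hom}_R(B,I')\cong I'\otimes_RB$, using that $B$ is a Frobenius $R$-algebra. Moreover $(F_B)^\bullet$ is contractible over $R$, via the homotopy $(v,w)\mapsto(w,0)$ in the coordinates of \eqref{eq:dual-number-coordinate-identification}.

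What is missing is an injective right $T$-module whose $e_B$-corner is $R^+_B$. This is exactly what the lower-left entry ${}_BR_R$ of $T$ is designed to provide. The paper takes $C_T=(Te_R)^+$. Since $Te_R$ is a direct summand of ${}_TT$, it is flat, so $C_T$ is injective by \Cref{lem:lambek-duality}. Restriction gives an isomorphism of right $B$-modules $C_Te_B\cong(e_BTe_R)^+\cong R^+$. With $C_T$ in place of $E$, your second and third steps go through unchanged and give $\operatorname{Tor}_1^T(C_T,G)\neq0$; this is the paper's proof.
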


\begin{proof}
      The ring $T$ is left and right coherent by \Cref{prop:bilateral-coherence}, and the left $T$-module $G$ is strongly Gorenstein projective by \Cref{prop:bilateral-total}.

      Define the right $T$-module $(C_T)_T\coloneqq(Te_R)^+$.

      \noindent\emph{Claim 1.} The right $T$-module $(C_T)_T$ is injective, and $\mathrm H^0\bigl((C_T)_T\otimes_T(F_T)^\bullet\bigr)\ne0$.

      \begin{proof}[Proof of Claim~1]
            The decomposition of the left regular module ${}_TT=Te_R\oplus Te_B$ shows that $Te_R$ is projective. Therefore \Cref{lem:lambek-duality} shows that $(C_T)_T$ is injective.

            Restriction defines a right $B$-homomorphism
            \[
                  \rho\colon (C_Te_B)_B\xlongrightarrow{\sim}\bigl((e_BTe_R)^+\bigr)_B, \qquad h\longmapsto h\vert_{e_BTe_R}.
            \]
            The inverse of $\rho$ is
            \[
                  \sigma\colon \bigl((e_BTe_R)^+\bigr)_B\xlongrightarrow{\sim}(C_Te_B)_B, \qquad f\longmapsto\bigl[x\longmapsto f(e_B\cdot x)\bigr] \quad(x\in Te_R).
            \]
            Indeed, $\sigma(f)\cdot e_B=\sigma(f)$, and for $y\in e_BTe_R$ and $x\in Te_R$ one has
            \[
                  \rho\bigl(\sigma(f)\bigr)(y)=f(e_B\cdot y)=f(y), \qquad \sigma\bigl(\rho(h)\bigr)(x)=h(e_B\cdot x)=h(x).
            \]
            The map $\rho$ is a homomorphism of right $B$-modules because, for $b\in B$,
            \[
                  \rho(h\cdot b)(y)=h(b\cdot y)=\bigl(\rho(h)\cdot b\bigr)(y).
            \]

            The identity on the lower-left entry is an isomorphism of $B$--$R$-bimodules
            \[
                  \lambda\colon {}_B(e_BTe_R)_R\xlongrightarrow{\sim}{}_BR_R, \qquad \begin{pmatrix}0&0\\r&0\end{pmatrix}\longmapsto r.
            \]
            Contravariance of the character module gives the right $B$-module isomorphism
            \[
                  \Gamma\coloneqq(\lambda^{-1})^+\circ\rho\colon (C_Te_B)_B\xlongrightarrow{\sim}(R^+)_B, \qquad f\longmapsto f\circ\lambda^{-1}.
            \]

            By \Cref{lem:idempotent-corners}\textup{(2)} and \eqref{eq:bilateral-complex}, the map
            \[
                  \omega\colon (C_T)_T\otimes_T(F_T)^\bullet\xlongrightarrow{\sim}(C_Te_B)_B\otimes_B(F_B)^\bullet, \qquad c\otimes_T(te_B\otimes_Bx)\longmapsto(c\cdot te_B)\otimes_Bx
            \]
            is an isomorphism of cochain complexes.  Define
            \[
                  \begin{aligned}
                        \Omega&\coloneqq
                        \bigl(\Gamma\otimes_B1_{(F_B)^\bullet}\bigr)\circ\omega\\
                        &\colon (C_T)_T\otimes_T(F_T)^\bullet
                        \xlongrightarrow{\sim}(R^+)_B\otimes_B(F_B)^\bullet.
                  \end{aligned}
            \]
            The isomorphism $\Omega$ and \Cref{prop:folded-detection}\textup{(2)} give
            \begin{equation}\label{eq:coherent-detector-obstruction}
                  \mathrm H^0\bigl((C_T)_T\otimes_T(F_T)^\bullet\bigr)\cong\mathrm H^0\bigl((R^+)_B\otimes_B(F_B)^\bullet\bigr)\ne0.
            \end{equation}
      \end{proof}

      \noindent\emph{Claim 2.} The nonzero cohomology group in \eqref{eq:coherent-detector-obstruction} gives
      \begin{equation}\label{eq:coherent-nonzero-tor}
            \operatorname{Tor}_1^T\bigl((C_T)_T,{}_TG\bigr)\ne0.
      \end{equation}

      \begin{proof}[Proof of Claim~2]
            Apply \Cref{lem:tensor-cohomology-tor} with $A=T$, $N_A=(C_T)_T$, $K^\bullet=(F_T)^\bullet$, and $n=0$. The equality $Z^2((F_T)^\bullet)=G$ and \eqref{eq:coherent-detector-obstruction} give the natural isomorphism
            \[
                  \operatorname{Tor}_1^T\bigl((C_T)_T,{}_TG\bigr)\cong\mathrm H^0\bigl((C_T)_T\otimes_T(F_T)^\bullet\bigr)\ne0.
            \]
      \end{proof}

      If ${}_TG$ belonged to $\GF(T)$, then the injectivity of $(C_T)_T$ and \Cref{fact:gf-injective-tor-vanishing} would give $\operatorname{Tor}_1^T((C_T)_T,{}_TG)=0$, contrary to \eqref{eq:coherent-nonzero-tor}. Hence $G\notin\GF(T)$. Finally, \Cref{prop:gorenstein-comparison}\textup{(1)} gives
      \[
            \PGF(T)\subseteq\GP(T)\cap\GF(T).
      \]
      Therefore $G\notin\PGF(T)$ and $\PGF(T)\subsetneq\GP(T)$.
\end{proof}

\appendix

\section{Discussion of a conditional positive direction via \texorpdfstring{\textnormal{\textsf{(V=L)}}+\textnormal{\textsf{PDS}}}{(V=L)+PDS}}
\label{sec:positive-interface}

This appendix is exploratory and is not used in the proof of the main theorem. It uses the periodic dual-number stationarity hypothesis $\textsf{PDS}$ to establish the conditional implication
\begin{equation}\label{eq:positive-mainline}
      \textsf{ZFC}+\textsf{(V=L)}+\textsf{PDS} \quad\vdash\quad \forall A\,\bigl( A\text{ is a ring}\implies\GP(A)=\PGF(A) \bigr).
\end{equation}
We first prove the result from $\textsf{PDS}$ and the nonexistence of monotone $g$-sequential cardinals.  We then use $\textsf{(V=L)}$ to rule out such cardinals.  For every triple $\mathbf t=(R,I,d)$ whose complex in \eqref{eq:pds-periodic-complex} is totally acyclic, the first two hypotheses give
\begin{equation}\label{eq:pds-no-monotone-g-sequential-stationarity}
      \left[ \operatorname{Ext}^1_B\bigl({}_BP_d,({}_BU)^{(J)}\bigr)=0 \text{ for every set }J \right] \implies {}_BP_d\text{ is strict }{}_BU\text{-stationary}.
\end{equation}
\Cref{thm:pds-no-monotone-g-sequential} uses \eqref{eq:pds-no-monotone-g-sequential-stationarity}, and \Cref{cor:conditional-vl-pds} then gives \eqref{eq:positive-mainline}.

We introduce $\textsf{PDS}$ only as a hypothesis for this conditional argument; it is not a standard set-theoretic axiom.  We do not know whether $\textsf{ZFC}+\textsf{PDS}$ is consistent relative to $\textsf{ZFC}$, and we do not know any familiar set-theoretic axiom or hypothesis that implies $\textsf{PDS}$. Thus we make no consistency assertion for $\textsf{ZFC}+\textsf{(V=L)}+\textsf{PDS}$; the result is only a conditional implication.

\subsection{The axiom \texorpdfstring{$\textsf{(V=L)}$}{(V=L)}}
\label{subsec:constructible-background}

Von Neumann developed an early transfinite analysis of ranks in 1929 \cite[pp.~236--239]{vonNeumann1929}; the symbols $V_\alpha$ in \eqref{eq:cumulative-hierarchy} are modern notation. Gödel announced in 1938 a model consisting of constructible sets and the axiom that every set is constructible \cite[pp.~556--557]{Godel1938}. His 1940 monograph gave the formal construction and the consistency proof \cite[Chapters~V--VIII]{Godel1940}.

Let $\mathrm{Ord}$ denote the class of all ordinals. Define the cumulative hierarchy and the class $V$ by
\begin{equation}\label{eq:cumulative-hierarchy}
      V_0\coloneqq\emptyset,\quad V_{\alpha+1}\coloneqq\mathcal P(V_\alpha),\quad V_\beta\coloneqq\bigcup_{\alpha<\beta}V_\alpha \quad (\beta\text{ a limit ordinal}),\quad V\coloneqq\bigcup_{\alpha\in\mathrm{Ord}}V_\alpha.
\end{equation}

\begin{fact}\label{fact:cumulative-hierarchy}
      In $\textsf{ZFC}$, the rank theorem gives, for every set $x$, an ordinal $\alpha$ such that $x\in V_{\alpha+1}$.  Thus the class $V$ in \eqref{eq:cumulative-hierarchy} is the class of all sets.
\end{fact}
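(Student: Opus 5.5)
The plan is a standard $\in$-induction argument in $\textsf{ZFC}$, using Foundation together with Replacement. Everything follows from two structural properties of the hierarchy \eqref{eq:cumulative-hierarchy}, which I will prove first by transfinite induction on $\alpha$:
\begin{enumerate}[label=\textup{(\roman*)}]
      \item each $V_\alpha$ is transitive, that is, $y\in V_\alpha$ implies $y\subseteq V_\alpha$;
      \item the hierarchy is increasing: $\alpha\leq\beta$ implies $V_\alpha\subseteq V_\beta$.
\end{enumerate}
At a successor step, if $V_\alpha$ is transitive and $y\in V_{\alpha+1}=\mathcal P(V_\alpha)$, then $y\subseteq V_\alpha$. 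Each $z\in y$ therefore satisfies $z\subseteq V_\alpha$ by transitivity, so $z\in V_{\alpha+1}$; this gives transitivity of $V_{\alpha+1}$. It also gives $V_\alpha\subseteq V_{\alpha+1}$, because each member of $V_\alpha$ is a subset of $V_\alpha$. Limit steps are immediate, since unions of increasing transitive sets are transitive. Consequently, whenever $x\subseteq V_\beta$ we have $x\in V_{\beta+1}$, and membership in some $V_\gamma$ upgrades to membership in a successor stage.

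Next, I argue by contradiction. Suppose some set $x$ lies in no $V_{\alpha+1}$; by (ii), this is the same as lying in no $V_\gamma$ at all. Form the transitive closure $\operatorname{TC}(\{x\})=\bigcup_{n\in\omega}\bigcup^n\{x\}$, which exists by recursion on $\omega$ and uses Infinity, Replacement and Union. Let
\[
      D\coloneqq\bigl\{y\in\operatorname{TC}(\{x\})\mid y\notin V_\gamma\text{ for every ordinal }\gamma\bigr\}.
\]
The set $D$ is defined by Separation and is nonempty because $x\in D$. Foundation supplies an $\in$-minimal $y\in D$.

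By transitivity of $\operatorname{TC}(\{x\})$, every $z\in y$ lies in $\operatorname{TC}(\{x\})$. Minimality of $y$ then gives $z\notin D$, so each such $z$ lies in some $V_\gamma$. Let $\alpha_z$ denote the least such ordinal. Replacement yields the set $\{\alpha_z\mid z\in y\}$, and $\beta\coloneqq\sup_{z\in y}\alpha_z$ is an ordinal. By (ii), $y\subseteq V_\beta$, and hence $y\in V_{\beta+1}$. This contradicts $y\in D$. Thus every set $x$ lies in some $V_\gamma$, and therefore in $V_{\alpha+1}$ for a suitable $\alpha$ (take $\alpha=\gamma$ by (ii)). Since every element of $V$ is trivially a set, this shows that $V$ is the class of all sets.

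The only delicate point is the bookkeeping in (i) and (ii), together with the requirement that Foundation be applied to an honest set. That is why one passes to the transitive closure rather than working with the proper class of unranked sets. Beyond that, the argument is routine.
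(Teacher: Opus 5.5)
Your proof is correct and is essentially the paper's approach: it is the standard textbook proof of the rank theorem. You apply Foundation to the unranked members of $\operatorname{TC}(\{x\})$ and use Replacement to bound the ranks of the elements of an $\in$-minimal one; the paper reproduces no proof and simply cites the rank theorem, so your write-up supplies the omitted standard argument.
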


\begin{definition}
      \label{def:definable-power-set}
      For a set $X$, define the family of subsets of $X$ that are first-order definable over $(X,\in)$ with parameters from $X$ by
      \begin{equation}\label{eq:definable-power-set}
            \operatorname{Def}X\coloneqq \left\{ Y\subseteq X \mid \begin{array}{l} \text{there exist $n<\omega$, parameters $a_1,\ldots,a_n\in X$, and a formula}\\ \text{$\varphi(v,w_1,\ldots,w_n)$ such that $Y=\{x\in X\mid (X,\in)\models \varphi(x,a_1,\ldots,a_n)\}$} \end{array} \right\}.
      \end{equation}
      The value $n=0$ permits definitions without parameters.
\end{definition}

Using \Cref{def:definable-power-set}, define the constructible hierarchy and Gödel's constructible universe $L$ by
\begin{equation}\label{eq:constructible-hierarchy}
      L_0\coloneqq\emptyset,\quad L_{\alpha+1}\coloneqq\operatorname{Def}L_\alpha,\quad L_\beta\coloneqq\bigcup_{\alpha<\beta}L_\alpha \quad (\beta\text{ a limit ordinal}),\quad L\coloneqq\bigcup_{\alpha\in\mathrm{Ord}}L_\alpha.
\end{equation}
The symbols $V$ and $L$ in \eqref{eq:cumulative-hierarchy} and \eqref{eq:constructible-hierarchy} are class abbreviations, not set constants.  The axiom of constructibility is the assertion
\begin{equation}\label{eq:axiom-constructibility}
      \textsf{(V=L)}\colon \qquad \forall x\,\exists\alpha\in\mathrm{Ord}\;(x\in L_\alpha).
\end{equation}
In other words, $\textsf{(V=L)}$ asserts that every set is constructible.

\begin{fact}
      \label{fact:relative-consistency-L}
      (\cite[Chapters~V--VIII, pp.~35--61]{Godel1940}) Gödel's relative consistency theorem gives
      \[
            \operatorname{Con}(\textsf{ZFC}) \implies \operatorname{Con}\bigl(\textsf{ZFC}+\textsf{(V=L)}\bigr).
      \]
\end{fact}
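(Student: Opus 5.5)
This fact is not proved inside the paper; it is cited from Gödel. Still, the proof I would follow is Gödel's inner model argument, which shows relative consistency by interpretation. In outline: working in $\textsf{ZFC}$ (indeed only $\textsf{ZF}$ is needed), define the class $L$ by \eqref{eq:constructible-hierarchy}. Then show that the relativization $\varphi^L$ of every axiom $\varphi$ of $\textsf{ZFC}+\textsf{(V=L)}$ is a theorem of $\textsf{ZF}$. A derivation of a contradiction from $\textsf{ZFC}+\textsf{(V=L)}$ would relativize to a derivation of a contradiction in $\textsf{ZF}$, and the result follows by contraposition. This argument is finitary and syntactic, so it is an honest proof of the displayed implication rather than a semantic statement about models.

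\emph{Key steps, in order.} First, formalize $\operatorname{Def}$ from \Cref{def:definable-power-set} inside set theory. To do this, code formulas as elements of $V_\omega$ and define the satisfaction relation for set-sized structures $(X,\in)$ by recursion on formula complexity. Then $\alpha\mapsto L_\alpha$ is a definable class function by transfinite recursion. Second, verify basic structural facts: each $L_\alpha$ is transitive, the $L_\alpha$ are increasing, and $L_\alpha\cap\mathrm{Ord}=\alpha$. Hence $L$ is a transitive class containing all ordinals. Third, check the $\textsf{ZF}$ axioms in $L$:
\begin{itemize}
      \item Extensionality and Foundation hold by transitivity.
      \item Pairing, Union and Infinity hold by closure computations.
      \item Power Set holds by bounding ranks, using Replacement in $V$.
      \item Separation and Replacement follow from the Reflection Principle applied to the hierarchy $(L_\alpha)$. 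Given a formula, find $\alpha$ such that it is absolute between $L_\alpha$ and $L$. The required set is then definable over $L_\alpha$, hence lies in $L_{\alpha+1}$.
\end{itemize}

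Fourth, prove that the map $\alpha\mapsto L_\alpha$ is absolute for transitive models of enough of $\textsf{ZF}$. This requires checking that $\operatorname{Def}$ is defined by a $\Delta_1^{\textsf{ZF}}$ formula. As a consequence, $L^L=L$, which gives $\textsf{(V=L)}^L$. Finally, $\textsf{(V=L)}$ yields a definable well-order of $L$: order sets by their least stage, then by the least defining formula and parameters, lexicographically. This definable well-order gives $\textsf{AC}$ in $L$.

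\emph{Main obstacle.} The delicate step is the absoluteness of the constructible hierarchy, $L^L=L$. Carrying out this step requires a careful arithmetization of syntax and satisfaction, and a verification that each clause of the recursion is absolute for transitive models. I would first handle this via Gödel operations: show that $\operatorname{Def}X$ equals the closure of $X\cup\{X\}$ under finitely many explicit operations, intersected with $\mathcal P(X)$. Each Gödel operation is evidently $\Delta_0$, which makes the absoluteness transparent and avoids formalizing satisfaction directly.
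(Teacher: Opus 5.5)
Your outline is correct and is, in modern form, the inner-model argument of the cited chapters of Gödel's monograph; the paper quotes this without proof. The only difference is cosmetic: Gödel builds the constructible sets directly from his fundamental operations inside a class theory, whereas you start from $\operatorname{Def}$ as in \eqref{eq:constructible-hierarchy} and use the operations only for the absoluteness of $\alpha\mapsto L_\alpha$. Note that $\operatorname{Def}X=\operatorname{cl}(X\cup\{X\})\cap\mathcal P(X)$ requires $X$ transitive, which is harmless since you apply it to $X=L_\alpha$. Proving that identity inside $\textsf{ZF}$ still needs the formalized satisfaction relation and a normal form theorem over coded formulas, so satisfaction is not avoided — only its direct absoluteness check is.
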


One direct consequence of $\textsf{(V=L)}$ is that there are no monotone $g$-sequential cardinals; see \Cref{prop:V=L-no-monotone-g-sequential}.

\begin{definition}
      \label{def:monotone-g-sequential}
      A \emph{monotone subadditive measure} on a set $J$ is a function $\mu\colon\mathcal P(J)\longrightarrow[0,1] \subseteq \mathbb R$ satisfying conditions \textup{(1)}--\textup{(3)}.
      \begin{enumerate}[label=\textup{(\arabic*)}]
            \item If $A\subseteq B\subseteq J$, then $\mu(A)\leq\mu(B)$.
            \item If $A,B\subseteq J$ and $A\cap B=\emptyset$, then $\mu(A\cup B)\leq\mu(A)+\mu(B)$.
            \item If $A_0\supseteq A_1\supseteq\cdots$ is a decreasing sequence of subsets of $J$ with $\bigcap_{n\in\mathbb N}A_n=\emptyset$, then $\mu(A_n)\longrightarrow0$.
      \end{enumerate}
      A cardinal $\lambda$ is \emph{monotone $g$-sequential} if there exist a set $J$ with $|J|=\lambda$ and a monotone subadditive measure $\mu$ on $J$ satisfying the following two additional conditions.
      \begin{enumerate}[label=\textup{(\arabic*)},start=4]
            \item $\mu(J)=1$.
            \item $\mu(F)=0$ for every finite subset $F\subseteq J$.
      \end{enumerate}
\end{definition}

\begin{proposition}
      \label{prop:V=L-no-monotone-g-sequential}
      Under $\textsf{(V=L)}$, there are no monotone $g$-sequential cardinals.
\end{proposition}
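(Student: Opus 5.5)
The plan is to reduce the nonexistence of monotone $g$-sequential cardinals to the nonexistence of measurable cardinals in $L$ (Scott's theorem) together with the Maharam--Talagrand style dichotomy for submeasures.

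\emph{Reduction.} Suppose $\lambda$ is monotone $g$-sequential, witnessed by $J$ and $\mu$. We only need a $\sigma$-additive or at least a two-valued countably complete object. The key observation is that a monotone subadditive measure $\mu$ satisfying \textup{(1)}--\textup{(5)} is an exhaustive-type continuous submeasure (condition \textup{(3)} is continuity from above at $\emptyset$, which together with subadditivity yields $\sigma$-subadditivity on disjoint countable unions). The null ideal $\mathcal N_\mu\coloneqq\{A\subseteq J\mid\mu(A)=0\}$ is then a proper $\aleph_1$-complete ideal containing all finite sets, and $\mathcal P(J)/\mathcal N_\mu$ satisfies the countable chain condition. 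Indeed, uncountably many pairwise disjoint positive sets would give, for some $\varepsilon>0$, infinitely many disjoint sets $A_n$ with $\mu(A_n)\ge\varepsilon$, and then the tails $\bigcup_{m\ge n}A_m$ decrease to $\emptyset$ while having measure at least $\varepsilon$, contradicting \textup{(3)}.

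\emph{From the ideal to a large cardinal.} Let $\kappa$ be the least cardinal such that some $\kappa$-sized union of null sets is non-null; by $\aleph_1$-completeness $\kappa\ge\aleph_1$, and $\kappa\le\lambda$ since singletons are null but $J$ is not. Pull $\mu$ back along the resulting partition of a set of positive measure into $\kappa$ null pieces, obtaining a nontrivial $\kappa$-complete $\kappa$-saturated (ccc) ideal on $\kappa$. This is the standard Ulam argument: $\kappa$ is then either real-valued measurable-like or carries a precipitous/saturated ideal. The cleanest route is to work in the generic ultrapower by $\mathcal P(\kappa)/\mathcal N$: because the ideal is $\kappa$-complete and ccc, the generic ultrapower is well founded and gives an elementary embedding $j\colon V\to M$ with critical point $\kappa$, definable in a forcing extension $V[G]$.

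\emph{Contradiction under $\textsf{(V=L)}$.} If $V=L$, then $j\restriction L\colon L\to L^M$ is a nontrivial elementary embedding of $L$ into an inner model, and $L^M=L$, so $V[G]$ contains a nontrivial elementary embedding $L\to L$. By Kunen's theorem this implies that $0^\sharp$ exists in $V[G]$; since $0^\sharp$ is absolute between $V$ and generic extensions (it is a $\Sigma^1_3$-characterized real, unique if it exists, and its existence is upward and downward absolute for set forcing), $0^\sharp$ exists in $V=L$, contradicting $0^\sharp\notin L$. The main obstacle is the middle step: verifying carefully that the ideal is genuinely $\kappa$-complete and ccc (so that the generic ultrapower is well founded) from the weak axioms \textup{(1)}--\textup{(3)}; I would first establish countable subadditivity from \textup{(2)}+\textup{(3)}, then argue ccc as above, and cite the standard generic-ultrapower well-foundedness theorem for $\kappa$-complete $\kappa^+$-saturated ideals.
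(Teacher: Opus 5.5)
Your argument is correct in outline, but it follows a genuinely different route from the paper's proof.

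\textbf{The paper's route.} The paper never passes to ideals or embeddings. It checks directly from (1)--(3) that $\mu\colon 2^J\to[0,1]$ is sequentially continuous: with $C_n=(A_n\setminus A)\cup(A\setminus A_n)$, the tails $\bigcup_{k\ge n}C_k$ decrease to $\emptyset$, and $|\mu(A_n)-\mu(A)|\le\mu(C_n)$. It then invokes Choodnovsky's corollary that, under $\textsf{(V=L)}$, sequentially continuous maps on such Cantor cubes are continuous. Density of the finite sets together with (5) forces $\mu\equiv0$, contradicting (4).

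\textbf{Your route.} You extract the set-theoretic content instead:
\begin{enumerate}
\item[(a)] The null ideal $\mathcal N_\mu$ is $\aleph_1$-complete and ccc.
\item[(b)] Ulam's minimal-$\kappa$ argument yields a nontrivial $\kappa$-complete ccc ideal on some $\kappa\le\lambda$.
\item[(c)] Its generic ultrapower is well founded. For ccc ideals no normality is needed, since countable antichains can be disjointified using $\aleph_1$-completeness.
\item[(d)] Under $\textsf{(V=L)}$ this produces a nontrivial elementary $j\colon L\to L$ in $L[G]$, and hence $0^\sharp$.
\end{enumerate}

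\textbf{What each approach buys.} The paper's proof is short but black-boxes Choodnovsky's theorem and is tied to $\textsf{(V=L)}$. Yours uses heavier machinery (generic ultrapowers, Kunen's theorem) but proves more. In ZFC, a monotone $g$-sequential cardinal yields a precipitous ideal, hence, by Jech--Magidor--Mitchell--Prikry, an inner model with a measurable cardinal. You could cite that theorem directly instead of the $0^\sharp$ detour.

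\textbf{Two points to tighten.}
\begin{enumerate}
\item[(i)] The ccc property of $\mathcal P(J)/\mathcal N_\mu$ concerns families of positive sets whose pairwise intersections are null, not necessarily empty. Your disjoint-family argument needs one extra step: first replace $A_n$ by $A_n\setminus\bigcup_{m<n}A_m$. By (2), this set still has measure at least $\mu(A_n)$.
\item[(ii)] Set forcing cannot add $0^\sharp$, but not because of a $\Sigma^1_3$ definition: $\Sigma^1_3$ statements are only upward absolute. Here you need no transfer back to $V$ at all. Argue directly in $L[G]$: the cardinal $(|\mathbb P|^+)^L$ remains a cardinal there and is a successor in $L$, whereas $0^\sharp$ would make every uncountable cardinal inaccessible in $L$.
\end{enumerate}
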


\begin{proof}
      Suppose, to the contrary, that $\lambda$ is monotone $g$-sequential. Choose $J$ with $|J| = \lambda$ and $\mu\colon\mathcal P(J)\longrightarrow[0,1]$ as in \Cref{def:monotone-g-sequential}. Equip $\{0,1\}$ with the discrete topology.  The bijection
      \[
            \mathcal P(J)\longrightarrow 2^J=\{0,1\}^J, \qquad A\longmapsto\mathbf 1_A,
      \]
      identifies $\mathcal P(J)$ with the Cantor cube $2^J$ with the product topology.

      We first verify the sequential continuity needed below. Suppose that $A_n\longrightarrow A$ in $2^J$, that is, for each $j \in J$ the sequence $\mathbf 1_{A_n}(j)$ converges to $\mathbf 1_A(j)$ in $\{0,1\}$. Then take
      \[
            C_n\coloneqq (A_n \setminus A) \cup (A \setminus A_n), \qquad D_n\coloneqq\bigcup_{k\geq n}C_k.
      \]
      By construction every element of $J$ belongs to only finitely many of the sets $C_n$, and consequently
      \[
            D_0\supseteq D_1\supseteq\cdots, \qquad \bigcap_{n\in \mathbb N}D_n=\emptyset.
      \]
      Condition \textup{(3)} of \Cref{def:monotone-g-sequential} gives $\mu(D_n)\longrightarrow0$, while monotonicity gives $\mu(C_n)\leq\mu(D_n)$.  Apply conditions \textup{(1)} and \textup{(2)} of \Cref{def:monotone-g-sequential} to $A_n=(A_n\cap A)\mathbin{\dot\cup}(A_n\setminus A)$ and $A=(A_n\cap A)\mathbin{\dot\cup}(A\setminus A_n)$.  This gives $\mu(A_n)\leq\mu(A)+\mu(C_n)$ and $\mu(A)\leq\mu(A_n)+\mu(C_n)$, respectively. Hence $|\mu(A_n)-\mu(A)|\leq\mu(C_n)\longrightarrow0$. Thus $\mu\colon2^J\longrightarrow[0,1]$ is sequentially continuous, i.e., for every sequence $(B_n)_{n\in \mathbb N}$ of subsets of $J$ and every subset $B\subseteq J$,
      \[
            B_n\longrightarrow B\text{ in }2^J \implies \mu(B_n)\longrightarrow\mu(B)\text{ in }[0,1].
      \]

      Since each factor $\{0,1\}$ is second-countable and Hausdorff and $[0,1]$ is metrizable, Choodnovsky's \cite[Corollary~3.4]{Choodnovsky1978} implies that, under $\textsf{(V=L)}$, $\mu\colon2^J\longrightarrow[0,1]$ is continuous. Every basic open subset of $2^J$ specifies only finitely many coordinates and therefore contains the characteristic function of a finite subset of $J$.  Hence $\mathcal P_{\aleph_0}(J)$ is dense in $2^J$.  The closed set $\mu^{-1}[\{0\}]$ contains $\mathcal P_{\aleph_0}(J)$ by condition \textup{(5)} of \Cref{def:monotone-g-sequential}, so $\mu^{-1}[\{0\}]=2^J$.  In particular, $\mu(J)=0$, contradicting condition \textup{(4)} of \Cref{def:monotone-g-sequential}, which states that $\mu(J)=1$.
\end{proof}

\begin{remark}\label{rem:monotone-g-sequential}
      Uspenskij introduced the term $g$-sequential cardinal \cite[Definition~1.3]{Uspenskij2023}.  His printed definition omits monotonicity, and the two stated conditions alone do not imply it. Indeed, let $E=\{0,1\}$ and define $p\colon\mathcal P(E)\longrightarrow[0,1]$ by
      \[
            p(\{0\})=1, \qquad p(A)=0\quad\text{for every }A\in\mathcal P(E)\setminus\{\{0\}\}.
      \]
      The function $p$ satisfies the printed disjoint-subadditivity condition. It also satisfies continuity from above at the empty set, since every decreasing sequence of subsets of the finite set $E$ with empty intersection is eventually empty. However, $\{0\}\subsetneq E$ while $p(\{0\})=1>0=p(E)$, so $p$ is not monotone. Monotonicity is used in the proof of \cite[Proposition~2.3]{Uspenskij2023}.  Since condition \textup{(1)} of \Cref{def:monotone-g-sequential} adds this requirement, we use the distinct term \emph{monotone $g$-sequential cardinal}.  We do not assert that the two definitions are equivalent.
\end{remark}

\subsection{The periodic dual-number stationarity hypothesis \texorpdfstring{$\textsf{PDS}$}{PDS}}
\label{subsec:pds-hypothesis}

The abbreviation $\textsf{PDS}$ stands for \emph{periodic dual-number stationarity}. It is an auxiliary hypothesis introduced for this article, not a standard set-theoretic principle. The word \emph{periodic} refers to the complex in \eqref{eq:pds-periodic-complex}, \emph{dual-number} refers to the ring $B$, and \emph{stationarity} refers to strict ${}_BU$-stationarity.

Let $\mathbf t=(R,I,d)$ be a triple consisting of a ring $R$, an index set $I$, and a homomorphism of left $R$-modules $d\colon({}_RR)^{(I)}\longrightarrow({}_RR)^{(I)}$, $p\longmapsto d(p)$, such that $d\circ d=0$. Put ${}_RP\coloneqq({}_RR)^{(I)}$. From $\mathbf t$, define
\begin{itemize}
      \item the ring of dual numbers $B=R[\varepsilon]/(\varepsilon^2)$, where $\varepsilon$ is central over $R$;
      \item the quotient ring homomorphism $q_R\colon B \longrightarrow R,\quad (r+s\cdot\varepsilon)\longmapsto r$ with $\operatorname{Ker} q_R = (\varepsilon)$;
      \item the $B$--$B$-bimodule ${}_BU_B\coloneqq B/(\varepsilon)$ induced by the ring homomorphism $q_R$;
      \item the left $B$-module ${}_BP_d$ whose underlying left $R$-module is ${}_RP$ and whose left $B$-action is
            \[
                  (r+s\cdot\varepsilon)\cdot p \coloneqq r\cdot p+s\cdot d(p) \qquad(r,s\in R,\ p\in P).
            \]
\end{itemize}

The triple also determines the one-periodic complex of free left $R$-modules
\begin{equation}\label{eq:pds-periodic-complex}
      \mathbf P_{\mathbf t}^\bullet\colon \cdots\xlongrightarrow{d}{}_RP\xlongrightarrow{d}{}_RP\xlongrightarrow{d}{}_RP\xlongrightarrow{d}\cdots.
\end{equation}

\begin{definition}
      \label{def:pds}
      We say that $\mathbf t$ satisfies \emph{periodic dual-number stationarity}, and write $\textsf{PDS}(\mathbf t)$, if the following implication holds:
      \begin{equation}\label{eq:pds-predicate}
            \left[ \begin{gathered} {}_BP_d\text{ is not strict }{}_BU\text{-stationary}, \text{ and}\\ \operatorname{Ext}^1_B \bigl({}_BP_d,({}_BU)^{(J)}\bigr)=0 \text{ for every set }J. \end{gathered} \right] \implies |I|\text{ is a monotone $g$-sequential cardinal}.
      \end{equation}
      The global hypothesis $\textsf{PDS}$ is the assertion that the local predicate $\textsf{PDS}(\mathbf t)$ holds for every triple $\mathbf t=(R,I,d)$ specified above for which the complex $\mathbf P_{\mathbf t}^\bullet$ in \eqref{eq:pds-periodic-complex} is totally acyclic.
\end{definition}

\begin{remark}\label{rem:pds-topological-origin}
      The predicate in \Cref{def:pds} may look unusual from a module-theoretic point of view, but it has a natural source in topology. Give $R$ the discrete topology and $(R_R)^I$ the product topology. The Ext condition in \eqref{eq:pds-predicate} gives the lifting property as in \Cref{lem:dual-number-sl} and makes the dual map $D\colon(R_R)^I\longrightarrow\operatorname{Ker}D$ in \eqref{eq:pds-dual-map} surjective.
\end{remark}

\subsection{The conditional criterion}
\label{subsec:positive-theorem}

\begin{theorem}
      \label{thm:pds-no-monotone-g-sequential}
      Assume $\textsf{PDS}$ and that there is no monotone $g$-sequential cardinal.  Then, for every ring $A$,
      \[
            \GP(A)\subseteq\GF(A) \qquad\text{and hence}\qquad \GP(A)=\PGF(A).
      \]
\end{theorem}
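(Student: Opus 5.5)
By \Cref{prop:gorenstein-comparison}\textup{(2)} it suffices to prove $\GP(A)\subseteq\GF(A)$ for every ring $A$. Since $\GF(A)$ is closed under direct summands (Gorenstein flat modules are closed under summands over any ring, by Šaroch--Šťovíček), \Cref{fact:gp-summand-strongly-gorenstein-projective} reduces this to strongly Gorenstein projective modules. \Cref{lem:free-middle-stabilization} then lets us assume the middle term is free. So fix a short exact sequence $0\to G\to P\to G\to 0$ with $P=({}_AA)^{(I)}$ that stays exact under $\operatorname{Hom}_A(-,Q)$ for projective $Q$. Put $d\colon P\to G\to P$. Then $d\circ d=0$, the periodic complex $\mathbf P_{\mathbf t}^\bullet$ for $\mathbf t=(A,I,d)$ is totally acyclic, and $G=\operatorname{Im}d=\operatorname{Ker}d$. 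By \Cref{lem:stationarity-bridge}, it is enough to show that $G$ is strict $A$-stationary.

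The plan is to transport this to the dual-number side. Let $B=A[\varepsilon]/(\varepsilon^2)$ and consider ${}_BP_d$. Since $U=B/(\varepsilon)$, a $B$-linear map $P_d\to U^{(J)}$ is an $A$-linear map $P\to A^{(J)}$ killed by precomposition with $d$. This is the same as a map $P/\operatorname{Im}d\cong G\to A^{(J)}$. I would therefore verify two things.
\begin{enumerate}[label=\textup{(\roman*)}]
\item Total acyclicity of $\mathbf P_{\mathbf t}^\bullet$ against the free modules $A^{(J)}$ gives $\operatorname{Ext}^1_B({}_BP_d,({}_BU)^{(J)})=0$ for every $J$. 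I would prove this by building extensions of $P_d$ by $U^{(J)}$ and splitting them using surjectivity of $\operatorname{Hom}_A(d,A^{(J)})$ onto the cocycles, i.e.\ the lifting property alluded to in \Cref{rem:pds-topological-origin}.
\item Strict ${}_BU$-stationarity of ${}_BP_d$ implies strict $A$-stationarity of ${}_AG$. I would use \Cref{lem:finite-tuple-strict-stationarity}. A finitely presented $B$-module $E$ with a tuple mapping to $P_d$ yields the finitely presented $A$-module $E/\varepsilon E$, or $U\otimes_BE$, mapping to $P/dP\cong G$. The identification $\operatorname{Hom}_B(-,U)=\operatorname{Hom}_A(U\otimes_B-,A)$ then turns the equality of image sets in $U$ into the equality of image sets in $A$. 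Every tuple in $G$ lifts to a tuple in $P$, so the criterion applies.
\end{enumerate}

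Given \textup{(i)}, suppose ${}_BP_d$ were not strict ${}_BU$-stationary. Then the hypothesis of $\textsf{PDS}(\mathbf t)$ in \eqref{eq:pds-predicate} holds, so $|I|$ is monotone $g$-sequential, contradicting the assumption. Hence ${}_BP_d$ is strict ${}_BU$-stationary. By \textup{(ii)}, $G$ is strict $A$-stationary, and \Cref{lem:stationarity-bridge} finishes the proof.

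The main obstacle is step \textup{(i)}. Computing $\operatorname{Ext}^1_B(P_d,-)$ requires a $B$-projective presentation of $P_d$. I would use $0\to P_d\to B\otimes_AP\to P_d\to0$, with the maps given by $1\otimes p\mapsto p$ and the kernel identified via $d$. Applying $\operatorname{Hom}_B(-,U^{(J)})$ gives $\operatorname{Hom}_A(P,A^{(J)})$ in the middle, so the Ext group becomes the cokernel of a map governed by $\operatorname{Hom}_A(d,A^{(J)})$. Its vanishing is exactly the exactness of $\operatorname{Hom}_A(\mathbf P_{\mathbf t}^\bullet,A^{(J)})$, which total acyclicity provides. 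Some care with signs and sides will be needed there. Step \textup{(ii)} should be a routine adjunction check.
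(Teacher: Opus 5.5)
Your proposal is correct and follows essentially the paper's own route. It uses the same reduction via \Cref{fact:gp-summand-strongly-gorenstein-projective}, \Cref{lem:free-middle-stabilization}, and closure of $\GF(A)$ under summands. It derives $\operatorname{Ext}^1_B({}_BP_d,({}_BU)^{(J)})=0$ from total acyclicity through a free $B$-presentation $B\otimes_AP\to P_d$, then invokes $\textsf{PDS}$ to get strict ${}_BU$-stationarity. It finishes by transferring stationarity to ${}_A(U\otimes_BP_d)\cong P/\operatorname{Im}d\cong G$ through the adjunction with \Cref{lem:finite-tuple-strict-stationarity}, and concludes with \Cref{lem:stationarity-bridge}.

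The paper writes out the full periodic resolution with $d_-$ and $d_+$ rather than a one-step presentation. One correction to your sketch: the kernel of $b\otimes p\mapsto b\cdot p$ is $P$ with $\varepsilon$ acting by $-d$, not ${}_BP_d$ itself. This does not affect the computation, since only $\operatorname{Hom}_B(-,({}_BU)^{(J)})$ of that kernel is used, and it gives $\operatorname{Ker}\operatorname{Hom}_A(d,A^{(J)})$ either way.
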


\begin{proof}
      Let ${}_AG\in\GP(A)$ be arbitrary.  By \Cref{fact:gp-summand-strongly-gorenstein-projective}, the left $A$-module ${}_AG$ is a direct summand of a strongly Gorenstein projective left $A$-module ${}_A\widetilde G$. By \Cref{lem:free-middle-stabilization}, there exist a projective left $A$-module ${}_AQ_0$, a set $I$, a free left $A$-module ${}_AP\coloneqq({}_AA)^{(I)}$, $K\coloneqq\widetilde G\oplus Q_0$, and a short exact sequence of left $A$-modules
      \begin{equation}\label{eq:free-periodic-sequence}
            0\longrightarrow {}_AK \xlongrightarrow{i}{}_AP \xlongrightarrow{q}{}_AK \longrightarrow0.
      \end{equation}
      The sequence \eqref{eq:free-periodic-sequence} remains exact after applying $\operatorname{Hom}_A(-,{}_AQ)$ for every projective left $A$-module ${}_AQ$.

      \noindent\textbf{Claim 1.} There is a one-periodic totally acyclic complex $\mathbf P^\bullet  = \bigl(P^n,d_{\mathbf P}^n\bigr)_{n\in\mathbb Z}$ of free left $A$-modules whose module of cocycles in every degree is isomorphic to ${}_AK$.

      \begin{proof}[Proof of Claim~1]
            Define the endomorphism of left $A$-modules $d\coloneqq i\circ q\colon{}_AP\longrightarrow{}_AP$, $p\longmapsto i(q(p))$. Exactness of \eqref{eq:free-periodic-sequence} gives $q\circ i=0$, and hence $d\circ d=i\circ(q\circ i)\circ q=0$. Moreover, $\operatorname{Ker}d=\operatorname{Ker}q =\operatorname{Im}i=\operatorname{Im}d$. The equality $\operatorname{Im}i=\operatorname{Im}d$ follows from the surjectivity of $q$. Set
            \begin{equation}\label{eq:one-periodic-complex-P}
                  \mathbf P^\bullet\coloneqq \bigl(P^n,d_{\mathbf P}^n\bigr)_{n\in\mathbb Z}, \qquad {}_AP^n\coloneqq{}_AP, \qquad d_{\mathbf P}^n\coloneqq d\colon{}_AP^n\longrightarrow{}_AP^{n+1}, \quad p\longmapsto d(p) \quad(n\in\mathbb Z).
            \end{equation}
            The complex $\mathbf P^\bullet$ is exact, and $\operatorname{Ker}d_{\mathbf P}^n=\operatorname{Im}i\cong K$ for every $n\in\mathbb Z$. To show that $\mathbf P^\bullet$ is totally acyclic, fix a projective left $A$-module ${}_AQ$ and define the maps induced by $\operatorname{Hom}_A(-,{}_AQ)$:
            \[
                  \begin{aligned} i^\ast &\coloneqq\operatorname{Hom}_A(i,{}_AQ)\colon \operatorname{Hom}_A({}_AP,{}_AQ) \longrightarrow \operatorname{Hom}_A({}_AK,{}_AQ), &f&\longmapsto f\circ i,\\ q^\ast &\coloneqq\operatorname{Hom}_A(q,{}_AQ)\colon \operatorname{Hom}_A({}_AK,{}_AQ) \longrightarrow \operatorname{Hom}_A({}_AP,{}_AQ), &g&\longmapsto g\circ q,\\ d^\ast = q^\ast\circ i^\ast &\coloneqq\operatorname{Hom}_A(d,{}_AQ) \colon \operatorname{Hom}_A({}_AP,{}_AQ) \longrightarrow \operatorname{Hom}_A({}_AP,{}_AQ), &f&\longmapsto f\circ d. \end{aligned}
            \]
            Applying $\operatorname{Hom}_A(-,{}_AQ)$ to \eqref{eq:free-periodic-sequence} shows that $i^\ast$ is surjective, $q^\ast$ is injective, and $\operatorname{Ker}i^\ast=\operatorname{Im}q^\ast$.
            Thus $\operatorname{Ker}d^\ast =\operatorname{Ker}i^\ast =\operatorname{Im}q^\ast =\operatorname{Im}d^\ast$.
            Hence $\mathbf P^\bullet$ has the asserted properties.
      \end{proof}

      For the triple $\mathbf t=(A,I,d)$, form the objects associated with $\mathbf t$ in \Cref{def:pds}.  Thus put
      \[
            B\coloneqq A[\varepsilon]/(\varepsilon^2); \qquad \iota_A\colon A\longrightarrow B, \quad a\longmapsto a; \qquad q_A\colon B\longrightarrow A, \quad a+a^{\prime}\cdot\varepsilon\longmapsto a,
      \]
      and put $U\coloneqq B/(\varepsilon)$, regarded as the quotient $B$--$B$-bimodule ${}_BU_B$.
      The quotient $q_A$ makes $A$ a $B$--$B$-bimodule ${}_BA_B$, and induces the $B$--$B$-bimodule isomorphism
      \[
            \eta_A\colon{}_BU_B\xlongrightarrow{\ \sim\ }{}_BA_B, \qquad (a+a^{\prime}\cdot\varepsilon)+(\varepsilon)\longmapsto a.
      \]
      Restriction of the left $B$-action on ${}_BU_B$ along $\iota_A$ gives the $A$--$B$-bimodule ${}_AU_B$. Define the left $B$-module ${}_BP_d$ to have underlying left $A$-module ${}_AP$ and action
      \[
            (a+a^{\prime}\cdot\varepsilon)\cdot p \coloneqq a\cdot p+a^{\prime}\cdot d(p) \qquad(a,a^{\prime}\in A,\ p\in P).
      \]
      Since $d$ is a homomorphism of left $A$-modules with $d\circ d=0$, the left $B$-action on $P$ is well-defined.

      Define the free left $B$-module ${}_B\widetilde P \coloneqq{}_BB_A\otimes_A{}_AP  \cong({}_BB)^{(I)}$ and the following left $B$-endomorphisms:
      \[
            \begin{aligned} d_-\colon{}_B\widetilde P&\longrightarrow{}_B\widetilde P, &b\otimes p&\longmapsto b\cdot\varepsilon\otimes p-b\otimes d(p),\\ d_+\colon{}_B\widetilde P&\longrightarrow{}_B\widetilde P, &b\otimes p&\longmapsto b\cdot\varepsilon\otimes p+b\otimes d(p), \end{aligned}
      \]
      Also define the left $B$-homomorphism (the augmentation)
      \[
            \pi\colon{}_B\widetilde P\longrightarrow{}_BP_d, \qquad b\otimes p\longmapsto b\cdot p.
      \]
      These formulas are balanced over $A$: for each $a\in A$, $b\in B$, and $p\in P$,
      \[
            \begin{aligned} d_\pm\bigl(b\cdot\iota_A(a)\otimes p\bigr) &=b\cdot\varepsilon\otimes a\cdot p \mathbin{\pm}b\otimes d(a\cdot p)=d_\pm(b\otimes a\cdot p),\\ \pi\bigl(b\cdot\iota_A(a)\otimes p\bigr) &=\pi(b\otimes a\cdot p). \end{aligned}
      \]
      Left multiplication in the first tensor factor then shows that $d_-$, $d_+$, and $\pi$ are left $B$-homomorphisms.

      \noindent\textbf{Claim 2.} The maps $d_-$, $d_+$, and $\pi$ form the free left $B$-resolution \eqref{eq:dual-number-free-resolution} of ${}_BP_d$:
      \begin{equation}\label{eq:dual-number-free-resolution}
            \cdots\xlongrightarrow{d_-}{}_B\widetilde P \xlongrightarrow{d_+}{}_B\widetilde P \xlongrightarrow{d_-}{}_B\widetilde P \xlongrightarrow{\pi}{}_BP_d\longrightarrow0.
      \end{equation}
      More explicitly, the coordinate isomorphism of left $A$-modules
      \[
            \Psi_P\colon{}_A(P\oplus P)\xlongrightarrow{\ \sim\ } {}_A\widetilde P, \qquad (x,y)\longmapsto 1_B\otimes x+\varepsilon\otimes y,
      \]
      conjugates the three maps to
      \begin{equation}\label{eq:dual-number-coordinate-formulas}
            \begin{aligned} \bigl(\Psi_P^{-1}\circ d_-\circ\Psi_P\bigr)(x,y) &=\bigl(-d(x),x-d(y)\bigr),\\ \bigl(\Psi_P^{-1}\circ d_+\circ\Psi_P\bigr)(x,y) &=\bigl(d(x),x+d(y)\bigr),\\ \bigl(\pi\circ\Psi_P\bigr)(x,y) &=x+d(y). \end{aligned}
      \end{equation}

      \begin{proof}[Proof of Claim~2]
            The augmentation is surjective because $\pi(1_B\otimes p)=p$ for every $p\in P$. The coordinate formulas in \eqref{eq:dual-number-coordinate-formulas} give
            \[
                  \begin{aligned} \operatorname{Ker}\pi &=\bigl\{\Psi_P(-d(y),y)\bigm|y\in P\bigr\} =\operatorname{Im}d_-,\\ \operatorname{Ker}d_- &=\bigl\{\Psi_P(d(y),y)\phantom{-}\bigm|y\in P\bigr\} =\operatorname{Im}d_+,\\ \operatorname{Ker}d_+ &=\bigl\{\Psi_P(-d(y),y)\bigm|y\in P\bigr\} =\operatorname{Im}d_-. \end{aligned}
            \]
            Hence \eqref{eq:dual-number-free-resolution} is exact.
      \end{proof}

      \noindent\textbf{Claim 3.} For every set $J$, $\operatorname{Ext}^1_B \bigl({}_BP_d,({}_BU)^{(J)}\bigr)=0$.

      \begin{proof}[Proof of Claim~3]
            Fix a set $J$.  The element $\varepsilon$ annihilates ${}_BU$. After restriction of scalars along $\iota_A$, the direct sum of the copies of $\eta_A$ gives the following isomorphism of left $A$-modules:
            \[
                  \eta_A^{(J)}\colon({}_BU)^{(J)} \xlongrightarrow{\ \sim\ }({}_AA)^{(J)}, \qquad (u_j)_{j\in J}\longmapsto\bigl(\eta_A(u_j)\bigr)_{j\in J}.
            \]
            At every free term of \eqref{eq:dual-number-free-resolution}, tensor--Hom adjunction followed by postcomposition with $\eta_A^{(J)}$ naturally identifies
            \begin{equation}\label{eq:dual-number-Hom-isomorphism}
                  \operatorname{Hom}_B \bigl({}_B\widetilde P,({}_BU)^{(J)}\bigr) \cong \operatorname{Hom}_A \bigl({}_AP,({}_AA)^{(J)}\bigr),
            \end{equation}
            which sends each left $B$-homomorphism $f\colon{}_B\widetilde P\longrightarrow({}_BU)^{(J)}$ to the left $A$-homomorphism
            \[
                  {}_AP \longrightarrow ({}_AA)^{(J)}\qquad p\longmapsto\eta_A^{(J)}\bigl(f(1_B\otimes p)\bigr).
            \]
            Define the endomorphisms induced by the Hom functors:
            \[
                  \begin{aligned} d_J^\ast &\coloneqq \operatorname{Hom}_A\bigl(d, \phantom{_+} ({}_AA)^{(J)}\bigr)\colon \operatorname{Hom}_A\bigl({}_AP,({}_AA)^{(J)}\bigr) \longrightarrow \operatorname{Hom}_A\bigl({}_AP,({}_AA)^{(J)}\bigr), &f&\longmapsto f\circ d,\\ d_-^\ast &\coloneqq \operatorname{Hom}_B\bigl(d_-,({}_BU)^{(J)}\bigr)\colon \operatorname{Hom}_B\bigl({}_B\widetilde P,({}_BU)^{(J)}\bigr) \longrightarrow \operatorname{Hom}_B\bigl({}_B\widetilde P,({}_BU)^{(J)}\bigr), &f&\longmapsto f\circ d_-,\\ d_+^\ast &\coloneqq \operatorname{Hom}_B\bigl(d_+,({}_BU)^{(J)}\bigr)\colon \operatorname{Hom}_B\bigl({}_B\widetilde P,({}_BU)^{(J)}\bigr) \longrightarrow \operatorname{Hom}_B\bigl({}_B\widetilde P,({}_BU)^{(J)}\bigr), &f&\longmapsto f\circ d_+. \end{aligned}
            \]
            Since $\varepsilon$ annihilates $({}_BU)^{(J)}$, the formula for the isomorphism in \eqref{eq:dual-number-Hom-isomorphism} identifies $d_-^\ast$ with $-d_J^\ast$ and $d_+^\ast$ with $d_J^\ast$.  Applying $\operatorname{Hom}_B\bigl(-,({}_BU)^{(J)}\bigr)$ to \eqref{eq:dual-number-free-resolution} therefore gives the cochain complex
            \begin{equation}\label{eq:dual-number-Hom-complex}
                  \begin{aligned} \mathbf C_J^\bullet\coloneqq\bigl(C_J^n,\partial_J^n\bigr)_{n\in\mathbb Z_{\geq0}},\qquad C_J^n\coloneqq \operatorname{Hom}_A \bigl({}_AP,({}_AA)^{(J)}\bigr),\\ \partial_J^n\coloneqq (-1)^{n+1}d_J^\ast\colon C_J^n\longrightarrow C_J^{n+1},\qquad f\longmapsto(-1)^{n+1}\cdot(f\circ d). \end{aligned}
            \end{equation}
            Total acyclicity of $\mathbf P^\bullet$ in \eqref{eq:one-periodic-complex-P} and projectivity of $({}_AA)^{(J)}$ give $\operatorname{Ker}d_J^\ast =\operatorname{Im}d_J^\ast$. Consequently,
            \begin{equation}\label{eq:pds-ext-vanishing}
                  \operatorname{Ext}^1_B \bigl({}_BP_d,({}_BU)^{(J)}\bigr) \cong\mathrm H^1(\mathbf C_J^\bullet)={\operatorname{Ker}d_J^\ast} \big/ {\operatorname{Im}(-d_J^\ast)} =0 \qquad\text{for every set }J.
            \end{equation}
      \end{proof}

      \noindent\textbf{Claim 4.} The left $B$-module ${}_BP_d$ is strict ${}_BU$-stationary.

      \begin{proof}[Proof of Claim~4]
            By Claim~1, the complex $\mathbf P_{\mathbf t}^\bullet$ in \eqref{eq:pds-periodic-complex} is totally acyclic.  Claim~3 and \eqref{eq:pds-no-monotone-g-sequential-stationarity} therefore give the asserted strict stationarity.
      \end{proof}

      \noindent\textbf{Claim 5.} The left $A$-module ${}_A(U\otimes_BP_d)$ is strict ${}_AA$-stationary.

      \begin{proof}[Proof of Claim~5]
            For left $B$-modules ${}_BN$ and ${}_BC$ and a finite tuple $\mathbf n=(n_1,\ldots,n_r)\in({}_BN)^r$, write
            \[
                  \mathcal H^B_{N,\mathbf n}(C) \coloneqq \bigl\{\bigl(f(n_1),\ldots,f(n_r)\bigr)\bigm| f\in\operatorname{Hom}_B({}_BN,{}_BC)\bigr\} \subseteq({}_BC)^r.
            \]
            By \Cref{lem:finite-tuple-strict-stationarity}, for every finite tuple $\mathbf p=(p_1,\ldots,p_r)\in({}_BP_d)^r$ there exist a finitely presented left $B$-module ${}_BE$, a tuple $\mathbf e=(e_1,\ldots,e_r)\in({}_BE)^r$, and a left $B$-homomorphism
            \[
                  h\colon{}_BE\longrightarrow{}_BP_d, \qquad e_j \longmapsto p_j\quad(1\leq j\leq r),
            \]
            such that $\mathcal H^B_{E,\mathbf e}(U)=\mathcal H^B_{P_d,\mathbf p}(U)$. Then the map $h$ induces the left $A$-homomorphism
            \[
                  1_U\otimes_Bh\colon{}_A(U\otimes_BE) \longrightarrow{}_A(U\otimes_BP_d), \qquad 1_U\otimes e_j\longmapsto1_U\otimes p_j.
            \]
            Since ${}_BE$ is finitely presented, there exist $m,n\in\mathbb N$ and an exact sequence of left $B$-modules
            \[
                  ({}_BB)^m\longrightarrow({}_BB)^n \longrightarrow{}_BE\longrightarrow0.
            \]
            The right exact functor ${}_AU_B\otimes_B-$ gives an exact sequence of left $A$-modules
            \[
                  ({}_AU)^m\longrightarrow({}_AU)^n \longrightarrow{}_A(U\otimes_BE)\longrightarrow0.
            \]
            Restriction of $\eta_A$ along $\iota_A$ identifies ${}_AU$ with ${}_AA$, so ${}_A(U\otimes_BE)$ is finitely presented.

            Note that $\operatorname{Hom}_A({}_AU_B,{}_AA)$ is a left $B$-module. Evaluation at $1_U$ gives the left $B$-isomorphism
            \[
                  \operatorname{ev}_{1_U}\colon {}_B\!\operatorname{Hom}_A({}_AU,{}_AA) \xlongrightarrow{\ \sim\ }{}_BU, \qquad f \longmapsto \eta_A^{-1}\bigl(f(1_U)\bigr).
            \]
            Tensor--Hom adjunction therefore gives, for ${}_BN\in\{{}_BE,{}_BP_d\}$, natural isomorphisms
            \begin{equation}\label{eq:stationarity-Hom-isomorphisms}
                  \operatorname{Hom}_A\bigl({}_A(U\otimes_BN),{}_AA\bigr) \cong\operatorname{Hom}_B({}_BN,{}_BU).
            \end{equation}
            For each ${}_BN\in\{{}_BE,{}_BP_d\}$, the isomorphism in \eqref{eq:stationarity-Hom-isomorphisms} sends a left $A$-homomorphism $g\colon{}_A(U\otimes_BN)\longrightarrow{}_AA$ to the left $B$-homomorphism
            \begin{equation}\label{eq:stationarity-Hom-element-formula}
                  {}_BN \longrightarrow {}_BU\qquad z\longmapsto \eta_A^{-1}\bigl(g(1_U\otimes z)\bigr).
            \end{equation}
            Applying $\eta_A$ coordinatewise to $\mathcal H^B_{E,\mathbf e}(U) =\mathcal H^B_{P_d,\mathbf p}(U)$ and using \eqref{eq:stationarity-Hom-element-formula} gives
            \[
                  \begin{aligned} &\bigl\{\bigl(g(1_U\otimes e_1),\ldots, g(1_U\otimes e_r)\bigr)\bigm| g\in\operatorname{Hom}_A \bigl({}_A(U\otimes_BE),{}_AA\bigr)\bigr\}\\ \quad= &\bigl\{\bigl(g(1_U\otimes p_1),\ldots, g(1_U\otimes p_r)\bigr)\bigm| g\in\operatorname{Hom}_A \bigl({}_A(U\otimes_BP_d),{}_AA\bigr)\bigr\}. \end{aligned}
            \]
            Define the left $A$-homomorphism
            \[
                  \rho_d\colon{}_AP_d\longrightarrow{}_A(U\otimes_BP_d), \qquad p \longmapsto 1_U \otimes p.
            \]
            Since $U_B$ is generated by $1_U$, $\rho_d$ is surjective. Hence every finite tuple in ${}_A(U\otimes_BP_d)$ has the form $\bigl(1_U\otimes p_1,\ldots,1_U\otimes p_r\bigr)$ for a finite tuple $(p_1,\ldots,p_r)\in({}_AP_d)^r$. Applying \Cref{lem:finite-tuple-strict-stationarity} over $A$ now shows that the left $A$-module ${}_A(U\otimes_BP_d)$ is strict ${}_AA$-stationary.
      \end{proof}

      The $B$-action on ${}_BP_d$ gives the isomorphism of left $A$-modules
      \[
            \theta_d\colon{}_A(U\otimes_BP_d) \xlongrightarrow{\ \sim\ } {}_A\bigl(P/\operatorname{Im}d\bigr), \qquad 1_U\otimes p \longmapsto p+\operatorname{Im}d.
      \]
      Since $\operatorname{Im}d=\operatorname{Ker}q$, the left $A$-homomorphism $q\colon{}_AP\longrightarrow{}_AK$ induces an $A$-isomorphism
      \[
            \overline q\colon{}_A\bigl(P/\operatorname{Im}d\bigr) \xlongrightarrow{\ \sim\ }{}_AK, \qquad p+\operatorname{Im}d\longmapsto q(p).
      \]
      By Claim~1, the left $A$-module ${}_AK$ is strongly Gorenstein projective.  Claim~5 and the isomorphism of left $A$-modules $\overline q\circ\theta_d\colon{}_A(U\otimes_BP_d)\xlongrightarrow{\sim}{}_AK$ show that ${}_AK$ is strict ${}_AA$-stationary.  Hence \Cref{lem:stationarity-bridge} gives ${}_AK\in\GF(A)$.

      The left $A$-module ${}_AG$ is a direct summand of ${}_A\widetilde G$, and ${}_A\widetilde G$ is a direct summand of ${}_AK={}_A(\widetilde G\oplus Q_0)$.  By \cite[Corollary~4.12]{SarochStovicek2020}, applied to $A^{\mathrm{op}}$, the class $\GF(A)$ is the left class of a cotorsion pair and is therefore closed under direct summands.  Thus ${}_AG\in\GF(A)$.  Since ${}_AG\in\GP(A)$ was arbitrary, $\GP(A)\subseteq\GF(A)$. Finally, \Cref{prop:gorenstein-comparison}\textup{(2)} gives $\GP(A)=\PGF(A)$.
\end{proof}

\begin{corollary}
      \label{cor:conditional-vl-pds}
      Assume $\textsf{ZFC}+\textsf{(V=L)}+\textsf{PDS}$.  Then, for every ring $A$,
      \[
            \GP(A)\subseteq\GF(A) \qquad\text{and hence}\qquad \GP(A)=\PGF(A).
      \]
\end{corollary}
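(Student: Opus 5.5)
This corollary follows by combining \Cref{thm:pds-no-monotone-g-sequential} with \Cref{prop:V=L-no-monotone-g-sequential}. Work in $\textsf{ZFC}+\textsf{(V=L)}+\textsf{PDS}$ and fix an arbitrary ring $A$. Theorem~\Cref{thm:pds-no-monotone-g-sequential} has two hypotheses: $\textsf{PDS}$, and the nonexistence of monotone $g$-sequential cardinals. The first is assumed outright. The second follows from $\textsf{(V=L)}$ via \Cref{prop:V=L-no-monotone-g-sequential}, since any monotone $g$-sequential cardinal would produce a sequentially continuous measure on the Cantor cube $2^J$. Choodnovsky's result, under $\textsf{(V=L)}$, forces such a measure to be continuous. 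It then vanishes on the dense set of finite subsets, which contradicts $\mu(J)=1$.

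With both hypotheses in place, \Cref{thm:pds-no-monotone-g-sequential} gives $\GP(A)\subseteq\GF(A)$ directly. The equality $\GP(A)=\PGF(A)$ then follows from Iacob's equivalence in \Cref{prop:gorenstein-comparison}\textup{(2)}; it is also part of the conclusion of the theorem itself. Since $A$ was arbitrary, this proves the corollary.

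There is no real obstacle here, because the work lies in the two cited results. The only point needing care is bookkeeping. \Cref{thm:pds-no-monotone-g-sequential} is stated for the global hypothesis $\textsf{PDS}$, quantified over all triples whose periodic complex is totally acyclic. In its proof it is applied to the triple $(A,I,d)$ built from a strongly Gorenstein projective module, so the global form assumed in the corollary is exactly what is required. No consistency claim is involved; the corollary is purely the conditional implication \eqref{eq:positive-mainline}.
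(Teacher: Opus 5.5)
Your proposal is correct and is exactly the paper's argument: by \Cref{prop:V=L-no-monotone-g-sequential}, \textsf{(V=L)} rules out monotone $g$-sequential cardinals, and \Cref{thm:pds-no-monotone-g-sequential} then applies directly, with its conclusion already giving $\GP(A)=\PGF(A)$ via Iacob's equivalence. Your remark that the global form of $\textsf{PDS}$ is what the theorem needs is accurate, and nothing further is required.
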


\begin{proof}
      By \Cref{prop:V=L-no-monotone-g-sequential}, $\textsf{(V=L)}$ implies that there is no monotone $g$-sequential cardinal.  Apply \Cref{thm:pds-no-monotone-g-sequential}.
\end{proof}

\subsection{Equivalent languages for \texorpdfstring{$\textsf{PDS}$}{PDS}}
\label{subsec:pds-languages}

Fix a triple $\mathbf t=(R,I,d)$ as in \Cref{def:pds}, retain the associated objects $P$, $B$, $U$ and $P_d$, and let $(e_i)_{i\in I}$ be the standard basis of ${}_RP$.  One has an isomorphism of right $R$-modules
\[
      \operatorname{ev}_I\colon \operatorname{Hom}_R({}_RP,{}_RR) \xlongrightarrow{\sim}(R_R)^I, \qquad f\longmapsto\bigl(f(e_i)\bigr)_{i\in I}.
\]
Define
\begin{equation}\label{eq:pds-dual-map}
      D\coloneqq \operatorname{ev}_I\circ\operatorname{Hom}_R(d,{}_RR) \circ\operatorname{ev}_I^{-1} \colon (R_R)^I\longrightarrow(R_R)^I, \qquad x\longmapsto\operatorname{ev}_I\bigl(\operatorname{ev}_I^{-1}(x)\circ d\bigr), \qquad H\coloneqq\operatorname{Ker}D.
\end{equation}
Thus $D$ is a homomorphism of right $R$-modules and $H$ is a right $R$-submodule of $(R_R)^I$.

For a set $J$, a \emph{point-finite $J$-family in $(R_R)^I$} is a $J$-indexed family $(x_j)_{j\in J}\in((R_R)^I)^J$ such that $\{j\in J\mid x_j(i)\neq0\}$ is finite for every $i\in I$.

\begin{definition}
      For a set $J$, let $\textsf{(SL)}_J$ denote the following condition
      \begin{enumerate}
            \item[$\textsf{(SL)}_J$] For every point-finite $J$-family $(y_j)_{j\in J}\in H^J$, there exists a point-finite $J$-family $(x_j)_{j\in J}\in((R_R)^I)^J$ such that $D(x_j)=y_j$ for every $j\in J$.
      \end{enumerate}
\end{definition}

\begin{fact}
      \label{fact:pds-sl-image}
      If $\textsf{(SL)}_J$ holds for a singleton set $J$, then $\operatorname{Im}D=H$.
\end{fact}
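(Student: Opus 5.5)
Write $\iota_1\colon(R_R)^I\longrightarrow(R_R)^{\{*\}\times I}$ for the obvious identification. The plan is to read $\textsf{(SL)}_J$ for $J=\{*\}$ directly, since for a singleton the point-finiteness requirement is vacuous. A $\{*\}$-indexed family $(y_*)$ in $H^{\{*\}}$ is just a single element $y\in H$. Point-finiteness asks that $\{j\in\{*\}\mid y_*(i)\neq0\}$ be finite for every $i\in I$, and this set has at most one element. So every family indexed by the singleton, in $H$ or in $(R_R)^I$, is automatically point-finite.

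First, the inclusion $\operatorname{Im}D\subseteq H$ holds without any hypothesis. By \eqref{eq:pds-dual-map}, $D\circ D$ corresponds under $\operatorname{ev}_I$ to $f\longmapsto f\circ d\circ d$. Since $d\circ d=0$, this gives $D\circ D=0$, and hence $\operatorname{Im}D\subseteq\operatorname{Ker}D=H$.

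Second, for the reverse inclusion, fix $y\in H$ and regard it as the family $(y_*)$ with $y_*=y$. This family is point-finite by the observation above. Then $\textsf{(SL)}_{\{*\}}$ supplies a point-finite family $(x_*)$ with $D(x_*)=y$, so $y\in\operatorname{Im}D$. The two inclusions together give $\operatorname{Im}D=H$.

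No step presents a real obstacle. The only point needing care is confirming that the point-finiteness condition imposes nothing when $|J|=1$, and this is immediate from the definition.
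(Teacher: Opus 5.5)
Your proof is correct and follows the paper's argument exactly: $D\circ D=0$ (from $d\circ d=0$) gives $\operatorname{Im}D\subseteq H$, and for $y\in H$ the singleton family with value $y$ is automatically point-finite, so $\textsf{(SL)}_J$ provides $x$ with $D(x)=y$. The identification $\iota_1$ you introduce at the start is never used and can be dropped.
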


\begin{proof}
      The equality $D\circ D=0$ gives $\operatorname{Im}D\subseteq H$. Conversely, let $y\in H$.  The constant $J$-family with value $y$ is point-finite because $J$ is a singleton.  Condition $\textsf{(SL)}_J$ gives a point-finite family $(x_j)_{j\in J}$ with $D(x_j)=y$ for every $j\in J$.  Taking the unique $j\in J$ and putting $x\coloneqq x_j$ gives $D(x)=y$. Thus $H\subseteq\operatorname{Im}D$.
\end{proof}

\begin{lemma}
      \label{lem:dual-number-sl}
      For every set $J$, there is an isomorphism of abelian groups
      \begin{equation}\label{eq:pds-ext-point-finite-quotient}
            \operatorname{Ext}^1_B({}_BP_d,({}_BU)^{(J)}) \cong \frac{ \{(y_j)_{j\in J}\in H^J\mid (y_j)_{j\in J}\text{ is a point-finite $J$-family}\}} { \{(D(x_j))_{j\in J}\mid (x_j)_{j\in J}\in((R_R)^I)^J \text{ is a point-finite $J$-family}\}}.
      \end{equation}
      Consequently, for every set $J$, one has
      \[
            \operatorname{Ext}^1_B({}_BP_d,({}_BU)^{(J)})=0 \quad\iff\quad \textsf{(SL)}_J\text{ holds}.
      \]
\end{lemma}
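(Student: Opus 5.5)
The plan is to compute $\operatorname{Ext}^1_B({}_BP_d,({}_BU)^{(J)})$ from the free resolution \eqref{eq:dual-number-free-resolution} built in Claim~2 of the proof of \Cref{thm:pds-no-monotone-g-sequential}. That claim only used that $d\circ d=0$, so it holds for an arbitrary triple $\mathbf t=(R,I,d)$. Applying $\operatorname{Hom}_B(-,({}_BU)^{(J)})$ and using the identification \eqref{eq:dual-number-Hom-isomorphism} (with $A$ replaced by $R$) turns the resolution into the cochain complex $\mathbf C_J^\bullet$ of \eqref{eq:dual-number-Hom-complex}, whose terms are $\operatorname{Hom}_R({}_RP,({}_RR)^{(J)})$ and whose differentials are $\pm d_J^\ast$. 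Hence
\[
\operatorname{Ext}^1_B({}_BP_d,({}_BU)^{(J)})\cong \operatorname{Ker}d_J^\ast\big/\operatorname{Im}d_J^\ast .
\]
Signs do not affect kernels or images, and this step needs no total acyclicity.

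Next I translate $\operatorname{Hom}_R({}_RP,({}_RR)^{(J)})$ into point-finite families. A homomorphism $f\colon({}_RR)^{(I)}\to({}_RR)^{(J)}$ is determined by the values $f(e_i)\in({}_RR)^{(J)}$ for $i\in I$, and these values are arbitrary. Write $f(e_i)=(f(e_i)_j)_{j\in J}$ and put $x_j\coloneqq(f(e_i)_j)_{i\in I}\in(R_R)^I$. The finite-support condition on each $f(e_i)$ says exactly that $\{j\mid x_j(i)\neq0\}$ is finite for every $i$. So $f\mapsto(x_j)_{j\in J}$ is a bijective additive map onto the group of point-finite $J$-families. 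Equivalently, $x_j=\operatorname{ev}_I(\pi_j\circ f)$, where $\pi_j\colon({}_RR)^{(J)}\to{}_RR$ is the $j$-th projection.

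Under this bijection, precomposition with $d$ becomes $D$ applied coordinatewise: $\operatorname{ev}_I(\pi_j\circ f\circ d)=D(\operatorname{ev}_I(\pi_j\circ f))=D(x_j)$, by the definition of $D$ in \eqref{eq:pds-dual-map}. Therefore $\operatorname{Ker}d_J^\ast$ corresponds to the point-finite families with every $x_j\in H$. Likewise, $\operatorname{Im}d_J^\ast$ corresponds to the families $(D(x_j))_j$ with $(x_j)$ point-finite. Such image families are automatically point-finite, since they come from $f\circ d$. This gives \eqref{eq:pds-ext-point-finite-quotient}.

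The consequence then follows directly. The quotient vanishes if and only if every point-finite family in $H^J$ equals $(D(x_j))_j$ for some point-finite family $(x_j)$, and this is exactly $\textsf{(SL)}_J$.

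The only delicate point is bookkeeping. I must check that the identification \eqref{eq:dual-number-Hom-isomorphism} is compatible with the differentials in the general setting, namely that $d_\pm^\ast$ correspond to $\pm d_J^\ast$ because $\varepsilon$ annihilates $U$. I must also check that the left/right conventions make $D$ the correct coordinate description of $f\mapsto f\circ d$. Here $D$ is defined via $\operatorname{Hom}_R(d,{}_RR)$, and composing with $\pi_j$ commutes with precomposing by $d$, so this should be a direct verification.
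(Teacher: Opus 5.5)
Your proposal is correct and follows essentially the same route as the paper. The paper uses the same free resolution of ${}_BP_d$ by $d_\pm$ (re-proved there as Claim~1, using only $d\circ d=0$, rather than cited), the same adjunction identification of $\operatorname{Hom}_B(B\otimes_RP,({}_BU)^{(J)})$ with $\operatorname{Hom}_R({}_RP,({}_RR)^{(J)})$, the same coordinate translation into point-finite families, and the same identification of $d_\pm^\ast$ with $\pm D$. The only cosmetic difference is that you get point-finiteness of $(D(x_j))_{j\in J}$ from $f\circ d$ landing in $({}_RR)^{(J)}$, whereas the paper checks it directly via the finite supports of the $d(e_i)$.
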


\begin{proof}

      \noindent\textbf{Claim 1.} The complex in \eqref{eq:pds-language-free-resolution} is a free left $B$-module resolution of ${}_BP_d$.

      \begin{proof}[Proof of Claim~1]
            Put $\widetilde P\coloneqq B\otimes_RP$.  The left $B$-homomorphism
            \begin{equation}\label{eq:pds-beta-P}
                  \beta_P\colon({}_BB)^{(I)} \xlongrightarrow{\ \sim\ }{}_B\widetilde P, \qquad e_i\longmapsto1_B\otimes e_i \quad(i\in I),
            \end{equation}
            is an isomorphism.  Define the endomorphisms of left $B$-modules
            \[
                  d_-\coloneqq\varepsilon\otimes1_P-1_B\otimes d, \qquad d_+\coloneqq\varepsilon\otimes1_P+1_B\otimes d,
            \]
            from ${}_B\widetilde P$ to ${}_B\widetilde P$, and define
            \[
                  \pi_d\colon{}_B\widetilde P\longrightarrow{}_BP_d, \qquad b\otimes p\longmapsto b\cdot p.
            \]
            Consider the complex
            \begin{equation}\label{eq:pds-language-free-resolution}
                  \cdots\xlongrightarrow{d_-}\widetilde P \xlongrightarrow{d_+}\widetilde P \xlongrightarrow{d_-}\widetilde P \xlongrightarrow{\pi_d}P_d\longrightarrow0.
            \end{equation}
            The isomorphism of left $R$-modules
            \[
                  \gamma_P\colon{}_R(P\oplus P) \xlongrightarrow{\ \sim\ }{}_R\widetilde P, \qquad (x,y)\longmapsto 1_B\otimes x+\varepsilon\otimes y,
            \]
            identifies the three maps with
            \begin{equation}\label{eq:pds-language-resolution-formulas}
                  \begin{aligned} (\gamma_P^{-1}\circ d_-\circ\gamma_P)(x,y) &=(-d(x),x-d(y)),\\ (\gamma_P^{-1}\circ d_+\circ\gamma_P)(x,y) &=(d(x),x+d(y)),\\ (\pi_d\circ\gamma_P)(x,y)&=x+d(y). \end{aligned}
            \end{equation}
            The map $\pi_d$ is surjective, and the identities in \eqref{eq:pds-language-resolution-formulas}, together with $d\circ d=0$, give
            \[
                  \begin{aligned} \operatorname{Ker}\pi_d &=\{\gamma_P(-d(y),y)\mid y\in P\} =\operatorname{Im}d_-,\\ \operatorname{Ker}d_- &=\{\gamma_P(d(y),y)\phantom{-}\mid y\in P\} =\operatorname{Im}d_+,\\ \operatorname{Ker}d_+ &=\{\gamma_P(-d(y),y)\mid y\in P\} =\operatorname{Im}d_-. \end{aligned}
            \]
            The terms in \eqref{eq:pds-language-free-resolution} are free left $B$-modules, so the three equalities between kernels and images prove the assertion.
      \end{proof}

      \noindent\textbf{Claim 2.} The isomorphism \eqref{eq:pds-coordinate-family} identifies $\operatorname{Hom}_R({}_RP,({}_RR)^{(J)})$ with the point-finite $J$-families in $(R_R)^I$, and $D$ sends every such family to a point-finite $J$-family.

      \begin{proof}[Proof of Claim~2]
            Apply $\operatorname{Hom}_B(-,({}_BU)^{(J)})$.  Since $\varepsilon\cdot U=0$ and the map $\eta_R$ in \Cref{def:pds} restricts to the isomorphism $\eta_R\colon{}_RU\xlongrightarrow{\sim}{}_RR$, tensor--Hom adjunction gives an isomorphism of abelian groups
            \begin{equation}\label{eq:pds-language-tensor-hom}
                  \operatorname{Hom}_B({}_B\widetilde P,({}_BU)^{(J)}) \cong\operatorname{Hom}_R({}_RP,({}_RR)^{(J)}).
            \end{equation}
            For $f\colon{}_RP\longrightarrow({}_RR)^{(J)}$, let $f_j\colon{}_RP\longrightarrow{}_RR$ be its $j$th coordinate.  The isomorphism of abelian groups defined by the coordinate maps is
            \begin{equation}\label{eq:pds-coordinate-family}
                  \begin{aligned} \operatorname{coord}_J\colon \operatorname{Hom}_R({}_RP,({}_RR)^{(J)}) &\xlongrightarrow{\sim} \bigl\{(x_j)_{j\in J}\in((R_R)^I)^J\bigm| (x_j)_{j\in J}\text{ is a point-finite $J$-family}\bigr\},\\ f&\longmapsto \bigl(\operatorname{ev}_I(f_j)\bigr)_{j\in J}. \end{aligned}
            \end{equation}
            Point-finiteness is exactly the requirement that every $f(e_i)$ belong to the direct sum $({}_RR)^{(J)}$.

            For $i\in I$, write $d(e_i)=\sum_{k\in S_i}r_{ki}\cdot e_k$ and $S_i\coloneqq\operatorname{supp}_I(d(e_i))$. The set $S_i$ is finite. Since $D(x_j)(i)=\sum_{k\in S_i}r_{ki}\cdot x_j(k)$, if $(x_j)_{j\in J}$ is point-finite, then $\{j\in J\mid D(x_j)(i)\neq0\} \subseteq  \bigcup_{k\in S_i}\{j\in J\mid x_j(k)\neq0\}$. The finite union indexed by $S_i$ proves the assertion.
      \end{proof}

      Since $d\circ d=0$, one has $D\circ D=0$. Hence, if $(x_j)_{j\in J}$ is a point-finite $J$-family in $(R_R)^I$, then $D\bigl(D(x_j)\bigr)=0$ for each $j \in J$. Hence $(D(x_j))_{j\in J}$ is a point-finite $J$-family in $H=\operatorname{Ker}D$.

      \noindent\textbf{Claim 3.} Apply $\operatorname{Hom}_B(-,({}_BU)^{(J)})$ to \eqref{eq:pds-language-free-resolution}.  Under \eqref{eq:pds-language-tensor-hom} and \eqref{eq:pds-coordinate-family}, the coboundary from degree zero to degree one is $-D$, and the coboundary from degree one to degree two is $D$.

      \begin{proof}[Proof of Claim~3]
            If $h\colon{}_B\widetilde P\longrightarrow({}_BU)^{(J)}$ corresponds to $f$ under \eqref{eq:pds-language-tensor-hom}, then $(h\circ d_\pm)(1_B\otimes p)=\pm f(d(p))$, since $\varepsilon$ annihilates $({}_BU)^{(J)}$.  Under \eqref{eq:pds-coordinate-family}, precomposition with $d_-$ and $d_+$ sends $(x_j)_{j\in J}$ to $(-D(x_j))_{j\in J}$ and $(D(x_j))_{j\in J}$, respectively.
      \end{proof}

      The kernel of $D$ on point-finite families consists precisely of the point-finite $J$-families in $H$, and its image is the denominator in \eqref{eq:pds-ext-point-finite-quotient}.  Hence the first cohomology of the cochain complex obtained by applying $\operatorname{Hom}_B(-,({}_BU)^{(J)})$ to \eqref{eq:pds-language-free-resolution} is the quotient in \eqref{eq:pds-ext-point-finite-quotient}.  Its vanishing is equivalent to $\textsf{(SL)}_J$ by definition.
\end{proof}

For every finite subset $I_0\subseteq I$, let $\operatorname{pr}_{I_0}\colon(R_R)^I \longrightarrow(R_R)^{I_0}$, $x\longmapsto x|_{I_0}$, be the coordinate projection, which is a homomorphism of right $R$-modules. Give $R$ the discrete topology, $(R_R)^I$ the product topology, and $H\subseteq(R_R)^I$ the subspace topology.

\begin{lemma}
      \label{lem:pds-finite-coordinate-criterion}
      Conditions \textup{(1)} and \textup{(2)} are equivalent.
      \begin{enumerate}[label=\textup{(\arabic*)}]
            \item ${}_BP_d$ is strict ${}_BU$-stationary;
            \item for every finite subset $I_0\subseteq I$, there is a finite subset $I_1\subseteq I$ such that
                  \[
                        \operatorname{pr}_{I_0} \bigl[\operatorname{Ker} (\operatorname{pr}_{I_1}\circ D)\bigr] =\operatorname{pr}_{I_0}[\operatorname{Ker}D].
                  \]
      \end{enumerate}
      Moreover, if $\operatorname{Im}D=H$, then conditions \textup{(1)} and \textup{(2)} are also equivalent to
      \begin{enumerate}[label=\textup{(3)}]
            \item the map $D\colon(R_R)^I\longrightarrow H$ is open, where $R$ is discrete, $(R_R)^I$ carries the product topology, and $H=\operatorname{Ker}D$ carries the subspace topology.
      \end{enumerate}
      In particular, if $\textsf{(SL)}_J$ holds for every set $J$, then \Cref{fact:pds-sl-image} yields $\operatorname{Im}D=H$, and hence the conditions \textup{(1)}, \textup{(2)}, and \textup{(3)} are equivalent.
\end{lemma}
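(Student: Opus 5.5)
The plan is to translate strict ${}_BU$-stationarity into the finite-tuple criterion of \Cref{lem:finite-tuple-strict-stationarity}, compute the relevant Hom-sets explicitly in the coordinates $(R_R)^I$, and then use finite presentations to reach condition (2). Throughout, I use the identification from the proof of \Cref{lem:dual-number-sl}. A $B$-homomorphism ${}_BP_d\to{}_BU$ is a map $g\colon P\to R$ of left $R$-modules with $g\circ d=0$, since $\varepsilon$ acts on $P_d$ by $d$ and kills $U$. Via $\operatorname{ev}_I$, such maps correspond exactly to $x\in(R_R)^I$ with $D(x)=0$, that is, to $x\in H$. Evaluating $g$ on the basis tuple $(e_i)_{i\in I_0}$ gives $\operatorname{pr}_{I_0}(x)$. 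So the right-hand side of the tuple criterion for the tuple $(e_i)_{i\in I_0}$ is $\operatorname{pr}_{I_0}[\operatorname{Ker}D]$.

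\emph{Step 1: (2)$\Rightarrow$(1).} Every finite tuple in $P$ lies in the $R$-span of some $(e_i)_{i\in I_0}$ with $I_0$ finite. The image set for an $R$-linear combination tuple is the image of the set for the basis tuple, so it suffices to treat the tuples $(e_i)_{i\in I_0}$. Given $I_0$, take $I_1\supseteq I_0\cup\bigcup_{i\in I_0}S_i$, where $S_i=\operatorname{supp}_I d(e_i)$. Let $E$ be the finitely presented $B$-module generated by symbols $e_i$, $i\in I_1$, subject to the relations $\varepsilon e_i=d(e_i)$ for those $i\in I_1$ with $S_i\subseteq I_1$. Let $h\colon E\to P_d$ be the obvious map. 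I expect $\operatorname{Hom}_B(E,U)$ to correspond to the $x\in R^{I_1}$ that kill the coordinates of $D$ indexed by these $i$. Adjusting $I_1$ to an $I_2$ with $I_2\supseteq\bigcup_{i\in I_1}S_i$, and presenting with all relations indexed by $I_1$, realizes $\operatorname{Ker}(\operatorname{pr}_{I_1}\circ D)$ restricted to $R^{I_2}$. Condition (2) then gives equality of the projected image sets, which is the tuple criterion.

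\emph{Step 2: (1)$\Rightarrow$(2).} Apply the tuple criterion to $(e_i)_{i\in I_0}$ to obtain $h\colon E\to P_d$ with $E$ finitely presented. The image $h[E]$ is finitely generated, hence lies in the $B$-span of $e_i$ for $i$ in some finite $I'$. The relations of $E$ map to finitely many relations holding in $P_d$. Let $I_1$ be a finite set containing $I'$ and the supports of these relation images. Every $x\in\operatorname{Ker}(\operatorname{pr}_{I_1}\circ D)$ defines a $B$-map on the span of $(e_i)_{i\in I_1}$ that respects those relations, so composing with $h$ gives a map $E\to U$. Its values on $\mathbf e$ are $\operatorname{pr}_{I_0}(x)$. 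The tuple criterion then puts $\operatorname{pr}_{I_0}(x)$ in $\operatorname{pr}_{I_0}[\operatorname{Ker}D]$, giving $\supseteq$; the reverse inclusion is trivial. I expect this bookkeeping, namely checking that $x$ really induces a homomorphism out of $E$, to be the main obstacle; I would handle it by writing $h(\text{generators})$ as $(r+s\varepsilon)$-combinations of basis vectors and expanding via $\varepsilon\mapsto d$.

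\emph{Step 3: equivalence with (3) when $\operatorname{Im}D=H$.} The basic neighbourhoods of $0$ in $(R_R)^I$ are $\operatorname{Ker}\operatorname{pr}_{I_0}$. Openness of $D$ onto $H$ means: for each $I_0$ there is $I_1$ with $H\cap\operatorname{Ker}\operatorname{pr}_{I_1}\subseteq D[\operatorname{Ker}\operatorname{pr}_{I_0}]$. By the standard duality for the linear map $D$ with $D\circ D=0$, this should match condition (2) with the roles of the coordinates transposed. Concretely, I would show both are equivalent to: $y\in H$ vanishing on $I_1$ lifts to some $x$ vanishing on $I_0$. This follows by subtracting a solution $x'\in\operatorname{Ker}D$ of the required form, using $\operatorname{Im}D=H$ to produce the lifts. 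The final sentence of the statement then follows at once from \Cref{fact:pds-sl-image}.
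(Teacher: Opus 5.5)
Your proposal is correct. Steps~1 and~3 follow the paper's argument. Your test module, once cleaned up, is the paper's $E_{I_1}$: generators indexed by $I_0\cup I_1\cup\bigcup_{i\in I_1}S_i$ and relations $\varepsilon e_i-d(e_i)$ for $i\in I_1$, with $I_1$ the set supplied by (2). General tuples are reduced to basis tuples by the linear map $L_{\mathbf p}$. Your lifting argument for (2)$\iff$(3), subtracting an element of $\operatorname{Ker}D$ and using $\operatorname{Im}D=H$, is the paper's Claim~6. Two small points of presentation:
\begin{itemize}
\item The first construction in Step~1, with relations only for those $i$ with $S_i\subseteq I_1$, is an unnecessary detour.
\item The phrase about ``transposed roles'' in Step~3 is misleading, since $I_0$ and $I_1$ play the same roles in both statements. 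The concrete argument you give is nevertheless right.
\end{itemize}

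The genuine difference is in Step~2, the direction (1)$\Rightarrow$(2). The paper assembles the modules $E_{I_1}$ into a direct system of $I_0$-pointed finitely presented modules and proves that its colimit is $(P_d,\mathbf e_{I_0})$. It then quotes \cite[Theorem~8.10(2)]{AngeleriHerbera2008} to obtain a stage whose image set agrees with that of $P_d$. You instead use only the tuple criterion \Cref{lem:finite-tuple-strict-stationarity} and factor an arbitrary finitely presented $E$ by hand. This is the factorization-through-a-stage argument done directly, with no colimit identification needed.

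The bookkeeping you flag as the main obstacle is in fact immediate. The $B$-submodule of $P_d$ generated by $(e_i)_{i\in I_1}$ is $N_{I_1}=\sum_{i\in I_1}(Re_i+Rd(e_i))$. Because $d\circ d=0$, one has $d(N_{I_1})=\sum_{i\in I_1}Rd(e_i)$. Hence the restriction of $g_x=\operatorname{ev}_I^{-1}(x)$ to $N_{I_1}$ is $B$-linear exactly when $D(x)(i)=g_x(d(e_i))=0$ for all $i\in I_1$. It therefore suffices that $I_1$ contain the supports of the images of the generators of $E$; the supports of the relation images are not needed. Then $g_x|_{N_{I_1}}\circ h\colon E\to U$ is a $B$-map sending the chosen tuple to $\operatorname{pr}_{I_0}(x)$, and the tuple criterion places $\operatorname{pr}_{I_0}(x)$ in $\operatorname{pr}_{I_0}[\operatorname{Ker}D]$.

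Your route is shorter and relies on one fewer external result. The paper's route has the advantage that a single explicit direct system serves both directions uniformly.
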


\begin{proof}
      Fix a finite subset $I_0\subseteq I$. For every finite subset $I_1\subseteq I$, put
      \[
            I_1^{\prime}\coloneqq I_0\cup I_1\cup \bigcup_{i\in I_1}\operatorname{supp}_I(d(e_i)),
      \]
      and define the finitely presented left $B$-module
      \begin{equation}\label{eq:pds-finite-module}
            {}_BE_{I_1}\coloneqq {({}_BB)^{(I_1^{\prime})}}\bigg/ {\displaystyle\sum_{i\in I_1} B\cdot\bigl(\varepsilon\cdot e_i-d(e_i)\bigr)}.
      \end{equation}
      In \eqref{eq:pds-finite-module}, $e_i$ denotes the standard basis element before taking the quotient and its image afterward. Put $\mathbf e_{I_0}\coloneqq(e_i)_{i\in I_0} \in({}_BE_{I_1})^{I_0}$. In the sense of \Cref{def:pointed-module}, the candidate pointed module indexed by $I_1$ is $({}_BE_{I_1},\mathbf e_{I_0})$. If $I_1\subseteq I_2$ are finite subsets of $I$, define
      \[
            \alpha_{I_1}^{I_2}\colon {}_BE_{I_1}\longrightarrow{}_BE_{I_2}, \qquad e_i\longmapsto e_i \quad(i\in I_1^{\prime}).
      \]

      \noindent\textbf{Claim 1.} The pointed modules $({}_BE_{I_1},\mathbf e_{I_0})$ and the maps $\alpha_{I_1}^{I_2}$ form a direct system of $I_0$-pointed finitely presented left $B$-modules, indexed by the finite subsets of $I$ ordered by inclusion.

      \begin{proof}[Proof of Claim~1]
            The set of finite subsets of $I$, ordered by inclusion, is directed: $I_1\cup I_2$ is a common upper bound of $I_1$ and $I_2$.  If $I_1\subseteq I_2$, then $I_1^{\prime}\subseteq I_2^{\prime}$ and every relation indexed by $I_1$ is among the relations defining ${}_BE_{I_2}$.  Hence $\alpha_{I_1}^{I_2}$ is well-defined.  It fixes every $e_i$ with $i\in I_0$, so it is a homomorphism of $I_0$-pointed modules.  The identity and composition relations
            \[
                  \alpha_{I_1}^{I_1}=\operatorname{id}, \qquad \alpha_{I_2}^{I_3}\circ\alpha_{I_1}^{I_2} =\alpha_{I_1}^{I_3} \quad(I_1\subseteq I_2\subseteq I_3)
            \]
            follow by evaluating the maps on the basis elements.  Finally, each ${}_BE_{I_1}$ is the quotient of a finite-rank free left $B$-module by the submodule generated by the finite family of relations indexed by $I_1$, and is therefore finitely presented.
      \end{proof}

      \noindent\textbf{Claim 2.} The filtered colimit of the direct system of $I_0$-pointed modules in Claim~1 is $({}_BP_d,\mathbf e_{I_0})$.

      \begin{proof}[Proof of Claim~2]
            For every finite subset $I_1\subseteq I$, the basis inclusions and the quotient map define a left $B$-homomorphism
            \[
                  \lambda_{I_1}\colon {}_BE_{I_1} \longrightarrow {({}_BB)^{(I)}}\bigg/ {\displaystyle\sum_{i\in I} B\cdot\bigl(\varepsilon\cdot e_i-d(e_i)\bigr)},\qquad e_i\longmapsto e_i.
            \]
            These maps satisfy $\lambda_{I_2}\circ\alpha_{I_1}^{I_2}=\lambda_{I_1}$ whenever $I_1\subseteq I_2$.  Since $i\in\{i\}^{\prime}$ for every $i\in I$, one has
            \[
                  \bigcup_{\substack{I_1\subseteq I\\I_1\text{ finite}}} I_1^{\prime}=I.
            \]
            The relation $\varepsilon\cdot e_i-d(e_i)$ is among the defining relations of ${}_BE_{I_1}$ whenever $i\in I_1$.

            We now verify that the global quotient
            \[
                  {({}_BB)^{(I)}}\bigg/ {\displaystyle\sum_{i\in I} B\cdot\bigl(\varepsilon\cdot e_i-d(e_i)\bigr)},
            \]
            together with the maps $\lambda_{I_1}$, satisfies the universal property of the colimit of the modules ${}_BE_{I_1}$ with transition maps $\alpha_{I_1}^{I_2}$.  Let ${}_BN$ be a left $B$-module, and suppose that for every finite subset $I_1\subseteq I$ a homomorphism of left $B$-modules $g_{I_1}\colon {}_BE_{I_1}\longrightarrow{}_BN$ is given, with $g_{I_2}\circ\alpha_{I_1}^{I_2}=g_{I_1}$ whenever $I_1\subseteq I_2$. For $i\in I$, choose a finite subset $I_1\subseteq I$ such that $i\in I_1^{\prime}$, and set $n_i\coloneqq g_{I_1}(e_i)\in N$. Note that this element $n_i$ is independent of the chosen finite subset: if $i\in I_1^{\prime}\cap I_2^{\prime}$, then ${}_BE_{(I_1\cup I_2)}$ receives transition maps from both ${}_BE_{I_1}$ and ${}_BE_{I_2}$, and compatibility gives the same value for $e_i$.

            There is therefore a unique homomorphism of left $B$-modules
            \[
                  \widetilde g\colon({}_BB)^{(I)}\longrightarrow{}_BN, \qquad e_i\longmapsto n_i \quad(i\in I).
            \]
            Fix $i\in I$ and choose a finite subset $I_1\subseteq I$ containing $i$.  Then $\operatorname{supp}_I(d(e_i))\subseteq I_1^{\prime}$, and, by the definition of the elements $n_k$, the restriction of $\widetilde g$ to the free module on $I_1^{\prime}$ is the composite of the quotient map with $g_{I_1}$.  The relation $\varepsilon\cdot e_i-d(e_i)$ is zero in that quotient, and hence $\widetilde g\bigl(\varepsilon\cdot e_i-d(e_i)\bigr)=0$. Thus $\widetilde g$ annihilates every defining relation and induces a unique homomorphism of left $B$-modules
            \[
                  \overline g\colon {({}_BB)^{(I)}}\bigg/ {\displaystyle\sum_{i\in I} B\cdot\bigl(\varepsilon\cdot e_i-d(e_i)\bigr)} \longrightarrow{}_BN, \qquad x+\sum_{i\in I}B\cdot\bigl(\varepsilon\cdot e_i-d(e_i)\bigr)\longmapsto\widetilde g(x)
            \]
            satisfying $\overline g\circ\lambda_{I_1}=g_{I_1}$ for every finite subset $I_1\subseteq I$.  Uniqueness follows because the elements $(e_i)_{i\in I}$ generate the global quotient.  Hence the maps $\lambda_{I_1}$ satisfy the universal property of the colimit and induce the first isomorphism of left $B$-modules below:
            \begin{equation}\label{eq:pds-pointed-colimit}
                  \varinjlim_{\substack{I_1\subseteq I\\I_1\text{ finite}}} {}_BE_{I_1}\xlongrightarrow[\sim]{\lambda} {({}_BB)^{(I)}}\bigg/ {\displaystyle\sum_{i\in I} B\cdot\bigl(\varepsilon\cdot e_i-d(e_i)\bigr)} \xlongrightarrow[\sim]{\overline\pi_d}{}_BP_d, \qquad e_i \longmapsto e_i.
            \end{equation}
            Under the isomorphism $\beta_P$ in \eqref{eq:pds-beta-P}, the inverse image of $\operatorname{Im}d_-$ is generated by the elements $\varepsilon\cdot e_i-d(e_i)$ for $i\in I$.  Exactness of \eqref{eq:pds-language-free-resolution} therefore proves the second isomorphism in \eqref{eq:pds-pointed-colimit}.  All transition maps preserve the distinguished tuple, so \eqref{eq:pds-pointed-colimit} proves the assertion.
      \end{proof}

      \noindent\textbf{Claim 3.} After applying $\eta_R$ coordinatewise, the images that are possible for $\mathbf e_{I_0}$ under homomorphisms ${}_BE_{I_1}\longrightarrow{}_BU$ form the set
      \begin{equation}\label{eq:pds-partial-possible-images}
            \operatorname{pr}_{I_0} \bigl[\operatorname{Ker} (\operatorname{pr}_{I_1}\circ D)\bigr],
      \end{equation}
      while the images that are possible under homomorphisms ${}_BP_d\longrightarrow{}_BU$ form
      \begin{equation}\label{eq:pds-global-possible-images}
            \operatorname{pr}_{I_0}[H] =\operatorname{pr}_{I_0}[\operatorname{Ker}D].
      \end{equation}

      \begin{proof}[Proof of Claim~3]
            Indeed, let $g\colon{}_BE_{I_1}\longrightarrow{}_BU$ be a homomorphism of left $B$-modules. Assign $x(i)\coloneqq\eta_R(g(e_i))$ for $i\in I_1^{\prime}$ and extend $x$ by zero outside $I_1^{\prime}$.  Since $\operatorname{supp}_I(d(e_i))\subseteq I_1^{\prime}$ for $i\in I_1$, the relations defining ${}_BE_{I_1}$ are equivalent to
            \[
                  \operatorname{ev}_I^{-1}(x)(d(e_i))=D(x)(i)=0 \qquad(i\in I_1).
            \]
            Hence $x\in\operatorname{Ker}(\operatorname{pr}_{I_1}\circ D)$. Conversely, restriction of any element of $\operatorname{Ker}(\operatorname{pr}_{I_1}\circ D)$ to $I_1^{\prime}$, followed by $\eta_R^{-1}$, defines such a homomorphism $g$.  The two constructions prove \eqref{eq:pds-partial-possible-images}.  For a left $B$-homomorphism $g\colon{}_BP_d\longrightarrow{}_BU$, the relations $\varepsilon\cdot e_i-d(e_i)$ for all $i\in I$ are equivalent, after applying $\eta_R$, to $D(x)=0$.  Conversely, every $x\in H$ defines such a homomorphism $g$.  Hence $\operatorname{Hom}_B({}_BP_d,{}_BU)$ is identified with $H$, proving \eqref{eq:pds-global-possible-images}.
      \end{proof}

      \noindent\textbf{Claim 4.} Conditions \textup{(1)} and \textup{(2)} are equivalent.

      \begin{proof}[Proof of Claim~4]
            \noindent\textup{(1) $\implies$ (2).} Suppose first that ${}_BP_d$ is strict ${}_BU$-stationary. Apply \cite[Theorem~8.10(2)]{AngeleriHerbera2008} (in the corresponding left-module form) to the pointed direct system in Claims~1 and~2. For the tuple $(e_i)_{i\in I_0}$, that theorem supplies a finite subset $I_1\subseteq I$ for which the set of possible images agrees with the corresponding set for ${}_BP_d$. Equations \eqref{eq:pds-partial-possible-images} and \eqref{eq:pds-global-possible-images} give condition \textup{(2)}.

            \noindent\textup{(2) $\implies$ (1).} Conversely, assume condition \textup{(2)}, and let $\mathbf p=(p_1,\ldots,p_n)$ be a finite tuple in ${}_BP_d$.  Choose a finite subset $I_0\subseteq I$ and coefficients $r_{ki}\in R$ such that $p_k=\sum_{i\in I_0}r_{ki}\cdot e_i$ for $1\leq k\leq n$. Choose $I_1$ as in condition \textup{(2)}, and define the tuple $\mathbf a=(a_1,\ldots,a_n)\in({}_BE_{I_1})^n$ by $a_k\coloneqq\sum_{i\in I_0}r_{ki}\cdot e_i$ for $1\leq k\leq n$. The left $B$-homomorphism $\overline\pi_d\circ\lambda_{I_1}$ in \eqref{eq:pds-pointed-colimit} sends $\mathbf a$ to $\mathbf p$.  The right $R$-homomorphism
            \[
                  L_{\mathbf p}\colon(R_R)^{I_0}\longrightarrow(R_R)^n,\quad (z_i)_{i\in I_0}\longmapsto \left(\sum_{i\in I_0}r_{ki}\cdot z_i\right)_{1\leq k\leq n}
            \]
            sends the two sets in \eqref{eq:pds-partial-possible-images} and \eqref{eq:pds-global-possible-images} to the sets of possible images of $\mathbf a$ and $\mathbf p$, respectively. Condition \textup{(2)} therefore gives equality of the sets of possible images for $\mathbf a$ and $\mathbf p$. Since $\mathbf p$ was arbitrary, \cite[Theorem~8.10(3)]{AngeleriHerbera2008} (in the corresponding left-module form) shows that ${}_BP_d$ is strict ${}_BU$-stationary.
      \end{proof}

      For the rest of the proof, assume $\operatorname{Im}D=H$.

      \noindent\textbf{Claim 5.} The map $D\colon(R_R)^I\longrightarrow H$ is a continuous surjection, and for every finite subset $I_1\subseteq I$,
      \begin{equation}\label{eq:pds-neighbourhood-image}
            H\cap\operatorname{Ker}(\operatorname{pr}_{I_1}) =D\bigl[\operatorname{Ker} (\operatorname{pr}_{I_1}\circ D)\bigr].
      \end{equation}

      \begin{proof}[Proof of Claim~5]
            Surjectivity is the assumption $\operatorname{Im}D=H$.  For each $i\in I$, the coordinate $D(x)(i)$ depends only on the coordinates in the finite set $\operatorname{supp}_I(d(e_i))$, so $D$ is continuous. Equality \eqref{eq:pds-neighbourhood-image} follows directly from surjectivity and the definition of $\operatorname{Ker}(\operatorname{pr}_{I_1}\circ D)$.
      \end{proof}

      \noindent\textbf{Claim 6.} For finite subsets $I_0,I_1\subseteq I$, one has
      \begin{equation}\label{eq:pds-projection-neighbourhood-equivalence}
            \begin{aligned} H\cap\operatorname{Ker}(\operatorname{pr}_{I_1}) \subseteq D\bigl[\operatorname{Ker}(\operatorname{pr}_{I_0})\bigr] \quad\iff\quad \operatorname{pr}_{I_0} \bigl[\operatorname{Ker} (\operatorname{pr}_{I_1}\circ D)\bigr] =\operatorname{pr}_{I_0}[\operatorname{Ker}D]. \end{aligned}
      \end{equation}

      \begin{proof}[Proof of Claim~6]
            Fix $x\in\operatorname{Ker}(\operatorname{pr}_{I_1}\circ D)$. By \eqref{eq:pds-neighbourhood-image}, the left-hand inclusion in Claim~6 holds if and only if every such $x$ can be written as $x=u+k$ with $u\in\operatorname{Ker}(\operatorname{pr}_{I_0})$ and $k\in\operatorname{Ker}D$.  Applying $\operatorname{pr}_{I_0}$ gives
            \[
                  \operatorname{pr}_{I_0} \bigl[\operatorname{Ker} (\operatorname{pr}_{I_1}\circ D)\bigr] \subseteq\operatorname{pr}_{I_0}[\operatorname{Ker}D].
            \]
            The reverse inclusion follows from $\operatorname{Ker}D\subseteq \operatorname{Ker}(\operatorname{pr}_{I_1}\circ D)$.

            Conversely, equality on the right of \eqref{eq:pds-projection-neighbourhood-equivalence} allows one to choose $k\in\operatorname{Ker}D$ with $\operatorname{pr}_{I_0}(k)=\operatorname{pr}_{I_0}(x)$ and set $u=x-k$. The two inclusions in \eqref{eq:pds-projection-neighbourhood-equivalence} prove the equivalence.
      \end{proof}

      The subgroups $\operatorname{Ker}(\operatorname{pr}_{I_0})$, for finite subsets $I_0\subseteq I$, form a neighbourhood basis of zero in $(R_R)^I$.  The subgroups $H\cap\operatorname{Ker}(\operatorname{pr}_{I_1})$, for finite subsets $I_1\subseteq I$, form a neighbourhood basis of zero in $H$.

      \noindent\textbf{Claim 7.} Conditions \textup{(2)} and \textup{(3)} are equivalent, when $\operatorname{Im}D=H$.

      \begin{proof}[Proof of Claim~7]
            \textup{(2) $\implies$ (3).} Assume condition \textup{(2)}, and fix a finite subset $I_0\subseteq I$.  Choose a finite subset $I_1\subseteq I$ as in condition \textup{(2)}.  Claim~6 gives
            \[
                  H\cap\operatorname{Ker}(\operatorname{pr}_{I_1}) \subseteq D\bigl[\operatorname{Ker}(\operatorname{pr}_{I_0})\bigr].
            \]
            Thus the image under $D$ of every basic neighbourhood of zero in $(R_R)^I$ contains a neighbourhood of zero in $H$.  Since $D$ is a homomorphism of additive groups, translation shows that the image of every open subset of $(R_R)^I$ is open in $H$.  Hence $D$ is open, which is condition \textup{(3)}.

            \noindent\textup{(3) $\implies$ (2).} Conversely, assume condition \textup{(3)}, and fix a finite subset $I_0\subseteq I$.  Since $D$ is open, $D[\operatorname{Ker}(\operatorname{pr}_{I_0})]$ is a neighbourhood of zero in $H$.  Hence there is a finite subset $I_1\subseteq I$ such that
            \[
                  H\cap\operatorname{Ker}(\operatorname{pr}_{I_1}) \subseteq D\bigl[\operatorname{Ker}(\operatorname{pr}_{I_0})\bigr].
            \]
            By Claim~6, this inclusion is equivalent to the equality in condition \textup{(2)}.  Since $I_0$ was arbitrary, condition \textup{(2)}  follows.
      \end{proof}
      This completes the proof of \Cref{lem:pds-finite-coordinate-criterion}.
\end{proof}

\begin{proposition}
      \label{prop:pds-languages}
      Fix a triple $\mathbf t=(R,I,d)$ as in \Cref{def:pds}, and define $D$ and $H$ by \eqref{eq:pds-dual-map}. Then conditions \textup{(1)}--\textup{(3)} are equivalent.
      \begin{enumerate}[label=\textup{(\arabic*)}]
            \item \emph{Modules.}  If $\operatorname{Ext}^1_B({}_BP_d,({}_BU)^{(J)})=0$ for every set $J$ and ${}_BP_d$ is not strict ${}_BU$-stationary, then $|I|$ is a monotone $g$-sequential cardinal.  Condition \textup{(1)} is the predicate $\textsf{PDS}(\mathbf t)$.
            \item \emph{Equations.} If $\textsf{(SL)}_J$ holds for every set $J$ and there is a finite subset $I_0\subseteq I$ such that
                  \[
                        \operatorname{pr}_{I_0} \bigl[\operatorname{Ker} (\operatorname{pr}_{I_1}\circ D)\bigr] \neq\operatorname{pr}_{I_0} [\operatorname{Ker}D] \quad\text{for every finite subset }I_1\subseteq I,
                  \]
                  then $|I|$ is a monotone $g$-sequential cardinal.
            \item \emph{Topology.}  If $\textsf{(SL)}_J$ holds for every set $J$ and the map $D\colon(R_R)^I\longrightarrow H$, which is then a continuous surjection, is not open, then $|I|$ is a monotone $g$-sequential cardinal.
      \end{enumerate}
\end{proposition}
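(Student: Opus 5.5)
The three conditions share the conclusion ``$|I|$ is a monotone $g$-sequential cardinal''. It therefore suffices to show that their hypotheses are equivalent, i.e.\ that the antecedents of (1), (2) and (3) are logically equivalent statements about the fixed triple $\mathbf t$. The implications themselves then coincide.

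\emph{(1)$\iff$(2).} By \Cref{lem:dual-number-sl}, for each set $J$ one has $\operatorname{Ext}^1_B({}_BP_d,({}_BU)^{(J)})=0$ if and only if $\textsf{(SL)}_J$ holds. Quantifying over all $J$ identifies the Ext-vanishing part of the antecedent of (1) with the $\textsf{(SL)}$ part of the antecedent of (2). For the stationarity part, \Cref{lem:pds-finite-coordinate-criterion} (equivalence of its conditions (1) and (2)) shows that ${}_BP_d$ is strict ${}_BU$-stationary if and only if every finite $I_0$ admits a finite $I_1$ with $\operatorname{pr}_{I_0}[\operatorname{Ker}(\operatorname{pr}_{I_1}\circ D)]=\operatorname{pr}_{I_0}[\operatorname{Ker}D]$. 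Negating both sides, non-stationarity is exactly the existence of a finite $I_0$ for which the inequality holds for every finite $I_1$. This equivalence needs no hypothesis beyond the triple, so the two antecedents agree.

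\emph{(2)$\iff$(3).} Both antecedents contain ``$\textsf{(SL)}_J$ for every $J$''. Under this assumption, \Cref{fact:pds-sl-image} with a singleton $J$ gives $\operatorname{Im}D=H$. Claim~5 in the proof of \Cref{lem:pds-finite-coordinate-criterion} then shows that $D\colon(R_R)^I\to H$ is a continuous surjection, which justifies the parenthetical remark in (3). The ``moreover'' part of \Cref{lem:pds-finite-coordinate-criterion} gives, still under $\operatorname{Im}D=H$, that its condition (2) is equivalent to openness of $D$. Negating, the finite-coordinate failure in (2) is equivalent to non-openness of $D$ in (3). Hence the antecedents of (2) and (3) coincide.

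The only real care is bookkeeping. The equivalence of stationarity with the coordinate condition is unconditional, but the equivalence with openness needs $\operatorname{Im}D=H$. That hypothesis is available precisely because the $\textsf{(SL)}$ clause sits inside both antecedents, so one must check it is invoked before Claim~7 of the lemma is applied. There is no genuine obstacle beyond this; everything reduces to the cited lemmas.
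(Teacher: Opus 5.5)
Your proposal is correct and follows the paper's proof. Like the paper, you reduce the claim to the equivalence of the three antecedents: \Cref{lem:dual-number-sl} and the equivalence of conditions (1) and (2) in \Cref{lem:pds-finite-coordinate-criterion} give (1)$\iff$(2), and \Cref{fact:pds-sl-image} together with the ``moreover'' clause of that lemma gives (2)$\iff$(3). Your remark that the stationarity--coordinate equivalence needs no $\textsf{(SL)}$ hypothesis is accurate; the paper assumes $\textsf{(SL)}_J$ at that point, but this is harmless.
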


\begin{proof}
      \noindent\textup{(1) $\iff$ (2).} By \Cref{lem:dual-number-sl}, the Ext-vanishing hypothesis in \textup{(1)} is equivalent to the requirement that $\textsf{(SL)}_J$ hold for every set $J$.  Assume that $\textsf{(SL)}_J$ holds for every set $J$.  Then \Cref{lem:pds-finite-coordinate-criterion} shows that ${}_BP_d$ is not strict ${}_BU$-stationary exactly when there is a finite subset $I_0\subseteq I$ such that
      \[
            \operatorname{pr}_{I_0} \bigl[\operatorname{Ker} (\operatorname{pr}_{I_1}\circ D)\bigr] \neq\operatorname{pr}_{I_0}[\operatorname{Ker}D] \quad\text{for every finite subset }I_1\subseteq I.
      \]
      Thus the hypotheses in \textup{(1)} and \textup{(2)} are equivalent.

      \noindent\textup{(2) $\iff$ (3).} By \Cref{fact:pds-sl-image}, the condition $\textsf{(SL)}_J$ for every set $J$ gives $\operatorname{Im}D=H$. By \Cref{lem:pds-finite-coordinate-criterion}, the finite condition in \textup{(2)} is equivalent to saying that $D\colon(R_R)^I\longrightarrow H$ is not open.  Hence \textup{(2)} and \textup{(3)} are equivalent.
\end{proof}

\section{Set-theoretic status and open questions}
\label{sec:consistency-questions}

Let $\mathsf U$ denote the sentence
\[
      \forall R\,\bigl(R\text{ is a ring}\implies\PGF(R)=\GP(R)\bigr).
\]
Since $\PGF(R)\subseteq\GP(R)$ for every ring $R$, the negation $\neg\mathsf U$ is equivalent to the existence of a ring $R$ such that $\PGF(R)\subsetneq\GP(R)$. \Cref{prop:introduction-luh,cor:conditional-vl-pds} establish the two implications
\begin{equation}\label{eq:two-set-theoretic-directions}
      \textsf{ZFC}+\textsf{LUH}\vdash\neg\mathsf U,
      \qquad
      \textsf{ZFC}+\textsf{(V=L)}+\textsf{PDS}\vdash\mathsf U.
\end{equation}
Although $\textsf{LUH}$ implies the existence of a measurable cardinal, we do not prove that $\neg\mathsf U$ implies the existence of a measurable cardinal or any other large-cardinal lower bound. Likewise, the second implication in \eqref{eq:two-set-theoretic-directions} still depends essentially on the auxiliary hypothesis $\textsf{PDS}$; it does not show that $\textsf{(V=L)}$ alone implies $\mathsf U$. In particular, no independence result is claimed here.

\begin{question}
      \label{question:pds-in-L}
      Does $\textsf{ZFC}+\textsf{(V=L)}$ prove $\textsf{PDS}$?
\end{question}

An affirmative answer would imply $\mathsf U$ by \Cref{cor:conditional-vl-pds}. Independently of $\textsf{PDS}$, one may ask the weaker module-theoretic question whether $\textsf{ZFC}+\textsf{(V=L)}$ proves $\mathsf U$. A still weaker preliminary question is whether
\[
      \operatorname{Con}(\textsf{ZFC})
      \implies
      \operatorname{Con}\bigl(\textsf{ZFC}+\textsf{(V=L)}+\textsf{PDS}\bigr).
\]

\begin{question}
      \label{question:negative-in-zfc}
      What is the consistency strength of $\neg\mathsf U$? Does the following implication hold?
      \[
            \operatorname{Con}(\textsf{ZFC})
            \implies
            \operatorname{Con}(\textsf{ZFC}+\neg\mathsf U)?
      \]
\end{question}

The main theorem proves the corresponding implication from the stronger antecedent
\begin{equation}\label{eq:strongly-compact-consistency-upper-bound}
      \operatorname{Con}\bigl(\textsf{ZFC}+\text{``there is a strongly compact cardinal''}\bigr)
      \implies
      \operatorname{Con}(\textsf{ZFC}+\neg\mathsf U).
\end{equation}
Thus \Cref{question:negative-in-zfc} asks whether this additional consistency strength can be removed; it does not ask whether $\textsf{ZFC}$ itself proves $\neg\mathsf U$.

An affirmative answer to \Cref{question:pds-in-L}, together with Gödel's relative consistency theorem in \Cref{fact:relative-consistency-L} and \Cref{cor:conditional-vl-pds}, would give
\[
      \operatorname{Con}(\textsf{ZFC})
      \implies
      \operatorname{Con}(\textsf{ZFC}+\mathsf U).
\]
If \Cref{question:negative-in-zfc} also has an affirmative answer, then $\mathsf U$ is independent of $\textsf{ZFC}$ relative to $\operatorname{Con}(\textsf{ZFC})$. At present, however, neither affirmative answer is known.

\section*{Acknowledgments}

Eureka, an autonomous multi-agent system for mathematical reasoning developed at the University of Science and Technology of China, was used to check an earlier manuscript proving that a strongly compact cardinal yields a ring $R$ with $\PGF(R)\ne\GP(R)$. It also assisted in weakening the strongly compact cardinal hypothesis to the local ultrafilter hypothesis $\textsf{LUH}$ and in strengthening the construction from a one-sided coherent ring to a left and right coherent ring. GPT-5.6 Sol was used for an additional adversarial review and revision of the manuscript. The author independently verified all mathematical statements, proofs, and citations in the final manuscript and takes full responsibility for its content.

The author thanks JIUCHONG team at the University of Science and Technology of China for providing access to Eureka, and Tianyang Sun and Jian Liu for their assistance in using the system. The author is grateful to Junhong Chen for his careful comments on the set-theoretic aspects of the manuscript, and to Pu Zhang and Xue-Song Lu for their early guidance in Gorenstein homological algebra. The author was supported by the National Natural Science Foundation of China (No.~12131015).


\begin{thebibliography}{EJT93}

      \bibitem[AAJ26]{AlonsoAlviteJeremias2026} L.~Alonso Tarrío, R.~Alvite Pazó, and A.~Jeremías López, \emph{Homotopy limits of complexes}, arXiv:2607.28117v1 [math.CT], submitted July~30, 2026.

      \bibitem[AB69]{AuslanderBridger1969} M.~Auslander and M.~Bridger, \emph{Stable Module Theory}, Memoirs of the American Mathematical Society, no.~94, American Mathematical Society, Providence, RI, 1969.

      \bibitem[AHH08]{AngeleriHerbera2008} L.~Angeleri Hügel and D.~Herbera, \emph{Mittag--Leffler conditions on modules}, Indiana Univ. Math. J. \textbf{57} (2008), no.~5, 2459--2518, \doilink{10.1512/iumj.2008.57.3325}.

      \bibitem[BM14a]{BagariaMagidor2014} J.~Bagaria and M.~Magidor, \emph{On $\omega_1$-strongly compact cardinals}, J. Symbolic Logic \textbf{79} (2014), no.~1, 266--278, \doilink{10.1017/jsl.2013.12}.

      \bibitem[BM14b]{BagariaMagidorGroup2014} J.~Bagaria and M.~Magidor, \emph{Group radicals and strongly compact cardinals}, Trans. Amer. Math. Soc. \textbf{366} (2014), no.~4, 1857--1877, \doilink{10.1090/S0002-9947-2013-05871-0}.

      \bibitem[Ben09]{Bennis2009} D.~Bennis, \emph{Rings over which the class of Gorenstein flat modules is closed under extensions}, Comm. Algebra \textbf{37} (2009), no.~3, 855--868, \doilink{10.1080/00927870802271862}.

      \bibitem[BM07]{BennisMahdou2007} D.~Bennis and N.~Mahdou, \emph{Strongly Gorenstein projective, injective, and flat modules}, J. Pure Appl. Algebra \textbf{210} (2007), no.~2, 437--445, \doilink{10.1016/j.jpaa.2006.10.010}.

      \bibitem[BM09]{BennisMahdou2009} D.~Bennis and N.~Mahdou, \emph{A generalization of strongly Gorenstein projective modules}, J. Algebra Appl. \textbf{8} (2009), no.~2, 219--227, \doilink{10.1142/S021949880900328X}.

      \bibitem[Cho78]{Choodnovsky1978} D.~V. Choodnovsky, \emph{Sequentially continuous mappings of product spaces}, Séminaire Maurey--Schwartz (1977--1978), exposé no.~4, 1--15, \href{https://www.numdam.org/item/SAF_1977-1978____A3_0/}{Numdam}.

      \bibitem[DZ26]{DaiZhang2026} G.~Dai and X.~Zhang, \emph{Mittag--Leffler conditions, Gorenstein modules and homological invariants}, arXiv:2608.13898v1 [math.RA], submitted August~14, 2026.

      \bibitem[DLM09]{DingLiMao2009} N.~Ding, Y.~Li, and L.~Mao, \emph{Strongly Gorenstein flat modules}, J. Aust. Math. Soc. \textbf{86} (2009), no.~3, 323--338, \doilink{10.1017/S1446788708000761}.

      \bibitem[EJT93]{EJT1993} E.~E. Enochs, O.~M.~G. Jenda, and B.~Torrecillas, \emph{Gorenstein flat modules}, Nanjing Daxue Xuebao Shuxue Bannian Kan \textbf{10} (1993), 1--9.

      \bibitem[EJ95]{EnochsJenda1995} E.~E. Enochs and O.~M.~G. Jenda, \emph{Gorenstein injective and projective modules}, Math. Z. \textbf{220} (1995), 611--633.

      \bibitem[EFI17]{EstradaFuIacob2017} S.~Estrada, X.~Fu, and A.~Iacob, \emph{Totally acyclic complexes}, J. Algebra \textbf{470} (2017), 300--319, \doilink{10.1016/j.jalgebra.2016.09.009}.

      \bibitem[Göd38]{Godel1938} K.~Gödel, \emph{The consistency of the axiom of choice and of the generalized continuum-hypothesis}, Proc. Natl. Acad. Sci. U.S.A. \textbf{24} (1938), no.~12, 556--557, \doilink{10.1073/pnas.24.12.556}.

      \bibitem[Göd40]{Godel1940} K.~Gödel, \emph{The Consistency of the Axiom of Choice and of the Generalized Continuum-Hypothesis with the Axioms of Set Theory}, Annals of Mathematics Studies, no.~3, Princeton University Press, Princeton, NJ, 1940.

      \bibitem[Gre82]{Green1982} E.~L. Green, \emph{On the representation theory of rings in matrix form}, Pacific J. Math. \textbf{100} (1982), no.~1, 123--138, \doilink{10.2140/pjm.1982.100.123}.

      \bibitem[Har66]{Harris1966} M.~E. Harris, \emph{Some results on coherent rings}, Proc. Amer. Math. Soc. \textbf{17} (1966), 474--479, \doilink{10.1090/S0002-9939-1966-0193111-X}.

      \bibitem[Hol04]{Holm2004} H.~Holm, \emph{Gorenstein homological dimensions}, J. Pure Appl. Algebra \textbf{189} (2004), 167--193, \doilink{10.1016/j.jpaa.2003.11.007}.

      \bibitem[Iac20]{Iacob2020} A.~Iacob, \emph{Projectively coresolved Gorenstein flat and Ding projective modules}, Comm. Algebra \textbf{48} (2020), no.~7, 2883--2893, \doilink{10.1080/00927872.2020.1723612}.

      \bibitem[Jec03]{Jech2003} T.~Jech, \emph{Set Theory: The Third Millennium Edition, Revised and Expanded}, Springer Monographs in Mathematics, Springer, Berlin, 2003.

      \bibitem[Kan03]{Kanamori2003} A.~Kanamori, \emph{The Higher Infinite: Large Cardinals in Set Theory from Their Beginnings}, 2nd ed., Springer Monographs in Mathematics, Springer, Berlin, 2003.

      \bibitem[Lam99]{Lam1999} T.~Y. Lam, \emph{Lectures on Modules and Rings}, Graduate Texts in Mathematics, vol.~189, Springer-Verlag, New York, 1999, \doilink{10.1007/978-1-4612-0525-8}.

      \bibitem[Lam64]{Lambek1964} J.~Lambek, \emph{A module is flat if and only if its character module is injective}, Canad. Math. Bull. \textbf{7} (1964), 237--243, \doilink{10.4153/CMB-1964-021-9}.

      \bibitem[Mao26]{Mao2026} L.~Mao, \emph{Projectively coresolved Gorenstein flat modules over semi-trivial ring extensions}, Filomat \textbf{40} (2026), no.~9, 3255--3265, \doilink{10.2298/FIL2609255M}.

      \bibitem[MS22]{MoradifarSaroch2022} P.~Moradifar and J.~Šaroch, \emph{Finitistic dimension conjectures via Gorenstein projective dimension}, J. Algebra \textbf{591} (2022), 15--35, \doilink{10.1016/j.jalgebra.2021.10.026}.

      \bibitem[PV14]{PsaroudakisVitoria2014} C.~Psaroudakis and J.~Vitória, \emph{Recollements of module categories}, Appl. Categ. Structures \textbf{22} (2014), no.~4, 579--593, \doilink{10.1007/s10485-013-9323-x}.

      \bibitem[RG71]{RaynaudGruson1971} M.~Raynaud and L.~Gruson, \emph{Critères de platitude et de projectivité: techniques de ``platification'' d'un module}, Invent. Math. \textbf{13} (1971), 1--89, \doilink{10.1007/BF01390094}.

      \bibitem[Roo61]{Roos1961} J.-E.~Roos, \emph{Sur les foncteurs dérivés de $\varprojlim$. Applications}, C. R. Acad. Sci. Paris \textbf{252} (1961), 3702--3704.

      \bibitem[SS20]{SarochStovicek2020} J.~Šaroch and J.~Šťovíček, \emph{Singular compactness and definability for $\Sigma$-cotorsion and Gorenstein modules}, Selecta Math. (N.S.) \textbf{26} (2020), no.~2, Paper No.~23, \doilink{10.1007/s00029-020-0543-2}.

      \bibitem[Usp23]{Uspenskij2023} V.~V. Uspenskij, \emph{Real-valued measurable cardinals and sequentially continuous homomorphisms}, Topology Appl. \textbf{340} (2023), Paper No.~108722, \doilink{10.1016/j.topol.2023.108722}.

      \bibitem[Usu21]{Usuba2021} T.~Usuba, \emph{A note on $\delta$-strongly compact cardinals}, Topology Appl. \textbf{301} (2021), Paper No.~107538, \doilink{10.1016/j.topol.2020.107538}.

      \bibitem[vN29]{vonNeumann1929} J.~von Neumann, \emph{Über eine Widerspruchsfreiheitsfrage in der axiomatischen Mengenlehre}, J. Reine Angew. Math. \textbf{160} (1929), 227--241, \doilink{10.1515/crll.1929.160.227}.

      \bibitem[WL16]{WangLiang2016} J.~Wang and L.~Liang, \emph{A characterization of Gorenstein projective modules}, Comm. Algebra \textbf{44} (2016), no.~4, 1420--1432, \doilink{10.1080/00927872.2015.1027356}.

      \bibitem[Wei94]{Weibel1994} C.~A. Weibel, \emph{An Introduction to Homological Algebra}, Cambridge Studies in Advanced Mathematics, vol.~38, Cambridge University Press, Cambridge, 1994, \doilink{10.1017/CBO9781139644136}.

\end{thebibliography}
\end{document}